\newif\iflight
\newif\ifnocomment
\newif\ifcolor
\titleformat{\chapter}[display]
{\LARGE\bfseries\scshape}
{{\chaptertitlename}\ \thechapter}
{.5em}
{\huge}
\titleformat{\section}[hang]
{\large\bfseries\scshape}
{\thesection.}{0.5em}{}
\numberwithin{equation}{section}
\let\oldsection\section% Store \section
\renewcommand{\section}{% Update \section
	\renewcommand{\theequation}{\thesection-\arabic{equation}}% Update equation number
	\oldsection}% Regular \section
\titleformat{\subsection}[hang]
{\large\bfseries}
{\thesubsection.}{0.5em}{}
\setlist[enumerate]{leftmargin=1.8em,itemsep=0.2em,topsep=0.2em,listparindent=\parindent,parsep=0em}
\newtheoremstyle{newdefinition}%
{\topsep}% space above
{\topsep}% space below
{\normalfont}% body font
{0pt}% indent amount
{\bfseries\scshape}% head font
{.}% punctuation after head
{ }% spacing after head
{}% head spec
\newtheoremstyle{newplain}%
{\topsep}% space above
{\topsep}% space below
{\itshape}% body font
{0pt}% indent amount
{\bfseries\scshape}% head font
{.}% punctuation after head
{ }% spacing after head
{}% head spec
\newtheoremstyle{remark}%
{\topsep}% space above
{\topsep}% space below
{\normalfont}% body font
{0pt}% indent amount
{\scshape}% head font
{.}% punctuation after head
{ }% spacing after head
{}% head spec
\renewenvironment{proof}[1][\proofname]{\par
	\pushQED{\qed}%
	\normalfont \topsep6\p@\@plus6\p@\relax
	\trivlist
	\item[\hskip\labelsep
	\bfseries\scshape
	#1\@addpunct{.}]\ignorespaces
}{%
	\popQED\endtrivlist\@endpefalse
}
\let\oldbibliography\thebibliography
\renewcommand{\thebibliography}[1]{%
	\oldbibliography{#1}%
	\setlength{\itemsep}{0pt}%
}
\theoremstyle{newdefinition} \newtheorem{definition}{Definition}[section]
\theoremstyle{newdefinition} \newtheorem{remark}[definition]{Remark}
\theoremstyle{newplain} \newtheorem{theorem}[definition]{Theorem}
\theoremstyle{newplain} \newtheorem{lemma}[definition]{Lemma}
\theoremstyle{newplain} \newtheorem{fact}[definition]{Proposition}
\theoremstyle{newplain} \newtheorem{corollary}[definition]{Corollary}
\theoremstyle{newplain}
\theoremstyle{newplain}
\theoremstyle{newdefinition} \newtheorem{metaremark}[definition]{Metaremark}
\newtheorem*{namedthm}{\namedthmname}
\newcounter{namedthm}
\theoremstyle{newplain}
\newtheorem{introtheorem}{Theorem}
\theoremstyle{newdefinition} \newtheorem{introremark}[introtheorem]{Remark}
\theoremstyle{newplain} \newtheorem{introcorollary}[introtheorem]{Corollary}
\theoremstyle{newplain} \newtheorem{introfact}[introtheorem]{Proposition}
\newcommand{\ray}[2]{(#1#2)^{\infty}}
\newcommand{\edgeinf}[6]
\newcommand{\edge}[4]
\newcommand{\intr}{\mathrm{int}}
\newcommand{\square}{\Box}%\!\!\!\!\!\!\qed}
\newcommand{\bdry}[2]{\,\overline{\!#1}_{#2}}
\newcommand{\ebdryx}[1]{\,\overline{\!X}^{#1}}
\newcommand{\Fix}{\mathrm{Fix}}
\newcommand{\geod}[2]{\varrho_{#1,#2}}
\newcommand{\im}{\mathrm{im}}
\newcommand{\Min}{\mathrm{Min}}
\newcommand{\rGamma}{\text{\reflectbox{$\Gamma$}}}%{{\quarternote}}
\newcommand{\norm}[2]{{\Vert #2 \Vert_{#1}}}%{\ell^{#1}}}}
\newcommand{\bVert}{\big\Vert}%\resizebox{1.2}{1.0}{\Vert}}
\newcommand{\bnorm}[2]{{\bVert #2 \bVert_{#1}}}%{\ell^{#1}}}}
\newcommand{\spl}{\mathrm{spl}}
\newcommand{\St}{\mathrm{St}}
\newcommand{\st}{\mathrm{st}}
\newcommand{\ccc}{$\mathfrak{ccc}$}
\newcommand{\simcpl}{\mathsf{c}}
\newcommand{\id}{\mathrm{id}}
\newcommand{\dstfun}{\mathfrak{d}}
\newcommand{\rips}{\mathrm{Rips}}
\newcommand{\Stab}{\mathrm{Stab}}
\newcommand{\csch}[4]{#1^{#2}(#3,#3 \setminus #4)} %{#1^{#2}(#3\,|\,#4)}
\newcommand{\bpl}{\big(}
\newcommand{\bpr}{\big)}
\newcommand{\C}{\mathbb{C}}
\newcommand{\R}{\mathbb{R}}
\newcommand{\N}{\mathbb{N}}
\newcommand{\Z}{\mathbb{Z}}
\newcommand{\nb}[1]{\textcolor{blue}{\bf\large \#}\footnote{\textcolor{blue}{#1}}}
\newcommand{\nr}[1]{\textcolor{red}{\bf\large \#}\footnote{#1}}
\definecolor{darkgreen}{rgb}{0.05, 0.50, 0.06}
\newcommand{\ngr}[1]{\textcolor{darkgreen}{\bf\large \#}\footnote{\textcolor{darkgreen}{#1}}}
\definecolor{purp}{rgb}{0.53, 0.38, 0.56}
\newcommand{\npu}[1]{\textcolor{orange}{\bf\large
		\#}\footnote{#1}}%\footnote{\textcolor{orange}{#1}}}%DD
\newcommand{\nv}[1]{\textcolor{violet}{\bf\large
		\#}\footnote{\textcolor{violet}{#1}}}%DD
\definecolor{afb}{rgb}{0.36, 0.54, 0.66}
\newcommand{\nby}[1]{\textcolor{afb}{\bf\large
		\#}\footnote{\textcolor{afb}{#1}}}%DD
\definecolor{bro}{rgb}{0.59, 0.29, 0.0}
\newcommand{\nbz}[1]{\textcolor{bro}{\bf\large
		\#}\footnote{\textcolor{bro}{#1}}}%DD
\definecolor{dmb}{rgb}{0.55, 0.0, 0.0}
\newcommand{\nce}[1]{\textcolor{dmb}{\bf\large
		\#}\footnote{\textcolor{dmb}{#1}}}%DD
\renewcommand{\nb}[1]{}
\renewcommand{\nr}[1]{}
\renewcommand{\ngr}[1]{}
\renewcommand{\npu}[1]{}%DD
\renewcommand{\nv}[1]{}%DD
\renewcommand{\nby}[1]{}%DD
\renewcommand{\nbz}[1]{}%DD
\renewcommand{\nce}[1]{}%DD
\makeatletter \newcommand{\myhypertarget}[1]{\Hy@raisedlink{\hypertarget{#1}{}}} \makeatother
\title{{\sc\bfseries On boundaries of bicombable spaces}\footnote{\today}}
\author{Daniel Danielski\\{\footnotesize Daniel.Danielski@math.uni.wroc.pl}\\{\small University of Wroc\l{}aw, Faculty of Mathematics and Computer Science}\\{\small Mathematical Institute, pl.~Grunwaldzki 2, 50-384 Wroc\l{}aw, Poland}}
\date{\vspace{-5ex}}
\begin{document}
	\setlength{\abovedisplayskip}{0.5em}
\setlength{\belowdisplayskip}{0.5em}
\setlength{\abovedisplayshortskip}{0.5em}
\setlength{\belowdisplayshortskip}{0.5em}

\maketitle
\pagestyle{fancy}
\fancyhead{} % clear all header fields
\fancyhead[R]{\today}

\begin{abstract}
We initiate systematic study of EZ-structures (and associated boundaries) of groups acting on spaces 
that admit 
consistent and conical (equivalently, consistent and convex)
geodesic bicombings.
Such spaces 
recently 
drew
a lot of attention 
due to the fact that many classical groups act 
`nicely' 
on them. 
We 
rigorously 
construct
EZ-structures, discuss their uniqueness (up to homeomorphism), provide examples, and prove some 
boundary-related
features 
analogous to 
the ones exhibited by CAT(0) spaces and groups,
which form a subclass of the discussed class of spaces and groups.
\end{abstract}

\begin{spacing}{0.9}
	\tableofcontents 
\end{spacing}

\setcounter{section}{-1}

\section{Introduction}

Non-positive curvature (NPC) plays a prominent role in the Geometric Group Theory, and appears there in many forms.
Two of the most important instances of NPC are CAT(0) 
\cite{BrHae99}
and (Gromov/word) hyperbolic 
\cite{Gromov87}
spaces and groups. Recently, a lot of attention has been attracted by another form of non-positive curvature, 
in a sense 
generalising
these
two concepts, and reaching far beyond the world of CAT(0) and hyperbolic spaces and groups. 
Namely, we focus our attention on geodesic metric spaces that admit 
so-called 
\ccc{} geodesic bicombings.
A 
\emph{geodesic bicombing} 
$\sigma$ on a space $X$ is a
continuous function 
$X\times X\times[0,1]\to X$ which is
a continuous choice of geodesics in $X$,
and
a bicombing is a 
\emph{\ccc{} bicombing} 
if it 
satisfies properties called 
consistency and conicality (or, equivalently, 
the ones called
consistency and convexity)
---
see Definition~\ref{def:bicombing} and Remark~\ref{r:cccbic} for more details.   

CAT(0) spaces are particular examples of 
\ccc-bicombable spaces,
and with every 
word hyperbolic group there is a canonically associated 
\ccc-bicombable space 
--- 
namely the so-called injective hull of the 
word hyperbolic 
group. Further important examples of 
\ccc-bicombable spaces
come from the realm of injective metric spaces \cite{Lang13,DeLa15}
--- see Corollary \ref{c:injective}\ref{c:injective1}; 
and other important examples of groups acting 
`nicely' 
on 
\ccc-bicombable spaces
include Helly groups 
\cite{CCGHO25}
--- see Corollary \ref{c:injective}\ref{c:injective2}. 
In particular, the last class of groups 
includes 
many classical families 
--- 
FC-type Artin groups, some lattices in Euclidean buildings, Garside groups, some small cancellation groups, as well as all CAT(0) cubical groups, 
and all 
word hyperbolic groups. 
For some of these groups the structure of a group acting geometrically on a 
\ccc-bicombable
space is the only known form of non-positive curvature, and this 
allows 
to 
exhibit many important features of such groups and to answer 
a
few 
open questions about them. The theory of Helly groups, 
groups acting geometrically on injective metric spaces, and, more generally, groups acting 
in a controlled way
on 
\ccc-bicombable
spaces is currently being intensively developed, bringing many new achievements:
\cite{Lang13,DeLa15,DeLa16,Miesch17,Basso18,CCHO20,HaOs21,HuOs21,GuMoSch22,Haettel22a,Haettel22b,Haettel23,HaHoPe23,Hoda23,OsVa24,Haettel24,CCGHO25}.

In the current article, 
we 
initiate 
a systematic study of boundaries of 
\ccc-bicombable
spaces, in particular, such spaces acted upon 
geometrically 
by
a group. More precisely, for a group $G$ acting geometrically on a 
\ccc-bicombable
space $X$ we 
construct and
study its 
EZ-structures. 
Following \cite{Bestvina96,FaLa05,Dranishnikov06,OsPr09},
where 
Z-structures and 
their 
equivariant version, EZ-structures, have been defined in ways slightly differing in 
some details,
an 
\emph{EZ-structure}
$\bdry{X}{}$
is a 
$G$-equivariant
compactification of $X$ 
where the boundary 
$\partial X:=\bdry{X}{}\setminus X$ 
is 
a
`small'
subset of $\bdry{X}{}$
---
see Definition \ref{def:ezstr} and the discussion below 
it.

The
visual-boundary
compactification of a CAT(0) space 
admitting a geometric group action,
or 
the
compactification of a suitable Rips complex of a 
word hyperbolic
group by 
adding
its Gromov boundary, 
are 
the 
two most important
--- 
and historically 
the 
first
---
examples of 
EZ-structures. 
Already the existence of an EZ-structure has very important consequences, e.g.~it implies the Novikov Conjecture for the group in the torsion-free case. 
Furthermore, the topology of the boundary reflects some algebraic properties of the group (see 
e.g.~\cite{Bowditch98,Swenson99,PaSw09})
and various topological invariants of the boundary are invariants of the group 
(see e.g.~\cite{BeMe91,Bestvina96,Dranishnikov06,GeOn07}).
 Besides CAT(0) and 
word hyperbolic groups, 
(E)Z-structures 
have
been  constructed 
for various other families of groups
in e.g.~%
\cite{
Dahmani03,OsPr09,Tirel11,Martin14,Guilbault14,Pietsch18,GuMoTi19,GuMoSch22,EnWu23,CCGHO25}.
For the existence of 
a
Z-structure
in the torsion-free case,
it is required for a group to be
of type $F$, 
that is 
to
admit
a finite classifying space.
It is 
a well-known
open 
question
whether all groups of type $F$
possess
(E)Z-structures
\cite[Question in 3.1]{Bestvina96}. 

\bigskip
\smallskip
Recall 
that an action of a group $G$ on a metric space $X$ via isometries is \emph{geometric} if it is \emph{proper}, that is the set $\{g\in G:gK\cap K\neq\emptyset\}$ is finite for all 
compact
sets $K\subseteq X$, and \emph{cocompact}, that is the quotient $G\backslash X$ is compact.
Our first 
result is that 
at least two constructions lead to 
EZ-structures 
for 
groups acting geometrically on
\ccc-bicombable
spaces.

\begin{introtheorem}\label{t:main}
	Let $G$ be a group acting geometrically on a finite-dimensional proper
	geo\-desic
	metric space $(X,d)$,
	which 
	possesses 
	a 
	\ccc,
	$G$-equivariant 
	bicombing $\sigma$. Then $G$ admits an EZ-structure. 
\end{introtheorem}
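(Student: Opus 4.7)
The plan is to imitate, in the bicombing setting, the classical construction of the visual-boundary compactification of a CAT(0) space. Fix a basepoint $x_{0}\in X$, and define $\partial_{\sigma}X$ to be the set of equivalence classes of $\sigma$-rays, i.e.\ geodesic rays $r:[0,\infty)\to X$ that are $\sigma$-geodesics on every compact sub-interval, where two rays are equivalent if they remain a bounded distance apart. Convexity of $\sigma$ together with its consistency should guarantee that every equivalence class has a unique representative starting at $x_{0}$, so $\partial_{\sigma}X$ is in bijection with $\sigma$-rays based at $x_{0}$. I would then equip $\overline{X}:=X\cup\partial_{\sigma}X$ with the cone topology: a sequence $(y_{n})$ in $X$ converges to a boundary point $\xi$ if the $\sigma$-segments $\sigma(x_{0},y_{n},\cdot)$ converge uniformly on compacts to the ray representing $\xi$, and convergence inside $X$ is the metric one. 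Properness of $(X,d)$ together with Arzelà--Ascoli yields compactness of $\overline{X}$.

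Next, I would verify the four requirements of an EZ-structure. Contractibility of $\overline{X}$ is immediate from the bicombing: the straight-line homotopy $H(y,t)=\sigma(x_{0},y,1-t)$ contracts $X$ onto $x_{0}$, and extends to boundary points via their ray representatives. Metrizability follows from compactness plus second-countability (inherited via the compact-open topology on $C([0,\infty),X)$). Finite-dimensionality of $\overline{X}$ is the main technical issue: one must promote the standing finite-dimensionality of $X$ to finite-dimensionality of $\overline{X}$, which in the CAT(0) world is done by controlling the ``shadows'' of large balls using the CAT(0) inequality; here one has to re-run these arguments using only convexity and consistency of $\sigma$. The Z-set property is then witnessed by the map $\rho_{t}$ that pulls each point of $\overline{X}$ a distance $t$ back along its $\sigma$-ray from $x_{0}$: this gives a homotopy from $\id_{\overline{X}}$ to a map whose image is bounded, and for each $t>0$ lands inside $X$.

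Equivariance is painless because $\sigma$ is $G$-equivariant: each $g\in G$ sends $\sigma$-rays to $\sigma$-rays and preserves the asymptote relation, so the action extends to $\overline{X}$; continuity at boundary points follows from the cone-topology definition and the fact that $g$ acts isometrically on $X$. The nullity condition (each compact $K\subseteq X$ has cofinitely many $G$-translates lying in any prescribed open cover of $\overline{X}$) then reduces to a standard argument: by properness and cocompactness only finitely many translates of $K$ meet any given bounded set, and the remaining translates accumulate to $\partial_{\sigma}X$ in the cone topology. The chief obstacle I foresee is therefore the finite-dimensionality and ANR/ER arguments for $\overline{X}$: one has to transport the geometric control provided by the CAT(0) inequality to the weaker regime of a \ccc-bicombing, replacing orthogonal projection and comparison-triangle estimates by arguments leveraging convexity of $t\mapsto d(\sigma(x,y,t),\sigma(x',y',t))$ and consistency of $\sigma$ on sub-intervals.
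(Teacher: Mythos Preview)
Your approach is essentially the same as the paper's first proof (via the Descombes--Lang visual boundary $\partial_\sigma X$), and the overall outline is correct. However, the two places you flag as ``routine'' or ``the chief obstacle'' are precisely where the actual content lies, and you have not supplied the arguments.

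For finite-dimensionality of $\overline{X}$, no comparison-triangle substitute is needed. The trick is purely topological: once you have the Z-set homotopy $H_t$, take any open cover $\mathcal{U}$ of $\overline{X}$ with Lebesgue number $\lambda$, refine the cover on $X$ to a cover $\mathcal{V}$ of order $\le\dim X+1$ with sets of diameter $<\lambda/3$, choose $t_0>0$ with $d_o(H_{t_0}(\bar x),\bar x)<\lambda/3$ for all $\bar x$, and observe that $\{H_{t_0}^{-1}(V):V\in\mathcal{V}\}$ is a cover of $\overline{X}$ refining $\mathcal{U}$ with the same order bound. So $\dim\overline{X}\le\dim X$. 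This, together with contractibility and local contractibility (which also come from the bicombing, via \cite[Theorem~1.4]{DeLa15}), gives the ER property.

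For the nullity condition, ``the remaining translates accumulate to $\partial_\sigma X$'' is not enough: you must show the translates become \emph{small} in the cone topology, not merely that they approach the boundary. The missing quantitative input is the elementary estimate
\[
d\bigl(\varrho_{o,x}(r),\varrho_{o,y}(r)\bigr)\ \le\ \frac{2r}{d(o,x)}\,d(x,y)
\]
for $r\le\max(d(o,x),d(o,y))$, which follows from conicality alone. With this, a ball $B(go,R)$ at distance $\gg r$ from $o$ has $\varrho_{o,\cdot}(r)$-image of diameter $\le 2Rr/d(o,go)$, hence fits into any prescribed basic open set $U_o(\bar x,r,\epsilon)$ once $d(o,go)$ is large enough.
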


One of the aforementioned constructions 
uses
the 
construction
via the
Gelfand dual from \cite{EnWu23},
and 
the other
one goes along 
the
more standard approach, known from the CAT(0) and 
word
hyperbolic cases, namely 
via 
equivalence classes of infinite rays (compatible with the lines) of the bicombing, initiated in the 
article
\cite{DeLa15}
--- see Subsections \ref{sbs:enwubdez} and \ref{sbs:deslanez}, respectively, for more details.
These constructions turn out to 
be
equivalent,
see Corollary~\ref{c:equivbdry}.
We note here that the construction of
an
EZ-structure
following \cite{DeLa15}
---
where the authors give the construction of the compactification and prove several of the Z-structure--properties for it
---
was claimed in 
\cite{CCGHO25},
however
there was no 
rigorous proof there. 
We also note that the second of the constructions has been recently carried out by Basso \cite{Basso24} under different assumptions to obtain 
EZ-structures
under more general assumptions than these in
Theorem~\ref{t:main}, namely the \ccc{} condition is relaxed to the assumption of conicality 
--- see Remark \ref{u:bassobdry} for more details.
For a space $X$ admitting a \ccc{} bicombing $\sigma$, we denote below by $\partial_\sigma X$ the boundary resulting from the construction from \cite{DeLa15}, and put $\bdry{X}{\sigma}:= X\cup\partial_\sigma X$.

\smallskip
Since \cite{ArPa56},
injective metric spaces have
appeared 
and have been rediscovered
in many research areas
of mathematics and computer science, and 
are known 
under many different names:
in 
the 
contexts
closest to 
ours 
---
in topology and metric geometry 
---
they are known
as 
\emph{hyperconvex spaces}, or \emph{absolute retracts} or \emph{injective objects} (in the category of metric spaces with
1-Lipschitz maps); 
they
have
also appeared
in
e.g.~%
functional
analysis and ﬁxed point theory 
\cite{Sine79,Soardi79},
and theoretical computer science
\cite{ChLa94}.
In this article we use the following definition, known under the name \emph{`hyperconvexity'}:
a metric space $(X,d)$ is injective if for every family of 
points $x_i\in X$ and radii $r_i>0$
such that $d(x_i,x_j)\leq r_i+r_j$ for every $i,j\in I$ 
the intersection $\bigcap \overline{B}(x_i,r_i)$
(of closed balls)
is non-empty.
As is shown in \cite{Isbell64},
each metric space $X$ admits an \emph{injective hull} $E(X)$ --- the `smallest' injective metric space that contains $X$.
Then 
the \emph{combinatorial dimension} \cite{Dress84} is defined as $\dim_{\mathrm{comb}}X=\sup\{\dim E(Y):Y\subseteq X,|Y|<\infty\}$. 
For 
more 
details
about
the notion of an injective metric space and 
of
the
injective hull $E(X)$ of a metric space $X$
one may
also 
see
e.g.~\cite{Lang13}. 
For more 
about Helly graphs and groups 
one may
see e.g.~\cite{CCGHO25}.

\begin{introcorollary}\label{c:injective}
	A group $G$ acting geometrically on: 
	\begin{enumerate}[(i)]
		\item\label{c:injective1} 
		either a finite-dimensional proper injective metric space $X$ 
		\item\label{c:injective2}
		or a locally finite Helly graph $\Gamma$
	\end{enumerate}
	admits an EZ-structure.
\end{introcorollary}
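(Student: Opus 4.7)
The plan is to derive both parts from Theorem \ref{t:main} by producing, in each case, a finite-dimensional proper geodesic metric space equipped with a \ccc, $G$-equivariant bicombing on which $G$ acts geometrically.

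For part \ref{c:injective1}, the space $X$ is already geodesic, proper and finite-dimensional by hypothesis, so it remains only to furnish the bicombing. Here I would invoke the theorem of Lang \cite{Lang13}, refined by Descombes--Lang \cite{DeLa15}, which states that every injective metric space carries a canonical consistent and conical (equivalently, consistent and convex) geodesic bicombing. Canonicity of the construction means that it commutes with every isometry of $X$, so the resulting bicombing is automatically $G$-equivariant for any isometric $G$-action. Theorem \ref{t:main} then applies directly.

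For part \ref{c:injective2}, my strategy is to reduce to \ref{c:injective1} by passing from the locally finite Helly graph $\Gamma$ to an associated injective metric space $X_\Gamma$ on which $G$ still acts geometrically. A natural choice is the injective hull $E(V\Gamma)$ of the vertex set with its shortest-path metric; equivalently, one may take the geometric realization of the Helly (clique) complex of $\Gamma$ with a suitable polyhedral $\ell_\infty$-type metric, as developed in \cite{CCGHO25}. Local finiteness of $\Gamma$ uniformly bounds the sizes of local cliques, which yields both finite combinatorial dimension (hence finite-dimensionality of $X_\Gamma$) and properness of $X_\Gamma$. The action of $G$ on $\Gamma$ extends canonically --- by functoriality of the injective hull, or equivalently by acting on cliques of the Helly complex --- to an isometric, proper, cocompact action on $X_\Gamma$, and case \ref{c:injective1} now delivers the desired EZ-structure.

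The main obstacle lies not in the EZ-structure machinery itself, but in the technical bookkeeping required for \ref{c:injective2}: one must verify that passing from $\Gamma$ to $X_\Gamma$ preserves properness, cocompactness and finite-dimensionality, and that the action extends continuously and isometrically to $X_\Gamma$. These are precisely the points addressed in \cite{CCGHO25}, so the corollary essentially amounts to combining that reference with Lang's canonical bicombing and Theorem \ref{t:main}.
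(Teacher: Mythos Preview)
Your overall strategy matches the paper's: reduce both cases to Theorem~\ref{t:main} by producing a \ccc, $G$-equivariant bicombing on a suitable proper finite-dimensional space, and for \ref{c:injective2} pass through the injective hull and then apply \ref{c:injective1}. Two points of imprecision are worth tightening.

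First, in part \ref{c:injective1} you assert that Lang and Descombes--Lang give a canonical \emph{consistent} conical bicombing on every injective metric space. That overstates the literature. Lang's construction \cite[Proposition~3.8(1)]{Lang13} yields a conical bicombing, but consistency is not automatic; the upgrade in \cite[Theorems~1.1 and~1.2]{DeLa15} requires finite combinatorial dimension (or local finiteness thereof for proper spaces), and it is the uniqueness of the resulting convex bicombing that forces equivariance under the full isometry group. So you must actually use the finite-dimensionality hypothesis: since $E(Y)$ embeds in $E(X)=X$ for every finite $Y\subseteq X$, one has $\dim_{\mathrm{comb}}X\le\dim X<\infty$, and only then does Remark~\ref{u:equivariant} apply. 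The paper makes exactly this verification.

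Second, in part \ref{c:injective2} you write that local finiteness of $\Gamma$ ``uniformly bounds the sizes of local cliques.'' Local finiteness gives only that each clique is finite, not that clique sizes are bounded uniformly over the whole graph; it is cocompactness of the $G$-action (finitely many vertex orbits, hence bounded degree) that yields a uniform bound and thus finite-dimensionality of the injective hull. The paper invokes \cite[Theorem~6.3]{CCGHO25} and then observes that $E(\Gamma)$ is finite-dimensional precisely because it is a locally finite polyhedral complex carrying a cocompact action.
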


\begin{introremark}\label{u:equivariant}
	In view of \cite[Theorems 1.1 and 1.2]{DeLa15}, if $X$ 
	is a proper metric space 
	of
	finite combinatorial dimension or 
	such that
	every bounded subset of $X$ has finite combinatorial dimension, and admits a conical bicombing, then 
	$X$
	admits a 
	\ccc,
	reversible
	bicombing,
	which is 
	a
	unique
	convex bicombing on $X$, 
	therefore 
	is 
	equivariant with respect to the full isometry group 
	of $X$. 	
\end{introremark}

\begin{proof}
	\ref{c:injective1}
	By \cite[Proposition 3.8(1)]{Lang13}, 
	$X$ admits a conical bicombing and, since for each subspace $Y\subseteq X$ the injective hull $E(Y)$ embeds into $E(X)=X$ (see e.g.~\cite[Proposition 3.5(1)]{Lang13}), 
	we have that 
	$\dim_{\mathrm{comb}}X\leq\dim X<\infty$. Therefore the claim follows by Remark \ref{u:equivariant} and Theorem \ref{t:main}.
	
	\medskip
	\ref{c:injective2}
	By \cite[Theorem 6.3]{CCGHO25}, 
	the action of $G$ extends to a geometric action on the
	injective hull $E(\Gamma)$, 
	which is a proper injective metric space, and which is finite-dimensional as a locally finite polyhedral complex on which 
	the 
	group 
	$G$
	acts cocompactly. The claim follows by 
	\ref{c:injective1}.
\end{proof}

\medskip
Croke and Kleiner 
\cite{CrKl00}
showed that in the case of a group acting geometrically on two CAT(0) spaces $X$ and $Y$, their boundaries $\partial X$ and $\partial Y$ may be 
non-homeomorphic.
Since CAT(0) spaces are 
\ccc-bicombable, 
this provides examples of non-homeomorphic boundaries of 
a group acting geometrically on a \ccc-bicombable space.
However, even in the CAT(0) world, restricting to some particular cases of spaces and/or groups brings one to a situation when a boundary of a group is well defined, up to homeomorphism. Such phenomenon is called `boundary rigidity' 
\cite{Ruane99,Hosaka03,HrKl05}.
Since injective spaces form one of the most important examples of 
\ccc-bicombable
spaces, and their geometry seems very 
`rigid' 
in a sense, a natural question is whether the boundary in this case is unique up to homeomorphism. We 
give a negative answer
in the following theorem. 

\begin{introtheorem}[Theorem \ref{t:nonunique}]\label{t:nonuniqueintro}
There exists a group $G$ acting geometrically on two
proper
finite-dimensional
injective metric spaces $X^1,X^2$ with 
convex
bicombings $\sigma^1,\sigma^2$, respectively, such that $\partial_{\sigma^1} X^1$ and $\partial_{\sigma^2} X^2$ are not homeomorphic. 	
\end{introtheorem}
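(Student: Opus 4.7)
My plan is to transport the classical Croke-Kleiner non-uniqueness phenomenon from the CAT(0) world into the injective world. Croke and Kleiner~\cite{CrKl00} exhibited a right-angled Artin group $G$ acting geometrically on two CAT(0) $2$-complexes $Y_1,Y_2$ whose visual boundaries $\bd{Y_1},\bd{Y_2}$ are not homeomorphic. My candidates for the required injective spaces are the injective hulls $X^i:=E(Y_i)$, equipped with the unique \ccc{} bicombings $\sigma^i$ produced by Remark~\ref{u:equivariant}.

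The setup is straightforward to verify. Each $Y_i$ is a proper CAT(0) space of finite (combinatorial) dimension, so $X^i=E(Y_i)$ is a proper finite-dimensional injective metric space; the isometric $G$-action on $Y_i$ extends functorially to an isometric action on $X^i$, and remains proper and cocompact because $Y_i\hookrightarrow X^i$ is a $G$-equivariant quasi-isometry with coarsely dense image; uniqueness of $\sigma^i$ then forces $G$-equivariance. Corollary~\ref{c:injective}\ref{c:injective1} accordingly furnishes the EZ-boundaries $\partial_{\sigma^i}X^i$.

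The main obstacle is to verify that $\partial_{\sigma^1}X^1\not\cong\partial_{\sigma^2}X^2$, and this is where essentially all of the work lies. The cleanest route is to exhibit, for each $i$, a $G$-equivariant homeomorphism $\partial_{\sigma^i}E(Y_i)\cong\bd{Y_i}$, i.e.\ to show that passing to the injective hull does not alter the boundary topology. The candidate forward map sends the $\sigma^i$-asymptotic class of a ray $\gamma$ in $E(Y_i)$ issued from a basepoint in $Y_i$ to the CAT(0)-asymptotic class of the unique CAT(0) ray in $Y_i$ at finite Hausdorff distance from the nearest-point projection of $\gamma$ onto $Y_i$; coarse density of $Y_i$ in $X^i$ together with convexity of $\sigma^i$ ensure this projection is a quasi-ray, so it fellow-travels a unique genuine CAT(0) ray by the standard CAT(0) stability result. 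The reverse map sends a CAT(0) ray in $Y_i$ to its $\sigma^i$-asymptotic class in $E(Y_i)$; its well-definedness reduces to showing that CAT(0) geodesic segments in $Y_i$ remain $\sigma^i$-geodesics in $E(Y_i)$, which I would verify using the uniqueness of convex bicombings on CAT(0) spaces together with the fact that the isometric embedding $Y_i\hookrightarrow E(Y_i)$ is distance-preserving. Mutual inverseness, continuity and $G$-equivariance are then routine. Once the identification is in place, the Croke-Kleiner non-homeomorphism $\bd{Y_1}\not\cong\bd{Y_2}$ immediately yields $\partial_{\sigma^1}X^1\not\cong\partial_{\sigma^2}X^2$. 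Should the identification with the CAT(0) boundary turn out to be too delicate to establish directly, a fallback is to work with a group-theoretic invariant of the boundary --- such as the topology of the fixed-point sets of the rank-two abelian subgroups of $G$, which is what Croke and Kleiner use to distinguish $\bd{Y_1}$ from $\bd{Y_2}$ --- and to transfer this invariant to the injective hulls via the $G$-equivariance of the EZ-structure.
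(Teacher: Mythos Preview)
Your approach has a genuine gap at the crucial step: the identification $\partial_{\sigma^i}E(Y_i)\cong\bd{Y_i}$. The argument you sketch for the reverse map requires that CAT(0) geodesics in $Y_i$ remain $\sigma^i$-geodesics in $E(Y_i)$, i.e.\ that $Y_i$ is $\sigma^i$-convex in its injective hull. Uniqueness of the convex bicombing on $Y_i$ tells you nothing about this: it constrains bicombings \emph{on} $Y_i$, not the restriction to $Y_i$ of a bicombing on a larger space. In general a subspace of a \ccc-bicombable space need not be convex for the ambient bicombing even when it carries its own \ccc{} bicombing (cf.\ the paper's remark in the proof of Proposition~\ref{p:sphereinbdry} that flats are typically not $\sigma$-convex). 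Your forward map has a parallel problem: injective spaces have no canonical nearest-point projection onto an arbitrary isometrically embedded subspace, so ``project $\gamma$ onto $Y_i$'' is not well-defined. The fallback via fixed-point sets of abelian subgroups is too vague to assess; Croke and Kleiner's actual argument tracks the topology near a specific arc in the boundary, and transferring that analysis to $E(Y_i)$ would again require understanding how the $\sigma^i$-geometry interacts with the CAT(0) geometry of $Y_i$.

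The paper sidesteps all of this by never passing to an injective hull. Instead it equips the \emph{same} underlying $2$-complexes (the universal covers of the Croke--Kleiner tori-complexes) with piecewise-$\ell^\infty$ metrics in place of piecewise-$\ell^2$ metrics, checks by a local analysis (Lemma~\ref{l:5sp}) plus a local-to-global principle (Miesch, and Proposition~\ref{f:loctoglobgeod}) that the resulting spaces are injective and that the $\ell^\infty$-bicombing has \emph{literally the same trajectories} as the CAT(0) bicombing, and then invokes Proposition~\ref{p:bilipbic} to conclude the boundaries are homeomorphic. The non-homeomorphism then comes directly from Croke--Kleiner. The point is that the injective spaces are the Croke--Kleiner spaces themselves with a different metric, not abstract injective hulls.
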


The example of the group $G$ from the above theorem is the original 
Croke--Kleiner 
example 
\cite{CrKl00}.
The two injective spaces $X^1,X^2$ are basically the 
Croke--Kleiner 
polygonal
complexes 
carefully equipped with two different injective metrics. It is intriguing that, unlike in the CAT(0) case, the injective metric structure imposes severe restrictions on the gluing pattern within the complexes. In particular, we are able to produce only two homeomorphism types of boundaries for the given group $G$. In the CAT(0) case 
infinitely 
many pairwise 
non-homeomorphic 
boundaries have been produced 
\cite{Wilson05,Mooney10}.
Furthermore, at the moment we do not know whether boundaries of Helly groups are unique, that is, whether
there exists a group acting on two Helly graphs whose associated boundaries are non-homeomorphic
--- see \ref{q:Helly} for more detail. 

\medskip
Since CAT(0) groups and 
word
hyperbolic groups are examples of groups acting geometrically on 
\ccc-bicombable
spaces, a natural question arises about relations between all types of boundaries. It is clear that if a 
word
hyperbolic group acts geometrically on a 
space
admitting a \ccc{} bicombing $\sigma$, 
then
the 
boundary 
$\partial_\sigma X$
coincides with the Gromov boundary. 
For
the CAT(0) spaces the situation is much more subtle, even in, otherwise restricted, case of CAT(0) cubical complexes. Here we have been able to obtain the following result in dimension $2$.

\begin{introtheorem}[Corollary \ref{c:ccc}]\label{t:cccintro}
Let $X$ be a 
CAT(0) cube complex
of dimension at most $2$,
and 
let $\sigma^p$ be the convex bicombing on 
$X$ equipped with the 
piecewise-$\ell^p$ metric
for 
$p\in[1,\infty]$. 
Then the identity of $X$ extends to a homeomorphism between $\bdry{X}{\sigma^p}$ and $\bdry{X}{\sigma^r}$ for any $p,r\in[1,\infty]$, in particular 
all of 
the boundaries $\partial_{\sigma^p}X$
for $p\in[1,\infty]$
%and $\partial_{\sigma^\infty}X$ 
are homeomorphic. 
\end{introtheorem}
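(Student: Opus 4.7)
The plan is to compare the two compactifications by producing a canonical bijection between $\sigma^2$- and $\sigma^\infty$-rays based at a fixed $x_0\in X$ (modulo asymptotic equivalence), and then to show that together with the identity on $X$ it yields a homeomorphism $\bdry{X}{\sigma^2}\to\bdry{X}{\sigma^\infty}$. First I would collect the preliminaries. For a locally finite $2$-dimensional CAT(0) cube complex, $(X,d_\infty)$ is a proper injective metric space, so Theorem \ref{t:main} together with Remark \ref{u:equivariant} supplies the unique \ccc{} bicombing $\sigma^\infty$ and the compactification $\bdry{X}{\sigma^\infty}$; the CAT(0) bicombing $\sigma^2$ is \ccc{} as well. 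Both boundaries are, following \cite{DeLa15}, sets of equivalence classes of bicombing rays with the topology of uniform convergence on compacta. Cubewise one has $d_\infty\le d_2\le\sqrt{2}\,d_\infty$, so the two metrics are bi-Lipschitz equivalent and the notion of ``bounded Hausdorff distance between rays'' is intrinsic.

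Next, given a $\sigma^\infty$-ray $\gamma^\infty$ with $\gamma^\infty(0)=x_0$, properness forces $\gamma^\infty(n)\to\infty$; the $\sigma^2$-segments from $x_0$ to $\gamma^\infty(n)$ form an equicontinuous family and, by Arzel\`a--Ascoli together with the consistency of $\sigma^2$, subconverge to a $\sigma^2$-ray $\Phi(\gamma^\infty)$ whose $\sigma^2$-equivalence class does not depend on the choice of subsequence. An analogous construction using $\sigma^\infty$-segments defines a map $\Psi$ in the opposite direction. Inside any CAT(0) flat $F\subseteq X$ (an isometrically embedded copy of $\R^2$) both bicombings restrict to the linear bicombing, because linear interpolation is consistent and convex with respect to both the $\ell^2$ and the $\ell^\infty$ norm on $\R^2$, so rays which are contained in a flat coincide on the nose.

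The heart of the argument, where the hypothesis $\dim X\le 2$ is genuinely used, is that any $\sigma^\infty$-ray $\gamma^\infty$ and its image $\Phi(\gamma^\infty)$ remain at bounded Hausdorff distance, and symmetrically for $\Psi$. In dimension at most $2$ the hyperplanes of $X$ are $1$-dimensional, so a geodesic ray in either bicombing is essentially determined by the ordered sequence of hyperplanes it crosses; and within each individual $2$-cube the two bicombings are convex interpolations which differ only by the diameter-bounded deformation between the $\ell^2$- and $\ell^\infty$-geodesic of a unit square. A cube-by-cube tracking then shows that $\gamma^\infty$ and $\Phi(\gamma^\infty)$ traverse the same sequence of cubes, enter and leave each cube through the same $1$-dimensional hyperplanes at uniformly close points, and hence stay in a tubular neighbourhood of fixed radius of one another. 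It follows that $\Phi$ and $\Psi$ descend to mutually inverse bijections on equivalence classes; and because uniform convergence on compacta of the $\sigma^p$-segments emanating from $x_0$ transfers between $p=2$ and $p=\infty$ under this bounded-distance pairing, the resulting bijection extends the identity of $X$ to a homeomorphism of the compactifications.

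The main obstacle is exactly this bounded-Hausdorff-distance statement for paired rays, which genuinely relies on $\dim X\le 2$: already on a single higher-dimensional cube the $\ell^2$- and $\ell^\infty$-bicombings diverge in an uncontrolled way, and for $\dim X\ge 3$ the hyperplanes crossed by a ray cease to be a totally ordered invariant sufficient to reconstruct the ray. The bulk of the technical work therefore lies in the cube-by-cube combinatorial analysis in dimension $2$ --- matching how each bicombing crosses a given edge-hyperplane, and verifying that the accumulated discrepancy between the two rays remains bounded as they travel out to infinity.
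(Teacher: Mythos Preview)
Your overall architecture (build maps $\Phi,\Psi$ between boundaries via limits of bicombing segments, then show they are mutually inverse homeomorphisms) is reasonable, but the paper takes a sharper and more direct route, and the step you single out as ``the main obstacle'' is precisely where your argument has a genuine gap.

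The paper does not attempt to show that a $\sigma^\infty$-ray and its associated $\sigma^2$-ray stay at bounded Hausdorff distance. Instead it proves the much stronger Theorem~\ref{t:ccc}: the two bicombings have \emph{identical trajectories}, i.e.\ $\im\,\sigma^2_{xy}=\im\,\sigma^\infty_{xy}$ for all $x,y\in X$. This is established by a local-to-global argument: each point $x\in X$ is assigned a ``type'' $(n,m)$ with $n+m\le 2$, and Lemma~\ref{l:ccctech} handles the finitely many local models (a point, an interval, a tree of intervals, a product with an interval, a vertex with several $2$-cubes attached), showing by a short induction that the two bicombings agree as sets on each local piece $X_x$. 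One then invokes Proposition~\ref{f:loctoglobgeod} (local $\sigma$-geodesics of a \ccc{} bicombing are global $\sigma$-geodesics) to globalise. The corollary is then immediate from the general Proposition~\ref{p:bilipbic}, which says that two \ccc{} bicombings with the same trajectories yield canonically homeomorphic compactifications. No Arzel\`a--Ascoli, no hyperplane combinatorics, and no bounded-distance estimate is needed.

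In your proposal, by contrast, the assertion that ``$\gamma^\infty$ and $\Phi(\gamma^\infty)$ traverse the same sequence of cubes, enter and leave each cube through the same $1$-dimensional hyperplanes at uniformly close points'' is exactly the content of the theorem, not a step towards it: a priori nothing prevents the $\sigma^2$-segment from $x_0$ to $\gamma^\infty(n)$ from crossing different hyperplanes, or the same hyperplanes at drifting locations, as $n\to\infty$. The heuristic that ``hyperplanes are $1$-dimensional, so the ordered sequence of crossed hyperplanes determines the ray'' does not by itself bound the discrepancy, and you offer no mechanism that does. (A small side remark: inside a single $2$-cube both bicombings are already the linear one, so they agree exactly there; the divergence you allude to occurs only at gluings of cubes, which is why the paper's analysis is organised around local pieces $X_x$ rather than single cubes.) Thus the ``bulk of the technical work'' you defer is not a routine tracking argument but the entire proof; the paper's local-model approach is what actually discharges it.
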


In fact, we prove a more general result that the geodesics from both bicombings follow the same lines --- see Theorem \ref{t:ccc} --- which in view of Proposition \ref{p:bilipbic} implies the 
theorem
above. 

\bigskip
Our further results concern properties of the boundary analogous to the ones of CAT(0) boundary and the Gromov boundary. We were able to extend some results concerning 
the
CAT(0) case to the more general setting of 
\ccc-bicombable
spaces.

\medskip
The first of such results is the analogue of the result of 
Osajda--\'Swiatkowski 
\cite{OsSw15} 
(cf.~also 
\cite{Moran16})
concerning the existence of a particular metric on the boundary, and consequently, existence of a well-defined quasisymmetric structure. 

\begin{introfact}[see Proposition \ref{p:quasisym}\ref{p:quasisym5}]\label{p:quasisymintro}
	Let $(X,d)$ be a complete metric space that admits
	an action of a group $G$
	and a 
	\ccc{},
	$G$-equivariant
	bicombing $\sigma$.
	Then 
	there exists a metric $d_q$ on $\partial_\sigma X$ such that
	the extension of 
	the
	action of each element of $G$ to $\bdry{X}{\sigma}$ 
	restricts to 
	a quasisymmetry of $(\partial_\sigma X,d_q)$. 
	
\end{introfact}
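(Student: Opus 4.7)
The plan is to adapt the CAT(0) boundary-metric construction of Osajda--Świątkowski \cite{OsSw15} (cf.~also \cite{Moran16}) to the \ccc-bicombable setting, with CAT(0) comparison replaced by the estimates that follow directly from consistency and conicality of $\sigma$. Fix a basepoint $x_0 \in X$; for each $\xi \in \partial_\sigma X$ let $c_\xi^{x_0} \colon [0, \infty) \to X$ denote the $\sigma$-ray from $x_0$ representing $\xi$, provided by the construction of $\bdry{X}{\sigma}$. I would first introduce a bicombing-based Gromov-type product $(\xi|\eta)_{x_0}$ measuring how long the rays $c_\xi^{x_0}$ and $c_\eta^{x_0}$ fellow-travel, and from it extract a visual-type distance $d_q^{x_0}$ on $\partial_\sigma X$, following the recipe of \cite{OsSw15}. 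The key technical input for this step is that, by consistency and conicality (equivalently, consistency and convexity), the function $t \mapsto \tfrac{1}{t} d(c_\xi^{x_0}(t), c_\eta^{x_0}(t))$ is monotone non-decreasing, so the limits defining $(\xi|\eta)_{x_0}$ exist and enjoy the triangle-type estimates needed to ensure that $d_q^{x_0}$ is a genuine metric compatible with the topology inherited from $\bdry{X}{\sigma}$.

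From $G$-equivariance of $\sigma$ and the fact that $G$ acts on $X$ by isometries, the map $\xi \mapsto g\xi$ sends $c_\xi^{x_0}$ to $c_{g\xi}^{g x_0}$, yielding for every $g \in G$ an isometry
\[
g\colon (\partial_\sigma X, d_q^{x_0}) \longrightarrow (\partial_\sigma X, d_q^{g x_0}).
\]
Setting $d_q := d_q^{x_0}$, the proposition thus reduces to showing that for any two basepoints $y, y' \in X$ the identity map $(\partial_\sigma X, d_q^{y}) \to (\partial_\sigma X, d_q^{y'})$ is a quasisymmetry whose distortion depends only on $d(y, y')$: composing such a comparison with the isometry produced by $g$ then yields the required quasisymmetry of $(\partial_\sigma X, d_q)$.

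The main obstacle, and where the \ccc-axioms do the essential work, is precisely this change-of-basepoint comparison. The strategy is to prove the additive estimate
\[
|(\xi|\eta)_y - (\xi|\eta)_{y'}| \le C\bigl(d(y, y')\bigr)
\]
for the Gromov-type products, which translates through the exponential-type defining formula into a bi-Lipschitz --- and hence uniformly quasisymmetric --- comparison of $d_q^{y}$ and $d_q^{y'}$. The estimate itself is obtained by first using conicality together with consistency to show that, for any $\xi \in \partial_\sigma X$, the $\sigma$-rays $c_\xi^{y}$ and $c_\xi^{y'}$ remain within distance $d(y, y')$ of one another at matching parameters, and then combining this fellow-travelling bound with the triangle inequality inside $X$ at a large parameter $t$ before passing to the limit defining $(\xi|\eta)_{\cdot}$. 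This convexity-flavoured asymptoticity of $\sigma$-rays is the real technical point; it plays the role that the Busemann function plays in the classical CAT(0) treatment, and extracting it from the bare \ccc-axioms is what I expect to require the most care.
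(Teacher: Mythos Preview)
Your outline is correct and matches the paper's strategy: define a fellow-travelling metric based at $x_0$, observe that each $g\in G$ is an isometry from the $x_0$-metric to the $gx_0$-metric, and then reduce everything to a change-of-basepoint comparison. The paper (Section~\ref{s:quasisym}) carries this out with a slightly more concrete metric: rather than a Gromov product plus an exponential, it sets $d_{o,C}(\bar x_1,\bar x_2)=t^{-1}$ where $t$ is the unique parameter at which $d(\geod{o}{\bar x_1}(t),\geod{o}{\bar x_2}(t))=C$. With this choice the triangle inequality drops out of a single conicality estimate, and the basepoint change is handled not via an additive bound on a Gromov product but by first using part~\ref{p:quasisym3} to enlarge $C$ past $2d(o,o')$ and then reading off a direct multiplicative bound $d_{o,C}\le \frac{C}{C-2d(o,o')}\,d_{o',C}$. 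Your exponential framing would also work (and is closer to the hyperbolic-boundary tradition), but the paper's $t^{-1}$ formulation avoids having to verify a $\delta$-inequality for the Gromov product and makes all the constants explicit.
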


Recall 
that in the 
word hyperbolic case an analogous quasisymmetric structure plays an important role in 
understanding 
the group. 
No
such 
spectacular
applications 
are known in the CAT(0) setting
at the moment, 
due to 
a much looser connection between 
the
boundary and the group.
The 
metric 
from the above proposition
has been 
used
in research
regarding the linearly-controlled dimension and the asymptotic dimension \cite{Moran16};
we 
also
use 
it, 
as a convenient tool in the further course of the text.

\medskip
An important application of EZ-structures is a consequence of the relation between the topology of the boundary and algebraic properties of the group. Such relations are quite well understood in the 
word 
hyperbolic and CAT(0) case, see eg.~\cite{Bowditch98,Swenson99,PaSw09}.
We extend such studies to the case of groups acting geometrically on 
\ccc-bicombable
spaces. 
Here is 
a minor
result in this direction.

\begin{introfact}[Proposition \ref{p:amalgamated}]\label{p:amalgamatedintro}
Let $G=G_1*_ZG_2$ (with $G_1\neq Z\neq G_2$), 
where $Z$ is virtually $\Z$, 
act geometrically on a proper metric space $(X,d_X)$ that admits a 
\ccc,
reversible, $G$-equivariant bicombing $\sigma$. Then there exists a separating pair of points in the boundary $\partial_\sigma X$.
\end{introfact}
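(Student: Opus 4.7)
The natural candidate for the separating pair is $\{\xi^+,\xi^-\}$, the two endpoints at infinity of a $\sigma$-axis of an infinite-order element $z \in Z$. My plan has three steps: (i) construct this axis and identify $\xi^\pm$; (ii) use the Bass--Serre tree of the splitting to partition a $G$-orbit of a basepoint into two coarse ``sides'' of the axis; (iii) upgrade this coarse dichotomy to a topological disconnection of $\partial_\sigma X \setminus \{\xi^\pm\}$.

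For step (i), since $Z$ is virtually $\Z$, I fix $z \in Z$ of infinite order. The action being geometric and $\sigma$ being \ccc{}, reversible and $G$-equivariant, a standard argument in the \ccc{} setting (an analogue of the CAT(0) case, available via consistency and conicality once one knows $z$ has positive translation length) yields a $z$-invariant $\sigma$-geodesic line $\ell \colon \R \to X$ along which $z$ translates; a finite-index subgroup of $Z$ acts on $\ell$ by translations. Let $\xi^\pm \in \partial_\sigma X$ be the endpoints of $\ell$ and fix $x_0 \in \ell$. For step (ii), let $T$ be the Bass--Serre tree of $G = G_1 *_Z G_2$, with adjacent vertices $v_1, v_2$ stabilised by $G_1, G_2$ and edge $e$ stabilised by $Z$. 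Removing the interior of $e$ disconnects $T$ into subtrees $T_1 \ni v_1$ and $T_2 \ni v_2$, and I partition $G = H_1 \sqcup H_2$ by $H_i = \{g \in G : g v_1 \in T_i\}$. Since $G_i \neq Z$, both $H_i$ are infinite. Via the quasi-isometry $g \mapsto g x_0$ from $G$ (with the word metric) to $X$, the Bass--Serre description of word geodesics gives an $R > 0$ such that any coarse geodesic in $X$ from $H_1 \cdot x_0$ to $H_2 \cdot x_0$ passes through the $R$-neighbourhood of some $G$-translate of $\ell$.

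For step (iii), let $\Lambda_i \subseteq \partial_\sigma X$ be the accumulation locus of $H_i \cdot x_0$. Openness of $\Lambda_i \setminus \{\xi^\pm\}$ in $\partial_\sigma X \setminus \{\xi^\pm\}$ together with $\Lambda_1 \cup \Lambda_2 \cup \{\xi^\pm\} = \partial_\sigma X$ follows from cocompactness and the shadow-base description of the topology on $\bdry{X}{\sigma}$ used in the construction of the EZ-structure; non-emptiness of each $\Lambda_i \setminus \{\xi^\pm\}$ follows from $G_i \neq Z$. The crux is disjointness: if $g_n x_0 \to \eta$ with $g_n \in H_1$ and $h_n x_0 \to \eta$ with $h_n \in H_2$, then by step (ii) the $\sigma$-geodesic $\sigma(g_n x_0, h_n x_0, \cdot)$ meets $N_R(\gamma_n \ell)$ for some $\gamma_n \in G$. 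Translating by $\gamma_n^{-1}$, using properness of $X$ and continuity plus consistency of $\sigma$ (Arzel\`a--Ascoli and the fact that a limit of $\sigma$-geodesics is a $\sigma$-geodesic), a subsequence converges to a $\sigma$-bi-infinite geodesic both of whose ends lie in the $G$-orbit of $\eta$ and which, by convexity of the distance to $\ell$ coming from conicality, must be asymptotic to $\ell$ at both ends; hence $\eta$ lies in the $G$-orbit of $\{\xi^\pm\}$, and since (by the two-ended structure of $Z$) the stabiliser of the pair $\{\xi^+,\xi^-\}$ is commensurable with $Z \subseteq H_1$, this forces $\eta \in \{\xi^+,\xi^-\}$.

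\textbf{Main obstacle.} The disjointness in step (iii) is the main technical difficulty. In the CAT(0) setting this is Swenson's classical argument, which uses \emph{strict} convexity of the distance function to collapse bi-infinite geodesics with common endpoints to a point. For \ccc{} bicombings one only has convexity, so the collapse must be arranged by a more delicate limit argument combining consistency of $\sigma$ (guaranteeing that limits of $\sigma$-geodesics are $\sigma$-geodesics) with the two-endedness of $Z$ (so that the $G$-stabiliser of $\{\xi^+,\xi^-\}$ is small enough to keep track of $\eta$ under translation). Verifying that this limiting geodesic is forced to coincide with a $G$-translate of $\ell$, rather than merely running parallel to it, is where the bulk of the work will lie.
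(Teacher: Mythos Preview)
Your overall strategy---take the endpoints of a $\sigma$-axis for an infinite-order element of $Z$ and use the splitting to cut the boundary into two pieces---matches the paper's. The paper also cites \cite[Proposition~5.5]{DeLa16} for the existence of the $\sigma$-axis, so your step~(i) is fine.

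The significant divergence is in how you organise steps~(ii) and~(iii), and here the paper's route is both simpler and avoids the obstacle you identify. In your step~(ii) you claim that a coarse geodesic from $H_1\cdot x_0$ to $H_2\cdot x_0$ meets $N_R(\gamma_n\ell)$ for \emph{some} $\gamma_n\in G$. In fact the Bass--Serre picture gives more: since $gv_1\in T_1$ and $hv_1\in T_2$, the tree geodesic between them crosses the edge $e$ itself, so any Cayley-graph path from $g$ to $h$ passes through a bounded neighbourhood of $Z$ (not merely some coset). Transported to $X$, this says the overlap region is contained in a fixed tube $B(\operatorname{im}\ell,\mathbf{C})$ around the single axis $\ell$. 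The paper exploits exactly this: it sets $X_i=B(\alpha(A_i\cup Z),C+1)$, shows $X_1\cap X_2\subseteq B(\operatorname{im}\mu,\mathbf{C})$, and then defines $E_i$ to be the set of $\sigma$-rays from $\mu(0)$ that are \emph{eventually contained in} $X_i$ (rather than your accumulation loci $\Lambda_i$). Disjointness of $E_1,E_2$ is then almost free: any $\sigma$-ray $\zeta$ not asymptotic to $\mu$ satisfies $d(\zeta(t),\operatorname{im}\mu)\ge\mathbf{C}+2$ for all large $t$ (by convexity of this distance function, Proposition~\ref{f:asymptotic}), so $\zeta([r,\infty))$ lies in $(X_1\setminus X_2)\cup(X_2\setminus X_1)$; connectedness forces it into exactly one piece. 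Openness of $E_i$ is a one-line conicality estimate, and non-emptiness comes from producing a $\sigma$-axis for $g_1g_2$ with $g_i\in G_i\setminus Z$.

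By contrast, your disjointness argument---translate by $\gamma_n^{-1}$, pass to a limiting bi-infinite $\sigma$-geodesic, and argue it coincides with a translate of $\ell$---has a real gap. After translating you lose control of where $\eta$ goes (the $\gamma_n^{-1}$ need not converge on the boundary), so the assertion that the limiting geodesic has ``both ends in the $G$-orbit of $\eta$'' is not justified; and even granting that, forcing the limit to \emph{coincide} with $\ell$ rather than run parallel to it is precisely the strictness you flagged as missing in the \ccc{} world. The paper sidesteps this entirely by using the stronger separation in step~(ii) and the ``eventually in $X_i$'' definition, so no collapse or Arzel\`a--Ascoli argument is needed.
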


Another result analogous to the CAT(0) case is the following theorem about boundaries of groups containing abelian subgroups. In particular, besides boundaries of 
\ccc-bicombable
spaces coinciding with the Gromov or CAT(0) boundaries, this provides the first basic examples of EZ-structures for groups acting geometrically on 
\ccc-bicombable
spaces 
--- 
such boundaries of free abelian groups are spheres, the same as in the CAT(0) case. 

\begin{introfact}[Proposition \ref{p:sphereinbdry}]\label{p:sphereinbdryintro}
Let $G$ be group that contains a free abelian subgroup $\Z^n\cong A<G$ and acts geometrically on a proper metric space $X$ that admits a
\ccc,
reversible, $G$-equivariant bicombing $\sigma^X$. Then $\partial_{\sigma^X} X$ contains a homeomorphic copy of $S^{n-1}$. 
Moreover, 
if $A$ is of finite index in $G$, then $\partial_{\sigma^X} X\cong S^{n-1}$.
\end{introfact}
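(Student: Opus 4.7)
My strategy follows the template of the CAT(0) argument, replacing the Flat Torus Theorem by its analog for \ccc-bicombable spaces. Since $G$ acts properly on $X$ and $A \cong \Z^n$ contains no torsion, every nontrivial element $a \in A$ has positive infimal displacement $\ell(a) = \inf_{x \in X} d(x, ax) > 0$. Combined with reversibility, $G$-equivariance, and the \ccc{} property of $\sigma$, this implies that each such $a$ is axial: the minset $\Min(a) \subseteq X$ is nonempty, $\sigma$-convex, and foliated by bi-infinite $\sigma$-axes of $a$. Because the elements of $A$ pairwise commute, I would invoke a bicombing version of the Flat Torus Theorem (as developed for \ccc{} bicombings by Descombes--Lang and the further authors cited in the introduction) to obtain a nonempty, $A$-invariant, $\sigma$-convex subset $\Min(A) = \bigcap_{a \in A} \Min(a)$ that splits $A$-equivariantly as a metric product $Y \times F$, where $F$ is isometric to $(\R^n, \|\cdot\|)$ for some norm $\|\cdot\|$, and $A$ acts on $F$ as a cocompact lattice of translations.

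Given such a flat $F$, the next step is to identify $\partial_\sigma F$ with an embedded copy of $S^{n-1}$ inside $\partial_\sigma X$. Since $F$ is $\sigma$-convex and closed in $X$, the $\sigma$-geodesics joining corresponding points of two $\sigma$-rays in $F$ remain in $F$, so two rays in $F$ stay at bounded Hausdorff distance in $X$ if and only if they do so inside $F$; this gives a continuous injection $\partial_\sigma F \hookrightarrow \partial_\sigma X$. By the uniqueness statement of Remark \ref{u:equivariant} applied to $(\R^n,\|\cdot\|)$, the bicombing induced on $F$ must be the affine one, so $\partial_\sigma F$ is identified via $v \mapsto [t \mapsto tv]$ with the unit sphere of $(\R^n,\|\cdot\|)$, which is homeomorphic to $S^{n-1}$. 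Composing yields the desired topological embedding $S^{n-1} \hookrightarrow \partial_\sigma X$.

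For the moreover statement, assume $[G : A] < \infty$, so that $A$ itself acts geometrically on $X$ and the flat $F$ is already cobounded in $X$. To upgrade the previous injection to a bijection, I would take an arbitrary $\sigma$-ray $c : [0, \infty) \to X$ starting at a chosen basepoint $x_0 \in F$, use the cocompact $A$-action to translate each $c(t)$ into a fixed compact fundamental domain meeting $F$, and extract a subsequential $\sigma$-limit ray in $F$ via properness of $X$ and continuity of $\sigma$. A displacement comparison shows that this limit ray is asymptotic to $c$, yielding a continuous surjection $\partial_\sigma F \twoheadrightarrow \partial_\sigma X$. The resulting continuous bijection between the compact Hausdorff spaces $\partial_\sigma F$ and $\partial_\sigma X$ is automatically a homeomorphism, hence $\partial_\sigma X \cong S^{n-1}$.

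The principal obstacle is the first step: the bicombing analog of the Flat Torus Theorem together with the product splitting $\Min(A) = Y \times F$ is substantially more delicate than its CAT(0) counterpart, since $\sigma$-geodesics are not uniquely determined among all geodesics, $\sigma$-convex subsets need not be strongly convex, and the flat $F$ is only normed rather than Euclidean. I would expect to rely on the minimum-set and splitting theory for \ccc{} bicombings developed in the references cited in the introduction.
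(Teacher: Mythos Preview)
Your argument has a genuine gap at its core: you assume that the flat $F$ is $\sigma^X$-convex in $X$, and the paper's proof opens by pointing out that this is \emph{false} in general. The reference is \cite[Example~6.3]{DeLa16}: flats do exist in $X$ by \cite[Theorem~1.2]{DeLa16}, but there is no $\sigma$-convex flat and no product splitting $\Min(A)\cong Y\times F$ in the \ccc{} setting. The paper says this explicitly in the introduction (``even though flats exist, there is no such splitting nor such convexity of flats'') and again in the first paragraph of the proof (``in general $F$ is not $\sigma^X$-convex in $X$ \ldots{} if it was so, then the assertion would easily follow''). So the step where you write ``Since $F$ is $\sigma$-convex and closed in $X$, the $\sigma$-geodesics joining corresponding points of two $\sigma$-rays in $F$ remain in $F$'' does not go through, and with it collapses the identification $\partial_\sigma F\hookrightarrow\partial_\sigma X$ as you have stated it.

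What the paper does instead is work around this obstruction by comparing two different families of rays: the linear rays $\xi_a^F$ in the (non-$\sigma^X$-convex) flat $F$, and genuine $\sigma^X$-axes $\xi_a^X$ of each $a\in A$, which exist by \cite[Proposition~5.5]{DeLa16} but need not lie in $F$. A sequence of elementary estimates (Claims~(A), (B), (B$\sharp$)) bounds the discrepancy between $d(\geod{o}{[\xi_{a_1}^X]}(r),\geod{o}{[\xi_{a_2}^X]}(r))$ and $d(\xi_{a_1}^F(r),\xi_{a_2}^F(r))$ uniformly, and uses conicality plus scaling in $F$ to show the map $[\xi_a^F]\mapsto[\xi_a^X]$ is $1$-Lipschitz in the metric $d_{o,1}$, hence extends continuously from a dense subset of $\partial_{\sigma^F}F\cong S^{n-1}$ to all of it; injectivity and (in the finite-index case) surjectivity are then proved by further estimates of the same flavour. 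Your sketch of the surjectivity part is close in spirit to what the paper does, but the embedding step requires this more hands-on comparison rather than an appeal to $\sigma$-convexity.
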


The proof of an analogous statement in the CAT(0)
setting 
easily follows from the fact that the minset of $A$, which is a 
convex
subset of $X$,
splits by the Flat Torus Theorem \cite[{}II.7.2]{BrHae99} as a 
metric
product with one of the factors being an $n$-dimensional Euclidean space $F$, giving an $n$-dimensional flat $F$ as a convex subset of the space $X$.
However,
in the context of 
spaces admitting 
\ccc, reversible 
bicombings,
even though flats exist, there is no such splitting nor 
such
convexity 
of flats
--- see the first paragraph of the proof of the above 
proposition 
--- which leads us to 
approach
the problem in a more elementary way. 

\medskip
In 
the case 
of
a
group 
$G$
acting geometrically on a space $X$,
the
Alexander--Spanier
cohomology
(which 
is
equivalent to the \v{C}ech cohomology)
of the boundary describes 
some
cohomological properties of the group
--- e.g.~one has the Bestvina--Mess formula \cite{Bestvina96} (in the torsion-free case),
or
in the case of groups acting 
on
\ccc-bicombable spaces, 
one has
the isomorphism between the group cohomology $H^{*+1}(G,\Z G)$ and the reduced cohomology of the boundary $\tilde{\bar{H}}^*(\partial_\sigma X)$ 
(see Remark 
\ref{u:groupch}).
In this paper
we present two 
\ccc-counterparts 
of important results concerning the topology of CAT(0) boundaries of spaces acted 
upon
cocompactly 
by
a group, see \cite{GeOn07,Ontaneda05}.
The meaning of the first 
of them
is that the Alexander--Spanier cohomology 
behaves 
quite well with respect to the topology 
--- 
it
indicates 
the dimension of the boundary. 

\begin{introtheorem}[Theorem \ref{t:agcmt}]\label{t:agcmtintro}
Let $X$ be a 
finite-dimensional proper metric space 
that admits a
\ccc{}
geodesic bicombing $\sigma$ and a cocompact group action via isometries,
such that $|\partial_\sigma X|\geq 2$. 
Then the
reduced Alexander--Spanier cohomology 
group $\tilde{\bar{H}}^{\dim \partial_\sigma X}(\partial_\sigma X)$ is non-zero.
\end{introtheorem}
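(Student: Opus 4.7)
The approach adapts Geoghegan--Ontaneda's argument for CAT(0) $G$-spaces \cite{GeOn07} to the \ccc-bicombable setting, using the EZ-structure provided by Theorem \ref{t:main}. That structure makes $\bdry{X}{\sigma}$ a compact metric AR (hence contractible) in which $\partial_\sigma X$ sits as a Z-set, so the long exact sequence of the pair combined with the standard identification $\bar{H}^{*}(\bdry{X}{\sigma},\partial_\sigma X)\cong H^{*}_{c}(X)$ (in Alexander--Spanier cohomology) yields the Bestvina--Mess type duality
\[
\tilde{\bar{H}}^{k}(\partial_\sigma X) \;\cong\; H^{k+1}_{c}(X)
\]
announced in Remark \ref{u:groupch}. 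Writing $n=\dim \partial_\sigma X$, the problem reduces to showing $H^{n+1}_c(X)\neq 0$. The case $n=0$ is immediate: $\partial_\sigma X$ is then a totally disconnected compact metric space, so the hypothesis $|\partial_\sigma X|\geq 2$ gives $\tilde{\bar{H}}^{0}(\partial_\sigma X)\neq 0$ directly.

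For $n\geq 1$, the plan is to produce a non-zero top-dimensional class by combining dimension theory with the bicombing. By the Alexandroff cohomological characterisation of covering dimension, there exists a closed subset $A\subseteq \partial_\sigma X$ carrying a non-zero class $\alpha\in\check{H}^{n}(\partial_\sigma X,A)$. Using a basepoint $x_0\in X$ and the $\sigma$-rays $t\mapsto \sigma(x_0,\xi,t)$ extended to $\xi\in\partial_\sigma X$ via the ray construction underlying Theorem \ref{t:main}, one obtains a continuous cone map $\mathrm{Cone}(\partial_\sigma X)\to\bdry{X}{\sigma}$ extending the identity on $\partial_\sigma X$. Cocompactness of the $G$-action then yields a $G$-finite cover of $X$ by translates of cones over small pieces of $\partial_\sigma X$, and a Mayer--Vietoris / \v{C}ech-cohomology argument over that cover is intended to promote $\alpha$ to a non-zero class in $H^{n+1}_c(X)$.

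The main obstacle is this last transport step: ensuring that the relative class $\alpha$ on the boundary survives as a non-zero absolute class with compact support in $X$. In the CAT(0) situation Geoghegan--Ontaneda control this using the conical-at-infinity behaviour of CAT(0) geodesics together with the Flat Torus Theorem, the latter governing how cones from different basepoints overlap along $G$-orbits. In our setting the Flat Torus Theorem is unavailable --- as recalled in the proof of Proposition \ref{p:sphereinbdryintro}, flats in $X$ need not even be convex --- and one must work directly from the consistency and conicality of $\sigma$: consistency ensures that cones based at a fixed $x_0$ are nested along rays, so the intersection pattern of cones in $X$ mirrors that of their bases in $\partial_\sigma X$, while conicality/convexity supplies the continuity needed for the \v{C}ech construction. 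Making this correspondence quantitative and uniform over the $G$-finite cover should be the technical crux of the argument.
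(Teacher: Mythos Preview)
Your opening is on target: the duality $\tilde{\bar{H}}^{k}(\partial_\sigma X)\cong H^{k+1}_{c}(X)$ (Remark~\ref{r:cohoeq}) and the Alexandroff dimension characterisation producing a closed $A\subseteq\partial_\sigma X$ with $\bar H^{n}(\partial_\sigma X,A)\neq 0$ are exactly the starting points of the paper's proof. But the transport step you sketch is not the one that actually works, and it contains a substantive misconception.

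First, Geoghegan--Ontaneda do \emph{not} use the Flat Torus Theorem in their Main Theorem; you have conflated this with the discussion around Proposition~\ref{p:sphereinbdryintro}, which concerns an entirely different statement. So there is no missing ``Flat Torus input'' to worry about. Second, the cocompact action is not used to build a $G$-finite cover followed by Mayer--Vietoris. Its role is subtler and occurs earlier: cocompactness (together with finite-dimensionality) forces that if $H^{i}_{c}(X)=0$ for all $i>k$ then these groups are \emph{uniformly} trivial --- there is a single $t>0$ such that $H^{i}_{c}(U)\to H^{i}_{c}(B(U,t))$ is zero for every open $U\subseteq X$ (Lemma~\ref{l:agct2p1c}). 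This uniform-triviality lemma is the engine of the argument, and your proposal never produces or invokes anything playing this role.

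The actual transport runs by contradiction. Assume $H^{n+1}_{c}(X)=0$, hence uniformly trivial with constant $t$. From the non-zero $\bar H^{n}(\partial_\sigma X,A)$, a lemma in \cite{GeOn07} gives small balls $B_k\subseteq\partial_\sigma X\setminus A$ with $\bar H^{n}(\partial_\sigma X,\partial_\sigma X\setminus B_k)\to\bar H^{n}(\partial_\sigma X,A)$ non-zero. One then builds (Lemma~\ref{l:agcD}) a closed $D\subseteq\bdry{X}{\sigma}$ which is a strong deformation retract of $\bdry{X}{\sigma}$, meets the boundary exactly in $A$, and is a thin neighbourhood of $\mathrm{Cone}_o(A)$ inside $X$; and a companion set $E$ with $E\cap\partial_\sigma X=\partial_\sigma X\setminus B_k$ such that $d(D\cap X,\,X\setminus E)>t$. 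A diagram chase through the exact sequences of the triples $(\bdry{X}{\sigma},\partial_\sigma X\cup E,E)$ and $(\bdry{X}{\sigma},\partial_\sigma X\cup D,D)$, combined with excision and the identification $\bar H^{*}(\bdry{X}{\sigma},\bdry{X}{\sigma}\setminus K)\cong H^{*}_{c}(X\setminus K')$, yields a non-zero map $H^{n+1}_{c}(X\setminus E)\to H^{n+1}_{c}(B(X\setminus E,t))$, contradicting uniform triviality. Your Mayer--Vietoris/\v{C}ech plan over a $G$-cover never brings the crucial metric gap $d(D\cap X,X\setminus E)>t$ into play, and without that there is no contradiction to reach.
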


The second result is a pretty immediate consequence of the 
first
one, but is very useful by itself. We 
say that
a space 
$X$ that admits a \ccc{} bicombing $\sigma$
is
\emph{almost
$\sigma$-geodesically 
complete}
if the (the images of) $\sigma$-rays originating from some (equivalenty, any) point of $X$ are coarsely dense in $X$. 
This property allows one to
perform a 
`push-out'
on various 
objects 
(e.g.~paths or subspaces) 
from the space $X$ 
`towards infinity', in particular, to the boundary. It makes 
it
possible to 
`transfer' 
various reasonings between the space and its boundary hence and forth. 

\begin{introtheorem}[Theorem \ref{t:agcc3}]\label{t:agcc3intro}
Assume that $X$ is a proper 
finite-dimensional 
geodesic metric space that admits a 
\ccc{}
geodesic bicombing 
$\sigma$
and a cocompact group action via isometries,
such that $|\partial_\sigma X|\geq 2$. 
Then $X$ is 
almost 
$\sigma$-geodesically 
complete.	
\end{introtheorem}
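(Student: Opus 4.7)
The plan is to derive Theorem \ref{t:agcc3intro} from Theorem \ref{t:agcmtintro} by contradiction: assuming that $X$ is not almost $\sigma$-geodesically complete, I will show that $\tilde{\bar H}^{n}(\partial_\sigma X)=0$ for $n=\dim \partial_\sigma X$, contradicting Theorem \ref{t:agcmtintro}.

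Fix $x_0\in X$ and let $C:=\bigcup_{\xi\in \partial_\sigma X}\sigma_{x_0,\xi}([0,\infty))$ denote the union of images of the $\sigma$-rays issuing from $x_0$, with closure $\bar C\subseteq \bdry{X}{\sigma}$. The assignment $c:\partial_\sigma X\times[0,1]\to \bdry{X}{\sigma}$ given by $c(\xi,t)=\sigma_{x_0,\xi}(t/(1-t))$ for $t<1$ and $c(\xi,1)=\xi$ is continuous by the definition of the boundary topology and surjects onto $\bar C$; the straight-line homotopy $H_s(\xi,t)=c(\xi,st)$ then exhibits $\bar C$ as a topological cone over $\partial_\sigma X$ with apex $x_0$, so $\bar C$ is contractible and $\tilde{\bar H}^{\ast}(\bar C)=0$.

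Now suppose that $X$ is not almost $\sigma$-geodesically complete, so for every $R>0$ there exists $y_R\in X$ with $d(y_R,C)>R$. Using properness of $X$ and cocompactness of the $G$-action, translate such $y_R$'s into a fixed compact fundamental domain; passing to a subsequence produces a limit configuration in which arbitrarily large balls avoid the swept region of $\sigma$-rays from translated basepoints. Following Ontaneda's strategy in the CAT(0) case \cite{Ontaneda05,GeOn07}, the consistency and conicality/convexity of $\sigma$ let one assemble these uniformly large gaps into a continuous deformation $F:\bdry{X}{\sigma}\times[0,1]\to \bdry{X}{\sigma}$ with $F_0=\mathrm{id}$, fixing $x_0$, whose time-one restriction to $\partial_\sigma X$ factors through a closed subset of $\bar C$ of covering dimension at most $n-1$. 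The reduced Alexander--Spanier cohomology of that image vanishes in degree $n$ by standard dimension theory; combined with $F_0=\mathrm{id}$ this forces $\tilde{\bar H}^n(\partial_\sigma X)=0$, contradicting Theorem \ref{t:agcmtintro}.

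The principal obstacle lies in the construction of $F$. The pointwise failure of geodesic completeness must be upgraded to a uniform, globally defined, continuous deformation of the whole compactification; here the bicombing supplies the patching tool and cocompactness is what trades local gaps for gaps at infinity. Establishing that $F$ extends continuously across $\partial_\sigma X$, and that the compression of $\partial_\sigma X$ into a lower-dimensional subspace is actually detected on cohomology (i.e.~that the homotopy acts non-trivially on $\tilde{\bar H}^{n}$), is the most delicate step and uses the finite-dimensionality of $X$ together with the $Z$-set property of $\partial_\sigma X$ in $\bdry{X}{\sigma}$.
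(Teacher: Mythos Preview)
Your overall strategy---reduce to Theorem~\ref{t:agcmtintro} by proving the contrapositive ``not almost $\sigma$-geodesically complete $\Rightarrow$ $\hasred^{n}(\partial_\sigma X)=0$''---matches the paper exactly. The paper combines Theorem~\ref{t:agcmt} with Lemma~\ref{l:agcta} (the analogue of Ontaneda's Theorem~A) and the isomorphism $\has_c^{*+1}(X)\cong\hasred^{*}(\partial_\sigma X)$ of Remark~\ref{r:cohoeq}.

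The genuine gap is your second and third paragraphs: you do not construct the deformation $F$, and the mechanism you sketch is not what Ontaneda's argument actually does. There is no dimension-reduction step and no need to factor $\partial_\sigma X$ through an $(n{-}1)$-dimensional subspace. The real argument runs entirely in $\has_c^*(X)$: given a class represented in $\csch{\has}{i}{X}{\overline{B}(x,R)}$, cocompactness lets you translate so that $\overline{B}(x,R)$ misses every $\sigma$-ray from a fixed basepoint $o$; properness then gives a finite bound $C$ on the maximal $\sigma$-extension length of any $\sigma$-geodesic from $o$ through $\overline{B}(x,R)$; the explicit retraction $f^{o,C}(y)=\exp_o(y,\max(d(o,y)-C,0))$ is boundedly homotopic to $\id_X$ and has image disjoint from $\overline{B}(x,R)$, so the class is zero. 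This kills all of $\has_c^*(X)$, and the isomorphism of Remark~\ref{r:cohoeq} transports this to $\hasred^*(\partial_\sigma X)$. Your contractibility observation about the cone $\bar C$ is correct but plays no role, and your proposed map $F$ on $\bdry{X}{\sigma}$ with a lower-dimensional image on the boundary is neither constructed nor needed.
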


As a note on the assumptions of Theorems \ref{t:agcmtintro} and \ref{t:agcc3intro}, we remark here that we do not know whether a space with a 1-point boundary exists ---
it does not if the bicombing $\sigma$ is additionally equivariant w.r.t.~the group action. 
Furthermore, one may change the assumption that $|\partial_\sigma X|\geq 2$ for the one that $X$ is non-compact in Theorems \ref{t:agcmtintro} and \ref{t:agcc3intro} iff such a space does not exist. See Remarks \ref{u:oneptbdry1} and \ref{u:oneptbdry2} for more detail.

\bigskip
In the course of this article, we 
prove
also the following minor results that may be themselves of some interest.

In Proposition \ref{p:fingpfixpt} we note that 
the
construction 
from \cite{BaMi19}
of a conical, reversible bicombing out of a conical one,
is equivariant, which leads to the fact that a finite group acting on a space that admits a conical bicombing has a fixed point.

In Proposition \ref{f:loctoglobgeod} we give 
a direct
proof 
of the fact
that local geodesics of a \ccc{} bicombing are global geodesics of this bicombing 
(such
fact
could have been 
deduced from a more general setting in \cite{Miesch17}).

In Proposition \ref{p:join} we state and prove 
an observation 
that the boundary of a product of proper \ccc-bicombable spaces $X,Y$, with respect to a naturally defined product bicombing, is the join 
of the boundaries of $X$ and $Y$.

\bigskip
\textbf{Organisation of the paper.}
In Section \ref{s:prelim} we establish some notation, 
as well as
provide and discuss some general notions used later in this article. We also prove Proposition \ref{p:fingpfixpt}.

In Section \ref{s:deslan} we 
recall
the construction of
the 
boundary-compactification via geodesic rays as in \cite{DeLa15},
prove some preliminary facts about this construction,
and in Subsection~\ref{sbs:deslanez} we prove the existence of an EZ-structure (Theorem \ref{t:main}) relying on this construction.

In Section \ref{s:enwubd} we 
recall 
the construction of the boundary-compactification via the Gelfand dual from \cite{EnWu23} (in the, less general compared to \cite{EnWu23}, setting of metric spaces); 
in Subsection~\ref{sbs:enwubdez} we prove the existence of an EZ-structure (Theorem \ref{t:main}) relying on this construction, 
and discuss the obtained EZ-structures;
in Subsection~\ref{sbs:equivbdry} we prove that the EZ-structures obtained in Subsections \ref{sbs:deslanez} and \ref{sbs:enwubdez} are equivalent (Proposition \ref{p:equivbdrymain} and Corollary \ref{c:equivbdry}).

In Section \ref{s:nonunique} we prove the non-uniqueness of the boundary in the injective case 
---
Theorem \ref{t:nonunique} (Thm.~\ref{t:nonuniqueintro}).
During the preparations for this proof we prove Propositions \ref{f:loctoglobgeod} and \ref{p:bilipbic}. 

In Section \ref{s:ccc} we discuss the boundaries of CAT(0) cube complexes 
equipped 
with piecewise-$\ell^p$ 
metrics 
---
justifying Theorem \ref{t:ccc} and Corollary \ref{c:ccc} (Thm.~\ref{t:cccintro})
--- 
and prove a proposition 
concerning
the boundary of the product of spaces (Proposition \ref{p:join}). 

In Section \ref{s:quasisym} we discuss a metric on the boundary that leads to a quasisymmetric structure on the boundary of \ccc-bicombable spaces (Proposition \ref{p:quasisym}, cf.~Prop.~\ref{p:quasisymintro}).

In Section \ref{s:flats} 
we use the existence of axes and flats in bicombable spaces to prove Proposition \ref{p:amalgamated} (Prop.~\ref{p:amalgamatedintro}) and Proposition \ref{p:sphereinbdry} (Prop.~\ref{p:sphereinbdryintro}).

Section \ref{s:agc} concerns almost geodesic completeness for \ccc-bicombable spaces:
in Subsection \ref{sbs:agcprep} we prove some preparatory lemmas,
and in Subsection \ref{sbs:agcmain} we
prove 
Theorem~\ref{t:agcmt} (Thm.~\ref{t:agcmtintro}) and Theorem \ref{t:agcc3} (Thm.~\ref{t:agcc3intro}).

In Section \ref{s:open} we collect and formulate 
some
problems and open questions.

\bigskip
\textbf{Acknowledgements.} 
I would like to thank Damian Osajda
for the introduction to the topic,
and Jan Dymara 
for helpful conversations.

I thank Giuliano Basso and Craig Guilbault for helpful comments on the manuscript.
I thank Harry Petyt for useful comments regarding Section \ref{s:ccc} and for pointing us the paper \cite{HaHoPe25}.

The 
author was partially supported by (Polish) Narodowe Centrum
Nauki, grant no 2020/37/N/ST1/01952.

\medskip
This paper comprised the bulk of the PhD thesis of the author, defended in February 2025 at the University of Wrocław.

\section{Preliminaries}\label{s:prelim}

Let
$(X,d)$ 
be
a 
metric space.
Let $x\in X$, $r>0$ and $A\subseteq X$. We denote by $B_X(x,r)$ (resp.~$\overline{B}_X(x,r))$) the open (resp.~closed) ball of radius $r$ around $x$, and put $B_X(A,r):=\bigcup_{x\in A}B_X(x,r)$ 
and 
$\overline{B}_X(A,r):=\bigcup_{x\in A}\overline{B}_X(x,r)$.
We denote by 
$\id_X$ 
the identity (map) 
on
the space $X$.

In this article we tend to omit the 
space-related
sub- and superscripts 
of various objects
when the space we are referring to is clear from the context.

\bigskip
\textbf{Basic notions from basic coarse geometry.}
Let $(X,d_X)$ and $(Y,d_Y)$ be metric spaces.

For a subset $A\subseteq X$ and $C>0$,
we say that the set $A$ is \emph{$C$-dense} in $X$
if $\overline{B}(A,C)=X$.

For a pair of maps $f,f'\colon X\to Y$, we say that they \emph{are at finite distance (from each other)}
whenever there exists a constant $C>0$ such that for all $x\in X$ the inequality $d_Y(f(x),f'(x))\leq C$ holds.

We say that 
a map 
$f\colon X\to Y$ is: 
\begin{enumerate}[$\bullet$]
\item \emph{coarsely Lipschitz}, if there exists $C>0$ such that for all $x,x'\in X$ the inequality $d_Y(f(x),f(x'))\leq Cd_X(x,x')+C$ holds;
\item a \emph{quasi-isometric embedding}, if there exists $C>0$ such that for all $x,x'\in X$ the inequality $C^{-1}d_X(x,x')-C\leq d_Y(f(x),f(x'))\leq Cd_X(x,x')+C$ holds;
\item a \emph{quasi-isometry}, if it is coarsely Lipschitz and admits a \emph{quasi-inverse}, i.e.~a coarsely Lipschitz function $g\colon Y\to X$ such that $f\circ g$ and $g\circ f$ are 
at finite distance from
$\id_Y$ and $\id_X$, respectively. 
\end{enumerate}
An elementary argument shows that any quasi-inverse 
of
a quasi-isometry $X\to Y$ is automatically a quasi-isometry $Y\to X$.
Another elementary argument shows that a function $f\colon X\to Y$ is a quasi-isometry iff it is a quasi-isometric embedding whose image is $C$-dense in $Y$ for some $C>0$.

\bigskip
\textbf{Geodesic bicombings.}
We denote by $\im\,\gamma$ 
the image of the 
curve $\gamma\colon I\to X$, where $I\subseteq\R$ is a possibly infinite interval. 

\begin{definition}\label{def:bicombing}
Let $(X,d)$ be a geodesic metric space.
A \emph{(geodesic) bicombing} $\sigma$ is a continuous function $\sigma\colon X\times X\times[0,1]\to X$ 
such 
that for each $x,y\in X$ the function $\sigma_{xy}:=\sigma(x,y,\cdotp)$ is a constant speed geodesic from $x$ to $y$;
we call 
each
function $\sigma_{xy}$ 
a 
\emph{$\sigma$-geodesic}.
We say that a bicombing $\sigma$ is:
\begin{enumerate}
\item[$\bullet$] \emph{consistent}, if $\im\,\sigma_{\sigma_{xy}(s),\sigma_{xy}(t)}=\im\,\sigma_{xy}|_{[s,t]}$
for all $x,y\in X$ 
and
$s,t\in[0,1]$ with $s<t$;

\item[$\bullet$] \emph{conical}, if 
$d(\sigma_{xy}(t),\sigma_{x'y'}(t))\leq (1-t)d(x,x')+td(y,y')$
for all $x,y,x',y'\in X$, $t\in[0,1]$;

\item[$\bullet$] \emph{convex}, if the function $[0,1]\ni t\mapsto d(\sigma_{xy}(t),\sigma_{x'y'}(t))$ is convex for all 
$x,y,x',y'\in X$; 

\item[$\bullet$] \emph{reversible}, if  
$\sigma_{xy}(t)=\sigma_{yx}(1-t)$
for all $x,y\in X$, $t\in[0,1]$;

\item[$\bullet$] given an action of a group $G$ on $X$ and $g\in G$, 
$g$\emph{-equivariant},
if 
$g\sigma_{xy}=\sigma_{gx,gy}$
for all $x,y\in X$,
and $G$\emph{-equivariant}, if it is $g$-equivariant 
for all $g\in G$.
\end{enumerate}

We say that a subset $A\subseteq X$ is \emph{$\sigma$-convex} if $\im\,\sigma_{xy}\subseteq A$ for any $x,y\in A$. 
\end{definition}

\begin{remark}\label{r:cccbic}
It is clear that 
every convex bicombing is conical.
The 
converse
holds for consistent bicombings --- we need to show that given $x_1,y_1,x_2,y_2\in X$, $0\leq s<t\leq 1$ 
and $\alpha\in[0,1]$, 
we have that 
\begin{multline*}
d(\sigma_{x_1,y_1}(\alpha s+(1-\alpha)t),\sigma_{x_2,y_2}(\alpha s+(1-\alpha)t))\\
\leq\alpha d(\sigma_{x_1,y_1}(s),\sigma_{x_2,y_2}(s))+(1-\alpha)d(\sigma_{x_1,y_1}(t),\sigma_{x_2,y_2}(t)).
\end{multline*} 
Indeed, 
this inequality 
holds,
since
by consistency we have that $\sigma_{\sigma_{x_i,y_i}(s),\sigma_{x_i,y_i}(t)}(\alpha)=\sigma_{x_i,y_i}(\alpha s+(1-\alpha)t)$ for any $i\in\{1,2\}$ and $\alpha\in[0,1]$, so the inequality above
reduces to an inequality asserted in the definition of conicality.
\end{remark}

Bearing in mind the remark above,
we 
call a geodesic bicombing a 
\emph{\ccc\,\,bicombing} 
if it is consistent, 
conical, 
and 
--- 
therefore 
--- 
convex.

\bigskip
We note that the following proposition follows from an appropriate application of methods and theorems 
due to
Basso, Descombes, Lang and Miesch.

\begin{fact}\label{p:fingpfixpt}
Assume that $G$ is a finite group 
acting of a metric space $X$ 
via isometries,
and
that 
$X$
admits a conical, $G$-equivariant geodesic bicombing $\sigma$.
Then the action of $G$ on $X$ has a fixed point.
\end{fact}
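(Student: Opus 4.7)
The strategy is in two steps: first, upgrade $\sigma$ to a conical \emph{reversible} bicombing $\bar{\sigma}$ that remains $G$-equivariant, and second, apply a circumcenter argument to the finite orbit of any point in $X$.

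For the first step, I would invoke the Basso--Miesch construction from \cite{BaMi19}, which produces a conical reversible bicombing $\bar{\sigma}$ out of a conical bicombing $\sigma$ via an averaging/limit procedure on pairs of points. The key observation is that every ingredient of that construction is defined purely from the metric $d$ and from $\sigma$ itself: reverse geodesics, midpoints, distances between $\sigma$-geodesic points, and iterated averaging. Since every $g \in G$ is an isometry of $X$ and $\sigma$ is $G$-equivariant by hypothesis, $g$ carries each stage of the construction for a pair $(x,y)$ to the corresponding stage for $(gx,gy)$. Passing to the limit yields $g\bar{\sigma}_{xy} = \bar{\sigma}_{gx,gy}$, so $\bar{\sigma}$ is $G$-equivariant. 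I expect this to be a direct line-by-line inspection of \cite{BaMi19}; this is the ``equivariance of the construction'' advertised in the introduction.

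For the second step, fix any $x_0 \in X$ and form the finite orbit $F := G x_0$. Define $f\colon X \to \R$ by $f(y) := \max_{z \in F} d(y,z)$. Conicality of $\bar{\sigma}$, applied with $z$ in place of both endpoints of the second geodesic, gives $d(\bar{\sigma}_{y_1 y_2}(t), z) \leq (1-t)d(y_1,z) + t\, d(y_2,z)$ for each $z \in F$, so $f$ is convex along every $\bar{\sigma}$-geodesic. Moreover $f$ is $G$-invariant because $F$ is $G$-invariant and $G$ acts by isometries.

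The crux, and the main obstacle, is the third step: producing a \emph{unique} minimiser of $f$. Setting $r := \inf_X f$ and taking a minimising sequence $(y_n)$, I would iterate the $\bar{\sigma}$-midpoint operation, using reversibility to make midpoints symmetric in their two arguments and conicality to bound $f$ at the midpoint by the average of its values at the endpoints. A Chebyshev/Bruhat--Tits style argument adapted to reversible conical bicombings, in the spirit of \cite{DeLa15,Basso18}, should force $(y_n)$ to be Cauchy and converge to a unique circumcentre $c$ of $F$. By $G$-invariance of $f$ and uniqueness of $c$, we have $gc = c$ for all $g \in G$, which produces the desired fixed point. The delicate ingredient is the uniqueness of $c$: in the CAT(0) setting it comes from strict convexity of $d^2$, whereas here only plain convexity of $d$ along $\bar{\sigma}$ is available, so the averaging has to lean essentially on reversibility to rule out distinct minimisers.
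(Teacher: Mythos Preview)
Your first step is identical to the paper's and correct: the Basso--Miesch construction is visibly equivariant. The divergence is in your second and third steps, and the third step contains a genuine gap.

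The paper does \emph{not} use a circumcentre argument. Instead it invokes the barycentre maps $\mathrm{bar}_n\colon X^n\to X$ of Descombes--Lang and Basso \cite{DeLa16,Basso18}, which are built from a conical reversible bicombing and are (i) invariant under permutation of their arguments and (ii) equivariant under isometries preserving the bicombing. Applying $\mathrm{bar}_{|G|}$ to an orbit yields a single well-defined point, and the two properties force it to be fixed. No uniqueness-of-minimiser statement is needed.

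Your approach, by contrast, relies on uniqueness of the Chebyshev centre, and this fails in the very spaces the proposition is meant to cover. Take $X=(\R^2,\ell^\infty)$ with the linear bicombing, which is conical and reversible, and $F=\{(0,0),(1,0)\}$. Then $f(a,b)=\max\bigl(\max(|a|,|b|),\max(|a-1|,|b|)\bigr)$ attains its infimum $1/2$ on the whole segment $\{(1/2,b):|b|\le 1/2\}$. Reversibility does nothing to salvage uniqueness here; the obstruction is that conicality gives only convexity of $d(\cdot,z)$, not strict convexity, so the Bruhat--Tits argument collapses. Your minimising sequence need not be Cauchy, and there is no canonical way to select a $G$-fixed point from the (possibly large) set of circumcentres without essentially reintroducing a barycentre construction. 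The paper's route via \cite{DeLa16,Basso18} is the correct replacement for this step.
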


\begin{proof}
Basso and Miesch 
in \cite[Proposition 1.3]{BaMi19}
used the following procedure to produce
from a conical bicombing $\sigma$ 
a `midpoint map' $m\colon X\times X\to X$,
and then
a
conical, reversible bicombing $\sigma^R$.
Let $x,y\in X$. Define $x_0:=x$, $y_0:=y$, and $x_{n+1}:=\sigma_{x_n,y_n}(1/2)$, $y_{n+1}:=\sigma_{y_n,x_n}(1/2)$ for $n\in\N$. Then the sequences $(x_n)$ and $(y_n)$ are convergent to the same limit; 
define $m(x,y)$ to be this limit. Finally, $\sigma^R_{xy}(t):=m(\sigma_{xy}(t),\sigma_{yx}(1-t))$ defines a conical, reversible bicombing on $X$.
It is easy to check that, since $\sigma$ is $G$-equivariant, the constructions of the sequences $x_n$ and $y_n$, of the map $m$, and of the bicombing $\sigma^R$ are all also $G$-equivariant.

Descombes, Lang and Basso \cite{DeLa16,Basso18}, building on \cite{ESHa99,Navas13}, introduced a construction of barycentre maps  $\mathrm{bar}_n\colon X^n\to X$ for $n\in\N$ for spaces admitting conical bicombings.
If the input bicombing is additionally reversible, then the functions $\mathrm{bar}_n$ are invariant under permuting their arguments (see \cite[Theorem 4.1(2)]{DeLa16} or \cite[Proposition 3.4(4)]{Basso18})
and,
if the input bicombing is $G$-equivariant, then the barycentre maps are also $G$-equivariant (see \cite[Theorem 4.1(3)]{DeLa16} or \cite[around formula (3.5)]{Basso18}).
It is now easy to see that the barycentre
of any orbit of $G$ 
constructed using the bicombing $\sigma^R$ is a fixed point of the action of $G$ on $X$.
\end{proof}

\bigskip
\textbf{Euclidean retracts and absolute retracts.}
A 
space is a \emph{Euclidean retract (ER)} if it can be embedded in some Euclidean space as its retract.

A space is an \emph{absolute retract (AR)} if, whenever it is embedded into a metric space $Y$ as a closed subspace $A$, the subspace $A$ is a retract of the space $Y$.

\smallskip
A locally compact, separable metric space is an ER iff 
it is finite-dimensional, contractible 
and locally contractible.
We provide an outline of the proof of this characterisation in the following proposition, for completess. 

\begin{fact}\label{p:charerar}
Let $X$ be a metric space. Then:
\begin{enumerate}[(i)]
\item\label{p:charerar1} 
if $X$ locally compact and separable, then $X$ is an ER iff $X$ is a finite-dimensional AR;

\item\label{p:charerar2} 
if $X$ is finite-dimensional, then $X$ is an AR iff $X$ is contractible and locally contractible.
\end{enumerate}
\end{fact}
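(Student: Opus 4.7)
The plan is to treat the two parts separately, relying on classical embedding and extension theorems for metric spaces.

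For part~\ref{p:charerar1}, the forward implication is the easier direction: if $X$ is an ER, then $X$ embeds as a retract of some $\R^n$, so $\dim X \le n < \infty$, and since $\R^n$ is an AR and any retract of an AR is an AR, $X$ is an AR. For the converse, assuming $X$ is a finite-dimensional, locally compact, separable AR, I would invoke the N\"obeling--Menger embedding theorem to obtain a topological embedding $X \hookrightarrow \R^{2\dim X + 1}$. The key additional point to verify is that, under the local compactness and separability hypotheses, this embedding can be arranged to have closed image; the standard trick is to first replace the embedding $j\colon X \to \R^N$ by $x \mapsto (j(x), \varphi(x))$, where $\varphi \colon X \to \R$ is a proper continuous function (which exists for locally compact, separable spaces), yielding a closed embedding into $\R^{N+1}$. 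Once $X$ is closed in some $\R^m$, the AR property immediately produces a retraction $\R^m \to X$.

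For part~\ref{p:charerar2}, the forward direction goes by two standard observations. Contractibility follows by embedding $X$ as a closed subspace of the cone $CX$ at level $0$ (the cone is a metrizable space in which $X$ sits as a closed subset), then using the AR property to retract $CX$ onto $X$; composing this retraction with the contraction of $CX$ to its apex gives a contraction of $X$. Local contractibility follows from a similar but local argument: given $x \in X$ and a neighbourhood $U$, embed $X$ as a closed subset of a convex subset of a normed space via the Kuratowski embedding, use the retraction onto $X$ supplied by the AR property, and pull back a small convex (hence contractible) neighbourhood of $x$.

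The converse in part~\ref{p:charerar2} is the main obstacle and is where finite-dimensionality is essential. The plan is to combine the Kuratowski/Arens--Eells embedding $X \hookrightarrow B$ into a Banach space (as a closed subset of a convex set) with Dugundji's extension theorem, which allows one to extend continuous maps into locally convex spaces from closed subsets of metric spaces. Using local contractibility together with a finite-dimensional nerve-type argument (a partition of unity subordinate to a locally contractible cover has a nerve of bounded dimension, so the Dugundji-type extension fits together via finitely many local contractions at each point), one produces a retraction from a neighbourhood of $X$ in $B$ onto $X$, showing $X$ is an ANR. Finally, contractibility of $X$ upgrades ANR to AR by the standard Borsuk homotopy extension argument: if $r\colon U \to X$ is a neighbourhood retraction and $X$ contracts in itself, one can extend $r$ over all of $B$ by first pushing the complement of $U$ inside $X$ along the contraction, after which the AR property of convex subsets of $B$ finishes the job. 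I expect the careful bookkeeping in the ANR step --- in particular the use of finite-dimensionality to control the nerve --- to be the delicate part, while everything else reduces to invoking the classical embedding/extension toolbox.
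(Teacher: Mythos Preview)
Your proposal is correct and, for part~\ref{p:charerar1}, follows essentially the same route as the paper: embed into Euclidean space, upgrade to a closed embedding via an auxiliary real-valued function (you use a proper function, the paper uses $x\mapsto d(\imath(x),\R^n\setminus U)^{-1}$ --- these are the same trick), and then invoke the AR property; in the forward direction your appeal to ``a retract of an AR is an AR'' is just the paper's Tietze argument packaged as a black box. For part~\ref{p:charerar2} the paper does not argue at all but simply cites \cite[Theorem~V.11.1]{SzeTsenBook65}, so your outline is strictly more than what the paper provides; your forward direction is clean, and your converse sketch (finite-dimensional locally contractible $\Rightarrow$ ANR via a nerve argument, then contractible ANR $\Rightarrow$ AR) identifies the right strategy and correctly flags the ANR step as the delicate one, though as written it would need substantial expansion to stand on its own --- in particular, the finite-dimensionality is used to refine covers \emph{of $X$} to ones whose nerves have bounded dimension, not to control the nerve of a cover of the ambient neighbourhood directly.
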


\begin{proof}
\ref{p:charerar1}{\sc, the $\Longrightarrow$ implication.} 
Let $\imath\colon X\to\R^n$ be an embedding such that the set $\imath(X)$ is a retract of the space $\R^n$.
Since $X$ is locally compact, there exists an open subset $U\subseteq\R^n$ such that $\imath(X)$ is contained in $U$ as a closed subset.
Then a standard trick of taking the product of the map $\imath$ with the map $X\ni x\mapsto d(\imath(x),\R^n\setminus U)^{-1}\in\R$ gives an embedding of $X$ into $\R^{n+1}$ as a closed subset. Therefore $\dim X\leq n+1$, see \cite[Theorem 3.1.4]{EngelkingBook78}.

Assume that 
we have an embedding $\jmath$ of $X$ into a metric space $Y$ such that $\jmath(X)$ is closed in $Y$. By the Tietze's Extension Theorem, 
the homeomorphism 
$\imath\circ\jmath^{-1}\colon\jmath(X)\to\imath(X)\subseteq\R^n$ can be extended to a map $Y\to\R^n$. Composing 
this map
with the retraction of $\R^n$ onto $\imath(X)$, and then with the inverse of the 
homeomorphism
$\imath\circ\jmath^{-1}$
gives a retraction of $Y$ 
\linebreak[1]
onto $\jmath(X)$.

\smallskip
\ref{p:charerar1}{\sc, the $\Longleftarrow$ implication.} 
By the Embedding Theorem, see \cite[Theorem 1.11.4]{EngelkingBook78}, the separable metric space $X$ embeds into a Euclidean space $\R^n$. As in the first paragraph of the proof, by local compactness of $X$, the space $X$ can be embedded into $\R^{n+1}$ as a closed subset. Then the AR-ness of $X$ gives the desired retraction.

\bigskip
\ref{p:charerar2} This is exactly the equivalence of (a) and (b) in \cite[Theorem V.11.1]{SzeTsenBook65}.
\end{proof}

For more information on the topic one may refer to \cite{SzeTsenBook65} or \cite{vanMillBook89}, or to 
the 
concise introduction in \cite[Section 2]{GuMo19}.

\bigskip
\textbf{EZ-structures.}
A compact subset $Z$ of a compact space $\mathfrak{X}$ is a 
\emph{Z-set} 
in $\mathfrak{X}$ if there exists a homotopy $\{H_t\colon\! \mathfrak{X}\to \mathfrak{X}:t\in[0,1]\}$ such that $H_0=\mathrm{id}_\mathfrak{X}$ and $H_t(\mathfrak{X})\cap Z=\emptyset$ for any $t>0$.

A sequence $K_n$ of subsets of a topological space $X$ is called a \emph{null-sequence} if for every open cover $\mathcal{U}$ of $X$, 
for all but finitely many $n$ the set $K_n$ is $\mathcal{U}$\emph{-small}, that is, there exists a set $U\in\mathcal{U}$ such that $K_n\subseteq U$.

\begin{definition}\label{def:ezstr}
Let $Z\subseteq \mathfrak{X}$ and $G$ act geometrically on $\mathfrak{X}\setminus Z$. The pair $(\mathfrak{X},Z)$ is an \emph{EZ-structure} for $G$ if $\mathfrak{X}$ an ER, $Z$ is a Z-set in $\mathfrak{X}$, $(gK)_{g\in G}$ is a null-sequence in $\mathfrak{X}\setminus Z$ for every compact set $K\subseteq \mathfrak{X}\setminus Z$, and the action of $G$ on $\mathfrak{X}\setminus Z$ extends to an action by homeomorphisms on $\mathfrak{X}$.
\end{definition}

The notion of the Z-structure has been introduced by Bestvina in \cite{Bestvina96}, and its equivariant 
version,
the EZ-structure, has been introduced in \cite{FaLa05}.
These notions have later been studied, 
perfected
and generalised in e.g.~\cite{Dranishnikov06,OsPr09}.
We note that there exist several variations of the definition of the EZ-structure,
mainly differing with the above by altering the ER-ness assumption to an AR-ness assumption, or by altering the condition that the group action is geometric to other conditions imposed on the group action.
In the definition above we followed the last of the mentioned articles. 

\bigskip
\textbf{Gluings.} Further in this article we often encounter the following gluing setting. 
We are given a family of metric spaces $(X_i,d_i)$ for $i\in I$ and a surjective map $\pi\colon\bigsqcup_{i\in I}X_i\to X$ such that $\pi$ restricted to each of the $X_i$ is one-to-one, and such that for each $i,j\in I$ 
the map
$\pi|_{X_{j}}^{-1}\circ\pi|_{X_i}$ induces an isometry from $(\pi|_{X_i}^{-1}(\pi(X_i)\cap\pi(X_{j})),d_i)$ to $(\pi|_{X_{j}}^{-1}(\pi(X_i)\cap\pi(X_{j})),d_{j})$. 
Such a map $\pi$ is called a \emph{gluing map}.
We construct the following gluing pseudometric on $X$, which often turns out to be a metric in our settings.
Let $d(x,x'):=\inf\{\mathrm{len}(P):P\text{ is a }(x,x')\text{--gluing path}\}$,
where
an 
\emph{$(x,x')$--gluing path} is a sequence of points $P=(x=x_0,\ldots,x_n=x')$ such that $n\in\N$ and for each $0\leq j\leq n-1$ there exists $i_j\in I$ such that $x_j,x_{j+1}\in\pi(X_{i_j})$, and we define its length as $\mathrm{len}(P):=\sum_{i=0}^{n-1}d_{X_{i_j}}(x_j,x_{j+1})$.
We often do not distinguish the set $X_i$ from $\pi(X_i)$ and treat it as a subset of $X$.

\section{Boundary via geodesic rays}\label{s:deslan}

We begin with a brief summary of the construction in \cite[Section 5]{DeLa15}.

Let $(X,d)$ be a complete geodesic metric space with a 
\ccc{}
bicombing $\sigma$.
A 
$\sigma$-ray
in $X$ is an isometric embedding 
$\xi\colon [0,\infty)\to X$ such that 
$\im\,\sigma_{\xi(0)\xi(t)}=\im\,\xi|_{[0,t]}$ for every $t\geq0$.
Two
$\sigma$-rays $\xi,\zeta$ are asymptotic if their images are at finite Hausdorff distance, equivalently, $\sup_{t\geq0}d(\xi(t),\zeta(t))<\infty$, and we denote by $[\xi]$ the set of $\sigma$-rays asymptotic to 
the $\sigma$-ray
$\xi$.

The boundary $\partial_\sigma X$ is a topological space whose underlying set consists of the set of classes of asymptotic $\sigma$-rays.
By \cite[Proposition 5.2]{DeLa15}, for each basepoint $o\in X$, 
every
class $\bar{x}\in\partial_\sigma X$ has a unique representative $\geod{o}{\bar{x}}$ that originates in $o$, 
therefore the boundary $\partial_\sigma X$
may be identified
with the set of 
$\sigma$-rays 
originating at some arbitrary fixed point $o$. We 
often consider the set $\bdry{X}{\sigma}:=X\cup\partial_\sigma X$, and 
extend the definition of $\varrho$ to $X$
by defining functions $\geod{o}{x}\colon[0,\infty)\to X$ for $x\in X$ by stopping after reaching $x$, i.e.~$\geod{o}{x}(t)=\sigma_{ox}(\min(t/d(o,x),1))$ for any $o,x\in X$,
and
identify point $x\in X$ with $\geod{o}{x}$.

The topology on $\bdry{X}{\sigma}$ is given by the following base: for a fixed basepoint $o\in X$, take $\{U_o(\bar{x},t,\epsilon):\bar{x}\in\bdry{X}{\sigma},t\geq0,\epsilon>0\}$ where $U_o(\bar{x},t,\epsilon)=\{\bar{y}\in 
\bdry{X}{\sigma}
:d(\geod{o}{\bar{y}}(t),\geod{o}{\bar{x}}(t))<\epsilon\}$. One can show that the resulting topology does not depend on the choice of the basepoint $o$, the topology on 
$\bdry{X}{\sigma}$
extends the topology on $X$ 
---
see \cite[Lemma 5.3 and above]{DeLa15}
--- 
and the topology on $\partial_\sigma X$ has a base $\{U_o(\bar{x},t,\epsilon)\cap\partial_\sigma X:\bar{x}\in \partial_\sigma X,t\geq0,\epsilon>0\}$. 
One may observe that the topology coincides with the 
topology arising from viewing $\bdry{X}{\sigma}$ as the inverse limit of the system
$(\{X_R:R\geq0\},\{\pi^R_r\colon X_R\to X_r:0\leq r\leq R\})$ 
where $X_R=\overline{B}(o,R)(=\bigcup\{\geod{o}{x}([0,R]):x\in X\})$ and $\pi_r^R(\geod{o}{x}(R))=\geod{o}{x}(r)$ for all $x\in X$ (which is well-defined by consistency of $\sigma$). Thus $\bdry{X}{\sigma}$ admits a metric 
\begin{equation}\label{eq:do}
d_o(\bar{x},\bar{y}):=\sum_{n=1}^\infty2^{-n}\min\!\bpl d(\geod{o}{\bar{x}}(n),\geod{o}{\bar{y}}(n)),1\bpr,
\end{equation}
arising from viewing $\bdry{X}{\sigma}$ as the subspace $\{(x_n)_{n\in\N}:x_n\in X_n,\pi^{n+1}_n(x_{n+1})=x_n\}\subseteq\prod X_n$ (we note that this metric is in the same spirit as the metric $D_o$ introduced in \cite{DeLa15}), and 
the space
$\bdry{X}{\sigma}$ is compact iff $X$ is proper. 
Note that for 
all
$r>0$ the 
map 
given by $\pi_r(\bar{x})=\geod{o}{\bar{x}}(r)$ 
corresponds 
to the projection 
map  
$\pi_r\colon\bdry{X}{\sigma}\to X_r$ from the definition of an inverse limit.

\medskip
We finish this introduction with three useful observations.
Proposition \ref{f:neweq52} will be used in the proof of Theorem \ref{t:main} in this section, and Proposition \ref{f:asymptotic} and Proposition \ref{p:ellexp} will be used in other parts of this 
article.

\begin{fact}\label{f:asymptotic}
	Assume that $\sigma$ is a 
	\ccc{}
	bicombing on a complete metric space $(X,d)$ and $\zeta,\eta$ are $\sigma$-rays.
	\begin{enumerate}[(i)]
		\item\label{f:asymptotic1}
		The function $D(t):=d(\zeta(t),\eta(t))$ is convex. In particular, if $\zeta$ and $\eta$ are asymptotic, then $D$ is non-increasing.
		
		\item\label{f:asymptotic2}
		Let $\Xi$ be a family of $\sigma$-rays originating from a common point and such that the set $\{[\xi]:\xi\in\Xi\}$ is compact in $\partial_\sigma X$.
		If 
		\begin{equation*}
		(\exists D\geq0)(\forall r>0)(\exists t\geq r,\xi\in\Xi)(d(\zeta(t),\im\,\xi)\leq D), 
		\end{equation*}
		then $\zeta$ is asymptotic to some $\sigma$-ray from $\Xi$.
		
		\item\label{f:asymptotic3} 
		The $\sigma$-rays
		$\zeta$ and $\eta$ are asymptotic iff $(\exists D\geq0)(\forall r>0)(\exists t\geq r)(d(\zeta(t),\im\,\eta)\leq D)$.

	\end{enumerate}	
\end{fact}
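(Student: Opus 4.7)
The plan is to prove (i), then (iii), then (ii), with (ii) as the crux.

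For part (i), convexity of $D$ follows directly from the convexity of $\sigma$ combined with consistency: for any $T > 0$, consistency together with the definition of a $\sigma$-ray ensures that $\zeta|_{[0,T]}$ and $\eta|_{[0,T]}$ coincide, after affine reparametrisation of $[0,T]$ onto $[0,1]$, with $\sigma_{\zeta(0)\zeta(T)}$ and $\sigma_{\eta(0)\eta(T)}$, so convexity of $\sigma$ yields convexity of $D$ on $[0,T]$, and hence on $[0,\infty)$. For the non-increasing assertion I would invoke the standard dichotomy for convex functions on a half-line: the right-derivative is non-decreasing, so if it is positive anywhere then $D$ grows linearly to $+\infty$; the bound from asymptoticity rules this out, leaving $D$ non-increasing. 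Part (iii) is then immediate: the forward direction is the definition of asymptoticity, while the reverse direction is the special case of (ii) with $\Xi := \{\eta\}$, whose class set is a singleton and therefore compact.

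For (ii), I would first extract sequences. Choose $r_k \to \infty$ and, via the hypothesis (inserting a harmless $1/k$ slack in case the infimum in $d(\zeta(t),\im\,\xi)$ is not attained), pick $t_k \geq r_k$, $\xi_k \in \Xi$ and $s_k \geq 0$ with $d(\zeta(t_k), \xi_k(s_k)) \leq D+1$. Writing $o := \zeta(0)$ and $o'$ for the common starting point of $\Xi$, two triangle inequalities (using that rays are arc-length parametrised) give $|t_k - s_k| \leq D + d(o, o') + 1 =: D_1$. Compactness of $\{[\xi] : \xi \in \Xi\}$ yields a subsequence with $[\xi_k] \to [\bar{\xi}]$ for some $\bar{\xi} \in \Xi$. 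The key manoeuvre is to replace each $\xi_k$ by its unique representative $\hat{\xi}_k := \geod{o}{[\xi_k]}$ from $o$: by (i) applied to the asymptotic pair $\hat{\xi}_k$ and $\xi_k$, the convex bounded function $t \mapsto d(\hat{\xi}_k(t), \xi_k(t))$ is non-increasing, hence bounded by $d(o, o')$ throughout, so $d(\zeta(t_k), \hat{\xi}_k(s_k)) \leq D_1$.

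Now I would apply conicality to the $\sigma$-geodesics $\sigma_{o, \zeta(t_k)} = \zeta|_{[0, t_k]}$ and $\sigma_{o, \hat{\xi}_k(s_k)} = \hat{\xi}_k|_{[0, s_k]}$ at the affine parameter $\tau = u/t_k$, for each fixed $u \geq 0$ and each $k$ with $t_k \geq u$: this yields $d(\zeta(u), \hat{\xi}_k(u s_k/t_k)) \leq u D_1 / t_k$. Combined with $|u s_k/t_k - u| \leq u D_1 / t_k$ and the triangle inequality, one obtains $d(\zeta(u), \hat{\xi}_k(u)) \leq 2 u D_1 / t_k$, which tends to $0$ as $k \to \infty$. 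Meanwhile, cone-topology convergence $[\hat{\xi}_k] \to [\hat{\bar{\xi}}]$ (with $\hat{\bar{\xi}} := \geod{o}{[\bar{\xi}]}$), valid because the boundary topology is independent of basepoint, gives $\hat{\xi}_k(u) \to \hat{\bar{\xi}}(u)$ at each fixed $u$. Uniqueness of limits forces $\zeta(u) = \hat{\bar{\xi}}(u)$ for every $u$, so $[\zeta] = [\bar{\xi}]$. The main obstacle is that cone-topology convergence only controls the rays at each fixed finite parameter, whereas asymptoticity is a statement at infinity; the rescaled-conicality argument bridges this gap by using the vanishing factor $u/t_k$ to transport information from the unbounded parameters $t_k, s_k$ back to each individual $u$.
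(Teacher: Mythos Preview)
Your proof is correct. Parts (i) and (iii) match the paper's arguments essentially verbatim. For (ii), you take a genuinely different route.

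The paper works directly with the rays $\xi_{n_k}$ (all based at the common point $o'$ of $\Xi$), never changing basepoint. After deducing $|t_n-s_n|\le D+d(\zeta(0),o')$ it notes that $d(\zeta(t_n),\xi_n(t_n))\le 2D+d(\zeta(0),o')$, and then applies convexity of $t\mapsto d(\zeta(t),\xi_{n_k}(t))$ on $[0,t_{n_k}]$ to bound this function by the maximum of its endpoint values. Combined with the pointwise convergence $\xi_{n_k}(t)\to\xi(t)$ (valid since the $\xi_{n_k}$ and $\xi$ share the basepoint $o'$), this yields a \emph{uniform bound} $d(\zeta(t),\xi(t))\le 2D+d(\zeta(0),o')$, i.e.\ asymptoticity.

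Your argument instead transplants the rays to the basepoint $o=\zeta(0)$ via $\hat\xi_k=\geod{o}{[\xi_k]}$, and then uses conicality with the \emph{vanishing} factor $u/t_k$ to force $d(\zeta(u),\hat\xi_k(u))\to 0$ at each fixed $u$, concluding that $\zeta$ literally equals the limit ray $\hat{\bar\xi}$. This is a sharper intermediate conclusion than the paper's, obtained at the cost of the extra basepoint-change step and the invocation of basepoint-independence of the boundary topology. The paper's approach is a touch shorter and uses only convexity (not the sharper conical rescaling), while yours makes explicit the useful fact that a ray satisfying the hypothesis must actually \emph{coincide} with some ray in the cone over $\{[\xi]:\xi\in\Xi\}$ once rebased at $\zeta(0)$.
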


\begin{proof}
	\ref{f:asymptotic1} Follows from convexity of $\sigma$. 
	
	\medskip
	\ref{f:asymptotic2} Assume 
	that there 
	exists $D\geq 0$
	and
	sequences $t_n\to\infty$, $s_n\geq0$
	and $\xi_n\in\Xi$, 
	such that $d(\zeta(t_n),\xi_n(s_n))\leq D$.
	Since $\zeta$ and $\xi_n$ are geodesic rays, 
	\begin{multline*}
	|t_n-s_n|=|d(\zeta(t_n),\zeta(0))-d(\xi_n(0),\xi_n(s_n))|\\
	\leq d(\zeta(t_n),\xi_n(s_n))+d(\zeta(0),\xi_n(0))
	\leq  D+d(\zeta(0),\xi_n(0)),
	\end{multline*} 
and $d(\zeta(t_n),\xi_n(t_n))\leq2D+d(\zeta(0),\xi_n(0))$ for any $n\in\N$.
	By compactness, one may choose a subsequence $n_k$ such that $[\xi_{n_k}]$ converge to $[\xi]$ for some $\xi\in\Xi$.
	Then, for any $k$ and $t\leq t_{n_k}$, 
	by convexity of $\sigma$
	we have that 
	\begin{multline*}
	d(\zeta(t),\xi(t))
	\leq d(\zeta(t),\xi_{n_k}(t))+d(\xi_{n_k}(t),\xi(t))\\
	\leq \max\!\bpl\!\!\; d(\zeta(0),\xi_{n_k}(0)),d(\zeta(t_{n_k}),\xi_{n_k}(t_{n_k}))\bpr+d(\xi_{n_k}(t),\xi(t))\\
	\leq 2D+d(\zeta(0),\xi_{n_k}(0))+d(\xi_{n_k}(t),\xi(t)).
	\end{multline*}
	Therefore, since the $\sigma$-rays from $\Xi$ are based in one point, 
	and
	$t_{n_k}\to\infty$, and $\xi_{n_k}(t)\to\xi(t)$ for all $t\geq0$, the $\sigma$-rays $\xi$ and $\zeta$ are asymptotic.
	
	\medskip
	\ref{f:asymptotic3} The $\Longrightarrow$ implication follows from \ref{f:asymptotic1}.
	The $\Longleftarrow$ implication 
	follows from \ref{f:asymptotic2} with $\Xi=\{\eta\}$.
\end{proof}

The following builds on \cite[inequality (5.2)]{DeLa15}.

\begin{fact}\label{f:neweq52}
	Let $(X,d)$ be a metric space with a 
	\ccc{}
	bicombing $\sigma$, and $x,y,o\in X$, $r>0$ be such that $\max(d(o,x),d(o,y))\geq r$. Then $d(\geod{o}{x}(r),\geod{o}{y}(r))\leq 2\cdot d(x,y)\cdot r/d(o,x)$.	
\end{fact}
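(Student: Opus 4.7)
The plan is to introduce an intermediate point on the $\sigma$-segment $\sigma_{oy}$ that matches the ``parameter'' of $\geod{o}{x}(r)$, invoke conicality of $\sigma$ to compare it with $\geod{o}{x}(r)$, and then slide along $\sigma_{oy}$ to reach $\geod{o}{y}(r)$. Concretely, setting $a:=d(o,x)$ and $b:=d(o,y)$, I would pick $y':=\sigma_{oy}(\min(1,r/a))$ and split via the triangle inequality
$$d(\geod{o}{x}(r),\geod{o}{y}(r))\leq d(\geod{o}{x}(r),y')+d(y',\geod{o}{y}(r)).$$

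For the first summand I would use that by the definition of $\geod{o}{x}$ one has $\geod{o}{x}(r)=\sigma_{ox}(\min(1,r/a))$, so conicality of $\sigma$ at the common parameter $\min(1,r/a)$ yields
$$d(\geod{o}{x}(r),y')\leq \min(1,r/a)\cdot d(x,y)\leq (r/a)\,d(x,y).$$
For the second summand, both $y'$ and $\geod{o}{y}(r)=\sigma_{oy}(\min(1,r/b))$ lie on the single constant-speed $\sigma$-geodesic $\sigma_{oy}$, so their distance equals $b\,|\min(1,r/a)-\min(1,r/b)|$, and the task reduces to showing this is at most $r\,d(x,y)/a$; summing the two bounds then gives exactly the required $2r\,d(x,y)/a$.

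The main content---and the step I expect to require the most care---is the estimate on this second summand. I would split into cases on whether each of $a,b$ exceeds $r$, invoking the hypothesis $\max(a,b)\geq r$ to rule out the scenario where both fall below $r$, and using the triangle inequality $|a-b|\leq d(x,y)$ throughout. When both $a,b\geq r$ the computation reduces cleanly to $b\,|r/a-r/b|=r|b-a|/a\leq r\,d(x,y)/a$. The genuinely delicate sub-case is $a<r$, in which $\geod{o}{x}(r)=x$ and the hypothesis forces $b\geq r$; then $y'=y$, and I would have to absorb the remaining error $b-r\leq b-a\leq d(x,y)$ into $(r/a)\,d(x,y)$ by exploiting $r/a>1$. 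The symmetric sub-case $b<r\leq a$ (where $\geod{o}{y}(r)=y$) is analogous and slightly easier, relying on $a-r\leq a-b\leq d(x,y)$.
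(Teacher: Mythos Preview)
Your proof is correct and follows essentially the same approach as the paper: introduce the auxiliary point $y'=\sigma_{oy}(\min(1,r/a))$ (the paper calls it $y_{\sim r}$), apply the triangle inequality, bound the first summand by conicality at the common parameter, and bound the second summand by sliding along the geodesic $\sigma_{oy}$ using $|a-b|\leq d(x,y)$. The only difference is organizational: the paper reduces to the single case $d(o,x)\geq d(o,y)$, $d(o,x)\geq r$ by a symmetry swap (noting that $2r\,d(x,y)/d(o,y)\leq 2r\,d(x,y)/d(o,x)$ when $d(o,y)\geq d(o,x)$), whereas you keep $x,y$ in fixed roles and split into the three sub-cases $a,b\geq r$; $a<r\leq b$; $b<r\leq a$.
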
 

\begin{proof}
When $d(o,x)\geq d(o,y)$ and $d(o,x)\geq r$, the inequality is (almost) \cite[inequality (5.2)]{DeLa15}
and can be justified as follows:
note that $\geod{o}{x}(r)=\sigma_{o,x}(r/d(o,x))$, 
and denote $y_{\sim r}=\sigma_{o,y}(r/d(o,x))$; then
\begin{align*}
d(\geod{o}{x}(r)&,\geod{o}{y}(r))
\leq d(\geod{o}{x}(r),y_{\sim r})+d(y_{\sim r},\geod{o}{y}(r))\\[0.3em]
&\leq \frac{r}{d(o,x)}d(x,y)+ d(o,\geod{o}{y}(r))-d(o,y_{\sim r})\\[0.3em]
&= \frac{r}{d(o,x)}d(x,y)+ \min(r,d(o,y))-\frac{r}{d(o,x)}d(o,y)\\[0.3em]
&\leq \frac{r}{d(o,x)}d(x,y)+r\frac{d(o,x)-d(o,y)}{d(o,x)}
\leq \frac{r}{d(o,x)}d(x,y)+r\frac{d(x,y)}{d(o,x)}
= 2r\frac{d(x,y)}{d(o,x)}.
\end{align*}

\medskip
\noindent
If $d(o,y)\geq d(o,x)$ and $d(o,y)\geq r$, the 
above
gives that 
\begin{equation*}
d(\geod{o}{x}(r),\geod{o}{y}(r))\leq 2\:\!r\:\!d(x,y)/d(o,y)\leq 2\:\!r\:\!d(x,y)/d(o,x).\qedhere
\end{equation*}
\end{proof}

\begin{definition}\label{d:ellexp}
Let $o\in X$.
We define a map
$\ell_o\colon\bdry{X}{\sigma}\to[0,\infty]$ by 
putting
$\ell_o(x)=d(o,x)$ for $x\in X$ and $\ell_o(\bar{x})=\infty$ for $\bar{x}\in\partial_\sigma X$;
and 
define
an exponential 
map $\exp_o\colon\bdry{X}{\sigma}\times[0,\infty]\to\bdry{X}{\sigma}$ 
by 
$\exp_o(\bar{x},t)=\geod{o}{\bar{x}}(t)$
for $\bar{x}\in\bdry{X}{\sigma}$ and $t<\infty$,
and $\exp_o(\bar{x},\infty)=\bar{x}$. 
\end{definition}

\begin{fact}\label{p:ellexp}
Let $(X,d)$ be a complete metric space with a 
\ccc{}
bicombing $\sigma$. 
Then 
the 
maps $\ell_o$ and $\exp_o$ are continuous for any basepoint $o\in X$.
\end{fact}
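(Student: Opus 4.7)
The plan is to extract two simple tools and then combine them. First, from the basic neighbourhoods $\{U_o(\bar x,t,\varepsilon)\}$ one reads off that a sequence $\bar x_k\to\bar x$ in $\bdry{X}{\sigma}$ if and only if $\geod{o}{\bar x_k}(t)\to\geod{o}{\bar x}(t)$ in $(X,d)$ for every $t\geq 0$. Second, consistency of $\sigma$ (together with the convention that $\geod{o}{x}$ is constant on $[d(o,x),\infty)$ for $x\in X$) yields the identity
\begin{equation*}
\geod{o}{\exp_o(\bar y,t)}(s)=\geod{o}{\bar y}(\min(s,t))
\end{equation*}
for every $\bar y\in\bdry{X}{\sigma}$ and $s,t\in[0,\infty]$.

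I would verify this identity by splitting into cases: if $\bar y\in\partial_\sigma X$, or $\bar y\in X$ with $d(o,\bar y)\geq t$, then $\exp_o(\bar y,t)$ lies on the image of $\geod{o}{\bar y}$ at the time-$t$ point and consistency makes $\sigma_{o,\exp_o(\bar y,t)}$ a reparametrisation of the initial segment of $\geod{o}{\bar y}$; if instead $\bar y\in X$ with $d(o,\bar y)<t$, then $\exp_o(\bar y,t)=\bar y$ and the right-hand side collapses to $\geod{o}{\bar y}(s)$ as well, since $\geod{o}{\bar y}$ is constant beyond $d(o,\bar y)$. I expect this routine bookkeeping (rather than any conceptual step) to be the main obstacle of the argument.

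Granted the two tools, continuity of $\exp_o$ at $(\bar x,t)$ follows from the triangle-inequality split
\begin{equation*}
d\bigl(\geod{o}{\bar x_k}(\min(s,t_k)),\geod{o}{\bar x}(\min(s,t))\bigr)\leq|\min(s,t_k)-\min(s,t)|+d\bigl(\geod{o}{\bar x_k}(\min(s,t)),\geod{o}{\bar x}(\min(s,t))\bigr),
\end{equation*}
where the first summand uses that $\geod{o}{\bar x_k}$ is $1$-Lipschitz and tends to $0$ in both the finite-$t$ and infinite-$t$ cases, and the second vanishes by the first tool applied at time $\min(s,t)$. Running this over all $s\geq 0$ and invoking the first tool in reverse gives convergence $\exp_o(\bar x_k,t_k)\to\exp_o(\bar x,t)$ in $\bdry{X}{\sigma}$.

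For $\ell_o$, I would combine the same first tool with the formula $d(o,\geod{o}{\bar y}(s))=\min(s,\ell_o(\bar y))$. If $\bar x\in\partial_\sigma X$, then for any $M>0$ setting $s=M+1$ makes $\geod{o}{\bar x}(s)$ lie at distance $s$ from $o$, so eventually $\min(s,\ell_o(\bar x_k))>M$, forcing $\ell_o(\bar x_k)\to\infty$. If $\bar x\in X$, picking $s>d(o,\bar x)$ and using $\geod{o}{\bar x_k}(s)\to\geod{o}{\bar x}(s)=\bar x$ forces $\ell_o(\bar x_k)<s$ for large $k$, after which $\bar x_k=\geod{o}{\bar x_k}(s)\to\bar x$ in $X$, and $\ell_o(\bar x_k)\to d(o,\bar x)$ by continuity of $d(o,\cdot)$.
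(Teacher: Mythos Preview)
Your proof is correct. Both you and the paper rely on consistency of $\sigma$ and on the characterisation of convergence in $\bdry{X}{\sigma}$ via the basic neighbourhoods $U_o(\bar x,t,\varepsilon)$, but the organisation differs. The paper does a three-way case split for $\exp_o$: when $t=\infty$ it argues directly; when $t<\infty$ and $\bar x\in X$ it invokes continuity of the map $\sigma$ itself; when $t<\infty$ and $\bar x\in\partial_\sigma X$ it uses the semigroup-type identity $\exp_o(\bar x_n,t_n)=\exp_o(\exp_o(\bar x_n,R),t_n)$ to reduce to the previous case. You instead package consistency into the single identity $\geod{o}{\exp_o(\bar y,t)}(s)=\geod{o}{\bar y}(\min(s,t))$ and handle all cases at once with one triangle-inequality estimate, never appealing to continuity of $\sigma$ as a separate ingredient. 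Your route is a bit more uniform; the paper's is more hands-on in each case but avoids stating your ``second tool'' explicitly. For $\ell_o$ the paper gives a one-line argument using $\ell_o(U_o(\bar x,R,\delta))\subseteq[R-\delta,\infty]$, which is the same idea as yours phrased in terms of open sets rather than sequences.
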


\begin{proof}
The function $\ell_o$ is continuous
since it
is
continuous on $X$ 
as the metric $d$ is continuous with respect to itself, and for $\bar{x}\in\partial_\sigma X$ we have that $\ell_o(U_o(\bar{x},R,\delta))\subseteq[R-\delta,\infty]$.

\medskip
For the proof of continuity of $\exp_o$, assume that we have sequences $(\bar{x}_n)\subseteq\bdry{X}{\sigma}$ convergent to $\bar{x}\in\bdry{X}{\sigma}$ and $(t_n)\subseteq[0,\infty]$ convergent to $t\in[0,\infty]$. 
If 
$t=\infty$, then for any $s<\infty$ we have for sufficiently large $n$ that $\geod{o}{\exp_o(\bar{x}_n,t_n)}(s)=\geod{o}{\bar{x}_n}(s)$, as $t_n\to t=\infty$; the latter converges to $\geod{o}{\bar{x}}(s)=\geod{o}{\exp_o(\bar{x},\infty)}(s)$, as $\bar{x}_n\to\bar{x}$; therefore $\exp_o(\bar{x}_n,t_n)$ converges to $\exp_o(\bar{x},\infty)$.
Assume that $t<\infty$.
If $\bar{x}\in X$, then $\bar{x}_n\in X$ for sufficiently large $n$, so we can write by continuity of $\sigma$ that
\smallskip
\begin{equation*}
\exp_o(\bar{x}_n,t_n)=\sigma\!\left(\!o,\bar{x}_n,\frac{\min(t_n,d(o,\bar{x}_n))}{d(o,\bar{x}_n)}\right)\;\longrightarrow\,\,\sigma\!\left(\!o,\bar{x},\frac{\min(t,d(o,\bar{x}))}{d(o,\bar{x})}\right)=\exp_o(\bar{x},t).
\end{equation*} 

\smallskip
\noindent
Assume that $\bar{x}\in\partial_\sigma X$. For sufficiently large $n$ we have that $t_n\leq R$ (and thus ${t\leq R}$) for some $R<\infty$. Then, 
by consistency of $\sigma$,
we have for sufficiently large $n$ that $\exp_o(\bar{x}_n,t_n)=\exp_o(\exp_o(\bar{x}_n,R),t_n)$.
Since $\exp_o(\bar{x}_n,R)$ converges to $\exp_o(\bar{x},R)$, as 
$\bar{x}_n$
converges to
$\bar{x}$, we can use the previous case to conclude that $\exp_o(\bar{x}_n,t_n)$ converges to $\exp_o(\exp_o(\bar{x},R),t)=\exp_o(\bar{x},t)$.
\end{proof}

\subsection{The EZ-structure}\label{sbs:deslanez}

Below we describe how the above construction of boundary may be used to construct an EZ-structure.

\begin{proof} ({\sc of Theorem} \ref{t:main})
We 
show that $(\bdry{X}{\sigma},\partial_\sigma X)$ is an EZ-structure for $G$ (see Definition \ref{def:ezstr}).
Fix any basepoint $o\in X$.

\smallskip
The boundary $\partial_\sigma X$ is a Z-set in 
$\bdry{X}{\sigma}$
by \cite[Theorem 1.4]{DeLa15}.
Fix any homotopy $\{H_t:t\in[0,1]\}$ from the definition of $Z$-set. 

\smallskip
Now we show that the 
compact
space $\bdry{X}{\sigma}$ is an ER,
using the characterisation from Proposition \ref{p:charerar}.
The space $\bdry{X}{\sigma}$
is contractible and locally contractible by 
\cite[Theorem 1.4]{DeLa15}. The fact that the
dimension of 
$\bdry{X}{\sigma}$
is not greater than the dimension of $X$, therefore finite, is a standard task in topology 
and can be justified as follows. 
Let $\mathcal{U}$ be an open cover of 
$\bdry{X}{\sigma}$.
Let $\lambda$ be the Lebesgue number of $\mathcal{U}$.
Since $X$ is finite-dimensional,
there exists an open cover $\mathcal{V}$ of $X$ consisting of sets of $d_o$-diameter at most $\lambda/3$ and having empty intersections of each of its subfamilies of cardinality $\dim X+2$. By compactness of 
$\bdry{X}{\sigma}$,
there exists $t_0>0$ such that for all 
$x\in \bdry{X}{\sigma}$ 
we have $d_o(H_{t_0}(x),x)<\lambda/3$.
It follows that $\{H_{t_0}^{-1}(V):V\in\mathcal{V}\}$ is an open cover of 
$\bdry{X}{\sigma}$,
is a refinement of $\mathcal{U}$, and 
has the property that intersections of any of its $\dim X+2$ sets are empty.   
We note that a similar proof of the fact that $\dim\bdry{X}{\sigma}\leq\dim X$ when $\dim X<\infty$ is also present in the proof of \cite[Theorem 1.4]{DeLa15}, where $X$ is not assumed to be proper and a suitable value $t_0$ is stated explicitly using the metric $D_o$, or in the proof of \cite[Theorem 7.10]{EnWu23}. 

\smallskip
We 
claim
that 
for any
compact set $K\subseteq X$
and
any
open cover $\mathcal{U}$ of 
$\bdry{X}{\sigma}$
all but finitely many of the translates $(gK)_{g\in G}$ are $\mathcal{U}$-small.
By properness of $X$,
$K\subseteq B(o,R)$ for some 
$R\geq0$.
By compactness, let $\{U_o(\bar{x}_i,t_i,\epsilon_i)\cap\partial_\sigma X:1\leq i\leq n\}$ for some $\bar{x}_i\in\partial_\sigma X$, $t_i\geq0$, $\epsilon_i>0$ be a finite cover of $\partial_\sigma X$ such that each element of $\{U_o(\bar{x}_i,t_i,2\epsilon_i):1\leq i\leq n\}$ is contained in some element of $\mathcal{U}$. 
Since 
$\bdry{X}{\sigma}\setminus\bigcup U_o(\bar{x}_i,t_i,\epsilon_i)$ is a compact subset of $X$, it is contained in a ball $B(o,R_0)$ for some $R_0\geq 0$. 
Take $g\in G$ such that 
$d(o,go)>R_0$.
Then
there exists some $1\leq k\leq n$ such that $go\in U_o(\bar{x}_k,t_k,\epsilon_k)$.
If, additionally, $d(o,go)>t_k$ and $2Rt_k/d(o,go)\leq \epsilon_k$, then,
by 
Proposition \ref{f:neweq52} with $r=t_k$ and $x=go$,
\begin{equation*}
gK\subseteq gB(o,R)=B(go,R)
\subseteq U_o(go,t_k,2Rt_k/d(o,go))
\subseteq U_o(go,t_k,\epsilon_k)\subseteq U_o(\bar{x}_k,t_k,2\epsilon_k),
\end{equation*}
which is contained in some set from the family $\mathcal{U}$.
Since $G$ acts on $X$ properly, all but finitely many $g\in G$ satisfy 
$d(o,go)>\max\!\bpl\{R_0\}\cup\{t_i:1\leq i\leq n\}\cup\{2Rt_i/\epsilon_i:1\leq i\leq n\}\bpr$,
and the claim follows.

\smallskip
We extend the action of the
group $G$
on $X$ to $\bdry{X}{\sigma}$ by defining 
$g\bar{x}
:=[t\mapsto g\geod{o}{\bar{x}}(t)]$ for all 
$g\in G$, which is well-defined since $G$ acts via isometries and $\sigma$ is $G$-equivariant.
It 
remains
to show that such an extended action is an action via homeomorphisms.
By $G$-equivariance of $\sigma$,
for any $g\in G$, $\bar{x}\in \bdry{X}{\sigma}$, $t\geq 0$ and $\epsilon>0$ 
we have
that $gU_o(\bar{x},t,\epsilon)=U_{go}(g\bar{x},t,\epsilon)$. 
Therefore $g$ maps a base of the topology of $\bdry{X}{\sigma}$ arising from using the basepoint $o$ to another base, resulting from taking $go$ as the basepoint.
\end{proof}

\begin{remark}\label{u:bassobdry}
	The construction presented in \cite{DeLa15} has been recently carried out by Basso \cite{Basso24} to construct 
	boundaries 
	in the setting of complete metric spaces 
	that admit consistent, reversible bicombings 
	such 
	that the function 
	considered in the definition 
	of a convex bicombing (see Definition \ref{def:bicombing}) is convex for points $x,y,x',y'$ 
	satisfying the property
	that the distance between $x$ and $y$, and the distance between $x'$ and $y'$ are equal.
	
	This construction is used
	\cite[Corollary 1.6]{Basso24}
	to
	construct EZ-structures 
	for groups 
	$G$
	acting geometrically on proper metric spaces
	$X$
	admitting conical bicombings
	$\sigma$,
	with AR-ness condition 
	in the definition of an EZ-structure
	in the place of the ER-ness condition 
	considered in this article 
	--- this is a minor difference, as the latter differs from the former by an additional finite-dimensionality condition (see Definition \ref{def:ezstr} and Proposition \ref{p:charerar}\ref{p:charerar1}, and the dimension-related part of the proof above). 
	
	First, the conical bicombing $\sigma$ is used to construct a bicombing $\sigma'$
	with properties 
	as discussed 
	in the first paragraph of
	this remark, and then the aforementioned boundary construction gives a Z-structure.
	This Z-structure is 
	an EZ-structure if the initial conical bicombing 
	is
	$G$-equivariant:
	the bicombing $\sigma'$ is $G$-equivariant as 
	the first sentence of the last paragraph of the proof of \cite[Theorem 1.4]{Basso24} applies to any subgroup of the group of isometries of $X$; 
	and the fact that 
	whenever $\sigma'$ is $G$-equivariant
	the group action extends to an action on the whole boundary-compactification is implicit in the second paragraph of the proof of \cite[Corollary 1.6]{Basso24}.
	
	We also give a minor remark that the proof of \cite[Corollary 1.6]{Basso24} uses an integral metric,
	the same as the metric
	$D_o$ from \cite{DeLa15} (and similar to the metric $d_o$ from 
	the current 
	article), 
	in the proofs of the null-sequence--condition condition and the above-mentioned fact that the action extends continuously to the boundary.
\end{remark}

\section{Boundary via Gelfand dual}\label{s:enwubd}
In this section we discuss the EZ-structure resulting from 
the construction of the boundary introduced by Engel and Wulff in \cite{EnWu23}.
Then we prove 
equivalence of this 
EZ-structure with the one constructed in Section \ref{s:deslan}.
We try to give a more elementary 
treatment of the subject compared to the original paper.

Below
we 
briefly 
describe
the construction of the boundary 
from \cite{EnWu23},
which applies to 
the
so called coarse spaces, in our less general metric setting
(see \cite[Example 2.3]{EnWu23}).
Let $(X,d)$ be a metric space
and
fix a basepoint $o\in X$.
We say that a function 
$\Sigma\colon X\times\N\to X$, where we often use the notation $\Sigma_n(x)=\Sigma(x,n)$,
is a \emph{combing},
and say that $(X,\Sigma)$ is a \emph{combed space},
if the following are satisfied:
\begin{enumerate}
\item[$\bullet$]
$\Sigma_n(o)=o=\Sigma_0(x)$ for all $x\in X$, $n\in\N$;
\item[$\bullet$]
for all $R\geq0$ there exists $N\in\N$ such that $\Sigma_n(x)=x$ for all $n\geq N$ and $x\in\overline{B}(o,R)$;
\item[$\bullet$]
for all $D>0$ there exists $C$ such that for all $x,x'\in X$ with $d(x,x')\leq D$, 
and for all $n,n'\in\N$ with $|n-n'|\leq D$, 
it holds that $d(\Sigma(x,n),\Sigma(x',n'))\leq C$.
\end{enumerate}
(An example of a combing, related to 
Section \ref{s:deslan}, is $\Sigma(x,n):=\geod{o}{x}(n)$.)
We say that a combing $\Sigma$ is \emph{coherent} if there exists $R\geq 0$ such that $d(\Sigma_m(x),\Sigma_m(\Sigma_n(x)))\leq R$ for all $m\leq n\in\N$ and $x\in X$. 
A map $\alpha\colon X\to Y$ is a morphism between combed spaces $(X,\Sigma_X)$ and $(Y,\Sigma_Y)$ 
if the function $(x,n)\mapsto d(\alpha(\Sigma_X(x,n)),\Sigma_Y(\alpha(x),n))$ is bounded (see \cite[Remark 2.5(a)]{EnWu23}).
Morphisms, coarsely Lipschitz maps, 
and being at bounded distance work with each other in the following way.

\begin{fact}\label{p:enwumorfizmy}
	Assume that metric spaces $(X,d_X),(Y,d_Y)$ admit combings $\Sigma_X,\Sigma_Y$, respectively.
	\begin{enumerate}[(i)]
		\item 
		Let $f\colon X\to Y$, $g\colon Y\to X$ be coarsely Lipschitz such that $f\circ g$ is at finite distance from the identity of $Y$. Then, if $f$ is a morphism 
		from
		$(X,\Sigma_X)$ 
		to
		$(Y,\Sigma_Y)$, then $g$ is a morphism 
		from
		$(Y,\Sigma_Y)$ 
		to
		$(X,\Sigma_X)$.
		\label{p:enwumorfizmy1}
		
		\item	
		Assume that $f'\colon X\to Y$ is at finite distance from a morphism $f$
		from
		$(X,\Sigma_X)$ 
		to
		$(Y,\Sigma_Y)$. Then $f'$ is also a morphism from $(X,\Sigma_X)$ to $(Y,\Sigma_Y)$.
		\label{p:enwumorfizmy2}
	\end{enumerate}	
\end{fact}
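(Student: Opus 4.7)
For part (ii) my plan is a direct triangle-inequality estimate. To bound $d_Y(f'(\Sigma_X(x,n)),\Sigma_Y(f'(x),n))$ I would insert the reference points $f(\Sigma_X(x,n))$ and $\Sigma_Y(f(x),n)$, producing three pieces. The first, $d_Y(f'(\Sigma_X(x,n)),f(\Sigma_X(x,n)))$, is bounded because $f$ and $f'$ are at finite distance; the middle, $d_Y(f(\Sigma_X(x,n)),\Sigma_Y(f(x),n))$, is bounded because $f$ is a morphism; and the last, $d_Y(\Sigma_Y(f(x),n),\Sigma_Y(f'(x),n))$, is bounded by the third combing axiom for $\Sigma_Y$ applied to the inputs $f(x)$ and $f'(x)$, which sit at bounded distance and share the same index $n$. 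I do not expect any real obstacle here.

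For part (i) my plan is to transport the desired $X$-side inequality across $f$ into $Y$, exploit the morphism property of $f$ at the point $g(y)$, and then pull the resulting $Y$-side estimate back via $g$. Concretely: apply the morphism property of $f$ at $g(y)$ to bound $d_Y(f(\Sigma_X(g(y),n)),\Sigma_Y(f(g(y)),n))$; then invoke $f\circ g\sim\id_Y$ together with the third combing axiom for $\Sigma_Y$ to replace $\Sigma_Y(f(g(y)),n)$ by $\Sigma_Y(y,n)$ up to a bounded error; a triangle inequality then places $f(\Sigma_X(g(y),n))$ within bounded distance of $\Sigma_Y(y,n)$ in $Y$. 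Applying the coarsely Lipschitz map $g$ transports this into the bound $d_X(g(f(\Sigma_X(g(y),n))),g(\Sigma_Y(y,n)))\leq\mathrm{const}$.

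The step I expect to be the actual obstacle is closing the loop, namely identifying $g(f(\Sigma_X(g(y),n)))$ with $\Sigma_X(g(y),n)$ up to bounded error. The clean mechanism for this is the complementary half of the quasi-inverse condition, that is, $g\circ f$ also at bounded distance from $\id_X$; this is automatic in the quasi-isometry context where $g$ is a genuine quasi-inverse of $f$, which appears to be the intended setting of \ref{p:enwumorfizmy}\ref{p:enwumorfizmy1}. Apart from this closing identification, the proof of (i) is a short chain of triangle inequalities stitched together using the third combing axiom for $\Sigma_Y$, the morphism property of $f$, coarse Lipschitzness of $g$, and the hypothesis $f\circ g\sim\id_Y$ — very much in the same spirit as the argument for (ii).
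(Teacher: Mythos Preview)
Your argument for (ii) is correct and is exactly the paper's proof.

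For (i) your strategy coincides with the paper's---push into $Y$ via the morphism property of $f$ at $g(y)$, use $f\circ g\sim\id_Y$ together with the third combing axiom for $\Sigma_Y$, and then return to $X$---but the bookkeeping differs. The paper opens with
\[
d_X\bigl(g(\Sigma_Y(y,n)),\Sigma_X(g(y),n)\bigr)\;\le\;C\,d_Y\bigl(f(g(\Sigma_Y(y,n))),f(\Sigma_X(g(y),n))\bigr)+C
\]
and then bounds the right-hand side by three terms (a direct use of $f\circ g\sim\id_Y$ at the point $\Sigma_Y(y,n)$, the combing-axiom step, and the morphism bound for $f$). That opening inequality, however, is the \emph{reverse} of ``$f$ coarsely Lipschitz'': it is the lower bound in a quasi-isometric embedding. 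So your instinct about the closing step is exactly right---neither your route nor the paper's closes under the hypotheses as literally stated; yours needs $g\circ f\sim\id_X$, the paper's needs $f$ to be a QI embedding (and the latter follows from the former together with $g$ coarsely Lipschitz). In every application of the proposition in the paper $f$ and $g$ are genuine quasi-inverses, so both extra conditions are available. Without that the statement actually fails: take $Y=\N$, $X=\{(m,k)\in\N^2:k\le m\}$ with the $\ell^1$ metric, $f(m,k)=m$, $g(m)=(m,0)$, and let $\Sigma_X$ be the combing that first runs up the diagonal to $(m,m)$ and then descends to $(m,k)$; then $f$ is a morphism but $d_X\bigl(g(\Sigma_Y(m,m)),\Sigma_X(g(m),m)\bigr)=m$.
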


\begin{proof}
\ref{p:enwumorfizmy1} Since $f\colon X\to Y$ is coarsely Lipschitz, there exists a constant $C$ such that for any $y\in Y$ and $n\in\N$ we have
\begin{multline*}
d_X\!\!\:\bpl g(\Sigma_Y(y,n)),\Sigma_X(g(y),n)\bpr\!\!\:
\leq Cd_Y\!\!\:\bpl f(g(\Sigma_Y(y,n))),f(\Sigma_X(g(y),n))\bpr\!\!\:+C\\
\leq C\bpl d_Y\!\!\:\bpl f(g(\Sigma_Y(y,n))),\Sigma_Y(y,n)\bpr
+d_Y\!\!\:\bpl\Sigma_Y(y,n),\Sigma_Y(f(g(y))),n\bpr\\
+d_Y\!\!\:\bpl\Sigma_Y(f(g(y)),n),f(\Sigma_X(g(y),n))\bpr\!\!\:\bpr\!\!\:+C.
\end{multline*}
The claim follows, as each of 3 terms in the parentheses is bounded by a constant independent of $y$ and $n$:
the first one by the fact 
that
$f\circ g$ is at finite distance from the identity of $Y$;
the second one by the fact that $f\circ g$ is at finite distance from the identity of $Y$, 
and the third ($\bullet$) in the definition of a combing;
the last one by the fact that $f$ is a morphism between $(X,\Sigma_X)$ and $(Y,\Sigma_Y)$.

\medskip
\ref{p:enwumorfizmy2} 
For any $x\in X$, $n\in\N$ we have the following
inequality:
\begin{multline*}
d_Y\!\!\:\bpl f'(\Sigma_X(x,n)),\Sigma_Y(f'(x),n)\bpr
\leq d_Y\!\!\:\bpl f'(\Sigma_X(x,n)),f(\Sigma_X(x,n))\bpr\\
+d_Y\!\!\:\bpl f(\Sigma_X(x,n)),\Sigma_Y(f(x),n)\bpr+d_Y\!\!\:\bpl\Sigma_Y(f(x),n),\Sigma_Y(f'(x),n)\bpr.
\end{multline*}
The claim follows, as each of these 3 terms is bounded by a constant independent of $x$ and $n$:
the first one by the fact that the functions $f$ and $f'$ are at finite distance from each other;
the second one by the fact that $f$ is a morphism;
the last one by the fact the functions $f$ and $f'$ are at finite distance from each other, 
and the third ($\bullet$) 
in
the definition of combing.  		
\end{proof}

Given a coherent combing $\Sigma$ on a proper metric space $X$, 
one can construct the compactification $\ebdryx\Sigma$ (technically, the construction below applies in a more general setting of proper combings, see \cite[Definition 2.6 and Lemma 2.7]{EnWu23}) as follows.
Define $C_\Sigma(X)$ 
to be
the $C^*$-algebra of all continuous, bounded functions $f\colon X\to\C$ such that $f\circ \Sigma_n\to f$ (in the supremum metric) and $f$ has bounded variation, i.e.~for 
all
$R>0$ the variation function $\mathrm{Var}_R(f)(x):=\sup\{|f(y)-f(x)|:y\in \overline{B}(x,R)\}$ is below any $\epsilon>0$ outside of a compact set $K(\epsilon,f,R)$. 
We 
consider 
the Gelfand dual $\ebdryx\Sigma$ of $C_\Sigma(X)$ --- the space of non-zero 
multiplicative $\C$-linear functionals
on the algebra $C_\Sigma(X)$ 
(these properties imply continuity (with norm 1), see \cite[Theorem 1.3.2(i)]{LinBook01})
with the weak-${}^*$ topology. 
Taking the kernel of such functionals gives an identification of 
the underlying set of $\ebdryx\Sigma$ 
with the set of
maximal (proper) ideals in $C_\Sigma(X)$,
see \cite[Theorem 1.3.2(ii)]{LinBook01}.
The map $X\ni x\mapsto\delta_x\in\ebdryx\Sigma$, where $\delta_x$ is the evaluation 
at $x$, is a well-defined homeomorphic embedding (for more details, see the proof of Proposition \ref{p:equivbdrymain}).

\subsection{The EZ-structure}\label{sbs:enwubdez}

Below we show a proof of existence of an EZ-structure, relying on the construction presented previously in this section. Next, we 
describe and compare the EZ-structures
obtained in this proof. 

\begin{proof} ({\sc of Theorem} \ref{t:main})
Fix a basepoint $o\in X$.
By the \v{S}varc--Milnor lemma, the group $G$ is finitely generated, 
and 
the map $\alpha\colon G\to X$
given by the formula $\alpha(g)=go$
is
a $G$-equivariant quasi-isometry; 
therefore the map $\Sigma(x,n):=\geod{o}{x}(n)$, which is clearly a combing of $X$, can be moved to a combing $\Sigma_G$ of $G$ 
using
$\alpha$ as follows:
let $\beta\colon X\to G$ be a quasi-inverse of $\alpha$ and put $\Sigma_G(g,n):=\beta(\Sigma(\alpha(g),n))$ (the basepoint for $\Sigma_G$ is $\beta(o)$).
We 
list
the assumptions of \cite[Theorem 7.10 and Remark 7.15]{EnWu23}, whose conclusion is that $G$ admits an EZ-structure
(the yet-unexplained terms appearing in 
this
list 
are
addressed
when needed
later in
the proof, 
in a form adapted to
our metric setting):
$X$ is a Euclidean retract, 
the space $X$ and the Rips complex $\rips(G)$ are $G$-equivariantly homotopy equivalent,
and the combing $\Sigma_G$ is coherent, expanding and $G$-coarsely equivariant. 
We shall show that all these conditions are satisfied.

\medskip
By the characterisation from Proposition \ref{p:charerar},
the space $X$ is an ER, 
as
it is 
locally compact, 
finite-dimensional (by assumption), contractible (by
the 
existence of bicombing) and locally contractible (the balls in $X$ are $\sigma$-convex because $\sigma$ is conical). 

\smallskip
Now we check that the space $X$ and the Rips complex of $G$ are $G$-equivariantly 
homotopy equivalent.
The Rips complex $\rips(G)$ is defined in \cite[Definition 4.1 and Example 4.4]{EnWu23} as the (increasing) union of the 
Rips complexes $\rips_n(G)$ for $n\in\N$, where the complex $\rips_C(G)$ for $C>0$ is the typically discussed Rips complex, i.e.~a 
finite subset
$T$ of elements of $G$ spans a simplex of $\rips_C(G)$ iff every two elements of $T$ are at distance at most $C$. 
(This way every finite tuple of elements of $G$ spans a simplex of $\rips(G)$, and $\rips(G)$ is equipped with the 
direct limit
topology induced by the inclusions $\rips_n(G)\subseteq\rips(G)$.)
In view of 
\cite[Lemma 3.3]{Lueck05},
it is sufficient to show that 
$\rips(G)$ is a model 
of $\underline{\mathrm{E}}G$
and 
$X$ is a model of $\underline{\mathrm{J}}G$ (a `numerable' version of $\underline{\mathrm{E}}G$, see \cite[Definition 2.3]{Lueck05}).

We use the
homotopy characterisation of $\underline{\mathrm{E}}G$, see \cite[Theorem 1.9(ii)]{Lueck05},
to show that 
the Rips complex $\rips(G)$ is
a model for $\underline{\mathrm{E}}G$.
Namely, we check that
the stabiliser of each element of $\rips(G)$ under the action of $G$ 
is
finite,
and that for each finite subgroup $H$ of $G$ the set $\rips(G)^H$ of fixed points of the action of $H$ on $\rips(G)$ is contractible.
Each element $x\in\rips(G)$ is contained in the interior of a simplex spanned by some elements $g_1,\ldots,g_m\in G$, therefore every element of the stabiliser $\Stab_G(x)$ of $x$ fixes setwise the set $\{g_i:1\leq i\leq m\}$, so $|\Stab_G(x)|\leq m<\infty$.
For each finite subgroup $H$ of $G$ 
the set 
$\rips(G)^H$ 
is non-empty, 
as it contains the barycentre of the simplex spanned by the elements of 
$H$.
It is easy to verify that
for each 
pair of
points 
$x,y\in\rips(G)^H$,  
the set $\rips(G)^H$ 
also
contains the linear segment in $\rips(G)$ between the points $x$ and $y$. 
Therefore, after fixing any point $x\in\rips(G)^H$, going along linear segments in $\rips(G)$ towards $x$ gives 
a 
homotopy $c_x\colon\rips(G)\times[0,1]\to\rips(G)$, 
which
restricts
to a homotopy $\rips(G)^H\times[0,1]\to\rips(G)^H$, 
which 
contracts the set $\rips(G)^H$ to the point $x$.
The continuity of $c_x$ may be justified by the following argument.
Let 
$\Delta_x$
be
a finite subset of $G$ 
such that $x$ belongs to a simplex spanned by 
the elements of $\Delta_x$.
Consider a point $y\in\rips(G)$ contained in a simplex 
spanned by 
the elements of a finite subset $\Delta_y$ of $G$.
We now check continuity of $c_x$ at all points of the form $(y,t)$ for $t\in[0,1]$. 
Let $U_y$ be the union of interiors of all 
of the
simplices of $\rips(G)$ that contain $y$. 
Then the set $U_y$ is an open neighbourhood of $x$ in $\rips(G)$.
Let $C\in\N$ be greater than any distance between a pair of points from the set 
$\Delta_x\cup\Delta_y$.
Then, 
for each $n\in\N$ the contraction $c_x$
restricts to a map 
$(U_y\cap\rips_n(G))\times[0,1]\to\rips_{C+n}(G)$, 
which is continuous.
Passing to the limit with $n$ gives continuity of $c_x$ on $U_y\times[0,1]$.

\newcommand{\rx}{2r_x}
\newcommand{\rxp}{r_x} 

We use the homotopy characterisation of $\underline{\mathrm{J}}G$, see \cite[Theorem 2.5(ii) and Definition 2.1]{Lueck05}, to show that $X$ is a model of $\underline{\mathrm{J}}G$.
Namely, we need to show that
(i) $X$ admits an open cover $\{U_i:i\in I\}$ such that 
(a) each of the sets $U_i$ is $G$-invariant and
admits 
a
\linebreak[2] 
$G$-equivariant map $U_i\to G/G_i$ for  
some finite subgroup 
$G_i$ 
of $G$,
and (b)
there exists a 
locally finite 
partition of unity on $X$ via $G$-invariant functions, subordinate to $\{U_i:i\in I\}$;
(ii) each finite subgroup of $G$ has a fixed point in $X$;
and 
(iii)
the projection maps $X\times X\to X$ onto the first coordinate and onto the second coordinate are homotopic via a $G$-equivariant homotopy.
Regarding 
property (i), first observe that properness of the action of $G$ gives that 
for each point $x\in X$ there exists a 
number
$r_x>0$ such that for all $g\in G$ either $gB(x,\rx)\cap B(x,\rx)=\emptyset$ or $g\in\Stab_G(x)$, and that the stabiliser $\Stab_G(x)$ is finite.
Let $U^x:=\bigcup\{gB(x,\rx):g\in G\}$ and $V^x:=\bigcup\{gB(x,\rxp):g\in G\}$.
It is 
easy to verify that:
assigning to an element $gy$, 
where $g\in G$ and $y\in B(x,\rx)$, 
the coset $g\:\!\Stab_G(x)$ gives a well-defined $G$-equivariant map $U^x\to G/\Stab_G(x)$;
and that the map $\varphi^x\colon X\to[0,\infty)$ defined by
$\varphi^x(y)=\sum_{g\in G}\max(0,\rxp-d(gx,y))$ is a well-defined $G$-invariant continuous map that 
is non-zero 
precisely
in the set 
$V^x$.
Cocompactness of the action of $G$ now gives that there exists a finite set $\{x_i:i\in I\}$ such that the sets 
$V^{x_i}$
for $i\in I$ form an open cover of $X$; let 
$U_i:=U^{x_i}$ and
$\varphi_i:=\varphi^{x_i}$ for $i\in I$.
The 
collection
$\{U_i:i\in I\}$ 
satisfies 
the property (a).
It is easy to check that the functions 
$X\ni y\mapsto\varphi_i(y){\big/}\sum_{j\in I}\varphi_j(y)$ 
for $i\in I$ form a locally finite partition of unity subordinate to $\{U_i:i\in I\}$, and are $G$-invariant since the functions $\varphi_i$ are $G$-invariant.
Property 
(ii)
is satisfied by Proposition \ref{p:fingpfixpt}. 
Property 
(iii)
is satisfied by the fact that the bicombing $\sigma$ gives the desired homotopy.

\medskip
Observe
that $\alpha$ is a morphism from $(G,\Sigma_G)$ to $(X,\Sigma)$,
as for any $h\in G$ and $n\in\N$ we have that $d(\alpha(\Sigma_G(h,n)),\Sigma(\alpha(h),n))=d(\alpha(\beta(\Sigma(\alpha(h),n))),\Sigma(\alpha(h),n))$ is universally bounded since $\alpha\beta$ is at bounded distance from the identity of $X$.
Therefore, by \cite[Lemma 2.8]{EnWu23},
in order to prove coherence and expandingness of $\Sigma_G$,
it suffices to check coherence and expandingness of $\Sigma$.

\smallskip
The fact that $\Sigma$ is coherent 
follows directly from consistency of $\sigma$ (it suffices to take $R=0$ in the definition of coherent combing).

\smallskip
Expandingness (cf.~\cite[Definition 2.6]{EnWu23}) of $\Sigma$ is equivalent to: there exists $R\geq 0$ such that for every $r\geq 0$ and $n\in\N$ there exists $D\geq0$ such that 
$\Sigma_n(\overline{B}(x,r))\subseteq \overline{B}(\Sigma_n(x),R)$ 
for all $x\in X\setminus \overline{B}(o,D)$. 
In fact, this statement holds for all $R>0$, and this is what we will show. 
Fix 
any $R>0$, $r\geq0$, $n\in\N$, 
and take $x,x'\in X$ with $d(o,x)\geq n$ and $d(x,x')\leq r$. Then by 
Proposition \ref{f:neweq52}
we have
$d(\Sigma_n(x),\Sigma_n(x'))\leq 2nr/d(o,x)$,
thus it suffices 
to take 
$D=\max(n,2nr/R)$.

\smallskip
A combing $\Sigma_Y$ on a space $Y$ 
admitting
an
action of $G$ is $G$-coarsely equivariant if the action of each $g\in G$ on $Y$ induces an endomorphism of $(Y,\Sigma_Y)$ (see \cite[Definition 5.14]{EnWu23}).
To 
show that the combing $\Sigma_G$ is $G$-coarsely equivariant, 
we first show that the combing $\Sigma$ is $G$-coarsely equivariant, 
which 
means that 
for all $g\in G$
there exists $R(g)\geq 0$ such that 
$d(\Sigma_n(gx),g\Sigma_n(x))\leq R(g)$ for all $x\in X$ 
and
$n\in\N$.
Indeed, take $g\in G$, $x\in X$ and $n\in\N$. 
Since $\sigma$ is $G$-equivariant, 
$g\sigma_{ox}=\sigma_{go,gx}$.
If $n\leq d(o,x)$ and $n\leq d(o,gx)$, 
we have that 
\begin{align*}
	\omit $d(\Sigma_n(gx),g\Sigma_n(x))$ \hfill&\\
	\leq d\!\left(\!\sigma_{o,gx}\!\left(\!\frac{n}{d(o,gx)}\!\right)\!,\sigma_{o,gx}\!\left(\!\frac{n}{d(go,gx)}\!\right)\!\right)+d\!\left(\!\sigma_{o,gx}\!\left(\!\frac{n}{d(go,gx)}\!\right)\!,\sigma_{go,gx}\left(\!\frac{n}{d(go,gx)}\!\right)\!\right)\qquad&\\[0.4em]
	\leq \frac{n|d(go,gx)-d(o,gx)|}{d(o,gx)d(go,gx)}\,d(o,gx)+\left(1-\frac{n}{d(go,gx)}\right)d(o,go)\qquad&\\[0.3em]
	\leq d(o,go)\left(\frac{n}{d(go,gx)}+1-\frac{n}{d(go,gx)}\right)
	=d(o,go).&
\end{align*}
If 
$n\geq d(o,x)$ 
(equivalently, $\Sigma_n(x)=x$), 
then
\begin{multline*}
d(\Sigma_n(gx),g\Sigma_n(x))=d(\Sigma_n(gx),gx)=\max(0,d(o,gx)-n)\\
\leq\max(0,d(o,gx)-d(o,x))\leq|d(o,gx)-d(go,gx)|\leq d(o,go).
\end{multline*}
If 
$n\geq d(o,gx)$
(equivalently, $\Sigma_n(gx)=gx$), then, 
by the above with $gx$ and $g^{-1}$ in the place of $x$ and $g$, respectively, we obtain
$d(\Sigma_n(x),g^{-1}\Sigma_n(gx))\leq d(o,g^{-1}o)$, which by 
\linebreak[1]
$g$-invariance of the metric $d$ gives $d(g\Sigma_n(x),\Sigma_n(gx))\leq d(go,o)$.
Summarising, it suffices to take $R(g)=d(o,go)$, and $\Sigma$ is $G$-coarsely equivariant. 

Now we show that $\Sigma_G$ is $G$-coarsely equivariant.
Let $g\in G$.
By $G$-equivariance of $\alpha$ we have that $\alpha g=g\alpha$, therefore $\beta\alpha g=\beta g\alpha$. 
The right-hand side is a morphism as a composition of morphisms ($\beta$ is a morphism by Proposition \ref{p:enwumorfizmy}\ref{p:enwumorfizmy1} as a quasi-inverse of $\alpha$), see \cite[Remark 2.18]{EnWu23}, 
therefore $\beta\alpha g$ is an endomorphism of $(G,\Sigma_G)$. 
Since $\beta\alpha$ is at finite distance from the identity map of $G$,
the map $\beta\alpha g$ 
is at finite distance from $g$; 
therefore, 
by Proposition \ref{p:enwumorfizmy}\ref{p:enwumorfizmy2}, 
the action of $g$ on $G$ induces an endomorphism of $(G,\Sigma_G)$. Therefore $\Sigma_G$ is $G$-coarsely equivariant.
\end{proof}

\newcommand{\stpref}{-}

In the remaining part of this subsection we show that all EZ-structures resulting from the proof above (for the choice involved, see Remark \ref{r:equivbdryconstruction}\stpref\ref{r:equivbdryconstructionsigmaprim} below)
are equivalent to the EZ-structure resulting from the compactification $\ebdryx\Sigma$ one would naturally consider (where the combing $\Sigma$ is as above, and its definition is recalled in Remark \ref{r:equivbdryconstruction}\stpref\ref{r:equivbdryconstructionsigma}), and which is among the EZ-structures resulting from the discussed proof (see Metaremark \ref{r:meta}\ref{r:meta2}). 
For a precise meaning of 
`equivalent',
see Proposition \ref{p:equivbdry0}\ref{p:equivbdry02}.

\medskip
We begin with a summary of the construction of the compactification from the proof above.

\begin{remark}\label{r:equivbdryconstruction}
	Formally, the EZ-structure 
	constructed
	in the proof of Theorem \ref{t:main} 
	above
	is 
	$(\ebdryx{\Sigma'},\overline{G}^{\Sigma_G}\setminus G)$
	resulting from the following multistep procedure.
(If the reader finds any of the steps below 
unsettling,
they may refer to Metaremark \ref{r:meta} below.)

	In the first paragraph of the proof 
	above
	we perform the following steps.
	
	\begin{enumerate}[label={\arabic*.}, ref=\arabic*]
		\item \label{r:equivbdryconstructionsigma}
		Fix a basepoint $o\in X$. Define a combing $\Sigma$ of $X$ 
		by
		$\Sigma(x,n):=\geod{o}{x}(n)$.
		
		\item \label{r:equivbdryconstructionsigmag}
		Let $\alpha$ be a $G$-equivariant quasi-isometry given by $G\ni g\mapsto go\in X$ (the \v{S}varc--Milnor Lemma) and $\beta\colon X\to G$ be its quasi-inverse.
		Define a combing $\Sigma_G$ of $G$ 
		by
		$\Sigma_G(g,n):=\beta(\Sigma(\alpha(g),n))$. 
	\end{enumerate}
	
	\smallskip
	\noindent
	Then we apply \cite[Theorem 7.10 with Remark 7.15]{EnWu23}, which states that 
	$(X^{\Sigma'},G^{\Sigma_G}\setminus G)$ --- which 
	is 
	the result of 
	the next two steps ---
	is an EZ-structure for $G$.
(See \cite[below Lemma 7.4, and proof of Theorem 7.10]{EnWu23}.)

\smallskip
\begin{enumerate}[label={\arabic*.}, ref=\arabic*,resume]
		\item
		\label{r:equivbdryconstructionsigmaprim}
		Let $\alpha'\colon G\to X$ 
		be a
		quasi-isometry of the form $\alpha'(g)=gx_0$ for some $x_0\in X$, 
		and 
		let
		$\Sigma'$ be 
		a combing on $X$
		such that
		$\alpha'$
		is a morphism from $(G,\Sigma_G)$ to $(X,\Sigma')$.
		
		\item
		By functoriality of the construction of the boundary, \cite[Corollary 2.18a]{EnWu23}, 
		and Proposition \ref{p:enwumorfizmy},
		the morphism $\alpha'$
		induces a homeomorphism
		from
		the boundary 
		${\overline{G}^{\Sigma_G}\setminus G}$
		of $G$
		to
		the boundary $\ebdryx{\Sigma'}\setminus X$ of $X$.
		Finally, the obtained EZ-structure for $G$ is 
		$(\ebdryx{\Sigma'},\overline{G}^{\Sigma_G}\setminus G)$, which is obtained from  $(\ebdryx{\Sigma'},\ebdryx{\Sigma'}\setminus X)$ using this identification of boundaries.
		\label{r:equivbdryconstructionbdryswap}
	\end{enumerate}

\smallskip
\noindent
	In view of the identification of $(\ebdryx{\Sigma'},\overline{G}^{\Sigma_G}\setminus G)$ with  $(\ebdryx{\Sigma'},\ebdryx{\Sigma'}\setminus X)$, we prefer to use the latter instead of the former,
	and consider in the text below $\ebdryx{\Sigma'}$  
	to be 
	the compactification resulting from the proof of Theorem \ref{t:main} from this subsection.
\end{remark}

\begin{metaremark}\phantomsection\label{r:meta}
\begin{enumerate}[(a)]
	\item\label{r:meta1}
	One may ask the question why we do not finish the construction at Step \ref{r:equivbdryconstructionsigma}, 
	and simply equip the space $X$ 
	with the combing $\Sigma$.
	The (formal) reason is that in the proof of Theorem \ref{t:main} in this
	subsection 
	we used \cite[Theorem 7.10]{EnWu23} as a black box, in which
	it is 
	the group
	$G$
	that
	is
	the object equipped with the combing,
	while 
	the space
	$X$ is 
	required
	to be $G$-equivariantly homotopy equivalent with $\rips(G)$ and to be an ER.
	This is an example of a difference between the approaches in this paper and in \cite{EnWu23}: while the natural place for 
	(bi)combings in this paper are 
	topological spaces, Engel and Wulff tend to 
	prefer
	to have the combing on the group $G$ or its Rips complex $\rips(G)$ --- while the latter 
	has nice abstract properties, and for it they can perform the homotopy from the definition of the Z-set, \cite[Theorem 5.7]{EnWu23}, 
	it does not have nice topological properties.
	This is the reason why they construct the compactification using $\rips(G)$ and exchange the space that has been compactified for $X$. (See \cite[beginning of Section 7]{EnWu23}).

	\item\label{r:meta2}
	We note that the choice $\alpha':=\alpha$ and $\Sigma':=\Sigma$ 
	satisfies the properties required in Remark \ref{r:equivbdryconstruction}\stpref\ref{r:equivbdryconstructionsigmaprim}
	(as has been shown in the proof 
	of Theorem \ref{t:main} in this subsection), 
	therefore $\ebdryx\Sigma$ is one of the compactifications constructed in the proof
	of Theorem \ref{t:main} in this subsection.
\end{enumerate}
\end{metaremark}

\begin{fact}\label{p:equivbdry0}

	Let $G$ be a group acting geometrically on a proper metric space $X$ that admits a \ccc, $G$-equivariant bicombing $\sigma$, and let $\Sigma$, $\Sigma'$ be as in Remark \ref{r:equivbdryconstruction}. Then:
	
	\begin{enumerate}[(i)]
		\item\label{p:equivbdry01}
		the identity function $\mathrm{id}_X\colon X\to X$ is a morphism from $(X,\Sigma)$ to $(X,\Sigma')$;
		
		\item\label{p:equivbdry02}
		the
		function $\mathrm{id}_X\colon X\to X$
		induces 
		a
		$G$-equivariant 
		homeomorphism 
		$(\mathrm{id}_X)_*\colon \ebdryx\Sigma\to\ebdryx{\Sigma'}$ 
		fixing 
		the
		copies of $X$ in $\ebdryx\Sigma$ and $\ebdryx{\Sigma'}$ pointwise, 
		that is 
		$(\mathrm{id}_X)_*(\delta_x)=\delta_x$ 
		for all $x\in X$.
	\end{enumerate}
	
\end{fact}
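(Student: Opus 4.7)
The plan for part (i) is to express $\mathrm{id}_X$ as being at finite distance from a composition of morphisms that factors through $G$, and then invoke Proposition \ref{p:enwumorfizmy}. Three morphisms are readily available: $\alpha\colon(G,\Sigma_G)\to(X,\Sigma)$, which is almost tautological from the definition $\Sigma_G=\beta\circ\Sigma\circ\alpha$ (this was already used in the proof of Theorem~\ref{t:main} in this subsection); its quasi-inverse $\beta\colon(X,\Sigma)\to(G,\Sigma_G)$, which is a morphism by Proposition \ref{p:enwumorfizmy}\ref{p:enwumorfizmy1}; and $\alpha'\colon(G,\Sigma_G)\to(X,\Sigma')$, which is a morphism by the defining property of $\Sigma'$ (Remark \ref{r:equivbdryconstruction}\stpref\ref{r:equivbdryconstructionsigmaprim}). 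Composing then gives a morphism $\alpha'\circ\beta\colon(X,\Sigma)\to(X,\Sigma')$.

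The next step is to observe that $\alpha'\circ\beta$ is at finite distance from $\mathrm{id}_X$. Since $G$ acts on $X$ by isometries, for any $x\in X$ we have
\[
d(\alpha'(\beta(x)),x)\leq d(\beta(x)\cdot x_0,\beta(x)\cdot o)+d(\alpha(\beta(x)),x)=d(x_0,o)+d(\alpha\beta(x),x),
\]
and the last term is uniformly bounded since $\beta$ is a quasi-inverse of $\alpha$. Proposition \ref{p:enwumorfizmy}\ref{p:enwumorfizmy2} then upgrades $\mathrm{id}_X$ to a morphism $(X,\Sigma)\to(X,\Sigma')$, establishing (i).

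For (ii), functoriality of the boundary construction (\cite[Corollary 2.18a]{EnWu23}) applied to the morphism from (i) yields a continuous map $(\mathrm{id}_X)_*\colon\ebdryx\Sigma\to\ebdryx{\Sigma'}$. Running the argument with the roles of $(\Sigma,\alpha)$ and $(\Sigma',\alpha')$ swapped --- namely, picking a quasi-inverse $\beta'$ of $\alpha'$, invoking Proposition \ref{p:enwumorfizmy}\ref{p:enwumorfizmy1} to see that $\beta'\colon(X,\Sigma')\to(G,\Sigma_G)$ is a morphism, and checking the analogous bound $d(\alpha(\beta'(x)),x)\leq d(o,x_0)+d(\alpha'\beta'(x),x)$ --- shows that $\mathrm{id}_X$ is likewise a morphism $(X,\Sigma')\to(X,\Sigma)$ and hence induces a continuous map in the reverse direction. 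Functoriality applied to the two composites (each equal to $(\mathrm{id}_X)_*$) shows these maps are mutually inverse, giving the claimed homeomorphism. $G$-equivariance follows because, for each $g\in G$, the action of $g$ on either boundary is precisely the map induced by the morphism $g$ on the corresponding combed space, so the functorial identity applied to $\mathrm{id}_X\circ g=g\circ\mathrm{id}_X$ gives equivariance. Finally, that $(\mathrm{id}_X)_*(\delta_x)=\delta_x$ for $x\in X$ is immediate, since on the dense subset $X$ the induced map sends the evaluation at $x$ to the evaluation at $\mathrm{id}_X(x)=x$. I do not foresee a serious obstacle; the only mild subtlety is keeping straight what data the hypothesis on $\Sigma'$ actually provides --- namely, the morphism property of $\alpha'$ --- and once that is unpacked the whole proof becomes a short diagram chase driven by Proposition \ref{p:enwumorfizmy} and functoriality.
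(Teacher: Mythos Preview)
Your proof is correct and follows essentially the same route as the paper: compose $\alpha'\circ\beta$ to get a morphism $(X,\Sigma)\to(X,\Sigma')$, observe it is at finite distance from $\mathrm{id}_X$ (the paper phrases this as ``$\alpha$ and $\alpha'$ are at finite distance since both have the form $g\mapsto gx$''), and apply Proposition~\ref{p:enwumorfizmy}\ref{p:enwumorfizmy2}. One small economy the paper exploits in part~(ii): rather than rerunning the argument with a new quasi-inverse $\beta'$, you can apply Proposition~\ref{p:enwumorfizmy}\ref{p:enwumorfizmy1} directly with $f=g=\mathrm{id}_X$ (since $\mathrm{id}_X\circ\mathrm{id}_X$ is at distance~$0$ from the identity) to conclude immediately that $\mathrm{id}_X$ is also a morphism in the reverse direction.
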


\begin{remark}\label{r:equivbdryfunctor}
	\begin{enumerate}[(i)]
		\item (cf.~\cite[Remark 7.15]{EnWu23}) Regarding the action of $G$ on the compactifications mentioned in the statement
		\ref{p:equivbdry02} 
		above,
		we justify below that
		the action of each $g\in G$ on $X$ induces 
		endomorphisms of $(X,\Sigma)$ and $(X,\Sigma')$;
		these
		induce by functoriality, \cite[Corollary 2.18b]{EnWu23},
		the desired
		homeomorphisms $g_*\colon \ebdryx{\Sigma}\to\ebdryx{\Sigma}$ 
		and $g_*\colon \ebdryx{\Sigma'}\to\ebdryx{\Sigma'}$.
		The fact that the action of $g$ induces an endomorphism of $(X,\Sigma)$ has been proved previously in this section in the proof of Theorem \ref{t:main}.
		In order to justify 
		that $g$ induces an endomorphism of $(X,\Sigma')$,
		observe
		that by construction of $\alpha'$ we have that $g\alpha'=\alpha'g$, therefore $g\alpha'\beta=\alpha'g\beta$. The right-hand side is a morphism as a composition of 3 morphisms ($\beta$ is a morphism 
		from
		$(X,\Sigma')$ 
		to
		$(G,\Sigma_G)$
		by Proposition \ref{p:enwumorfizmy}\ref{p:enwumorfizmy1} as a quasi-inverse of $\alpha'$). The left-hand side is at finite distance from $g$, 
		as $g$ acts on $X$ via isometries and $\alpha'\beta$ is at finite distance from the identity of $X$. Therefore, 
		by Proposition \ref{p:enwumorfizmy}\ref{p:enwumorfizmy2}, 
		the action of $g$ on $X$ induces an endomorphism of $(X,\Sigma')$.
		
		\item
		The functoriality of the construction of the compactification in \cite[Corollary 2.18]{EnWu23} works in the following way.
		It is a standard fact that a continuous function $F$ induces a pullback map $F^*$ between the spaces of continuous functions, and a pushforward map $F_*$ between the spaces dual to these spaces of continuous functions.
		It turns out
		that if $F$ is a morphism of combed spaces, then $F_*$ gives a well-defined continuous map between the compactifications.
	\end{enumerate}
\end{remark}

\begin{proof}
	\ref{p:equivbdry01}
	Since the quasi-isometry $\alpha'$ 
	is a morphism from $(G,\Sigma_G)$ to $(X,\Sigma')$,
	and $\beta$ is a morphism from $(X,\Sigma)$ to $(G,\Sigma_G)$ 
	(by Proposition
	\ref{p:enwumorfizmy}\ref{p:enwumorfizmy1},
	as a quasi-inverse of $\alpha$, 
	which we 
	have shown
	to be
	a 
	morphism
	from $(G,\Sigma_G)$ to $(X,\Sigma)$
	in the proof of Theorem \ref{t:main} in this subsection),
	the map 
	$\alpha'\beta$ 
	is a morphism from $(X,\Sigma)$ to $(X,\Sigma')$.
	Since both functions $\alpha'$ and $\alpha$ are of the form $g\mapsto gx$ for some $x\in X$, they are 
	at
	finite distance from each other, 
	so 
	the function $\alpha'\beta$ is at finite distance from $\alpha\beta$, which is at finite distance from the identity $\mathrm{id_X}$ of $X$, which implies by Proposition \ref{p:enwumorfizmy}\ref{p:enwumorfizmy2} that
	$\mathrm{id_X}$ is a morphism from $(X,\Sigma)$ to $(X,\Sigma')$. 
	
	\medskip
	\ref{p:equivbdry02} By Proposition \ref{p:enwumorfizmy}\ref{p:enwumorfizmy1}, the identity $\mathrm{id}_X$ is also a morphism from $(X,\Sigma')$ to $(X,\Sigma)$, 
	therefore,
	by functoriality, \cite[Corollary 2.18b]{EnWu23},
	the 
	induced 
	function $(\mathrm{id}_X)_*\colon \ebdryx{\Sigma}\to\ebdryx{\Sigma'}$
	is a homeomorphism.
	Finally,
	observe
	that for all $x\in X$ we have
	that
	$(\mathrm{id}_X)_*\delta_x=\delta_x$, 
	and 
	for all $g\in G$ we have that
	$g\:\!\mathrm{id}_X=\mathrm{id}_X\:\!g$, 
	which implies that
	$g_*(\mathrm{id}_X)_*=(\mathrm{id}_X)_*g_*$. 
\end{proof}

\subsection{Equivalence of the constructed compactifications}\label{sbs:equivbdry}
In this section we prove
that the constructions of the EZ-structures from Subsections \ref{sbs:deslanez} and \ref{sbs:enwubdez} produce the same compactification.

We note here that the 
pre-`moreover' part of the
proposition below follows from a stronger result, holding in a 
more general setting, \cite[Corollary 8.9]{FuOg20}, 
in a way described in \cite[above Examples 3.27, and 3.27.2]{EnWu23}. 
In view of this, we may see the proof of the proposition below as a more elementary proof of 
a
special case (with the minor addition of $G$-equivariance to the considerations).

\begin{fact}\label{p:equivbdrymain}
	Let $(X,d)$ be a proper metric space that admits a \ccc{} bicombing $\sigma$. Let $\Sigma$ be as in Remark \ref{r:equivbdryconstruction}\stpref\ref{r:equivbdryconstructionsigma}.
	Then there exists a 
	homeomorphism $\tau\colon \bdry{X}{\sigma}\to\ebdryx\Sigma$ fixing 
	the
	copies of $X$ in $\bdry{X}{\sigma}$ and $\ebdryx\Sigma$ pointwise, i.e.~$\tau(x)=\delta_x$ for all $x\in X$. Moreover, if a group $G$ acts on $X$
	in such a way that
	the bicombing $\sigma$ is additionally $G$-equivariant,
	then $\tau$ is additionally $G$-equivariant.
	
\end{fact}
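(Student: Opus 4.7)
The natural candidate is $\tau(\bar{x})(f) := \lim_{n\to\infty} f(\varrho_{o,\bar{x}}(n))$ for each $f \in C_\Sigma(X)$ and $\bar{x} \in \bdry{X}{\sigma}$. I would first check that this limit exists. For $x \in X$ the $\sigma$-geodesic $\varrho_{o,x}$ stabilises at $x$, so $\tau(x) = \delta_x$ automatically. For $\bar{x} \in \partial_\sigma X$, write $\xi = \varrho_{o,\bar{x}}$; given $\epsilon > 0$, the uniform convergence $f \circ \Sigma_n \to f$ furnishes $N$ with $|f(\Sigma_n(y)) - f(y)| < \epsilon$ for all $y \in X$, $n \geq N$. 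Applied to $y = \xi(m)$ with $m \geq n \geq N$, consistency of $\sigma$ gives $\Sigma_n(\xi(m)) = \varrho_{o,\xi(m)}(n) = \xi(n)$, hence $|f(\xi(n)) - f(\xi(m))| < \epsilon$ and the sequence is Cauchy. Linearity, multiplicativity and non-triviality of $\tau(\bar{x})$ are then immediate from the corresponding properties of evaluations on $X$ together with limit rules.

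\textbf{Injectivity, surjectivity, continuity.} I would organise all three via the key auxiliary claim that restriction $R\colon C(\bdry{X}{\sigma}) \to C_\Sigma(X)$, $F \mapsto F|_X$, is a $C^*$-algebra isomorphism, with inverse $f \mapsto \tilde f$ where $\tilde f(\bar{x}) := \lim_n f(\varrho_{o,\bar{x}}(n))$. To see $R$ lands in $C_\Sigma(X)$: by compactness of $\bdry{X}{\sigma}$, $F$ is bounded and uniformly continuous for $d_o$; the inequality $d_o(\Sigma_n(x), x) \leq 2^{-n}$ (direct from the definition of $d_o$ and consistency) gives uniform convergence $F|_X \circ \Sigma_n \to F|_X$; and Proposition~\ref{f:neweq52} implies that, for fixed $R$, the $d_o$-diameter of $\overline{B}(x,R) \cap X$ tends to $0$ as $d(o,x)\to\infty$, yielding bounded variation of $F|_X$. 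Conversely, given $f \in C_\Sigma(X)$, $\tilde f$ is well-defined by the Cauchy argument above; continuity at boundary points follows by combining the uniform smallness of $f \circ \Sigma_n - f$ with the definition of the basic sets $U_o(\bar{x}, t, \epsilon)$. The map $R$ is clearly a $*$-homomorphism, is injective because $X$ is dense in $\bdry{X}{\sigma}$, and is surjective with inverse $f \mapsto \tilde f$ (since $(\tilde f)|_X = f$ and $R(F) = f \Rightarrow F = \tilde f$). Taking Gelfand duals gives the homeomorphism $\ebdryx{\Sigma} \to \bdry{X}{\sigma}$ whose inverse coincides with $\tau$ by construction; in particular $\tau$ is a homeomorphism fixing $X$ pointwise.

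\textbf{Equivariance.} Assume $\sigma$ is $G$-equivariant. For $g \in G$, equivariance gives $g\varrho_{o,\bar{x}}(n) = \varrho_{go,g\bar{x}}(n)$, and this ray is asymptotic to $\varrho_{o,g\bar{x}}$ (both represent $g\bar{x} \in \partial_\sigma X$). For any $f \in C_\Sigma(X)$, asymptotic $\sigma$-rays yield the same $\tau$-value: if $\zeta, \eta$ are $\sigma$-rays with $\sup_t d(\zeta(t), \eta(t)) \leq D$, then by bounded variation $\mathrm{Var}_D(f)(\zeta(n)) \to 0$ as $\zeta(n)$ leaves every compact set, whence $|f(\zeta(n)) - f(\eta(n))| \to 0$ (Proposition~\ref{f:asymptotic}\ref{f:asymptotic1} ensures $\eta(n)$ stays within $\overline{B}(\zeta(n),D)$). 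Therefore
\[
\tau(g\bar{x})(f) = \lim_n f(\varrho_{o,g\bar{x}}(n)) = \lim_n f(g\:\!\varrho_{o,\bar{x}}(n)) = \tau(\bar{x})(f \circ g) = (g_*\tau(\bar{x}))(f),
\]
which is exactly $G$-equivariance of $\tau$.

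\textbf{Main obstacle.} The computational work is routine; the conceptual hurdle is the auxiliary claim $C(\bdry{X}{\sigma}) \cong C_\Sigma(X)$, i.e.\ ensuring that the two defining conditions of $C_\Sigma(X)$ are exactly the traces on $X$ of continuous functions on the inverse-limit compactification. The nontrivial direction is that a continuous function on $\bdry{X}{\sigma}$ restricts to a function of bounded variation on $X$; this is where Proposition~\ref{f:neweq52} does the real work, by controlling the $d_o$-distance between nearby points of $X$ that are far from $o$.
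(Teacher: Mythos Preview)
Your argument is correct, and the definition of $\tau$ matches the paper's. The route, however, is genuinely different in its organisation.

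The paper proves injectivity, surjectivity and continuity of $\tau$ directly at the level of the spectrum: injectivity by exhibiting, for each $\bar x\in\bdry{X}{\sigma}$, an explicit separating function $\dstfun_{\bar x}(\cdot)=d_o(\bar x,\cdot)$ in $C_\Sigma(X)$ (the bounded-variation check being exactly your computation via Proposition~\ref{f:neweq52}); surjectivity by a partition-of-unity style contradiction on a hypothetical maximal ideal not of the form $\ker\delta_{\bar x}$; and continuity by hand on subbasic open sets. You instead lift the whole problem one level up, proving that restriction $C(\bdry{X}{\sigma})\to C_\Sigma(X)$ is a $*$-isomorphism and then invoking Gelfand duality. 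Your approach is cleaner and makes transparent what is really being shown --- that the two compactifications have the same algebra of functions --- at the cost of packaging injectivity and surjectivity into the abstract functor rather than displaying them explicitly. The technical core (Proposition~\ref{f:neweq52} controlling $d_o$-diameters of $d$-balls far from $o$) is the same in both proofs.

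For equivariance you give a direct computation using bounded variation to show that asymptotic $\sigma$-rays induce the same functional; the paper instead observes $g_*\tau=\tau g$ on the dense subset $X$ and extends by continuity. Both are short; yours is slightly more informative because it shows that $\tau$ is basepoint-independent as a by-product.
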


In view of Proposition \ref{p:equivbdry0} and Remark \ref{r:equivbdryconstruction}, we have the following corollary. 

\begin{corollary}\label{c:equivbdry}
	Let $G$ be a group acting geometrically on a proper metric space $X$ that admits a \ccc, 
	$G$-equivariant bicombing $\sigma$, and 
	let
	$\bdry{X}{\sigma}$, $\ebdryx{\Sigma'}$ be the compactifications of $X$ constructed in the proofs of Theorem \ref{t:main} in Subsections \ref{sbs:deslanez}, \ref{sbs:enwubdez}, respectively.
	Then there exists a $G$-equivariant homeomorphism $\tau\colon \bdry{X}{\sigma}\to\ebdryx{\Sigma'}$ fixing the copies of $X$ in $\bdry{X}{\sigma}$ and $\ebdryx{\Sigma'}$ pointwise, i.e.~$\tau(x)=\delta_x$ for all $x\in X$.	
\end{corollary}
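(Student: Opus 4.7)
The approach is to realise both $\bdry{X}{\sigma}$ and $\ebdryx{\Sigma}$ as Gelfand spectra of a single $C^*$-subalgebra of $C_b(X)$. Concretely, because $\bdry{X}{\sigma}$ is a Hausdorff compactification of $X$, restriction $C(\bdry{X}{\sigma}) \to C_b(X)$ identifies $C(\bdry{X}{\sigma})$ with
\[
\mathcal{A} := \{f \in C_b(X) : f \text{ extends continuously to } \bdry{X}{\sigma}\},
\]
and Gelfand duality gives a canonical homeomorphism $\bdry{X}{\sigma} \to \mathrm{Spec}(\mathcal{A})$ sending $\bar{x}$ to the functional $f \mapsto \tilde{f}(\bar{x})$, which restricts on $X$ to $x \mapsto \delta_x$. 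Since $\ebdryx{\Sigma} = \mathrm{Spec}(C_\Sigma(X))$ by construction, the whole proposition reduces to the algebra identity $\mathcal{A} = C_\Sigma(X)$.

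For the inclusion $C_\Sigma(X) \subseteq \mathcal{A}$, I would extend any $f \in C_\Sigma(X)$ by $\tilde{f}(\bar{x}) := \lim_{n \to \infty} f(\geod{o}{\bar{x}}(n))$ for $\bar{x} \in \partial_\sigma X$ and $\tilde{f}|_X := f$. Existence of the limit is immediate: consistency of $\sigma$ gives $\Sigma_n(\geod{o}{\bar{x}}(m)) = \geod{o}{\bar{x}}(n)$ whenever $m \geq n$, so the combing condition $\sup_y |f(\Sigma_n y) - f(y)| \to 0$ makes $\{f(\geod{o}{\bar{x}}(n))\}_n$ Cauchy uniformly in $\bar{x}$. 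The same inequality yields the uniform estimate $|\tilde{f}(\bar{y}) - f(\pi_N(\bar{y}))| \leq \varepsilon$ on all of $\bdry{X}{\sigma}$ once $N$ is large enough; since $f \circ \pi_N$ is continuous on $\bdry{X}{\sigma}$ and $\varepsilon$ is arbitrary, $\tilde{f}$ is a uniform limit of continuous functions, and so is continuous.

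For the reverse inclusion $\mathcal{A} \subseteq C_\Sigma(X)$, I would argue by contradiction, leveraging compactness of $\bdry{X}{\sigma}$. A failure of the combing property produces $\varepsilon > 0$, $n_k \to \infty$ and $x_k \in X$ with $d(o, x_k) \geq n_k$ and $|f(\Sigma_{n_k}(x_k)) - f(x_k)| \geq \varepsilon$; a convergent subsequence gives $x_k \to \bar{x} \in \partial_\sigma X$ (necessarily, since $d(o, x_k) \to \infty$), and consistency of $\sigma$ forces $\pi_r(\Sigma_{n_k}(x_k)) = \pi_r(x_k)$ for $r \leq n_k$, hence $\Sigma_{n_k}(x_k) \to \bar{x}$ as well, contradicting continuity of $\tilde{f}$. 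A failure of the bounded-variation condition produces $x_k, y_k \in X$ with $d(x_k, y_k) \leq R$, $d(o, x_k) \to \infty$ and $|f(x_k) - f(y_k)| \geq \varepsilon$; Proposition~\ref{f:neweq52} then forces $d(\pi_r(x_k), \pi_r(y_k)) \to 0$ for every $r$, so by dominated convergence $d_o(x_k, y_k) \to 0$, and along a subsequence $x_k, y_k$ share the same limit in $\bdry{X}{\sigma}$, again contradicting continuity of $\tilde{f}$.

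For the moreover clause, when $\sigma$ is $G$-equivariant one obtains an action of $G$ on $\bdry{X}{\sigma}$ by homeomorphisms (as in Subsection~\ref{sbs:deslanez}) and a dual action on $\ebdryx{\Sigma}$ by $g_*\phi(f) := \phi(f \circ g)$. These actions are intertwined by $\tau$: for $g \in G$ and $\bar{x} \in \partial_\sigma X$ the rays $\geod{o}{g\bar{x}}$ and $g\geod{o}{\bar{x}}$ are asymptotic (their pointwise distance is non-increasing by Proposition~\ref{f:asymptotic}\ref{f:asymptotic1}, hence bounded by $d(o, go)$), so the bounded-variation property of any $f \in C_\Sigma(X)$ forces $\lim_n f(\geod{o}{g\bar{x}}(n)) = \lim_n f(g\geod{o}{\bar{x}}(n))$, i.e.\ $\tau(g\bar{x}) = g_*\tau(\bar{x})$. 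I expect the reverse inclusion $\mathcal{A} \subseteq C_\Sigma(X)$ to be the main obstacle, as it requires the inverse-limit topology of $\bdry{X}{\sigma}$ to be fine enough that pairs of points close in the combing sense share a limit --- exactly the quantitative content of Proposition~\ref{f:neweq52}.
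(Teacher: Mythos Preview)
Your argument is correct and rather elegant, but it proves Proposition~\ref{p:equivbdrymain} rather than Corollary~\ref{c:equivbdry}. The statement you are asked to prove concerns the compactification $\ebdryx{\Sigma'}$, where $\Sigma'$ is the combing produced in Remark~\ref{r:equivbdryconstruction}\stpref\ref{r:equivbdryconstructionsigmaprim} via the detour through $(G,\Sigma_G)$; you work throughout with $\ebdryx{\Sigma}$, where $\Sigma(x,n)=\geod{o}{x}(n)$ is the ``natural'' combing of Remark~\ref{r:equivbdryconstruction}\stpref\ref{r:equivbdryconstructionsigma}. While $\Sigma$ is \emph{one} admissible choice of $\Sigma'$ (Metaremark~\ref{r:meta}\ref{r:meta2}), the corollary is stated for an arbitrary such $\Sigma'$. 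To close this gap you must invoke Proposition~\ref{p:equivbdry0}\ref{p:equivbdry02}, which supplies the $G$-equivariant homeomorphism $(\mathrm{id}_X)_*\colon\ebdryx{\Sigma}\to\ebdryx{\Sigma'}$ fixing $X$; composing with your $\tau$ then gives the corollary. This is exactly how the paper deduces the corollary from Proposition~\ref{p:equivbdrymain}.

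As a proof of Proposition~\ref{p:equivbdrymain}, your route is genuinely different from the paper's and worth noting. The paper defines $\tau(\bar{x})=\delta_{\bar{x}}$ directly and then checks injectivity (via the explicit functions $\dstfun_{\bar{x}}=d_o(\bar{x},\cdot)\in C_\Sigma(X)$), surjectivity (via a partition-of-unity argument on maximal ideals), continuity, and equivariance (by density of $X$). You instead reduce everything to the single algebra identity $\mathcal{A}=C_\Sigma(X)$, after which Gelfand duality does the rest. Your inclusion $C_\Sigma(X)\subseteq\mathcal{A}$ via the uniform approximation $\tilde{f}=\lim_N f\circ\pi_N$ is clean; your reverse inclusion via sequential compactness of $\bdry{X}{\sigma}$ and Proposition~\ref{f:neweq52} is correct and avoids the paper's explicit construction of distinguishing functions. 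The trade-off is that the paper's argument is entirely self-contained, while yours leans on the Gelfand--Naimark correspondence for compactifications; on the other hand, your approach makes the equivalence of the two compactifications conceptually transparent as an equality of function algebras.
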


\begin{proof}{\sc (of Proposition \ref{p:equivbdrymain})}
	Put $\tau(\bar{x}):=\delta_{\bar{x}}$, where $\delta_{\bar{x}}(f)=\lim_nf(\geod{o}{\bar{x}}(n))$ for all $f\in C_\Sigma(X)$.
	
	Observe that for any $x\in X$ and $n\geq d(o,x)$ we have that $\geod{o}{x}(n)=x$, therefore the definition of $\delta$ from the definition of $\tau$ extends the 
	previous
	definition of $\delta$ from Section~\ref{s:enwubd}
	(from $X$ to the whole $\bdry{X}{\sigma}$),
	in particular the equality $\tau(x)=\delta_x$ holds for all $x\in X$.
	Observe that for any $\bar{x}\in \bdry{X}{\sigma}$ the limit $\lim_nf(\geod{o}{\bar{x}}(n))$ exists, 
	as the convergence of $f\circ \Sigma_n$ to $f$ in the supremum metric implies that the diameters of the sets $(f\circ\geod{o}{\bar{x}})([n,\infty))$ converge to 0
	when $n\to\infty$,
	and 
	that
	$\delta_{\bar{x}}$ is 
	a non-zero multiplicative linear functional 
	on $C_\Sigma(X)$; 
	therefore the map $\tau$ is well-defined.

	\smallskip
	The map $\tau$ is one-to-one by the following argument. Let $\bar{x}\in \bdry{X}{\sigma}$. Define $\dstfun_{\bar{x}}(x):=d_o(\bar{x},x)$ (the metric $d_o$ has been introduced in 
	equation \eqref{eq:do}).
	The function $\dstfun_{\bar{x}}$ is bounded (by 1) and continuous on $X$.
	Furthermore, for all $x\in X$ and $n\in\N$ we have that $|\dstfun_{\bar{x}}(\Sigma_n(x))-\dstfun_{\bar{x}}(x)|\leq 2^{-n+1}$,
	as for all $i\leq n$ we have 
	by consistency of $\sigma$
	that $\geod{o}{\Sigma_n(x)}(i)=\geod{o}{\geod{o}{x}(n)}(i)=\geod{o}{x}(i)$, so  the $i$-th summands
	for $i\leq n$
	in the definitions of $d_o(\bar{x},x)$ and $d_o(\bar{x},\Sigma_n(x))$ are the same; this implies that $\dstfun_{\bar{x}}\circ\Sigma_n$ converges to $\dstfun_{\bar{x}}$ in the supremum metric. 
	To see that $\dstfun_{\bar{x}}$ has bounded variation, 
	take
	$R>0$, $x\in X$ and $y\in\overline{B}(x,R)$, and 
	let $N\in\N$ be the integer part of $d(o,x)$.
	By 
	the triangle inequality and
	Proposition \ref{f:neweq52}
	we have 
	that
	\begin{multline*} |\dstfun_{\bar{x}}(x)-\dstfun_{\bar{x}}(y)|\leq\sum_{n=1}^N2^{-n}d(\geod{o}{x}(n),\geod{o}{y}(n))+\sum_{n=N+1}^\infty2^{-n}\\
		\leq \sum_{n=1}^{\infty}\frac{2Rn}{d(o,x)}2^{-n}
		+2^{-N}
		\leq\frac{4R}{d(o,x)}+2^{-d(o,x)+1},
	\end{multline*}
	which tends to $0$ when $d(o,x)$ tends to $\infty$.
	Therefore $\dstfun_{\bar{x}}\in C_\Sigma(X)$. 
	By continuity of $d_o$, for any $\bar{y}\in\bdry{X}{\sigma}$ we have that
	$\tau(\bar{y})(\dstfun_{\bar{x}})=\delta_{\bar{y}}(\dstfun_{\bar{x}})
	=d_o(\bar{y},\bar{x})$. Therefore, if $\tau(\bar{y})=\tau(\bar{x})$, then by an application of both sides to $\dstfun_{\bar{x}}$ we obtain that $d_o(\bar{y},\bar{x})=d_o(\bar{x},\bar{x})$, which is equal to~$0$, so $\bar{x}=\bar{y}$.

	The map $\tau$ is onto $\ebdryx\Sigma$ by the following argument. Assume that there exists a maximal
	proper
	ideal $I$ of $C_\Sigma(X)$ such that $I\neq\ker\delta_{\bar{x}}$ for any $\bar{x}\in\bdry{X}{\sigma}$. Then, because $I$ is closed under 
	the
	multiplication
	by the elements of the 
	(whole) 
	algebra $C_\Sigma(X)$,
	we can pick a family of functions $\{f_{\bar{x}}:\bar{x}\in\bdry{X}{\sigma}\}\subseteq I$ such that $f_{\bar{x}}$ is non-negative and $\delta_{\bar{x}}(f_{\bar{x}})=1$. If 
	$x\in X$,
	then there exists 
	$r_x>0$
	such that 
	$f_{\bar{x}}\geq 1/2$ 
	on 
	$B(x,r_x)$.
	If $\bar{x}\in\partial_\sigma X$, then, since $f_{\bar{x}}\in C_\Sigma(X)$, we can choose $t_{\bar{x}}\in\N$ 
	such that
	for all $x\in X$ with $d(o,x)\geq t_{\bar{x}}$ we have that $|f_{\bar{x}}(x)-f_{\bar{x}}(\Sigma_{t_{\bar{x}}}(x))|\leq 1/6$ (as $f_{\bar{x}}\circ\Sigma_n\to f_{\bar{x}}$) and $|f_{\bar{x}}(x)-f_{\bar{x}}(y)|\leq 1/6$ for all $y\in X$ with $d(x,y)\leq 1$ (as $f_{\bar{x}}$ has bounded variation). 
	Then 
	we have by the triangle inequality that for all $y\in U_o(\bar{x},t_{\bar{x}},1)\cap X$ and $n\geq t_x$
	\begin{multline*}
		|f_{\bar{x}}(y)-f_{\bar{x}}(\geod{o}{\bar{x}}(n))|
		\leq |f_{\bar{x}}(y)-f_{\bar{x}}(\geod{o}{y}(t_{\bar{x}}))|+|f_{\bar{x}}(\geod{o}{y}(t_{\bar{x}}))-f_{\bar{x}}(\geod{o}{\bar{x}}(t_{\bar{x}}))|\\
		+|f_{\bar{x}}(\geod{o}{\bar{x}}(t_{\bar{x}}))-f_{\bar{x}}(\geod{o}{\bar{x}}(n))|
		\leq 1/6+1/6+1/6=1/2,
	\end{multline*}
	therefore, passing to the limit with $n$, we obtain that 
	$f_{\bar{x}}(y)\geq 1-1/2=1/2$.	
	Observe that the family $\{U_o(\bar{x},t_{\bar{x}},1):\bar{x}\in\partial_\sigma X\}\cup\{B(x,r_{x}):x\in X\}$ is an open cover of $\bdry{X}{\sigma}$, thus we can choose a finite subcover $\{U_o(\bar{x}_i,t_{\bar{x}_i},1):i=1,\ldots,m_1\}\cup\{B(x_j,r_{x_j}):j=1,\ldots,m_2\}$. Then the function $f:=f_{\bar{x}_1}+\ldots+f_{\bar{x}_{m_1}}+f_{x_1}+\ldots+f_{x_{m_2}}$ belongs to $I$ and $1/2\leq f(x)$
	for all $x\in X$. Using the fact that
	\begin{equation*}
		\left|\frac{1}{f(x)}-\frac{1}{f(y)}\right|=\frac{|f(x)-f(y)|}{|f(x)f(y)|}\leq4\;\!|f(x)-f(y)| 
	\end{equation*}
	for all $x,y\in X$,
	one may 
	verify
	that the function $1/f$ belongs to $C_\Sigma(X)$, therefore $1=(1/f)\cdot f\in I$ and $I=C_\Sigma(X)$.
	This contradicts properness of $I$.

	Since 
	$\bdry{X}{\sigma}$
	is compact and $\tau$ is bijective, in order to prove that $\tau$ is a homeomophism, it suffices to show that it is continuous. Consider any subbase open subset $U_{f,a,\epsilon}:=\{\delta_{\bar{y}}\in\ebdryx\Sigma:|\delta_{\bar{y}}(f)-a|<\epsilon\}$ of $\ebdryx\Sigma$, 
	where $f\in C_\Sigma(X)$, $a\in\C$ and $\epsilon>0$.
	Then $\tau^{-1}(U_{f,a,\epsilon})=\{\bar{y}\in\bdry{X}{\sigma} :|\delta_{\bar{y}}(f)-a|<\epsilon\}$. 
	Consider a point $\bar{x}\in\bdry{X}{\sigma}$ such that $\epsilon':=|\delta_{\bar{x}}(f)-a|<\epsilon$.
	If $\bar{x}\in X$, then by continuity of $f$ there exists $\eta>0$ such that 
	\begin{equation*}
		\{\delta_y(f):y\in B(\bar{x},\eta)\}
		=f(B(\bar{x},\eta))\subseteq
		B_\C(\delta_{\bar{x}}(f),\epsilon-\epsilon')
		\subseteq B_\C(a,\epsilon)
	\end{equation*}
	If $\bar{x}\in\partial_\sigma X$, then, as in the proof that $\tau$ is onto, by the fact that $f\circ\Sigma_n\to f$ and that $f$ has bounded variation, for large enough $t$ we have that 
	\begin{equation*}
		f(U_o(\bar{x},t,1)\cap X)
		\subseteq\overline{B}_\C(\delta_{\bar{x}}(f),(\epsilon-\epsilon')/2) 
		\subseteq\overline{B}_\C(a,\epsilon'+(\epsilon-\epsilon')/2),
	\end{equation*}
	therefore 
	$\{\delta_{\bar{y}}(f):\bar{y}\in U_o(\bar{x},t,1)\}\subseteq \overline{f(U_o(\bar{x},t,1)\cap X)}\subseteq B_\C(a,\epsilon)$.
	
	\medskip
	Now we prove the `moreover' part.
	Observe that for any $g\in G$ 
	and 
	$x\in X$ we have $g_*\tau(x)=g_*\delta_x=\delta_{gx}=\tau(gx)$.
	Since $\tau$ is continuous, 
	and 
	the actions of $G$ on $\bdry{X}{\sigma}$ and $\ebdryx{\Sigma}$ are via continuous functions, and $X$ is dense in $\bdry{X}{\sigma}$,
	we have that 
	$g_*\tau(\bar{x})=\tau(g\bar{x})$ for any $\bar{x}\in\bdry{X}{\sigma}$. 
\end{proof}

\newcommand{\tnua}{\square}
\newcommand{\tnub}{\diamondsuit}

\newcommand{\lbla}{\circ}
\newcommand{\lblb}{\bullet}

\newcommand{\simple}{std}

\section{Non-uniqueness of boundary}\label{s:nonunique}

In this section we adapt the classical example by Croke and Kleiner \cite{CrKl00} to prove the following theorem on non-uniqueness of the boundary defined in Section \ref{s:deslan} in the case of injective groups.
(For the definition of 
injective metric space, see the paragraph above Corollary \ref{c:injective}.)

\begin{theorem}[Theorem \ref{t:nonuniqueintro}]\label{t:nonunique}
There exists a group $G$ acting geometrically on two
proper
finite-dimensional
injective metric spaces $X^\tnub,X^\tnua$ with 
convex
bicombings $\sigma^\tnub,\sigma^\tnua$, respectively, such that $\partial_{\sigma^\tnub} X^\tnub$ and $\partial_{\sigma^\tnua} X^\tnua$ are not homeomorphic. 
\end{theorem}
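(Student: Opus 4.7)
The plan is to replay the classical Croke--Kleiner example \cite{CrKl00} with $\ell^\infty$-metrics in place of Euclidean ones on the flat pieces.

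First, I would recall the Croke--Kleiner group $G = \langle a,b,c,d \mid [a,b]=[c,d]=1 \rangle$, presented as the fundamental group of the space $Y$ obtained by gluing two flat tori along a simple closed geodesic $\gamma$. The standard universal cover $\widetilde Y$ is a polygonal complex whose $2$-cells are flat planes and whose $1$-skeleton consists of the lifts of $\gamma$, and $G$ acts geometrically on $\widetilde Y$.

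Second, I would build $X^\square$ and $X^\diamondsuit$ as two copies of the combinatorial complex underlying $\widetilde Y$, each equipped with a different piecewise-$\ell^\infty$ metric: on every $\mathbb R^2$-piece use the $\ell^\infty$-norm (which is a finite-dimensional injective Banach space), and glue the pieces along the lifts of $\gamma$ according to one of two admissible patterns. The patterns correspond to whether a gluing line sits inside each adjacent flat as an $\ell^\infty$ coordinate axis ($X^\square$) or as an $\ell^\infty$ diagonal ($X^\diamondsuit$). Unlike in the CAT(0) case, these are essentially the only gluings yielding a globally injective metric, because hyperconvexity at the gluing locus forces a matching of the ``shapes'' of the $\ell^\infty$ balls across the two sides. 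I would establish global injectivity of each $X^i$ using a gluing theorem for injective spaces (in the spirit of \cite{Miesch17}), verify that $G$ still acts geometrically by isometries on $X^i$, and apply Remark \ref{u:equivariant} to obtain a unique, automatically $G$-equivariant convex bicombing $\sigma^i$ on $X^i$.

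Third, I would compare the resulting boundaries. A single $\ell^\infty$-flat $(\mathbb R^2,\ell^\infty)$ contributes to $\partial_{\sigma^i} X^i$ a closed circle, namely the visual boundary of the flat, realised as the boundary of the unit $\ell^\infty$-square. In $X^\square$, a coordinate-axis gluing line limits to two antipodal points of this circle, so two flats meeting along a line are glued at infinity along a pair of points; in $X^\diamondsuit$, a diagonal gluing line limits to an entire edge of the circle, so two flats are glued at infinity along a whole arc. This qualitative difference, for example measured by the existence or non-existence of small local cut points at the limits of gluing lines, or by a local cohomology computation there, supplies a topological invariant distinguishing $\partial_{\sigma^\square} X^\square$ from $\partial_{\sigma^\diamondsuit} X^\diamondsuit$.

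The main obstacle will be the second step: controlling injectivity under gluing. Hyperconvexity is a global intersection-of-balls property, and verifying it across an infinite tree-like union of $\ell^\infty$-planes requires a local-to-global argument of Miesch type. Proving that no intermediate gluing pattern yields an injective metric is equally subtle, as it amounts to exhibiting a local obstruction to hyperconvexity at the gluing locus. Once these are settled, the boundary comparison in the third step is technical but routine: it reduces to tracking $\sigma$-rays through sequences of adjacent flats and reading off how boundary squares are attached at infinity.
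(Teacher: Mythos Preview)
Your third step contains a genuine error. In $(\mathbb{R}^2,\ell^\infty)$ the unique convex bicombing consists of straight segments (see \cite[Theorem~3.3]{DeLa15}), so the $\sigma$-rays from the origin are precisely the Euclidean half-lines, and two such rays are asymptotic only if they have the same direction. Hence the visual boundary of a single $\ell^\infty$-plane is a topological circle, and any line---whether a coordinate axis or a diagonal---limits to exactly two points of that circle, never to an arc. The distinction you propose between $X^\square$ and $X^\diamondsuit$ (pair-of-points versus arc at infinity) therefore does not exist, and your invariant collapses. There is also a smaller slip in the first step: the Croke--Kleiner group is $\langle a,b,c,d\mid [a,b]=[b,c]=[c,d]=1\rangle$, the RAAG on a path of length three, realised by gluing \emph{three} tori; your presentation $[a,b]=[c,d]=1$ gives $\mathbb{Z}^2*\mathbb{Z}^2$, a different group.

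The paper sidesteps any direct analysis of the $\ell^\infty$-boundaries. Instead it proves that on each Croke--Kleiner complex the $\ell^2$- and $\ell^\infty$-bicombings have \emph{the same trajectories}: this is checked locally on five explicit model spaces built from at most three planes (Lemma~\ref{l:5sp}), pushed to a global statement via a local-to-global principle for $\sigma$-geodesics (Proposition~\ref{f:loctoglobgeod}), and then Proposition~\ref{p:bilipbic} converts ``same trajectories'' into a homeomorphism $\partial_{\sigma^\infty}X^{\alpha,\infty}\cong\partial_{\sigma^2}X^{\alpha,2}$. At that point the non-homeomorphism of the two injective boundaries is inherited verbatim from the classical CAT(0) result of Croke and Kleiner. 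What this buys is that one never has to understand the global topology of the injective boundary from scratch; the only new work is the local comparison of geodesics in Lemma~\ref{l:5sp}.
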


\begin{remark}\label{r:uniquebicombing}
As it 
has been 
discussed in
Remark \ref{u:equivariant}
and
in
the proof of Corollary~\ref{c:injective}\ref{c:injective1}, 
\linebreak[1]
a
proper 
injective 
metric
space $X$
that is
finite-dimensional
---
or, more generally, such that 
each of its
bounded subsets 
is of finite dimension
---
admits a unique convex bicombing, which is additionally consistent, reversible and equivariant with respect to the full isometry group $\mathrm{Iso}(X)$ of $X$.

In a CAT(0) space $X$, each pair of points is connected by a unique geodesic; 
these geodesics
give
the
unique bicombing on $X$, see \cite[Proposition II.1.1.4(1)]{BrHae99}. This bicombing therefore is automatically consistent, reversible and $\mathrm{Iso}(X)$-equivariant. It is also convex, see \cite[Proposition II.2.2.2]{BrHae99}.

Further in this section, 
we will use 
these uniqueness results
without mentioning. 
\end{remark}

We say that two bicombings $\sigma^\lbla,\sigma^\lblb$ on a complete geodesic metric space $X$ \emph{have the same trajectories} if $\im\,\sigma^\lbla_{xx'}=\im\,\sigma^\lblb_{xx'}$ for all $x,x'\in X$.

\smallskip
The strategy of the proof of Theorem \ref{t:nonunique} is as follows.
We take a group acting on two spaces with non-homeomorphic boundaries, described in \cite{CrKl00} (namely, the group of deck transformations acting on the universal covers of the spaces in Figure \ref{fig:glue}) and replace the original,
piecewise-$\ell^2$
metrics 
on these spaces with 
piecewise-$\ell^\infty$ 
metrics.
It is then sufficient, see Proposition \ref{p:bilipbic}, to show that the resulting spaces are injective and equipped with bicombings having the same trajectories as the original CAT(0) ones.
It turns out that 
these properties may be checked locally, to which end Lemma \ref{l:5sp} serves.
Lemmas \ref{l:fixpoint}, \ref{l:sigmaconvex}, \ref{l:conservation} and Proposition \ref{f:loctoglobgeod} are some preparatory lemmas useful in the proof of Lemma \ref{l:5sp}.

\medskip
We begin with the following simple, yet very useful observation.

\begin{lemma}\label{l:fixpoint}
	Let $\varphi$ be an isometry of a metric space $X$
	that
	possesses a $\varphi$-equivariant bicombing $\sigma$. Then the fixpoint set $\Fix(\varphi)$ of $\varphi$ is $\sigma$-convex. 
\end{lemma}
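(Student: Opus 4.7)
The plan is to simply unfold the definitions of $\sigma$-convexity and $\varphi$-equivariance; there is no genuine obstacle here, and I would not expect a proof longer than a few lines.

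Concretely, I would fix two arbitrary points $x, y \in \Fix(\varphi)$ and aim to show that $\im\,\sigma_{xy} \subseteq \Fix(\varphi)$, which is exactly what it means for $\Fix(\varphi)$ to be $\sigma$-convex (see Definition~\ref{def:bicombing}). To do so, apply the $\varphi$-equivariance of $\sigma$, which gives the identity $\varphi \circ \sigma_{xy} = \sigma_{\varphi x, \varphi y}$. Because $x$ and $y$ are fixed by $\varphi$, the right-hand side equals $\sigma_{xy}$, so evaluating at any parameter $t \in [0,1]$ yields
\[
\varphi\bigl(\sigma_{xy}(t)\bigr) = \sigma_{xy}(t),
\]
i.e.~$\sigma_{xy}(t) \in \Fix(\varphi)$ for every $t$. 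This gives the desired inclusion $\im\,\sigma_{xy} \subseteq \Fix(\varphi)$, completing the proof.

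I note that the hypothesis that $\varphi$ be an isometry is not actually used in this argument — only continuity of $\varphi$ (which follows from $\varphi$-equivariance together with continuity of $\sigma$) and the equivariance itself enter. The isometry hypothesis presumably appears because it is the natural setting in which $\varphi$-equivariant bicombings are considered later in the paper. The only subtlety worth flagging is the convention underlying $\varphi$-equivariance: the statement $g\sigma_{xy} = \sigma_{gx,gy}$ in Definition~\ref{def:bicombing} is an equality of parametrised curves, so it really does give pointwise fixing at every $t$, not merely setwise invariance of the image.
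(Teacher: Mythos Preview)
Your proof is correct and is essentially identical to the paper's one-line argument: for $x,y\in\Fix(\varphi)$, the paper writes $\sigma_{xy}=\sigma_{\varphi(x)\varphi(y)}=\varphi\circ\sigma_{xy}$, which is exactly the computation you carry out. Your side remark that the isometry hypothesis is not actually used in the argument is also accurate.
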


\begin{proof}
For any $x,y\in\Fix(\varphi)$ we have that $\sigma_{xy}=\sigma_{\varphi(x)\varphi(y)}=\varphi\circ\sigma_{xy}$. 
\end{proof}

We 
add some simple observations to the work of Miesch on gluings of injective metric spaces.
Following \cite{Miesch15}, we call a subset $A$ of a metric space $(X,d)$ \emph{strongly convex} whenever for all $x,y\in A$ the metric interval $\{z\in X:d(x,y)=d(x,z)+d(z,y)\}$ is contained in $A$, and \emph{externally hyperconvex}
(recall the 
discussion 
about names
from
around the definition 
of injective metric space, above Corollary~\ref{c:injective})
whenever for every family of points ${x_i\in X}$ 
and radii $r_i>0$ that satisfies $d(x_i,x_j)\leq r_i+r_j$ and $d(x_i,A)\leq r_i$, 
the set
$A\cap\bigcap\overline{B}(x_i,r_i)$
is non-empty.
For $\R^2$ with the 
$\ell^\infty$-metric, 
a standard example of a strongly convex subset is the diagonal line $\{(d,d):d\in\R\}$, 
and of an externally hyperconvex subset is the horizontal line $\{(x,0):x\in\R\}$.

Further in this chapter,
by an \emph{\simple-gluing} of the spaces $(X_\lambda,d_\lambda)_{\lambda\in\Lambda}$ along a subspace $A$ we mean the metric space $X$ obtained in the following process.
We assume that we have a family of metric spaces $(X_\lambda,d_\lambda)_{\lambda\in\Lambda}$,
a space $(A,d_A)$, 
and isometric embeddings $\imath_\lambda\colon(A,d_A)\to(X_\lambda,d_\lambda)$ for all $\lambda\in\Lambda$,
such that $\imath_\lambda(A)$ is closed in $X_\lambda$.
Then $X$ arises as the image of the gluing map $\pi$ defined as the quotient map 
of the relation $\sim$ on $\bigsqcup_{\lambda\in\Lambda}X_\lambda$
given by $\imath_\lambda(a)\sim\imath_{\lambda'}(a)$ for all $\lambda,\lambda'\in\Lambda$ and $a\in A$,
with the standard gluing metric.
Note that the spaces $A$ and $X_\lambda$ 
can be
isometrically embedded into $X$
using $\pi$ (and the $\imath_\lambda$), 
and therefore we will identify these spaces with their images in $X$.

\smallskip
In the following lemma, the statements about CAT(0)-ness and about injectivity, see \cite{Miesch15}, of the spaces resulting from gluings have been known previously.

\begin{lemma}\label{l:sigmaconvex}
Let $X$
be the \simple-gluing of 
metric
spaces
$(X_\lambda,d_\lambda)_{\lambda\in\Lambda}$ along some
space
$A$. 
In any of the 3 cases below:
\begin{enumerate}
\item the spaces 
$X_\lambda$ are CAT(0), and $A$ is a closed and convex subset of each 
of the
$X_\lambda$;
\item the spaces 
$X_\lambda$ are injective, $X$ is 
proper and
finite-dimensional, 
and $A$ is externally hyperconvex 
(therefore, automatically, closed) 
in each 
of the
$X_\lambda$;
\item the spaces $X_\lambda$ are injective, $X$ is 
proper and
finite-dimensional,
and $A$ is closed and strongly convex in each 
of the
$X_\lambda$;	
\end{enumerate}
the space $X$ is CAT(0) (case 1) or injective (cases 2 and 3), thus 
admits
a convex bicombing $\sigma$;
and
for all $\Lambda_0\subseteq\Lambda$ the space $X_0:=A\cup\bigcup_{\lambda\in\Lambda_0}X_\lambda$ is $\sigma$-convex, in particular $\sigma|_{X_0\times X_0\times[0,1]}$ is the convex bicombing on $X_0$. 
\end{lemma}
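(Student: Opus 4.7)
Plan: For Case 1, I would invoke Reshetnyak's gluing theorem to conclude that $X$ is CAT(0); for Cases 2--3, I would invoke Miesch's gluing results (for externally hyperconvex and strongly convex subspaces, respectively) to conclude that $X$ is injective. In each case the convex bicombing $\sigma$ is then immediate: in Case 1 it is the geodesic bicombing of the unique CAT(0) geodesics, and in Cases 2--3 it is supplied --- and shown to be unique and $\mathrm{Iso}(X)$-equivariant --- by Remark~\ref{r:uniquebicombing}.

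For the $\sigma$-convexity of $X_0$, the plan is a reflection trick via doubling. Set $\Lambda_1 := \Lambda \setminus \Lambda_0$, and let $Y$ denote the \simple-gluing along $A$ of the family $\{X_\lambda : \lambda \in \Lambda_0\} \cup \{X_\lambda^{(1)}, X_\lambda^{(2)} : \lambda \in \Lambda_1\}$, with $X_\lambda^{(1)}, X_\lambda^{(2)}$ two disjoint isometric copies of $X_\lambda$. This $Y$ satisfies the hypotheses of the present lemma in the same case as $X$ (properness and finite-dimensionality of $Y$ in the injective cases are routine to check), so applying to $Y$ the reasoning of the preceding paragraph yields a convex bicombing $\hat\sigma$ on $Y$. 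The natural involutive isometry $\varphi \colon Y \to Y$, swapping $X_\lambda^{(1)} \leftrightarrow X_\lambda^{(2)}$ for $\lambda \in \Lambda_1$ and fixing the $\Lambda_0$-pieces and $A$ pointwise, has fixed-point set $\Fix(\varphi) = X_0$; by uniqueness of $\hat\sigma$ it is $\varphi$-equivariant, and Lemma~\ref{l:fixpoint} then gives that $X_0$ is $\hat\sigma$-convex in $Y$.

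To transfer this to $\sigma$-convexity in $X$, identify $X$ with $X^{(1)} := A \cup \bigcup_{\lambda \in \Lambda_0} X_\lambda \cup \bigcup_{\lambda \in \Lambda_1} X_\lambda^{(1)} \subseteq Y$; this is an isometric embedding, since any gluing path in $Y$ between points of $X^{(1)}$ that visits some $X_\lambda^{(2)}$ can, by the convexity/external hyperconvexity/strong convexity of $A$ in $X_\lambda$, be replaced by one of at most the same length that stays in $X^{(1)}$. I then claim that $X^{(1)}$ is itself $\hat\sigma$-convex in $Y$: if $\hat\sigma_{x,y}$ for some $x, y \in X^{(1)}$ ever entered the subset $X^{(2)} \setminus X_0$, which is open in $Y$, then continuity would produce $t_1 < t^* < t_2$ with $a := \hat\sigma_{x,y}(t_1)$ and $b := \hat\sigma_{x,y}(t_2)$ in $X_0$ but $\hat\sigma_{x,y}(t^*) \notin X_0$, and consistency of $\hat\sigma$ would then force the image of $\hat\sigma_{a,b}$ to escape $X_0$, contradicting the just-proven $\hat\sigma$-convexity of $X_0$. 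Consequently $\hat\sigma|_{X \times X \times [0,1]}$ lands in $X$ and is a convex bicombing there, so by uniqueness it equals $\sigma$; combining with the $\hat\sigma$-convexity of $X_0$ shows that $\sigma_{x,y}$ stays in $X_0$ whenever $x, y \in X_0$. The ``in particular'' clause then follows from uniqueness of the convex bicombing on the sub-gluing $X_0$, which falls under the same case as $X$.

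The main obstacle is the apparent circularity in passing from $\hat\sigma$-convexity of $X_0$ in $Y$ to $\sigma$-convexity of $X_0$ in $X$: one first needs to know that $X \subseteq Y$ is itself $\hat\sigma$-convex, a statement prima facie of the same flavour as the goal. The consistency-and-continuity argument above resolves this by using the already-proven $\hat\sigma$-convexity of $X_0$ as a barrier against any excursion of $\hat\sigma_{x,y}$ into the ``other side'' of $Y$.
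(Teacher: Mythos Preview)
Your proof is correct. The approach is in the same spirit as the paper's---both use Lemma~\ref{l:fixpoint} after enlarging the gluing---but the executions differ in a way worth noting.

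The paper takes \emph{three} copies $X_\lambda^1,X_\lambda^2,X_\lambda^3$ of every $X_\lambda$ and glues them all along $A$ to form $\widetilde X$, on which $\mathrm{Sym}(\{1,2,3\})^\Lambda$ acts. The point of tripling is that $\mathrm{Sym}(\{1,2,3\})$ contains both an element fixing exactly one symbol (a transposition) and one fixing none (a $3$-cycle). Thus $X$ itself is the fixed set of $((1)(2\,3))_{\lambda\in\Lambda}$, and $X_0$ is the fixed set of the element that is $(1)(2\,3)$ on $\Lambda_0$-coordinates and $(1\,2\,3)$ on $\Lambda_1$-coordinates. Both $\widetilde\sigma$-convexities then drop out of Lemma~\ref{l:fixpoint} simultaneously, with no further argument needed.

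Your doubling of only the $\Lambda_1$-pieces realises $X_0$ as a fixed set, but not $X^{(1)}$, which forces you into the extra ``barrier'' step using consistency of $\hat\sigma$ to show $X^{(1)}$ is $\hat\sigma$-convex. That step is fine (and it is good that you flagged the apparent circularity and resolved it), but the paper's tripling sidesteps it entirely. A minor remark: your justification for the isometric embedding $X^{(1)}\hookrightarrow Y$ does not actually need convexity/external hyperconvexity/strong convexity of $A$; it suffices to reflect any excursion into $X_\lambda^{(2)}$ back to $X_\lambda^{(1)}$ via the canonical isometry fixing $A$.
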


\begin{proof}
For each $\lambda\in\Lambda$ consider copies $X^1_\lambda,X^2_\lambda,X^3_\lambda$ of the space $X_\lambda$, 
and let the space $\widetilde{X}$ be 
the 
\simple-gluing of the family $\{X_\lambda^i:\lambda\in\Lambda,1\leq i\leq 3\}$  along $A$. 
Note that the space $X$ is a subspace of $\widetilde{X}$, 
and the latter consists of 3 copies of $X$ \simple-glued along $A$.
Applying \cite[Theorem II.11.3]{BrHae99},
\cite[Theorem 1.3 and Theorem 1.1]{Miesch15}
to the cases 1, 2 and 3, respectively, we get that the spaces $X_0$, $X$, $\widetilde{X}$ are all CAT(0), or 
proper, 
finite-dimensional (see e.g.~\cite[Theorem 3.1.4 and Proposition 3.1.7]{EngelkingBook78})
and injective, thus 
admit
convex bicombings;
denote by $\widetilde{\sigma}$ the convex bicombing on $\widetilde{X}$. Consider the action of the group $\mathrm{Sym}(\{1,2,3\})^\Lambda$ 
with the $\lambda$-th coordinate acting by permuting copies $X_\lambda^j$ of the spaces $X_\lambda$.
Observe that $X$ 
(upon the identification with the subspace $\bigcup_{\lambda\in\Lambda}X^1_\lambda$ of $\widetilde{X}$)
is the set of fixed points of $((1)(2\,3))_{\lambda\in\Lambda}$, therefore
by Lemma \ref{l:fixpoint} 
it is $\widetilde{\sigma}$-convex and $\sigma:=\widetilde\sigma|_{X\times X\times[0,1]}$ is the convex bicombing on $X$. Similarly, one can see that there exists an element of $\mathrm{Sym}(\{1,2,3\})^\Lambda$ with fixed point set equal to $X_0$, therefore by Lemma \ref{l:fixpoint} the space $X_0$ is $\widetilde{\sigma}$-convex, thus $\sigma$-convex, and $\widetilde{\sigma}|_{X_0\times X_0\times[0,1]}=\sigma|_{X_0\times X_0\times[0,1]}$ is the convex bicombing on $X_0$.
\end{proof}

\begin{lemma}\label{l:conservation}
Let $X$
be the \simple-gluing of
injective spaces $(X_\lambda,d_\lambda)_{\lambda\in\Lambda}$ along some space $A$. 
	
\begin{enumerate}[(i)]
\item Assume that $A$ is externally hyperconvex in each of the $X_\lambda$. Then for all subsets $\Lambda_0\subseteq\Lambda$ the set $X_0:=A\cup\bigcup_{\lambda\in\Lambda_0}X_\lambda$ is an externally hyperconvex subset of $X$.
\label{l:conservation1}
\item Let $A$ be closed in each of the $X_\lambda$. Suppose $B\subseteq X_{\lambda_0}$ for some $\lambda_0\in\Lambda$ is strongly convex in $X_{\lambda_0}$ and $|A\cap B|\leq1$. Then $B$ is strongly convex in $X$.
\label{l:conservation2}
\end{enumerate}
\end{lemma}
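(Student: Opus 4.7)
The proof is by direct analysis of the gluing metric. Take $x,y\in B\subseteq X_{\lambda_0}$ and $z\in X$ with $d_X(x,z)+d_X(z,y)=d_X(x,y)$; we must show $z\in B$. The first observation is that $d_X(x,y)=d_{X_{\lambda_0}}(x,y)$: any gluing path from $x$ to $y$ that leaves $X_{\lambda_0}$ enters some $X_\mu$ through a point $a\in A$ and returns through a point $a'\in A$, and since $A$ embeds isometrically in every $X_\lambda$, the path length is bounded below by $d_{X_{\lambda_0}}(x,a)+d_{X_{\lambda_0}}(a,a')+d_{X_{\lambda_0}}(a',y)\geq d_{X_{\lambda_0}}(x,y)$.

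I then split into two cases. If $z\in X_{\lambda_0}$, the same reasoning gives $d_X(x,z)=d_{X_{\lambda_0}}(x,z)$ and $d_X(z,y)=d_{X_{\lambda_0}}(z,y)$, so $z\in[x,y]_{X_{\lambda_0}}$, and strong convexity of $B$ in $X_{\lambda_0}$ yields $z\in B$. If $z\in X_\mu\setminus A$ with $\mu\neq\lambda_0$, then the geodesic from $x$ to $z$ in $X$ must pass through $A$, giving points $a,a'\in A$ with
\begin{equation*}
d_X(x,z)=d_{X_{\lambda_0}}(x,a)+d_{X_\mu}(a,z),\qquad d_X(z,y)=d_{X_\mu}(z,a')+d_{X_{\lambda_0}}(a',y).
\end{equation*}
Summing and using the triangle inequality in $X_\mu$ together with $d_{X_\mu}(a,a')=d_{X_{\lambda_0}}(a,a')$ forces equality throughout, which in particular places both $a$ and $a'$ on a geodesic from $x$ to $y$ in $X_{\lambda_0}$. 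By strong convexity of $B$ in $X_{\lambda_0}$, this gives $a,a'\in A\cap B$, and the hypothesis $|A\cap B|\leq 1$ forces $a=a'$; then $d_{X_\mu}(a,z)+d_{X_\mu}(z,a)=0$, so $z=a\in B$.

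\textbf{Plan for part (i).} I would argue by induction on the number of indices $\mu\in\Lambda\setminus\Lambda_0$ for which the given family contains some center $x_i\in X_\mu\setminus A$. The base case ($0$ such indices) reduces to finding a common point of the balls in the injective space $X_0$ (injectivity supplied by Lemma~\ref{l:sigmaconvex}), which succeeds by the given pairwise compatibility.

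For the inductive step, fix such an index $\mu$ and let $I_\mu:=\{i:x_i\in X_\mu\setminus A\}$. The hypothesis $d(x_i,X_0)\leq r_i$ forces $d(x_i,A)\leq r_i$, since any path from $x_i\in X_\mu\setminus A$ to $X_0$ must traverse $A$. External hyperconvexity of $A$ in $X_\mu$ applied to the subfamily $\{\overline{B}_{X_\mu}(x_i,r_i):i\in I_\mu\}$ then furnishes a point $a_\mu\in A$ satisfying $d(a_\mu,x_i)\leq r_i$ for all $i\in I_\mu$. I then replace these constraints by the single constraint $(a_\mu,s_\mu)$ with $s_\mu:=\min_{i\in I_\mu}(r_i-d(a_\mu,x_i))\geq 0$: any $p\in X_0$ with $d(p,a_\mu)\leq s_\mu$ automatically satisfies $d(p,x_i)\leq r_i$ for $i\in I_\mu$. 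Since $a_\mu\in A\subseteq X_0$, the new family has one fewer offending component, and the inductive hypothesis applies once pairwise compatibility of the reduced family is verified.

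\textbf{Expected main obstacle.} The hard step is precisely the pairwise compatibility of the reduced family: one must ensure that $d(a_\mu,x_j)\leq r_j+s_\mu$ for all remaining centers $x_j\in X_0$ and, when several components are collapsed successively, that $d(a_\mu,a_\nu)\leq s_\mu+s_\nu$. A naive nearest-point choice of $a_\mu$ is insufficient, as the elementary triangle inequality only yields the wrong sign. I expect the remedy to require a coordinated choice: apply external hyperconvexity of $A$ in $X_\mu$ not only to the balls centered in $X_\mu\setminus A$ but also to the auxiliary balls on $A$ induced by the projections of the remaining constraints; here one uses that $A$ itself is injective (as an externally hyperconvex subset of each $X_\lambda$) and that external hyperconvexity in $X$ of the already-known externally hyperconvex subset $A$ (via Miesch, which is invoked implicitly in Lemma~\ref{l:sigmaconvex}) delivers a simultaneous choice of all the $a_\mu$ compatible with the original data.
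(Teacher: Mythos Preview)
Your argument for part~(ii) is correct and essentially matches the paper's: the paper phrases it in terms of a geodesic $\gamma$ from $x$ to $y$, replaces each excursion outside $X_{\lambda_0}$ by a geodesic in $X_{\lambda_0}$, and then uses $|A\cap B|\le 1$ to collapse the excursions, but the underlying mechanism (the excursion endpoints land in $A\cap B$) is identical to your pointwise version.

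For part~(i), however, you are working much too hard, and the obstacle you identify is real for your approach but entirely avoidable. The paper's proof is a two-line dichotomy that sidesteps any induction or coordinated choice of the $a_\mu$:
\begin{itemize}
\item If every ball satisfies $d(x_i,A)\le r_i$, then external hyperconvexity of $A$ in $X$ (which follows from \cite[Theorem~1.3]{Miesch15}) gives a common point in $A\subseteq X_0$ directly.
\item Otherwise some ball $\overline{B}(x_{i_0},r_{i_0})$ misses $A$; combined with the hypothesis $d(x_{i_0},X_0)\le r_{i_0}$, this forces $\overline{B}(x_{i_0},r_{i_0})\subseteq X_{\lambda_0}$ for some $\lambda_0\in\Lambda_0$. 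Since $X$ itself is injective (again \cite[Theorem~1.3]{Miesch15}), the full intersection $\bigcap_i\overline{B}(x_i,r_i)$ is nonempty, and it sits inside $\overline{B}(x_{i_0},r_{i_0})\subseteq X_{\lambda_0}\subseteq X_0$.
\end{itemize}
The point you missed is that injectivity of the ambient space $X$ already hands you the common intersection point for free; the only thing to check is that it lies in $X_0$, and a single ``trapped'' ball does that. Your inductive replacement scheme never exploits this, which is why you end up needing to manufacture the pairwise compatibilities $d(a_\mu,a_\nu)\le s_\mu+s_\nu$ by hand.
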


\begin{proof}
\ref{l:conservation1} Let $\{\overline{B}(x_i,r_i)\}_{i\in I}$ be a collection of closed balls in $X$ with $d(x_i,x_j)\leq r_i+r_j$ and $d(x_i,X_0)\leq r_i$. If this collection satisfies $d(x_i,A)\leq r_i$ for all $i\in I$, then the claim follows by the fact that $A$ is externally hyperconvex in $X$ by \cite[Theorem 1.3]{Miesch15}. 
Otherwise, 
there exist $i_0\in I$ and $\lambda_0\in\Lambda_0$ such that $\overline{B}(x_{i_0},r_{i_0})\subseteq X_{\lambda_0}$. 
By  \cite[Theorem 1.3]{Miesch15}, 
the space $X$ is injective, therefore $\emptyset\neq\bigcap_{\lambda\in\Lambda_0}\overline{B}(x_i,r_i)\subseteq \overline{B}(x_{i_0},r_{i_0})\subseteq X_{\lambda_0}\subseteq X_0$. The claim follows.
	
\medskip
\ref{l:conservation2} Take a geodesic $\gamma$ in $X$ with endpoints in $B$. Since $A$ is closed in each of the $X_\lambda$, the set $X_{\lambda_0}$ is closed in $X$, therefore the preimage $\gamma^{-1}(X\setminus X_{\lambda_0})$ is open, thus it is a 
union
of disjoint open intervals. 
Replace $\gamma$ on each of such intervals 
with a geodesic contained in $X_{\lambda_0}$, obtaining a new geodesic $\gamma'$ with the same endpoints as $\gamma$, which is contained in $X_{\lambda_0}$, and thus in $B$, by strong convexity. This implies that all 
of
the endpoints of the intervals where $\gamma$ went out of $X_{\lambda_0}$ are contained in $B$. Since $|A\cap B|\leq 1$, it follows that $\gamma=\gamma'$, so $\gamma$ is contained in $B$.  
The claim follows.
\end{proof}

We note that the following can be derived from \cite{Miesch17}, where the proof is 
more involved, 
as the setting is more general.

\begin{fact}\label{f:loctoglobgeod}
	Let $X$ be a 
	metric space 
	that
	admits a 
	\ccc{}
	bicombing $\sigma$. Assume that $c\colon[0,1]\to X$ is a constant speed geodesic such that locally it is a $\sigma$-geodesic, i.e.~there exists an open cover $\mathcal{U}$ of $[0,1]$ such that $\im\,\sigma_{c(s)c(t)}=\im\,c|_{[s,t]}$ for all $U\in\mathcal{U}$ and $s,t\in U$ satisfying $s\leq t$. Then $c$ is the $\sigma$-geodesic $\sigma_{c(0)c(1)}$.		
\end{fact}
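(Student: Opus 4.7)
The plan is to combine compactness of $[0,1]$ with an induction on the size of a finite subdivision, reducing the problem to a `gluing' step at an interior junction, which I then handle by combining conicality along the two half-intervals with a midpoint convexity inequality on a small open neighbourhood of the junction.

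\textbf{Reduction.} By compactness of $[0,1]$ and openness of the members of $\mathcal{U}$, one extracts a finite subdivision $0=t_0<t_1<\cdots<t_n=1$ with each $[t_i,t_{i+1}]$ contained in some member of $\mathcal{U}$, and with the additional property that each interior junction $t_k$ (for $0<k<n$) admits some $\epsilon_k>0$ with $(t_k-\epsilon_k,t_k+\epsilon_k)\subseteq U$ for some $U\in\mathcal{U}$, so that $c|_{[t_k-\epsilon_k,t_k+\epsilon_k]}$ is a $\sigma$-geodesic. I induct on $n$; the case $n=1$ is immediate. The inductive step amounts to merging the first two pieces via the following two-piece claim: if $c\colon[0,1]\to X$ is a constant-speed geodesic, $c|_{[0,s]}$ and $c|_{[s,1]}$ coincide (after affine rescaling of the parameter) with $\sigma_{c(0)c(s)}$ and $\sigma_{c(s)c(1)}$ respectively, and $c|_{[s-\epsilon,s+\epsilon]}$ is a $\sigma$-geodesic for some $\epsilon\in(0,\min(s,1-s))$, then $c=\sigma_{c(0)c(1)}$.

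\textbf{The gluing step.} Set $\eta:=\sigma_{c(0)c(1)}$. It suffices to prove $c(s)=\eta(s)$: consistency of $\sigma$ applied to $\eta$ then yields that the reparametrization of $\eta|_{[0,s]}$ is $\sigma_{c(0),\eta(s)}=\sigma_{c(0)c(s)}$, which by hypothesis also equals the reparametrization of $c|_{[0,s]}$; hence $c|_{[0,s]}=\eta|_{[0,s]}$, and similarly on $[s,1]$. To obtain $c(s)=\eta(s)$, first apply conicality to the pairs $\sigma_{c(0)c(s)},\sigma_{c(0),\eta(s)}$ (sharing the endpoint $c(0)$) and $\sigma_{c(s)c(1)},\sigma_{\eta(s),c(1)}$ (sharing the endpoint $c(1)$), obtaining
\begin{equation*}
d(c(t),\eta(t))\le \tfrac{t}{s}\,d(c(s),\eta(s))\text{ for }t\in[0,s],\quad d(c(t),\eta(t))\le \tfrac{1-t}{1-s}\,d(c(s),\eta(s))\text{ for }t\in[s,1].
\end{equation*}
For any $\delta\in(0,\epsilon)$ both $c|_{[s-\delta,s+\delta]}$ and $\eta|_{[s-\delta,s+\delta]}$ are $\sigma$-geodesics (by consistency in each case), so convexity of the bicombing applied at the midpoint parameter gives $d(c(s),\eta(s))\le\tfrac{1}{2}\bigl[d(c(s-\delta),\eta(s-\delta))+d(c(s+\delta),\eta(s+\delta))\bigr]$. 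Combining this with the two linear bounds yields
\begin{equation*}
d(c(s),\eta(s))\le\Bigl[1-\tfrac{\delta}{2}\bigl(\tfrac{1}{s}+\tfrac{1}{1-s}\bigr)\Bigr]\,d(c(s),\eta(s)),
\end{equation*}
which, since the bracketed factor is strictly less than $1$, forces $d(c(s),\eta(s))=0$.

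\textbf{Main obstacle.} The inductive reduction is formal; the heart of the proof is the strict contraction above. Conicality alone yields only the non-strict linear bounds, and the midpoint convexity inequality alone does not involve the gluing hypothesis. It is their combination, enabled precisely by $\mathcal{U}$ being an \emph{open} cover (so that each interior junction sits in a genuine open neighbourhood on which $c$ is a $\sigma$-geodesic), that produces the strict contraction and forces $d(c(s),\eta(s))$ to vanish.
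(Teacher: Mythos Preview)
Your proof is correct but follows a genuinely different route from the paper's. The paper does not subdivide and merge; instead it proves the uniform statement $\mathtt{S}(l)$ (``$\im\,\sigma_{c(s)c(t)}=\im\,c|_{[s,t]}$ for all $t-s\le l$'') and shows $\mathtt{S}(2l/3)\Rightarrow\mathtt{S}(l)$ by a ``thirds'' trick: setting $p=c((2s+t)/3)$, $q=c((s+2t)/3)$, $p'=\sigma_{c(s)c(t)}(1/3)$, $q'=\sigma_{c(s)c(t)}(2/3)$, conicality (from the shared endpoint $c(s)$, using $\mathtt{S}(2l/3)$) gives $d(p,p')\le d(q,q')/2$, and symmetrically $d(q,q')\le d(p,p')/2$, forcing both to vanish. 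This is a single self-contained step that multiplies the admissible scale by $3/2$.

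Your approach instead inducts on the number of pieces in a finite subdivision and reduces to a two-piece gluing lemma, where conicality from each endpoint gives linear bounds $d(c(t),\eta(t))\le\frac{t}{s}d(c(s),\eta(s))$ and $\le\frac{1-t}{1-s}d(c(s),\eta(s))$, and the midpoint inequality on the overlap interval produces a strict contraction. Both arguments ultimately rest on conicality plus consistency; the paper's coupled pair of inequalities is slicker and avoids having to track the overlap at junctions explicitly, while your version makes the role of the openness of the cover (the overlap $\epsilon$) more transparent and is perhaps closer in spirit to standard local-to-global proofs. One minor remark: your midpoint inequality needs only conicality, not full convexity, so neither proof actually uses convexity beyond what consistency plus conicality already give.
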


\begin{proof}
	Consider the following statement $\mathtt{S}(l)$,
	where $l\in[0,1]$: 
	for all $s,t\in[0,1]$ such that $s\leq t$ and $t-s\leq l$ 
	the equality
	$\im\,\sigma_{c(s)c(t)}=\im\,c|_{[s,t]}$
	holds.
	By compactness of $[0,1]$, 
	the statement $\mathtt{S}(\epsilon)$ holds for a sufficiently small $\epsilon>0$.
	It is sufficient to show that for any $l\in[0,1]$ the statement $\mathtt{S}(2l/3)$ implies $\mathtt{S}(l)$. Take any $s,t\in[0,1]$ such that $s\leq t$ and $t-s\leq l$. Let $p:=c((2s+t)/3)$, $q:=c((s+2t)/3)$ and $p':=\sigma_{c(s)c(t)}(1/3)$, $q':=\sigma_{c(s)c(t)}(2/3)$. By conicality and $\mathtt{S}(2l/3)$, we have $d(p,p')\leq d(q,q')/2$. Similarly,
	$d(q,q')\leq d(p,p')/2$. Therefore $d(p,p')=0=d(q,q')$, 
	and, by consistency of $\sigma$, $\im\,\sigma_{c(s)c(t)}=\im\,c|_{[s,t]}$.
\end{proof}

For a space $X$ obtained by gluing up to 3 copies $P_j:=\{(x,y)^{P_j}:x,y\in\R\}$ of $\R^2$,
where $j=1,2,3$,
we
denote by $X^\infty$ (resp.~$X^2$) the 
space $X$ 
equipped
with 
the
metric arising from 
putting the
$\ell^\infty$-metric (resp.~the $\ell^2$-metric) on each 
of
the $P_j$. 

The main technical lemma we use to adapt the Croke--Kleiner example is as follows.

\newcommand{\spsl}{\,{\big/}}

\begin{lemma}\label{l:5sp}
Let $X$ be one of the following spaces: 
\begin{enumerate}
\item $\R^2$,
\label{l:5sp1}

\item $P_1\sqcup P_2\spsl\{(x,0)^{P_1}=(x,0)^{P_2}:x\in\R\}$,
\label{l:5sp2} 

\item $P_1\sqcup P_2\spsl\{(d,d)^{P_1}=(d,d)^{P_2}:d\in\R\}$,
\label{l:5sp3}

\item $P_1\sqcup P_2\sqcup P_3\spsl\{(x,0)^{P_1}=(x,0)^{P_2}:x\in\R\}\cup\{(0,y)^{P_2}=(0,y)^{P_3}:y\in\R\}$, 
\label{l:5sp4}

\item $P_1\sqcup P_2\sqcup P_3\spsl\{(x,0)^{P_1}=(x,0)^{P_2}:x\in\R\}\cup\{(d,d)^{P_2}=(d,d)^{P_3}:d\in\R\}$. 
\label{l:5sp5}
\end{enumerate}
Then $X^2$ is CAT(0), $X^\infty$ is injective, and the convex bicombings $\sigma^2$ of $X^2$ and $\sigma^\infty$ of $X^\infty$ have the same trajectories. 
\end{lemma}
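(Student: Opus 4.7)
The plan is to handle the three conclusions in turn. Case (1) is immediate: $(\R^2,\ell^2)$ is CAT(0), $(\R^2,\ell^\infty)$ is injective (as a finite-dimensional $\ell^\infty$-space), and by Remark \ref{r:uniquebicombing} each admits a unique convex bicombing, which in both cases is the linear bicombing $(1-t)x+ty$, so $\sigma^2=\sigma^\infty$ as functions. For cases (2)--(5), the CAT(0)-ness of $X^2$ follows from iterated application of Lemma \ref{l:sigmaconvex}(1), since horizontal and diagonal lines are closed convex subsets of $(\R^2,\ell^2)$; for cases (4) and (5) I would perform the gluings sequentially, and the second gluing line remains convex in $X_1 := P_1 \cup P_2$ by transitivity of convexity through the convex subspace $P_2$.

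For injectivity of $X^\infty$ I would apply Lemma \ref{l:sigmaconvex}(2) for horizontal-line gluings, using that a horizontal line in $(\R^2,\ell^\infty)$ is externally hyperconvex, and Lemma \ref{l:sigmaconvex}(3) for diagonal gluings, using that the diagonal $\{(d,d)\}$ is strongly convex in $(\R^2,\ell^\infty)$ (a short computation: for $x,y$ on the diagonal, the $\ell^\infty$ metric interval $\{z:d_\infty(x,z)+d_\infty(z,y)=d_\infty(x,y)\}$ coincides with the diagonal segment). For case (5), the second gluing line (the diagonal of $P_2$) meets the first gluing locus only at the origin, so Lemma \ref{l:conservation2} promotes strong convexity of this diagonal from $P_2$ to $X_1$, and Lemma \ref{l:sigmaconvex}(3) handles the second gluing. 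For case (4) I would verify directly that the $y$-axis of $P_2$ remains externally hyperconvex in $X_1$, using the explicit formula for $\ell^\infty$-distances across the first gluing, which then allows a second application of Lemma \ref{l:sigmaconvex}(2).

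For the coincidence of trajectories I would use the natural projection $\pi\colon X\to\R^2$ that identifies each copy of $\R^2$ with $\R^2$; this is well-defined on the gluing lines by construction, is $1$-Lipschitz in both metrics, and restricts to an isometry on each piece. Given $A,B\in X$, let $\gamma_{AB}$ be the linear segment from $\pi(A)$ to $\pi(B)$ in $\R^2$ and let $c_{AB}\colon[0,1]\to X$ be its continuous lift starting at $A$; this is a broken linear curve with break points exactly where $\gamma_{AB}$ crosses the gluing lines. Since the $\ell^\infty$- and $\ell^2$-distances are additive along collinear points, the length of $c_{AB}$ in $X^\infty$ equals $\|\pi(A)-\pi(B)\|_\infty$, and by $1$-Lipschitzness of $\pi$ this equals $d_{X^\infty}(A,B)$; hence $c_{AB}$ is a geodesic in $X^\infty$, and analogously in $X^2$. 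By Lemma \ref{l:sigmaconvex} applied to single pieces and to gluing lines, each sub-arc of $c_{AB}$ is, up to reparametrisation, a restriction of a linear bicombing, so $c_{AB}$ is locally a $\sigma^\infty$- and $\sigma^2$-geodesic in the interior of each sub-arc; Proposition \ref{f:loctoglobgeod} then identifies $c_{AB}$ with the convex-bicombing geodesic globally, yielding $\im\,c_{AB}=\im\,\sigma^\infty_{AB}=\im\,\sigma^2_{AB}$.

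The main obstacle I expect is upgrading the local-$\sigma$-geodesic condition to cover the break points of $c_{AB}$ in the injective case: for fixed $A,B$ crossing a gluing, the $\ell^\infty$-geodesics from $A$ to $B$ typically form a one-parameter family indexed by the break point, and one must show that the specific break point produced by lifting the straight line from $\pi(A)$ to $\pi(B)$ is the one selected by the unique convex bicombing on $X^\infty$. I expect this to follow by verifying that $\pi$ is a morphism of bicombings, or equivalently that the assignment $(A,B)\mapsto c_{AB}$ is itself a convex bicombing on $X^\infty$; the uniqueness of convex bicombings on injective spaces (Remark \ref{r:uniquebicombing}) then forces $c_{AB}=\sigma^\infty_{AB}$.
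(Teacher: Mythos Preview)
Your arguments for CAT(0)-ness and injectivity are largely sound, though for case~(4) the paper takes a cleaner route: rather than showing that the $y$-axis of $P_2$ remains externally hyperconvex inside $X_1=P_1\cup P_2$ (a claim that is not immediate and is not covered by Lemma~\ref{l:conservation}), it views $X$ as the \simple-gluing of $X_{1,2}=P_1\cup P_2$ and $X_{2,3}=P_2\cup P_3$ along all of $P_2$, and uses Lemma~\ref{l:conservation}\ref{l:conservation1} to see that $P_2$ is externally hyperconvex in each piece.

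The genuine gap is in your trajectory argument. The map $\pi\colon X\to\R^2$ is not a covering, so ``the continuous lift of $\gamma_{AB}$ starting at $A$'' is ill-defined and, when you can make sense of it, does not in general end at $B$. Already in case~(2): for $A=(0,1)^{P_1}$ and $B=(2,1)^{P_2}$, the segment $[\pi(A),\pi(B)]$ never meets the $x$-axis, so any lift stays in $P_1$ and terminates at $(2,1)^{P_1}\neq B$. For $A=(0,1)^{P_1}$ and $B=(0,1)^{P_2}$ one has $\pi(A)=\pi(B)$, so $\gamma_{AB}$ is a point and no lift reaches $B$ at all; in particular your equality $d_{X^\infty}(A,B)=\|\pi(A)-\pi(B)\|_\infty$ fails ($2\neq 0$). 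The obstacle you anticipate at the end (selecting the correct break point) is not the issue; the curve $c_{AB}$ simply does not exist as described, so the proposal to verify that $(A,B)\mapsto c_{AB}$ is a convex bicombing cannot get off the ground.

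The paper's argument is structurally different. In cases~(2)--(3) one observes that the union of any \emph{two} of the four closed half-planes is isometric (in either metric) to $\R^2$ and is $\sigma^p$-convex by Lemma~\ref{l:sigmaconvex}; since any pair $A,B$ lies in such a union, one reduces to case~(1). In cases~(4)--(5) one first shows $P_1\cup P_2$ and $P_2\cup P_3$ are $\sigma^p$-convex (via Lemma~\ref{l:sigmaconvex} and, in case~(5), Lemma~\ref{l:fixpoint}), so by the previous cases the only pairs left are $a\in P_1\setminus(P_2\cup P_3)$, $b\in P_3\setminus(P_1\cup P_2)$. For these, one considers paths that are straight segments \emph{in each of the planes $P_1\cup P_2\subset X_{1,2}$ and $P_2\cup P_3\subset X_{2,3}$} (not in the projection $\pi$). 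If such a path from $a$ to $b$ exists and avoids the origin, it is locally a $\sigma^p$-geodesic for both $p$, hence equals $\sigma^p_{ab}$ by Proposition~\ref{f:loctoglobgeod}; otherwise both $\sigma^p_{ab}$ pass through the origin and coincide with the broken segment $a$--$O$--$b$.
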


\begin{proof}
The space $X^2$ is CAT(0) in each of the cases by \cite[Remark II.11.2.2]{BrHae99}.

\medskip
Let $X$ be as in \ref{l:5sp1}. 
It is well-known that the space $X^\infty$ 
is
injective.
Both 
$\sigma^\infty$
and 
$\sigma^2$
consist of linear segments --- one can refer to \cite[Theorem 3.3]{DeLa15}.

\medskip
Let $X$ be as in \ref{l:5sp2}. 
The resulting space $X^\infty$ is injective by 
Lemma \ref{l:sigmaconvex},
as
$\{(x,0):x\in\R\}$ is an externally hyperconvex subset of $\R^2$ with the $\ell^\infty$-metric. 
Furthermore, for any $p\in\{2,\infty\}$ the space $X^p$ restricted to the union of any pair of the 4 closed halfplanes induced in $P_1$ and $P_2$ by the 
$x$-axis 
(i.e.~$\{(x,y)^{P_j}:y\,\!R\:\!0,x\in\R\}$, where $R\in\{\leq,\geq\}$ and $j=1,2$) is isometric to $\R^2$ with the $\ell^p$-metric; therefore, 
by Lemma \ref{l:sigmaconvex} and 
case \ref{l:5sp1}, 
we obtain that $\sigma^2=\sigma^\infty$.

\smallskip
Let $X$ be as in \ref{l:5sp3}.
The argument is analogous to the one in \ref{l:5sp2}, 
with the difference that the space $X^\infty$ is injective 
as the set $\{(d,d):d\in\R\}$ is strongly convex in $\R^2$ with the $\ell^\infty$-metric.

\begin{figure}[h]
\includegraphics[width=\textwidth]{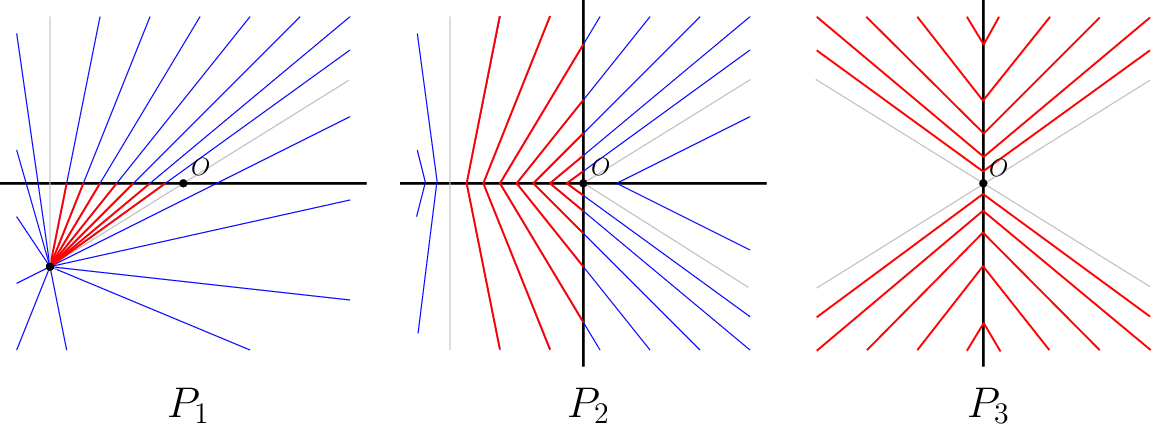}
\caption{Paths, whose intersections with $P_1\cup P_2$ and $P_2\cup P_3$ are linear segments, originating in some point $a\in P_1\setminus(P_2\cup P_3)$ and omitting $O$ (the \textcolor{red}{red} ones are these that reach $P_3$).}	
\label{fig:rayscase4}
\end{figure}

\medskip
Let $X$ be as in \ref{l:5sp4}.
The space $X$ may be seen as the \simple-gluing of the spaces $X_{1,2}$ and $X_{2,3}$ along $P_2$, where the space $X_{1,2}$ is the \simple-gluing of $P_1$ with $P_2$ along the $x$-axis and $X_{2,3}$ is the \simple-gluing of $P_2$ with $P_3$ along the $y$-axis. 
The set $P_2$ is externally hyperconvex in both $X_{1,2}^\infty$ and $X_{2,3}^\infty$ 
by Lemma \ref{l:conservation}\ref{l:conservation1};
therefore,
by Lemma \ref{l:sigmaconvex},
the
space $X^\infty$ is injective, 
and the
subsets $P_1\cup P_2$ and $P_2\cup P_3$ are both $\sigma^\infty$-convex and $\sigma^2$-convex 
in $X$.
Therefore, 
since case \ref{l:5sp2} applies to both $X_{1,2}$ and $X_{2,3}$,
it remains to show that for 
all
$a\in P_1\setminus(P_2\cup P_3)$ 
and
$b\in P_3\setminus(P_1\cup P_2)$ we have $\im\,\sigma^\infty_{ab}=\im\,\sigma^2_{ab}$. Let $O:=(0,0)^{P_1}(=(0,0)^{P_2}=(0,0)^{P_3})$ and 
$\sigma\in\{\sigma^\infty,\sigma^2\}$.
By case \ref{l:5sp2} (applied to $X_{1,2}$ and to $X_{2,3}$),
the sets $\im\,\sigma_{ab}\cap(P_1\cup P_2)$ and $\im\,\sigma_{ab}\cap(P_2\cup P_3)$
are linear segments (when considered as subsets 
of 
appropriate 
planes in $X_{1,2}$ or $X_{2,3}$,
see Figure \ref{fig:rayscase4}).
If there exists a path $\varpi$ from $a$ to $b$ such that $\varpi$ omits $O$, and $\im\,\varpi\cap(P_1\cup P_2)$ and $\im\,\varpi\cap(P_2\cup P_3)$ are linear segments, then, after an appropriate reparametrisation, $\varpi$ is locally a $\sigma$-geodesic, therefore by Proposition \ref{f:loctoglobgeod} is the $\sigma$-geodesic $\sigma_{ab}$. 
If there is 
no
such 
path 
from $a$ to $b$, then  the image $\im\,\sigma_{ab}$ contains $O$, and therefore is equal to the chain of segments with vertices $a$, $O$, $b$.
The case follows.

\smallskip
Let $X$ be as in 
\ref{l:5sp5}.
By Lemma \ref{l:conservation}\ref{l:conservation2},
the subset $\{(d,d)^{P_2}:d\in\R\}$ is 
strongly convex
in 
the \simple-gluing of $P_1$ with $P_2$ along the $x$-axis;
therefore, 
by 
Lemma \ref{l:sigmaconvex}, 
the space
$X^\infty$ is injective, 
and the subset $P_1\cup P_2$ is 
$\sigma^2$-convex and $\sigma^\infty$-convex in $X$.
The subset $P_2\cup P_3$ is $\sigma^2$-convex and $\sigma^\infty$-convex in $X$ as a consequence of Lemma \ref{l:fixpoint} applied to the map $X\to X$ fixing $P_2\cup P_3$ pointwise and reflecting $P_1$ with respect to its $x$-axis.
The remaining part of the argument is analogous to the one in \ref{l:5sp4}.
(Side note:
despite the similarities in the proofs, 
the analogue of Figure \ref{fig:rayscase4} for case \ref{l:5sp5} is 
substantially 
different from 
Figure \ref{fig:rayscase4},
e.g.~it is not just a sheared version of it.)
\end{proof}

\begin{fact}\label{p:bilipbic}
	Let $d_\lbla$ and $d_\lblb$ be two complete metrics on 
	a topological space
	$X$, 
	and assume that for $i\in\{\lbla,\lblb\}$ the space $(X,d_i)$ admits a 
	\ccc{}
	bicombing $\sigma^i$.
	Assume
	that 
	$\sigma^\lbla$ and $\sigma^\lblb$ 
	have the same trajectories. 
	Then the identity $\mathrm{id}_X$ continuously extends to a homeomorphism $\iota\colon\bdry{X}{\sigma^\lbla}\to\bdry{X}{\sigma^\lblb}$.
\end{fact}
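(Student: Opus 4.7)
The plan is to define $\iota$ on each $\bar{x} \in \partial_{\sigma^\lbla} X$ by taking the unique $\sigma^\lbla$-ray representative $\xi^\lbla := \geod{o}{\bar{x}}$ from a fixed basepoint $o \in X$, keeping its image $A := \im \xi^\lbla$, and reparameterizing $A$ by $d_\lblb$-arclength from $o$ to obtain a $\sigma^\lblb$-ray $\xi^\lblb$; then $\iota(\bar{x})$ is taken to be the asymptoticity class of $\xi^\lblb$. The key ingredient is that the same-trajectories assumption gives $\im \xi^\lbla|_{[0,t]} = \im \sigma^\lbla_{o, \xi^\lbla(t)} = \im \sigma^\lblb_{o, \xi^\lbla(t)}$ for all $t \geq 0$, so that for $t' \leq t$ the point $\xi^\lbla(t')$ lies on the $\sigma^\lblb$-geodesic from $o$ to $\xi^\lbla(t)$, which is in particular a $d_\lblb$-geodesic. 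This yields that $f(t) := d_\lblb(o, \xi^\lbla(t))$ is strictly increasing with $d_\lblb(\xi^\lbla(t'), \xi^\lbla(t)) = f(t) - f(t')$, and is continuous since $\xi^\lbla$ is continuous in the common topology of $d_\lbla$ and $d_\lblb$. Granted unboundedness of $f$, the map $f$ is a homeomorphism of $[0, \infty)$ and $\xi^\lblb := \xi^\lbla \circ f^{-1}$ is the desired $d_\lblb$-isometric $\sigma^\lblb$-ray with image $A$; swapping the roles of $\lbla$ and $\lblb$ yields an inverse correspondence, so $\iota$ is a bijection extending $\id_X$.

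The main technical point, which I expect to be the hardest part, is unboundedness of $f$. My plan is to argue by contradiction: if $f$ were bounded, then the distance formula above together with monotonicity of $f$ would make $\{\xi^\lbla(n)\}_{n \in \N}$ a $d_\lblb$-Cauchy sequence; completeness of $d_\lblb$ would provide a $d_\lblb$-limit $p \in X$, and by topology-matching of $d_\lbla$ and $d_\lblb$ on $X$ the sequence would also $d_\lbla$-converge to $p$, contradicting $d_\lbla(o, \xi^\lbla(n)) = n \to \infty$. Both completeness of $d_\lblb$ and the fact that the two metrics induce the same topology on $X$ are essential here --- without the latter, the two notions of escaping to infinity could a priori diverge.

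Continuity of $\iota$ (with that of $\iota^{-1}$ following by symmetry) would be handled as follows. Let $\bar{x}_n \to \bar{x}$ in $\bdry{X}{\sigma^\lbla}$, set $\bar{y}_n := \iota(\bar{x}_n)$, $\bar{y} := \iota(\bar{x})$, and fix $s \geq 0$; by the inverse-limit description of the boundary topology it suffices to show $\geod{o}{\bar{y}_n}^\lblb(s) \to \geod{o}{\bar{y}}^\lblb(s)$ in $d_\lblb$. Writing $p := \geod{o}{\bar{y}}^\lblb(s) = \geod{o}{\bar{x}}^\lbla(t)$ for $t := d_\lbla(o, p)$, the convergence in $\bdry{X}{\sigma^\lbla}$ gives $p_n := \geod{o}{\bar{x}_n}^\lbla(t) \to p$ in $d_\lbla$, hence in $d_\lblb$. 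Since $p_n$ lies on the common trajectory of the $\bar{y}_n$-ray, we have $p_n = \geod{o}{\bar{y}_n}^\lblb(s_n)$ with $s_n := d_\lblb(o, p_n) \to s$, and the $1$-Lipschitz character of $\geod{o}{\bar{y}_n}^\lblb$ in its time parameter gives
\begin{equation*}
d_\lblb\bpl\geod{o}{\bar{y}_n}^\lblb(s), p\bpr \leq |s - s_n| + d_\lblb(p_n, p) \to 0
\end{equation*}
by the triangle inequality. This last step is routine bookkeeping once $f$ has been shown to be a homeomorphism of $[0, \infty)$.
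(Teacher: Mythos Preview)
Your proposal is correct and follows essentially the same approach as the paper: the same definition of $\iota$ via reparametrisation of rays by $d_\lblb$-arclength, and the same Cauchy-plus-completeness contradiction for unboundedness of $f$. The only visible difference is in the continuity step at boundary points: the paper invokes Proposition~\ref{f:neweq52} to bound $d_\lblb(\geod{o}{\iota(\bar{x})}^\lblb(t_\lblb),\geod{o}{\iota(\bar{y})}^\lblb(t_\lblb))$ directly, whereas you locate $p_n=\geod{o}{\bar{x}_n}^\lbla(t)$ on the $\sigma^\lblb$-ray and use the $1$-Lipschitz time-parameter estimate together with $s_n\to s$; your route is slightly more elementary and avoids the auxiliary inequality, but otherwise the two arguments are equivalent in spirit.
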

\begin{proof}
	Consider a $\sigma^\lbla$-ray 
	$\xi_\lbla$.
	Let $l_\lblb(t):=d_\lblb(\xi_\lbla(0),\xi_\lbla(t))$. 
	Since the bicombings $\sigma^\lbla$ and $\sigma^\lblb$ have the same trajectories, 
	the function 
	$l_\lblb(t)$
	is increasing. We show that 
	$l_\lblb(t)$ 
	is unbounded. 
	If it was bounded, then it would have some limit.
	Therefore, 
	$(\xi_\lbla(n))_{n\in\N}$
	would be a Cauchy sequence in 
	$(X,d_\lblb)$, 
	so by completeness of the metric 
	$d_\lblb$
	on the space $X$, 
	it would have
	a limit in $X$. 
	On the other hand, 
	$\{\xi_\lbla(n):n\in\N\}$ 
	is a discrete set in 
	$(X,d_\lbla)$, 
	therefore 
	the sequence
	$(\xi_\lbla(n))_{n\in\N}$
	is not convergent in $X$ 
	---
	a contradiction.
	Note that the argumentation contained in the previous part of this paragraph works with $\lbla$ and $\lblb$ swapped.
	 
	Fix a basepoint $o\in X$.
	Let
	the map $\iota\colon\bdry{X}{\sigma^\lbla}\to\bdry{X}{\sigma^\lblb}$ 
	be
	induced by the map that 
	is the identity on $X$
	and 
	assigns to
	the 
	$\sigma^\lbla$-ray 
	$\geod{o}{\bar{x}}^\lbla$
	the $\sigma^\lblb$-ray 
	originating in $o$ 
	with the same 
	image
	as 
	$\geod{o}{\bar{x}}^\lbla$.
	By the previous paragraph, 
	the map $\iota$ is well-defined 
	and
	restricts to
	a bijection from the boundary $\partial_{\sigma^\lbla}X$ to the boundary $\partial_{\sigma^\lblb}X$.
	
	Now we show continuity of $\iota$. 
	Clearly, $\iota$
	is 
	continuous 
	at
	every point of
	$X$, as $X$ is an open subset of $\bdry{X}{\sigma^\lbla}$ and $\iota$ restricted to $X$ is the identity.
	Consider any $\bar{x}\in\partial_{\sigma^\lbla}X$ and
	$t_\lblb,\epsilon_\lblb>0$. Let $t_\lbla:=d_\lbla(\geod{o}{\iota(\bar{x})}^\lblb(t_\lblb))$ (so that, in particular, $\geod{o}{\iota(\bar{x})}^{\lblb}(t_\lblb)=\geod{o}{\bar{x}}^{\lbla}(t_\lbla)$).
	Since the maps $\bdry{X}{\sigma^\lbla}\ni\bar{y}\mapsto\geod{o}{\bar{y}}^\lbla(t_\lbla)\in X$ and $X\ni y\mapsto d_\lblb(y,\geod{o}{\bar{x}}^\lbla(t_\lbla))\in\R$ are continuous,
	the set $U:=\{\bar{y}\in\bdry{X}{\sigma^\lbla}:d_\lblb(\geod{o}{\bar{y}}^\lbla(t_\lbla),\geod{o}{\bar{x}}^\lbla(t_\lbla))<\epsilon_\lblb\}$ is an open neighbourhood of $\bar{x}$ in $\bdry{X}{\sigma^\lbla}$.
	Then, for any $\bar{y}\in U$, 
	by Proposition \ref{f:neweq52} 
	applied for the metric $d_\lblb$ and bicombing $\sigma^\lblb$ to 
	the points $\geod{o}{\iota(\bar{x})}^{\lblb}(t_\lblb)(=\geod{o}{\bar{x}}^{\lbla}(t_\lbla))$
	and $\geod{o}{\bar{y}}^{\lbla}(t_\lbla)$,
	and radius $t_\lblb$, 
	we obtain that
	$d_\lblb(\geod{o}{\iota(\bar{x})}^{\lblb}(t_\lblb),\geod{o}{\iota(\bar{y})}^{\lblb}(t_\lblb))\leq 2d_\lblb(\geod{o}{\bar{x}}^{\lbla}(t_\lbla),\geod{o}{\bar{y}}^{\lbla}(t_\lbla))<2\epsilon_\lblb$.
	Similarly,
	the inverse $\iota^{-1}$ is continuous as well.
\end{proof}

Now we are ready to prove the main theorem of this section.

\begin{proof} ({\sc of Theorem \ref{t:nonunique} (Thm.~\ref{t:nonuniqueintro})})
\begin{figure}[h]
	\centering
	\includegraphics[width=\textwidth]{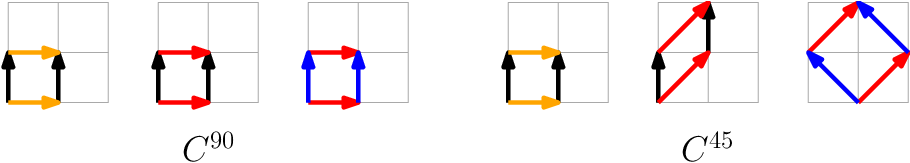}
	\caption{The tori glued to form complexes $C^{90}$ and $C^{45}$, placed on the plane. The grey lines represent the 
	$\Z\!\times\!\Z\:\!$--grid
	in
	$\R\!\times\!\R$.}
	\label{fig:glue}
\end{figure}
Consider the complexes $C^{90}$ and $C^{45}$, each consisting of appropriately glued three tori, as described on Figure \ref{fig:glue}. 
Let $X^{\alpha,p}$ for $\alpha\in\{45,90\}$ and $p\in\{2,\infty\}$ be the universal cover of $C^\alpha$ with the metric being the extension of the local metric from $C^\alpha$ in the case when the underlying planes are endowed with the $\ell^p$-metric.
The fundamental group $G:=\pi_1(C^{90})(=\pi_1(C^{45}))$ acts geometrically on each of these four spaces.
Since each point in each of the spaces $X^{\alpha,\infty}$ (resp.~$X^{\alpha,2}$) for $\alpha\in\{90,45\}$ has a neighbourhood homeomorphic to some open ball in some of the 5 spaces considered in Lemma \ref{l:5sp}, the spaces $X^{\alpha,\infty}$ (resp.~$X^{\alpha,2}$) are locally injective (resp.~locally CAT(0)), thus by 
\cite[Theorem 1.2]{Miesch17} (resp.~by the Cartan--Hadamard Theorem for CAT(0) spaces, \cite[Theorem II.4.1(2)]{BrHae99})
they are injective (resp.~CAT(0));
let $\sigma^{\alpha,\infty}$ (resp.~$\sigma^{\alpha,2}$) be their convex bicombings. 
The classical result \cite{CrKl00} gives that $\partial_{\sigma^{90,2}} X^{90,2}$ and $\partial_{\sigma^{45,2}} X^{45,2}$ are not homeomorphic.
In order to finish the proof, 
we 
show that for each 
$\alpha\in\{90,45\}$
we have
that $\partial_{\sigma^{\alpha,\infty}} X^{\alpha,\infty}\cong\partial_{\sigma^{\alpha,2}} X^{\alpha,2}$.
In view of 
Proposition \ref{p:bilipbic},
it suffices to show that $\im\,\sigma^{\alpha,\infty}_{ab}=\im\,\sigma^{\alpha,2}_{ab}$
for all $a,b\in X^\alpha:=X^{\alpha,\infty}(=X^{\alpha,2})$.
First, we check it locally.
Observe that for each $x\in X^\alpha$ the space $X^\alpha$ may be identified locally around point $x$ with some (subset of a) space $M$ considered in Lemma \ref{l:5sp} in such a way that 
for any $p\in\{2,\infty\}$
there exists $r_p>0$
such that
this identification 
gives 
an
isometric identification of 
the
ball $\overline{B}(x,r_p)$ 
in $X^{\alpha,p}$
with the ball 
$\overline{B}(x,r_p)$ in $M^p$. 
By conicality, for any $p\in\{2,\infty\}$ the ball $\overline{B}(x,r_p)$ is convex with respect to 
the 
\ccc{}
bicombings on $X^{\alpha,p}$ and $M^p$, therefore the 
\ccc{}
bicombings from these spaces restrict to 
\ccc{}
bicombings on $\overline{B}(x,r_p)$; they must restrict to the same bicombing, as the balls in injective and CAT(0) spaces are injective and CAT(0), respectively, which implies that the 
\ccc{}
bicombing on 
the ball $\overline{B}(x,r_p)$ 
in $X^{\alpha,p}$
is unique. 
Therefore, 
Lemma \ref{l:5sp} implies 
that 
for 
any 
$a,b\in B_{X^{\alpha,\infty}}(x,r_\infty)\cap B_{X^{\alpha,2}}(x,r_2)$ 
we have that $\im\,\sigma^{\alpha,2}_{ab}=\im\,\sigma^{\alpha,\infty}_{ab}$.
In the general case, by the above, the image of
a
(global) 
\linebreak[1]
$\sigma^{\alpha,\infty}$-geodesic $\sigma^{\alpha,\infty}_{ab}$ is the image of some local CAT(0)-geodesic, which is also the 
unique global CAT(0)-geodesic $\sigma^{\alpha,2}_{ab}$ by \cite[Proposition II.1.1.4(1,2)]{BrHae99} (or Proposition \ref{f:loctoglobgeod}).
\end{proof}

\begin{remark}\phantomsection\label{r:nonunique}
\begin{enumerate}[(i)]
\item\label{r:nonunique1} 
One may construct complexes as on Figure \ref{fig:glue} parametrised by the $\ell^2$-angle $\alpha\in(0,\pi/2]$ between the \textbf{black} and \textcolor{red}{red} segment (see \cite[Section 1.3]{CrKl00}). Wilson \cite{Wilson05} improved the result of Croke and Kleiner by showing that for each pair of different angles $\alpha,\beta\in(0,\pi/2]$ the resulting boundaries (of the universal cover with the locally
$\ell^2$ path metric) are 
not homeomorphic,
thus the 
fundamental
group of (all) these complexes admits $2^{\aleph_0}$ pairwise non-homeomorphic CAT(0)-boundaries. However, the only pair of lines in a pair of $\ell^\infty$-planes, along which they can be glued to obtain an injective metric space, 
are two diagonal ones, or two 
horizontal-or-vertical 
ones 
---
one may use the argument in \cite[Section 5]{Miesch15} almost verbatim. 
Since injective metric spaces are locally injective, this prevents us 
from using
Wilson's approach directly. 

\item\label{r:nonunique2} 
Since the construction of the 
spaces $X^{45,\infty}$ and $X^{90,\infty}$ consist in gluing injective planes along horizontal, vertical and diagonal lines, one may obtain graphs $\Gamma^\alpha$ for $\alpha\in\{45,90\}$ by replacing each injective plane in the construction of $X^{\alpha,\infty}$ with a graph on $\Z^2$ with vertices $(x_1,y_1),(x_2,y_2)\in\Z^2$ connected by an edge iff $|x_1-y_1|,|x_2-y_2|\leq 1$ 
(by the graph $\Gamma^\alpha$ we mean the metric space with its underlying set consisting of just the vertices, with the edges used only to induce the path metric making the ends of each edge at distance 1 from each other) 
to prove an analogue of Theorem \ref{t:nonunique} for Helly groups. However, the graph $\Gamma^{45}$ is not Helly. To see this, consider a pair of planes $P_i=\{(x,y)^{P_i}:x,y\in\R\}$ for $i=1,2$ that are glued along their diagonals $\{(d,d)^{P_i}:d\in\R\}$ in the construction of $X^{45,\infty}$ (so that $(d,d)^{P_1}=(d,d)^{P_2}$ for all $d\in\R$). Consider the unit balls in $\Gamma^{45}$ centred at points 
$a_1=(-1,0)^{P_1}$, $a_2=(1,2)^{P_1}$ and $a_3=(1,0)^{P_2}$.
They clearly intersect pairwise, however their intersection is empty. To see this, observe that the diagonal $D=\{(d,d)^{P_1}:d\in\R\}$ disconnects $\Gamma^{45}$ so that $a_1$ and $a_3$ are in different connected components of $\Gamma^{45}\setminus D$. Therefore $\overline{B}_{\Gamma^{45}}(a_1,1)\cap \overline{B}_{\Gamma^{45}}(a_3,1)\subseteq D$, consequently $\overline{B}_{\Gamma^{45}}(a_1,1)\cap \overline{B}_{\Gamma^{45}}(a_3,1)=\{(0,0)^{P_1}\}$, but $(0,0)^{P_1}\not\in \overline{B}_{\Gamma^{45}}(a_2,1)$.
\end{enumerate}
\end{remark}

\section{Products and CAT(0) cube complexes with injective metrics}\label{s:ccc}

Let $X$ be a 
cube complex. We denote by $d_p^X$ for $p\in[1,\infty]$ the 
gluing
metric on $X$ arising from endowing each cube
$C$ 
of $X$ 
with the $\ell^p$-metric 
from
the 
unit cube
$[0,1]^{\dim C}$.
\begin{remark}\label{r:lpmetric}
That $d_p^X$ is indeed a metric, not just a pseudometric, is a consequence 
(cf.~\cite[Corollary I.7.10]{BrHae99})
of the following observation:
for each point $x$ of a cube complex $X$
there exists $\epsilon_x>0$ such that
for each cube $C$ of $X$ that contains $x$
the 
\linebreak[2]
$d_p^{\:\!C}$-distance 
(note that we do not restrict here 
$d_p^X$ 
from $X$ to $C$)
from $x$ to the faces of $C$ that do not contain $x$ is 
at least
$\epsilon_x$.
Indeed,
let $C(x)$ be the cube 
of
$X$ that contains $x$ in its interior. If $x$ is not a vertex of $X$, 
let $\epsilon_x>0$ be the 
distance
in $(C(x),d_p^{\:\!C(x)})$ 
from $x$ to the faces of $C(x)$,
otherwise
put $\epsilon_x:=1$. 
Since 
every cube
$C$ 
containing $x$
is a product of $C(x)$ with another cube,  
the observation
follows. 
\end{remark}

In this section we build on the ideas introduced in Section \ref{s:nonunique} to prove the following theorem. 

\begin{theorem}\label{t:ccc} 
	Let $X$ be a 
	CAT(0) cube complex
	of dimension at most $2$,
	and 
	let $\sigma^p$ be the convex bicombing on $(X,d_p)$ 
	for $p\in[1,\infty]$.
	Then 
	all of the $\sigma^p$ for $p\in[1,\infty]$ 
	have the same trajectories.

\end{theorem}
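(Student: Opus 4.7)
The plan is to adapt the local-to-global strategy used at the end of the proof of Theorem~\ref{t:nonunique}: show that the $\sigma^\infty$-geodesic between any two points $a,b\in X$ is, viewed as a curve in $(X,d_2)$, a local CAT(0)-geodesic, and then invoke the CAT(0) local-to-global theorem \cite[Proposition II.1.1.4]{BrHae99} to conclude it is a global CAT(0)-geodesic. Uniqueness of CAT(0)-geodesics in $(X,d_2)$ then forces $\im\sigma^\infty_{ab}=\im\sigma^2_{ab}$.

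As a preliminary step, I would verify that $(X,d_\infty)$ is injective, so that $\sigma^\infty$ exists. Since $X$ is locally finite of dimension at most $2$, Miesch's local-to-global theorem \cite[Theorem 1.2]{Miesch17} reduces the task to checking local injectivity at each point. For an interior point of a $2$-cube this is clear. At an interior point of a $1$-skeleton edge, the local structure is isometric to several $\ell^\infty$-planes glued along a common horizontal line, and is injective by iterating Lemma~\ref{l:sigmaconvex} (using that a coordinate line is externally hyperconvex in $(\R^2,\ell^\infty)$). At a vertex $v$, the link $\mathrm{Lk}(v)$ is a triangle-free graph by the CAT(0) link condition in dimension~$2$, and a small ball around $v$ can be built inductively by gluing $\ell^\infty$-quadrants along half-rays emanating from $v$, each externally hyperconvex in the relevant quadrant; repeated use of Lemma~\ref{l:sigmaconvex} then yields local injectivity.

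Next, fix $a,b\in X$ and set $\gamma:=\sigma^\infty_{ab}$. For each $p\in\im\gamma$ I would exhibit $\gamma$ near $p$ as a local CAT(0)-geodesic in $(X,d_2)$. If $p$ lies in the interior of a $2$-cube or of a $1$-skeleton edge, a small ball around $p$ is isometric, under both $d_2$ and $d_\infty$, to an open subset of one of the models in Lemma~\ref{l:5sp}\ref{l:5sp1} or Lemma~\ref{l:5sp}\ref{l:5sp2} (the latter generalised from $2$ to $k$ planes glued along a common line, which proceeds analogously to the original case). The lemma then guarantees that the image of the short $\sigma^\infty$-geodesic piece through $p$ coincides with that of a CAT(0)-geodesic between the same endpoints, so $\gamma$ is locally a CAT(0)-geodesic near $p$.

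The main obstacle is the case $p=v$ a vertex, where the link may be an arbitrary triangle-free graph and a neighbourhood of $v$ is not in general isometric to a subspace of any of the five models. The essential structural claim to establish is the following analogue of the CAT(0) situation (in which the link condition, together with $\dim X\leq2$, forces a CAT(0)-geodesic through $v$ to traverse exactly two $2$-cubes $S_1,S_2$ sharing a common edge at $v$): a short arc of $\gamma$ through $v$ is contained in $S_1\cup S_2$ for some such pair. Once this is shown, the pair $S_1\cup S_2$ is locally an instance of Lemma~\ref{l:5sp}\ref{l:5sp2} under both metrics, Lemma~\ref{l:5sp} places us in the previous setting, and the CAT(0) local-to-global theorem then upgrades the local CAT(0)-geodesic property of $\gamma$ to a global one, yielding $\im\sigma^\infty_{ab}=\im\sigma^2_{ab}$.
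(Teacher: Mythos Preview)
Your overall local-to-global strategy matches the paper's, and your treatment of injectivity of $(X,d_\infty)$ and of points interior to cubes or edges is fine. The genuine gap is at the vertex case: the structural claim you flag as ``to establish'' is not merely unproved---it is false, already for CAT(0) geodesics. Take three unit squares $A,B,C$ at a vertex $v$ with $A\cap B$ and $B\cap C$ each an edge and $A\cap C=\{v\}$ (three quadrants of a plane; the link of $v$ is a path of angular length $3\pi/2$). For $a\in A$ and $b\in C$ near the outer edges, the CAT(0) geodesic from $a$ to $b$ bends at $v$, and any short arc of it through $v$ is a segment in $A$ followed by a segment in $C$---not contained in any pair of $2$-cubes sharing a common edge. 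The $\sigma^\infty$-geodesic behaves the same way, so the reduction to Lemma~\ref{l:5sp}\ref{l:5sp2} cannot work as stated. (There is also a secondary issue: even when a short arc does lie in some $S_1\cup S_2$, you would need $S_1\cup S_2$ to be $\sigma^\infty$-convex in the star of $v$ in order to identify the restricted arc with the bicombing geodesic of $(S_1\cup S_2,d_\infty)$ before invoking Lemma~\ref{l:5sp}; this convexity is not automatic when other cubes are present at $v$.)

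The paper avoids modelling the full star of $v$. Lemma~\ref{l:ccctech}\ref{l:ccctech4} instead proves directly that for $a\neq v$ the $\sigma^p$-geodesic from $a$ to $v$ is the straight segment in the single cube containing both (via Proposition~\ref{f:loctoglobgeod}, using that this cube embeds as a local isometry away from $v$ and that agreement at nearby points of lower type supplies the open cover needed there). For $a,b\neq v$ it then runs a dichotomy: either some $\sigma^p$-geodesic from $a$ to $b$ avoids $v$, in which case the lower-type agreement together with Proposition~\ref{f:loctoglobgeod} makes it simultaneously the $\sigma^2$- and the $\sigma^\infty$-geodesic; or both $\sigma^2_{ab}$ and $\sigma^\infty_{ab}$ pass through $v$ and are the concatenations of the already-matched radial segments $\sigma^p_{av}$ and $\sigma^p_{vb}$. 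This dichotomy is exactly what handles the configurations your claim misses.
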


\newcommand{\hdim}{X^{\text{\o}}}

\begin{remark}\phantomsection\label{r:ccc} 
	\begin{enumerate}[(i)]
		\item\label{r:ccc1} 
A CAT(0) cube complex $X$ endowed with the $\ell^p$-metric $d_p^X$ admits a unique convex geodesic bicombing for all $p\in[1,\infty]$, \cite[Theorem 5.18]{HaHoPe25}.
These bicombings are also consistent:  
for $p\in(1,\infty)$ the geodesics in $(X,d_p^X)$ are unique \cite[Theorem 5.17]{HaHoPe25}, and the convex bicombings for $p=1$ and for $p=\infty$ are limits of the convex bicombings for $p\in(1,\infty)$ as $p\to 1$ and $p\to\infty$, respectively, \cite[Theorem 5.18]{HaHoPe25}.
As in Section~\ref{s:nonunique}, 
further in this section we 
usually
use these uniqueness results without mentioning.

		\item\label{r:ccc2} 
		Injectivity of $(X,d_\infty)$ implies 
		CAT(0)-ness of $(X,d_2)$
		--- 
		see \cite[Theorem 1.2]{Miesch14},
		which uses the Link Condition, \cite[the 4.2.C that follows 4.2.D]{Gromov87} --- therefore the above theorem applies to the case of cube complexes that are injective when endowed with the piecewise-$\ell^\infty$ metric.

	\end{enumerate}
\end{remark}

In view of 
Proposition \ref{p:bilipbic}, 
we have the following corollary. 

\begin{corollary}[Theorem \ref{t:cccintro}]\label{c:ccc}
Let $X$ be a 
CAT(0) cube complex
of dimension at most $2$,
and 
let $\sigma^p$ be the convex bicombing on $(X,d_p)$ for 
$p\in[1,\infty]$. 
Then the identity of $X$ extends to a homeomorphism between $\bdry{X}{\sigma^p}$ and $\bdry{X}{\sigma^r}$ for any $p,r\in[1,\infty]$, in particular 
all of 
the boundaries $\partial_{\sigma^p}X$
for $p\in[1,\infty]$
%and $\partial_{\sigma^\infty}X$ 
are homeomorphic. 
\end{corollary}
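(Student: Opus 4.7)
The plan is to derive the corollary immediately from Theorem \ref{t:ccc} combined with Proposition \ref{p:bilipbic}. The former already tells us that the two bicombings $\sigma^2$ and $\sigma^\infty$ have the same trajectories, and the latter is precisely the abstract mechanism that turns a trajectory-level equality into a homeomorphism of the associated compactifications. So the only real task is to verify that the hypotheses of Proposition \ref{p:bilipbic} are satisfied in the present setup.

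First I would observe that $d_2$ and $d_\infty$ may be regarded as two metrics on one and the same underlying topological space $X$: on each cube $[0,1]^{\dim C}$ the $\ell^2$ and $\ell^\infty$ metrics are bi-Lipschitz equivalent, and applying the observation from Remark \ref{r:lpmetric} to both gluing metrics shows that they induce the same neighbourhood basis at each point of $X$, hence the same topology. Next I would check completeness: since $X$ is locally finite, closed bounded sets meet only finitely many cubes and are therefore compact in either metric, so both $(X, d_2)$ and $(X, d_\infty)$ are proper, and in particular complete. (Alternatively, $(X, d_2)$ is complete as a CAT(0) space, and $(X, d_\infty)$ is complete as an injective metric space, the latter existing by Remark \ref{r:ccc}\ref{r:ccc1}.) Existence of the \ccc{} bicombings $\sigma^2$ on $(X, d_2)$ and $\sigma^\infty$ on $(X, d_\infty)$ is then supplied by Remark \ref{r:uniquebicombing} in conjunction with Remark \ref{r:ccc}\ref{r:ccc3}.

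With every hypothesis of Proposition \ref{p:bilipbic} in place and Theorem \ref{t:ccc} supplying the equality of trajectories, the proposition produces a homeomorphism $\iota \colon \bdry{X}{\sigma^2} \to \bdry{X}{\sigma^\infty}$ extending $\mathrm{id}_X$; restricting $\iota$ to the complement of $X$ on either side yields the asserted homeomorphism $\partial_{\sigma^2} X \cong \partial_{\sigma^\infty} X$. There is no genuine obstacle in the corollary itself: all the substantive geometric content is packaged inside Theorem \ref{t:ccc}, whose proof uses the dimension hypothesis $\dim X \leq 2$ in an essential way through local model analyses of the Lemma \ref{l:5sp} type. The corollary amounts to harvesting that trajectory-level information by means of the soft Proposition \ref{p:bilipbic}.
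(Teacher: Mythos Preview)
Your proposal is correct and follows exactly the paper's approach: the paper simply states ``In view of Proposition \ref{p:bilipbic}, we have the following corollary,'' and you have spelled out the (routine) verification of the hypotheses of that proposition together with the invocation of Theorem \ref{t:ccc}.
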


We begin the preparations for the proof of Theorem \ref{t:ccc} with some general definitions and observations.

\newcommand{\prbi}[2]{#1 \otimes #2}

Let $(X,d_X)$ 
and 
$(Y,d_Y)$ be metric spaces. By the \emph{$\ell^p$-product $X\times_pY$} we mean the Cartesian product $X\times Y$ endowed with the metric 
\begin{equation*}
d_{X\times_pY}((x_1,y_1),(x_2,y_2))=\bnorm{p}{\bpl d_X(x_1,x_2),d_Y(y_1,y_2) \bpr}.
\end{equation*}
If the spaces $X,Y$ admit bicombings $\sigma^X\!,\sigma^Y\!$, respectively, then the \emph{product bicombing 
$\prbi{\sigma^X \!}{\sigma^Y}$
}
is defined 
to be
the map
\begin{equation*}
(X\times Y)\times(X\times Y)\times[0,1]\ni	
\bpl
(x_1,y_1),(x_2,y_2)
,
t\bpr
\:\longmapsto\,\bpl\sigma^X_{x_1,x_2}(t),\sigma^Y_{y_1,y_2}(t)\bpr
\in X\times Y.
\end{equation*}

\begin{remark}\label{r:prodbicomb}
		Let 
		$(X,d_X)$ and $(Y,d_Y)$ 
		be metric spaces that
		admit bicombings $\sigma^X$ and $\sigma^Y$, respectively. Then, 
the product bicombing 
$\prbi{\sigma^X \!}{\sigma^Y}$
is consistent, conical, convex, 
and reversible
iff
both
of
the 
bicombings
$\sigma^X$ and $\sigma^Y$ are consistent, conical, convex, 
and reversible, respectively.
	The $\Longrightarrow$ implication follows as the spaces $X$ and $Y$ embed into $X\times_p Y$ as 
	sections, to which the product bicombing 
	$\prbi{\sigma^X \!}{\sigma^Y}$
	restricts 
	as
	$\sigma^X$ and $\sigma^Y$, respectively.
Regarding the $\Longleftarrow$ implication,
the claim 
regarding
consistency and reversibility follows directly;
the claim regarding conicality and convexity follows from a direct calculation involving the Minkowski's inequality.
\end{remark}

\newcommand{\gl}{\mathrm{gl}}

\begin{lemma}\label{l:produkt}
Let
$\{(X_i,d_i):i\in I\}$ 
be 
a family of metric spaces that are glued together to form 
a
(metric)
space $(X,d_X)$, where $d_X$ is the 
gluing metric, 
and 
denote 
the gluing map $\pi\colon\bigsqcup_{i\in I}X_i\to X$.
Let
$Y$ be a geodesic metric space
and
$p\in[1,\infty]$. 
Then gluing and taking the $\ell^p$-product commute, namely: the 
gluing
metric $d_{\gl}$ on $X\times Y$ arising from the
gluing
map $\pi\times\mathrm{id}_Y\colon\bigsqcup_{i\in I}(X_i\times Y,d_{X_i\times_pY})
\to
X\times Y$ is the same as the $\ell^p$-metric $d_{X\times_pY}$ on $X\times Y$.

\end{lemma}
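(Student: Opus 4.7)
The plan is to establish the two inequalities $d_{X \times_p Y} \leq d_{\gl}$ and $d_{\gl} \leq d_{X \times_p Y}$ separately. For the first, I would take any gluing path $P = ((x_j, y_j))_{j=0}^n$ in $X \times Y$ for the gluing map $\pi \times \mathrm{id}_Y$, so that consecutive pairs $(x_j, y_j), (x_{j+1}, y_{j+1})$ lie in $X_{i_j} \times Y$ for some $i_j \in I$. Projecting $P$ onto $X$ yields a gluing path from $x_0$ to $x_n$ in $X$, so by the definition of $d_X$ we have $d_X(x_0, x_n) \leq \sum_{j} d_{i_j}(x_j, x_{j+1})$; projecting $P$ onto $Y$ and applying the triangle inequality gives $d_Y(y_0, y_n) \leq \sum_j d_Y(y_j, y_{j+1})$. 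Setting $\alpha_j := d_{i_j}(x_j, x_{j+1})$ and $\beta_j := d_Y(y_j, y_{j+1})$, and exploiting that $\bnorm{p}{\cdot}$ is non-decreasing in each coordinate on the non-negative orthant together with its subadditivity (Minkowski's inequality), I would then chain
\begin{equation*}
\bnorm{p}{\bpl d_X(x_0, x_n), d_Y(y_0, y_n) \bpr} \leq \bnorm{p}{\bpl {\textstyle\sum_j \alpha_j}, {\textstyle\sum_j \beta_j} \bpr} \leq \sum_j \bnorm{p}{(\alpha_j, \beta_j)} = \mathrm{len}(P),
\end{equation*}
and take the infimum over $P$ to obtain the first inequality.

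For the reverse inequality, I would fix $(x, y), (x', y') \in X \times Y$ and $\epsilon > 0$, and pick a gluing path $(x = x_0, \ldots, x_n = x')$ in $X$, with $\alpha_j := d_{i_j}(x_j, x_{j+1})$, whose total length $L := \sum_j \alpha_j$ is at most $d_X(x, x') + \epsilon$. Assuming first that $L > 0$, I would take a constant-speed geodesic $\gamma \colon [0, D] \to Y$ from $y$ to $y'$, where $D := d_Y(y, y')$, and subdivide it \emph{proportionally} to the $\alpha_j$ by setting $y_j := \gamma\bpl D\sum_{k<j}\alpha_k / L\bpr$. Then $\beta_j := d_Y(y_j, y_{j+1}) = \alpha_j D / L$, so every pair $(\alpha_j, \beta_j)$ is the non-negative scalar multiple $(\alpha_j / L)(L, D)$ of one common vector, and hence
\begin{equation*}
\sum_j \bnorm{p}{(\alpha_j, \beta_j)} = \bnorm{p}{(L, D)} \leq \bnorm{p}{\bpl d_X(x, x') + \epsilon, D \bpr}.
\end{equation*}
Since $((x_j, y_j))_{j=0}^n$ is a gluing path for $\pi \times \mathrm{id}_Y$ (as $x_j, x_{j+1} \in \pi(X_{i_j})$ forces $(x_j, y_j), (x_{j+1}, y_{j+1}) \in X_{i_j} \times Y$), this bounds $d_{\gl}((x, y), (x', y'))$, and sending $\epsilon \to 0$ concludes. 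In the degenerate case $L = 0$ we have $x = x'$, so I would pick any $i \in I$ with $x \in \pi(X_i)$ and use the single-segment gluing path $((x, y), (x, y'))$ inside $X_i \times Y$, of length $\bnorm{p}{(0, D)} = D = d_{X \times_p Y}((x, y), (x, y'))$.

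The main thing to spot is the proportional subdivision trick in the second half: a naive uniform split would only give an inequality via Minkowski in the \emph{wrong} direction. Choosing the $y_j$ so that all vectors $(\alpha_j, \beta_j)$ are positive scalar multiples of a single vector $(L, D)$ turns the triangle inequality for $\bnorm{p}{\cdot}$ into an equality, which is precisely what compensates the inequalities obtained from projecting gluing paths in the first half. Everything else is routine unpacking of the definitions.
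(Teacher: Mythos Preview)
Your proof is correct and follows essentially the same approach as the paper: project a gluing path onto the two factors and apply Minkowski for $d_{X\times_pY}\le d_{\gl}$, and use the proportional subdivision of a geodesic in $Y$ (so that all increment vectors are parallel and Minkowski becomes an equality) for the reverse inequality. Your explicit treatment of the degenerate case $L=0$ and the $\epsilon$-approximation are minor presentational differences only.
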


\begin{proof}
	Let $x,x'\in X$ and $y,y'\in Y$.
	Consider an
	$((x,y),(x',y'))$--gluing path 
	$P=((x,y)=(x_0,y_0),\ldots,(x_n,y_n)=(x',y'))$ in $X\times Y$
	and $i_j\in I$ 
	for $j=1,\ldots,n$,
	such that $x_{j-1},x_j\in X_{i_j}$.
	Then 
	$P$
	induces 
	an 
	$(x,x')$--gluing path 
	$P_X=(x_0,\ldots,x_n)$ in $X$.
	The length $\mathrm{len}(P)$ satisfies
	\begin{multline*}
		\mathrm{len}(P)=\sum_{j=1}^n\bnorm{p}{\bpl d_{X_{i_j}}(x_{j-1},x_j),d_Y(y_{j-1},y_j)\bpr}\\[-0.6em]
		\geq\bnorm{p}{\bpl\!\!\;\sum_{j=1}^nd_{X_{i_j}}(x_{j-1},x_j),\sum_{j=1}^nd_Y(y_{j-1},y_j)\bpr}\,\:\\[-0.3em]
		=\bnorm{p}{\bpl\mathrm{len}(P_X),\sum_{j=1}^nd_Y(y_{j-1},y_j))\bpr}
		\geq\bnorm{p}{\bpl d_X(x_0,x_n),d_Y(y_0,y_n)\bpr},
	\end{multline*} 
	therefore $d_{\gl}((x,y),(x',y'))\geq d_{X\times_p Y}((x,y),(x',y'))$.
	
	For the other inequality, let $P_X=(x=x_0,\ldots,x_n=x')$ be 
	an 
	$(x,x')$--gluing
	path
	of non-zero length
	in $X$ such that for every $1\leq j\leq n$ 
	the elements
	$x_{j-1},x_j$
	belong to 
	$X_{i_j}$ for some $i_j\in I$.
	Given any $y,y'\in Y$, choose a 
	sequence of points
	$P_Y=
	(y=y_0,y_1,\ldots,y_n=y')$ in $Y$ such that 
	$d_Y(y_{j-1},y_j)=d_{X_{i_j}}(x_{j-1},x_j)\cdot d_Y(y,y')/\mathrm{len}(P_X)$ 
	for 
	$1\leq j\leq n$. 
	Then $P=((x_0,y_0),\ldots,(x_n,y_n))$ 
	is 
	an 
	$((x,y),(x',y'))$--gluing
	path in $X\times Y$ 
	that satisfies $\mathrm{len}(P)=\norm{p}{(\mathrm{len}(P_X),d_Y(y,y'))}$, 
	and the other inequality follows.  
\end{proof}

Below
we 
present and discuss
several definitions concerning local fragments of a cube complex.

Let $X$ be a cube complex.
For any cube $C$ of $X$ 
we
denote by $X_C$ the 
smallest 
subcomplex of $X$ 
containing 
all 
of
the 
cubes of $X$ that contain 
$C$.
In further text,  
we 
usually
consider $X_C$ with the 
metrics 
$d_p^{X_C}$
(discarding 
the
cubes of $X$ not belonging to $X_C$ 
---
in particular 
we do not restrict the metric from $X$ to $X_C$).
We define $C^\perp$ to be 
any of the 
(identical) 
subcomplexes of 
$X_C$ 
such that 
$X_C$
is the product $C\times C^\perp$.

\begin{remark}\label{r:ltwoloc}
If $C$ is a cube of a CAT(0) cube complex $(X,d_2^X)$, then 
the above-defined cube complex
$(X_C,d_2^{X_C})$ 
is also CAT(0): since $X_C$ is contractible, it 
is sufficient 
to check that it satisfies the Link Condition.
Let $c$ be any vertex of $C$.
For each vertex $v$ of $X_C$ there exists 
the
smallest cube $C_v$ containing $C$ and $v$;
denote by $\Delta_v$ the set of neighbours of $c$ (in the 1-skeleton of $X$) that belong to $C_v$.
Consider a vertex $a$ of $X_C$ and neighbours $v_1,\ldots,v_n$ of $a$ in $X_C$ such that for 
all
$1\leq i<j\leq n$ the vertices $v_i,v_j$ span an edge in the link of $a$ in $X_C$, i.e.~there exists a cube $C_{ij}$ in $X$ containing $a,v_i,v_j$ and $C$.
Then the cube $C_{ij}$ must contain $C_a$, $C_{v_i}$ and $C_{v_j}$, hence each pair of vertices from $\Delta_a\cup\Delta_{v_i}\cup\Delta_{v_j}$ is connected by an edge in the link of $c$.
Therefore, 
every
pair of vertices of the set 
$\Delta_a\cup\bigcup\{\Delta_{v_i}:1\leq i\leq n\}$ spans an edge in the link of $c$, 
and
by the 
Link Condition 
for
$c$ in $X$, there exists a cube in $X$ 
that contains $a$, $C$ and all of the $v_i$, as required.
\end{remark}

Given a point $x\in X$, 
let
$C(x)$ be the cube in $X$ 
that contains $x$ in its interior
(we use 
the
convention that the interior of a 0-cube is the 0-cube itself).
We then 
use the shorter notation $X_x$ for $X_{C(x)}$
and
say that 
the point $x$
is of \emph{type} 
$(\dim X_x-\dim C(x),\dim C(x))$.
(Note that the first coordinate of type is equal to $\dim C(x)^\perp$.
The type 
with the 
lexicographic
order may be 
considered 
to be
a measure of complexity 
of a local neighbourhood of
$x$
in the 
cube
complex $X$.)

\newcommand{\traj}{%
\ifmmode%
\mathsf{Traj}%
\else%
$\mathsf{Traj}$%
\fi
}
\newcommand{\ged}{%
\ifmmode%
\mathsf{Par}%
\else%
$\mathsf{Par}$%
\fi
}

We say that 
a locally finite CAT(0)
cube complex 
$X$ has the \emph{property \traj} 
if all of the convex $d_p^X$-bicombings $\sigma^p$ on $X$ for $p\in[1,\infty]$
have the same trajectories, i.e.~$\im\,\sigma_{xx'}^p=\im\,\sigma_{xx'}^r$ for all $x,x'\in X$ and $p,r\in[1,\infty]$;
and say that $X$ has the \emph{property \ged} if furthermore the parametrisations agree, i.e.~the bicombings $\sigma^p$ 
for $p\in[1,\infty]$
are 
all
equal.

\medskip
The main technical lemma used in the proof of Theorem \ref{t:ccc} to carry an induction over the type is as follows.

\begin{lemma}\label{l:ccctech}
Let $X$ be a locally finite CAT(0) cube complex
and $x\in X$.
\begin{enumerate}[(i)]
\item\label{l:ccctech12}
If $x$ is of type $(0,0)$, $(0,1)$ or $(1,0)$,
i.e.~$\dim X_x\leq 1$, 
then 
$X_x$
satisfies
\ged.

\item\label{l:ccctech3}
If $C(x)^\perp$ 
satisfies 
\ged, then 
$X_x$
also 
satisfies
\ged.

\item\label{l:ccctech4}
If $x$ 
is a vertex,
and for 
all 
$x_\circ\neq x$ belonging to the interior of 
a cube containing $x$
the cube complex 
$X_{x_\circ}$
satisfies
\traj, then 
$X_x$
also 
satisfies
\traj.

\end{enumerate}
\end{lemma}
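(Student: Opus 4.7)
For part (i), each of the three admissible types forces $\dim X_x \leq 1$, so $X_x$ is a point, a single edge, or a bouquet of edges at $x$. On every edge both gluing metrics $d_2^{X_x}$ and $d_\infty^{X_x}$ restrict to the usual Euclidean metric on an interval, and this agreement propagates through the gluings to give $(X_x, d_2^{X_x}) = (X_x, d_\infty^{X_x})$ as metric spaces. Uniqueness of the \ccc{} bicombing on CAT(0) and on injective spaces then forces $\sigma^2 = \sigma^\infty$, so \ged{} holds. For part (ii), Lemma \ref{l:produkt} realises $(X_x, d_p^{X_x})$ as the $\ell^p$-product $(C(x), d_p^{C(x)}) \times_p (C(x)^\perp, d_p^{C(x)^\perp})$. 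On a single Euclidean cube $C(x)$, the affine map $(u,v,t) \mapsto (1-t)u + tv$ is a \ccc{} bicombing for both $d_2$ and $d_\infty$, and by uniqueness of the \ccc{} bicombing on a CAT(0) and on an injective space it coincides with $\sigma^2$ and $\sigma^\infty$ respectively, so $C(x)$ satisfies \ged. Given the hypothesis that $C(x)^\perp$ also satisfies \ged, Remark \ref{r:prodbicomb} tells us that the product of the factor bicombings is \ccc{} on both $(X_x, d_2^{X_x})$ and $(X_x, d_\infty^{X_x})$; uniqueness identifies each with $\sigma^2$ and $\sigma^\infty$; since the factor bicombings already agree, so do the products, yielding \ged{} on $X_x$.

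Part (iii) is the main work. Fix $a, b \in X_x$. I will first establish the \emph{non-passing lemma}: if $\sigma^2_{ab}$ avoids $x$ in its interior, then $\im\,\sigma^2_{ab} = \im\,\sigma^\infty_{ab}$. For each interior point $y \neq x$ of the geodesic, two cases arise. If $y$ lies in the interior of a cube $C(y)$ containing $x$, then $y$ is one of the $x_\circ$ of the hypothesis; moreover, every cube of $X$ containing $y$ necessarily contains $C(y) \ni x$ and therefore lies in $X_x$, so $X_y \subseteq X_x$ provides an open neighbourhood of $y$ in $X_x$ whose property \traj{} transfers directly to local trajectory-agreement of the two bicombings on $X_x$ at $y$. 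If instead $x \notin C(y)$, then locally $X_x$ near $y$ consists of $C(y)$ together with finitely many higher-dimensional cubes of $X_x$ containing $C(y)$ (each of which, being in $X_x$, is contained in some cube through $x$); a direct analysis patterned on the planar models of Lemma \ref{l:5sp} then gives the required local trajectory-agreement. Knowing local trajectory-agreement along the whole of $\sigma^2_{ab}$, I reparametrise it to constant $d_\infty^{X_x}$-speed and apply Proposition \ref{f:loctoglobgeod} with bicombing $\sigma^\infty$ and metric $d_\infty^{X_x}$ to identify the reparametrisation with $\sigma^\infty_{ab}$. A symmetric argument swapping the roles of $d_2$ and $d_\infty$ gives the reverse implication, so $\sigma^2_{ab}$ passes through $x$ in its interior precisely when $\sigma^\infty_{ab}$ does; when both do, consistency of each bicombing decomposes the two geodesics as concatenations at $x$, the non-passing lemma applies to each half (whose interior avoids $x$), and concatenation delivers $\im\,\sigma^2_{ab} = \im\,\sigma^\infty_{ab}$.

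The hard part will be the legitimate invocation of Proposition \ref{f:loctoglobgeod}, which requires the curve supplied to it already to be a constant-speed geodesic in the target metric; since $\sigma^2_{ab}$ is only a constant-speed $d_2^{X_x}$-geodesic, I will additionally need to show that its $d_\infty^{X_x}$-length equals $d_\infty^{X_x}(a, b)$. I plan to deduce this from the local trajectory-matching by piecing together local $d_\infty$-distances using additivity of $d_\infty$-distances along $\sigma^\infty$-segments, which is a direct consequence of consistency of $\sigma^\infty$. A second delicate point is the local analysis for boundary points $y$ with $x \notin C(y)$; in general dimension this would require a more intricate local model, but in the dimension at most $2$ setting of the intended application, Theorem \ref{t:ccc}, these local models reduce to configurations of half-squares glued along a common edge, covered by the planar analysis underlying Lemma \ref{l:5sp}.
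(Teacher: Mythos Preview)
Parts (i) and (ii) are correct and essentially match the paper. The overall structure of your part (iii) --- establish local trajectory-agreement away from $x$, use Proposition~\ref{f:loctoglobgeod} to globalise, then handle the through-$x$ case by concatenation --- is also the paper's structure. The gap is in how you obtain the local trajectory-agreement.

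Your case split on whether $x\in C(y)$ is the problem. When $x\notin C(y)$ you wave at ``a direct analysis patterned on the planar models of Lemma~\ref{l:5sp}'' and then concede that in general dimension you cannot carry it out. But Lemma~\ref{l:ccctech} is stated with no dimension restriction, so this is a genuine gap, not a technicality to be deferred to the application. The paper's key observation, which you are missing, is that this case split is unnecessary: for \emph{any} $x'\in X_x\setminus\{x\}$, let $C_{x'}$ be the smallest cube containing both $x$ and $x'$ (this exists because every point of $X_x$ lies in some cube through $x$), and pick $x_\circ'$ in its interior. Then $x_\circ'\neq x$ is in the interior of a cube through $x$, so the hypothesis applies to it. Crucially, $X_{x_\circ'}$ contains a neighbourhood of $x'$ in $X_x$: any cube $C$ of $X_x$ containing $x'$ is a face of some cube $D$ with $x\in D$; since $x,x'\in D$ we get $C_{x'}\subseteq D$, so $C$ is a face of a cube containing $C_{x'}$, i.e.\ $C$ lies in $X_{x_\circ'}$. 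Local isometry of the inclusion, $\sigma$-convexity of balls, and uniqueness of the \ccc{} bicombing then transfer \traj{} from $X_{x_\circ'}$ to a neighbourhood $U_{x'}$ of $x'$ in $X_x$. This single argument covers all $x'\neq x$ without any appeal to Lemma~\ref{l:5sp} and without a dimension restriction.

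Your worry about Proposition~\ref{f:loctoglobgeod} is reasonable on a literal reading of its hypothesis, but look at its proof: the induction from $\mathtt{S}(2l/3)$ to $\mathtt{S}(l)$ only uses that $c$ is constant speed and \emph{locally} a $\sigma$-geodesic (which already forces $d(c(s),c(t))=|t-s|\cdot|\dot c|$ on short intervals, so the midpoint identifications go through). Hence a constant-speed reparametrisation of a curve whose image is locally a $\sigma^\infty$-geodesic image can be fed directly into the proposition; no separate verification that it is globally length-minimising is needed.
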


\newcommand{\spcint}{\mathsf{I}}

\begin{proof}
\ref{l:ccctech12}
{\sc Type $(0,0)$.} In this case 
$X_x$
is a single point.
The claim follows.

\smallskip
{\sc Type $(0,1)$.} In this case 
$X_x$
is a unit interval $I$, on which 
all of the $\ell^p$-metrics $d_p^I$ for $p\in[1,\infty]$ coincide,
therefore 
all of the $d_p^I$-bicombings
on $I$
are equal.

\smallskip
{\sc Type $(1,0)$.}  
In this case the complex 
$X_x$
consists of several copies of the unit interval $I$ glued in 
the
common vertex $x$. 
Since 
all of the 
metrics
$d_p^I$ are equal,
also 
all of the 
metrics
$d_p^{X_x}$ are equal,
therefore 
all of
the 
convex 
$d_p^{X_x}$-bicombings 
on $X_x$ are equal.

\medskip
\ref{l:ccctech3}
By Lemma \ref{l:produkt}
the complex $(X_x,d_p^{X_x})$ 
is 
the $\ell^p$-product of $(C(x)^\perp,d_p^{\,C(x)^\perp})$ 
with 
the $\dim C(x)$--fold $\ell^p$-product of the unit interval $(I,d_p^I)$. 
By the assumption and \ref{l:ccctech12}, the complex $C(x)^\perp$ and the 
interval $I$, respectively, satisfy \ged. 
Denote
by $\sigma^{p,\perp}$ the
convex 
$d_p^{\,C(x)^\perp}$-bicombing on $C(x)^\perp$, 
and by $\sigma^I$ the bicombing on the unit interval. 
We 
then
have the equality $\prbi{\sigma^{p,\perp}}{(\sigma^I)^{\prbi{}{\dim C(x)}}}=\prbi{\sigma^{r,\perp}}{(\sigma^I)^{\prbi{}{\dim C(x)}}}$
for all $p,r\in[1,\infty]$.
The bicombing $\prbi{\sigma^{p,\perp}}{(\sigma^I)^{\prbi{}{\dim C(x)}}}$
is the convex $d_p^{X_x}$-bicombing on
$X_x$, 
see Remark \ref{r:prodbicomb}, which finishes the proof. 

\medskip
\ref{l:ccctech4}
Denote
by $\sigma^p$ the 
convex
$d_p^{X_x}$-bicombing on $X_x$.
Observe that for
any point $x_\circ\neq x$ 
belonging
to
the interior 
of 
a 
cube
incident 
to
$x$, 
the complex
$X_{x_\circ}$
is 
the 
subcomplex 
$(X_x)_{x_\circ}$ of 
$X_x$.
Consider $x'\in X_x\setminus\{x\}$ and let $x_\circ'$ be any point from the interior of the smallest cube containing both $x$ and $x'$.
Since 
some 
open neighbourhood
of 
$x'$
in $X_x$
is contained in $X_{x_\circ'}$,
and
the inclusion
$(X_{x_\circ'},d_r{\hspace{-0.35em}\raisebox{0.45em}{$\scriptstyle X\!\raisebox{-0.05em}{$\scriptscriptstyle x_\circ^{\scaleobj{0.7}{\prime}}$}$}})\hookrightarrow(X_x,d_r^{X_x})$
is an 
isometry
on 
a neighbourhood of $x'$ 
(recall Remark \ref{r:lpmetric}),
and
for all metrics $d$ the $d$-balls are convex with respect to conical $d$-bicombings
(here we consider $d=d_r^{X_x}$ and $d=d_r{\hspace{-0.35em}\raisebox{0.45em}{$\scriptstyle X\!\raisebox{-0.05em}{$\scriptscriptstyle x_\circ^{\scaleobj{0.7}{\prime}}$}$}}$), 
by uniqueness of bicombings (see Remark \ref{r:ccc}\ref{r:ccc1})
for all $p\in[1,\infty]$
there exists 
an open
neighbourhood 
$U^p_{x'}$
of 
$x'$
in $X_x$
such that 
$\im\,\sigma^2_{ab}=\im\,\sigma^p_{ab}$ 
for all 
$a,b\in U_{x'}$.

\smallskip
Now consider any $a,b\in X_x$. We shall prove that
$\sigma_{ab}^2=\sigma_{ab}^p$ for all $p\in[1,\infty]$.
The case of $a=x=b$ is clear.
Next, we consider the case when
$a\neq x=b$. 
Let $a_\circ$ belong to the interior of the smallest cube that contains 
both
$a$ and $x$.
Let $\gamma$ be the (straight-line) geodesic from the convex $d_2^{\:\!C(a_\circ)}$-bicombing on $C(a_\circ)$ that begins in $a$ and ends in $x$.
Since the inclusion 
$(C(a_\circ),d_2^{\:\!C(a_\circ)})\hookrightarrow(X_x,d_2^{X_x})$ 
is a local isometry 
on 
the interior of the cube $C(a_\circ)$
(recall Remark \ref{r:lpmetric}), 
the restrictions $\gamma|_{[s,t]}$ are 
local 
$\sigma^2$-geodesics 
for $0<s\leq t<1$
(recall that CAT(0)-geodesics are unique).
Therefore, 
by considering the neighbourhoods $U^p_{x'}$,
one obtains that for
all
$0<s\leq t<1$ the image $\im\,\gamma|_{[s,t]}$ is the image of a local $\sigma^p$-geodesic;
then, by Proposition \ref{f:loctoglobgeod}, 
we have that these local $\sigma^p$-geodesics are (global) $\sigma^p$-geodesics, hence $\im\,\sigma^2_{\gamma(s),\gamma(t)}=\im\,\gamma|_{[s,t]}=\im\,\sigma^p_{\gamma(s),\gamma(t)}$; finally, 
passing to the limits $s\to 0$ and $t\to 1$,
we obtain that $\im\,\sigma^2_{ax}=\im\,\gamma=\im\,\sigma^p_{ax}$, as required.
Likewise follows the case 
when 
$a=x\neq b$.
In the case when $a\neq x\neq b$, we consider two subcases.
If
for some $r\in\{2,p\}$ 
there exists a local $\sigma^r$-geodesic 
$\gamma$ 
from $a$ to $b$ in 
$X_x$ 
that omits $x$,
then, by 
considering 
the neighbourhoods $U^p_{x'}$,
one obtains that 
the
image 
$\im\,\gamma$
is both 
the image of a local $\sigma^2$-geodesic and 
the image of a local $\sigma^p$-geodesic,
therefore
by
Proposition \ref{f:loctoglobgeod} 
$\im\,\sigma_{ab}^2=\im\,\gamma=\im\,\sigma_{ab}^p$.
Otherwise, 
for both choices of $r\in\{2,p\}$
the $\sigma^r$-geodesic 
$\sigma^r_{ab}$
passes through $x$ and must be the concatenation of the geodesics $\sigma^r_{ax}$ and $\sigma^r_{xb}$; the image of each of these geodesics does not depend on $r$,
as it was discussed in the case 
when 
$a\neq x=b$.
\end{proof}

\newcommand{\spcainn}
{
\begin{tikzpicture}
\begin{scope}[scale=0.15]
\draw (0,0) -- (2,0) -- (3.41,1.41) -- (4.82,0) -- (3.41,-1.41) -- (2,0);
\end{scope}
\end{tikzpicture}
}
\newcommand{\spcsqinn}{
\begin{tikzpicture}
\begin{scope}[scale=0.085]
\draw (0,0) -- (4,0) -- (4,4) -- (0,4) -- cycle;
\end{scope}
\end{tikzpicture}
}
\newcommand{\spcrys}{\!\!\spcainn\!}
\newcommand{\spc}{F}
\newcommand{\spcsq}{\!\spcsqinn\!}

\begin{remark}\label{r:ccctech}
Denote by 
$\spc$
the space consisting of a 1-cube 
\,$\spcint$\,
glued with a 2-cube 
\spcsq{}
in a vertex: 
\spcrys. Observe that, even though 
\,$\spcint$\,
and 
\spcsq{}
satisfy \ged, the 
lengths
of
the same 
bicombing-geodesic 
in 
$(\text{\spcsq},d_p)$
almost always differ
for $p\in[1,\infty]$,
which leads to the following.
\begin{enumerate}[(i)]

\item\label{r:ccctech1} By considering the space 
$\spc$,
one 
obtains
that
the ($\ged\!\!\;\Rightarrow\!\!\;\ged$)-version of Lemma~\ref{l:ccctech}\ref{l:ccctech4} does not hold. 

\item\label{r:ccctech2} 
The product of 
$\spc$
with the unit interval
does not satisfy the property $\traj$,
hence does not satisfy the conclusion of Theorem \ref{t:ccc}
and shows that
the $(\traj\Rightarrow\traj)$-version of Lemma \ref{l:ccctech}\ref{l:ccctech3} does not hold.

\end{enumerate}
	
We also note that the geodesics may `change dimension' 
like in the spaces $F$ and $F\times[0,1]$ considered above
even 
when
the complex is `of constant dimension'.
Namely, consider the space $F_5$ consisting of 
$5$ squares $\square_i$ for $0\leq i<5$ 
glued
in such a way that they share a single vertex, 
each
square 
$\square_i$
shares an edge with 
square
$\square_{i+1}$
(modulo $5$), and each edge belongs to at most $2$ of the squares $\square_i$.
This space contains the space $F$ as a $\sigma_p$-convex subcomplex (as the space consisting of a square and an opposite edge): it is well known that this holds in the $p=2$, CAT(0), case, and then one may use Theorem \ref{t:main} (or Lemma \ref{l:ccctech}, whose application it is) to justify this fact for other $p$.
Then the properties discussed in \ref{r:ccctech1} and \ref{r:ccctech2} 
above
hold for $F_5$ and the product of $F_5$ with the unit interval, respectively (\ref{r:ccctech2}, recall Lemma \ref{l:produkt} and Remark \ref{r:prodbicomb}).

\end{remark}

Now we are ready to prove the main theorem of this section.

\begin{proof} ({\sc of Theorem \ref{t:ccc}})
Denote by $\traj(n,m)$ the property that for every 
CAT(0)
cube complex $Y$ and every point $y\in Y$ of type $(n,m)$ the complex $Y_y$ satisfies \traj; likewise for \ged.
Lemma \ref{l:ccctech}\ref{l:ccctech12} 
states that
$\ged(0,0)$, 
$\ged(0,1)$
and $\ged(1,0)$
hold. 
Next, observe that 
for any element $y$ of a 
cube complex $Y$, upon 
denoting 
by $y^\perp$ the only element of the intersection $C(y)\cap C(y)^\perp$
(which is a single vertex), 
the equality $C(y)^\perp=(C(y)^\perp)_{y^\perp}$ holds 
(any cube $C^{\downarrow}$ in $C(y)^\perp$ is the intersection 
$C\cap C(y)^\perp$ for some cube $C$ of $Y$ containing $y$; 
then
the cube $C$ 
also
contains 
$C(y)$, 
to which 
$y^\perp$ belongs; hence $y^\perp$ belongs to $C\cap C(y)^\perp=C^\downarrow$);
and,
upon denoting by $(n,m)$ the type of $y$ in $Y$, 
the type of $y^\perp$ 
in the complex $C(y)^\perp$ is 
$(\dim (C(y)^\perp)_y-0,0)=(\dim C(y)^\perp,0)=(\dim Y_y-\dim C(y),0)=(n,0)$.
Therefore,
by Lemma \ref{l:ccctech}\ref{l:ccctech3}, 
$\ged(0,2)$ and $\ged(1,1)$ follow
from $\ged(0,0)$ and $\ged(1,0)$, respectively. 
Finally, 
it follows from
Lemma \ref{l:ccctech}\ref{l:ccctech4}
that 
\traj$(2,0)$ holds,
as for a 2-dimensional 
cube
complex $Y$ and $y\in Y$, 
for all $y_\circ\neq y$ belonging to the interior of 
a
cube 
$C(y_\circ)$
in $Y$ containing $y$
one has that 
the type $(n,m)$ of $y_\circ\in Y$ satisfies $n+m=\dim Y_{y_\circ}\leq\dim Y=2$ 
and
$m=\dim C(y_\circ)>0$. 

\smallskip
Then,
as 
the type $(n,m)$ of each element of $X$ satisfies $n+m\leq\dim X\leq 2$,
and for 
$p\in[1,\infty]$
the
$d_p$-balls are $\sigma^p$-convex, 
and 
for $x\in X$ 
the inclusion 
$(X_x,d_p^{X_x})\hookrightarrow(X,d_p^X)$ 
is 
a 
local isometry
on a neighbourhood of
$x$
(recall Remark \ref{r:lpmetric}),
we have a family of open subsets $\{U^p_x\subseteq X:x\in X\}$ such that $\im\,\sigma^2_{ab}=\im\,\sigma^p_{ab}$ for all $x\in X$ and $a,b\in U_x^p$. 
Therefore by Proposition \ref{f:loctoglobgeod} the bicombings $\sigma^2$ and $\sigma^p$ have the same trajectories, and the theorem follows.
\end{proof}

\newcommand{\chain}{N}
\newcommand{\chainprod}{P}
\newcommand{\chainline}{L}
\newcommand{\rline}{R}
\newcommand{\chainhalfplane}{H}
\newcommand{\chainlinerysinn}
{
	\begin{tikzpicture}
		\begin{scope}[scale=0.15]
			\newcommand{\dia}{1.4142}
			\newcommand{\rad}{3pt}
			\newcommand{\krop}{1}
			\filldraw (0,0) circle (\rad);
			\filldraw (2,0) circle (\rad);
			\filldraw (4,0) circle (\rad);
			\filldraw (4+2*\dia,0) circle (\rad);
			\filldraw (4+4*\dia,0) circle (\rad);
			\filldraw (6+4*\dia,0) circle (\rad);
			\filldraw (6+6*\dia,0) circle (\rad);
			\filldraw (6+8*\dia,0) circle (\rad);
			\filldraw (6+8*\dia+\krop,0) circle (0.5\rad);
			\filldraw (6+8*\dia+1.5*\krop,0) circle (0.5\rad);
			\filldraw (6+8*\dia+2*\krop,0) circle (0.5\rad);
			\draw (0,0) -- (2,0) -- (4,0) -- (4+\dia,\dia) -- (4+2*\dia,0) -- (4+3*\dia,\dia) -- (4+4*\dia,0) -- (6+4*\dia,0) -- (6+5*\dia,\dia) -- (6+6*\dia,0) -- (6+7*\dia,\dia) -- (6+8*\dia,0) -- (6+7*\dia,-\dia) -- (6+6*\dia,0) -- (6+5*\dia,-\dia) -- (6+4*\dia,0) -- (4+4*\dia,0) -- (4+3*\dia,-\dia) -- (4+2*\dia,0) -- (4+\dia,-\dia) -- (4,0);
		\end{scope}
	\end{tikzpicture}
}
\newcommand{\chainlinerys}{\!\chainlinerysinn\!}

\begin{remark}\phantomsection\label{r:cccdisc}
\begin{enumerate}[(i)]
\item\label{r:cccdisc1} 
The 
product 
of the
space 
$\spc$
from Remark \ref{r:ccctech}
with the unit interval
is an example of a finite CAT(0) cube complex of dimension $3$ for which the conclusion of Theorem \ref{t:ccc} does not hold.
Furthermore, its underlying idea can be used in the following construction of a locally finite CAT(0) cube complex $\chainprod$ of dimension $3$
such that 
the identity map $(\chainprod,d_2)\to(\chainprod,d_\infty)$ does not extend continuously to 
a map between compactifications
(which is the first conclusion of the Corollary \ref{c:ccc}).
Let $x_1,x_2,x_3,\ldots$ be such that $x_1=1$, $x_n\in\{1,\sqrt{2}\}$,
and the sequence $(\sum_{i=1}^nx_i)/n$ is not convergent.
Consider cubes $C_1,C_2,C_3,\ldots$ such that 
$C_n$ is a 1-cube iff $x_n=1$, and $C_n$ is a 2-cube iff $x_n=\sqrt{2}$. 
Let $\chain$ be the cube complex 
similar to 
\chainlinerys,\linebreak[1]
obtained from the sequence of cubes $(C_n)$
---
more precisely, we denote by $\chain$ 
the result of
gluing the cubes $C_n$ in such a way that the cubes $C_i$ and $C_j$ are glued to each other iff $|i-j|=1$, 
every gluing is a gluing along a single vertex,
and if $C_n$ is a 2-cube, then the 
two
vertices participating in the gluing are on the opposite sides of a diagonal $D_n$ of $C_n$.
Denote by $\chainline$ the half-line in $\chain$ obtained by 
taking the union of all of the cubes $C_n$ that are 1-cubes with all of the diagonals $D_n$ 
(of 
these of the $C_n$
that are 2-cubes).
Let $\chainprod$ be the cube complex resulting from taking the product of $\chain$ with the $\R$-line $\rline$ subdivided into unit intervals. 

By applying Lemma \ref{l:fixpoint} to the symmetry along the half-plane $\chainhalfplane:=\chainline\times\rline$,
one obtains that that 
$\chainhalfplane$
is $\sigma^2$-convex and $\sigma^\infty$-convex 
in
$\chainprod$.
For $p\in\{2,\infty\}$, the 
half-line 
$(\chainline,d_p^{\chain}|_{\chainline\times\chainline})$ may be isometrically identified with the half-line 
$[0,\infty)$
(with the usual metric),
giving 
rise to
an isometric identification of the half-plane 
$(\chainhalfplane,d_p^\chainhalfplane)$ 
with the $\ell^p$-product 
$[0,\infty)\times_p\R$
(recall Lemma \ref{l:produkt}); below we use this identification by writing $(x,y)\in(\chainhalfplane,d^{\chainhalfplane}_p)$ for $x\geq 0$ and $y\in\R$.
Upon these identifications, the identity 
of $\chainhalfplane$ sends the point $(\sum_{i=1}^nx_i,y)\in(\chainhalfplane,d^{\chainhalfplane}_2)$ to $(n,y)\in(\chainhalfplane,d^{\chainhalfplane}_\infty)$, where $n\in\N$ and $y\in\R$.
Let $a\in(-1/\sqrt{2},1/\sqrt{2})\setminus\{0\}$.
The map $[0,\infty)\ni t\mapsto(t,at)\in(\chainhalfplane,d_2^H)$ is a $\sigma^2$-ray in $\chainhalfplane$
(thus, in $\chainprod$), 
and 
the points $(\sum_{i=1}^nx_i,a\cdot\sum_{i=1}^nx_i)\in(\chainhalfplane,d_2^{\chainhalfplane})$ for $n\in\N$ are mapped by 
the identity to
the points $(n,a\cdot\sum_{i=1}^nx_i)\in(\chainhalfplane,d_\infty^{\chainhalfplane})$;
the $\sigma^\infty$-geodesic from $(0,0)\in(\chainhalfplane,d^{\chainhalfplane}_\infty)$ to the point $(n,a\cdot\sum_{i=1}^nx_i)\in(\chainhalfplane,d^{\chainhalfplane}_\infty)$ passes through the point $(1,a\cdot(\sum_{i=1}^nx_i)/n)\in(\chainhalfplane,d^{\chainhalfplane}_\infty)$.
Since $|a|\in(0,\sqrt{2})$,
we have that 
$|a\cdot(\sum_{i=1}^nx_i)/n|\leq |a|\sqrt{2}\leq 1$,
so this point is at 
$d_\infty^{\chainhalfplane}$-distance $1$
from $(0,0)$.
By the assumption on the sequence $(x_n)$,
the sequence 
$(1,a\cdot(\sum_{i=1}^nx_i)/n)$ is not 
convergent, 
therefore $(n,a\cdot\sum_{i=1}^nx_i)\in(\chainhalfplane,d_\infty^{\chainhalfplane})$ is also not a convergent sequence in the compactification $\bdry{\chainprod}{\sigma^\infty}$, while it is convergent in the compactification $\bdry{\chainprod}{\sigma^2}$.

\smallskip
We do not know if the `in particular' part of Corollary \ref{c:ccc} may not hold for a locally finite CAT(0) cube complex.
(In the above example, 
the boundary $\partial_{\sigma^p}\chainprod$
is homeomorphic to an interval for any $p\in[1,\infty]$, as may be seen using Lemma \ref{l:produkt}, the fact that each ray in $\chain$ is contained in $\chainline$, and Proposition \ref{p:join} below.)

\item\label{r:cccdisc2} 
The local-to-global approach from the above proof of Theorem \ref{t:ccc} 
works not only for (CAT(0)) cube complexes of dimension at most $2$, 
however 
such extensions
would lead to
making the statement of Theorem \ref{t:ccc} 
more technical.
For instance, in the induction in the 
`local' 
part of this proof, 
instead of considering only the very general $\traj(n,m)$ and $\ged(n,m)$ properties
(note that we have, in fact, proved $\ged(0,m)$ 
and $\ged(1,m)$
for 
arbitrary $m$, and $\traj(2,0)$),
one may start with a particular cube complex $X$, consider which subcomplexes of $X$ 
are needed
to carry the induction, and prove \traj{} or \ged{} only for these --- this approach 
works e.g.~if the complex $X$ has no elements of type $(n,m)$ with $n\geq 2$ and $m\geq 1$.

\end{enumerate}	
\end{remark}

In the course of this section we have made enough preparations to state the following proposition.

\begin{fact}\label{p:join}
Let $p\in[1,\infty]$,
and 
assume that 
proper metric spaces $(X,d_X)$ and $(Y,d_Y)$ 
admit 
\ccc{}
bicombings $\sigma^X$ and $\sigma^Y$, respectively. Then the boundary 
${\partial_{\prbi{\sigma^X\!}{\sigma^Y}}X\times_p Y}$
is homeomorphic to the join 
$\partial_{\sigma^X}X*\partial_{\sigma^Y}Y$
of 
$\partial_{\sigma^X}X$ and $\partial_{\sigma^Y}Y$.
\end{fact}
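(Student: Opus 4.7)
The first step is to observe that the product bicombing $\sigma := \prbi{\sigma^X}{\sigma^Y}$ is \ccc{} by Remark~\ref{r:prodbicomb}, that $X \times_p Y$ is proper as an $\ell^p$-product of proper metric spaces, and hence the construction from Section~\ref{s:deslan} applies and produces the compact boundary $\partial_\sigma(X \times_p Y)$. I will fix basepoints $o_X \in X$, $o_Y \in Y$ and work with $o := (o_X, o_Y)$. The plan is to produce an explicit continuous bijection $\Phi$ from the join $\partial_{\sigma^X} X * \partial_{\sigma^Y} Y$ to $\partial_\sigma(X \times_p Y)$ and invoke compactness to upgrade it to a homeomorphism.

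For $t \in [0, 1]$ let $a(t) := (1-t)/\bnorm{p}{(1-t, t)}$ and $b(t) := t/\bnorm{p}{(1-t, t)}$, so that $\bnorm{p}{(a(t), b(t))} = 1$, with $a(0) = b(1) = 1$ and $a(1) = b(0) = 0$. I will define
\begin{equation*}
\Phi(\bar x, \bar y, t)(s) := \bpl \geod{o_X}{\bar x}(a(t) s),\, \geod{o_Y}{\bar y}(b(t) s) \bpr
\end{equation*}
and check that this is a unit-speed isometric embedding $[0, \infty) \to X \times_p Y$ (immediate from the $\ell^p$-product metric), and that, applying consistency of $\sigma^X$ and $\sigma^Y$ coordinatewise, the $\sigma$-geodesic from $o$ to $\Phi(\bar x, \bar y, t)(s_0)$ has image equal to $\Phi(\bar x, \bar y, t)([0, s_0])$. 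Hence $\Phi(\bar x, \bar y, t)$ is a $\sigma$-ray, and at the extremes $t = 0$ and $t = 1$ the expression depends only on $\bar x$ and only on $\bar y$, respectively, so $\Phi$ factors through the join identifications.

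The main step is surjectivity. Given any $\sigma$-ray $\xi$ based at $o$, write $\xi(s) = (\xi_X(s), \xi_Y(s))$. For each $s_0 > 0$, by definition of the product bicombing the $\sigma$-geodesic from $o$ to $\xi(s_0)$ equals $r \mapsto \bpl \sigma^X_{o_X, \xi_X(s_0)}(r), \sigma^Y_{o_Y, \xi_Y(s_0)}(r) \bpr$ for $r \in [0,1]$, and since $\xi$ is a $\sigma$-ray this is exactly $\xi|_{[0, s_0]}$ reparametrized affinely. Comparing parametrizations yields $\xi_X(s) = \sigma^X_{o_X, \xi_X(s_0)}(s/s_0)$ for $s \leq s_0$, whence $d_X(o_X, \xi_X(s)) = a \cdot s$ for some constant $a \geq 0$, and likewise $d_Y(o_Y, \xi_Y(s)) = b \cdot s$, with $\bnorm{p}{(a, b)} = 1$. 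When $a > 0$, the reparametrization $\eta_X(u) := \xi_X(u/a)$ is a $\sigma^X$-ray (consistency again), yielding $\bar x := [\eta_X] \in \partial_{\sigma^X} X$; symmetrically for $Y$. In the degenerate cases $a = 0$ or $b = 0$ the corresponding factor is constantly at the basepoint, and the missing boundary point is exactly the one collapsed out of the join at $t = 1$ or $t = 0$. This realizes $\xi$ as $\Phi(\bar x, \bar y, t)$ for the unique $t$ with $(a(t), b(t)) = (a, b)$, and the same description recovers $\bar x, \bar y, t$ from $\xi$ (where not collapsed), giving injectivity.

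Finally, for continuity: if $(\bar x_n, \bar y_n, t_n) \to (\bar x, \bar y, t)$ in the join, then for each $s \geq 0$ Proposition~\ref{p:ellexp} gives $\exp_{o_X}(\bar x_n, a(t_n) s) \to \exp_{o_X}(\bar x, a(t) s)$ and likewise in $Y$, so each coordinate of $\Phi(\bar x_n, \bar y_n, t_n)(s)$ converges to the corresponding coordinate of $\Phi(\bar x, \bar y, t)(s)$; by the inverse limit description of the topology on $\bdry{X \times_p Y}{\sigma}$ recalled in Section~\ref{s:deslan}, this yields $\Phi(\bar x_n, \bar y_n, t_n) \to \Phi(\bar x, \bar y, t)$. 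At $t \in \{0, 1\}$ one coordinate collapses to the basepoint, matching the join collapse. Since the join of two compact Hausdorff spaces is compact Hausdorff and the boundary is Hausdorff, the continuous bijection $\Phi$ is a homeomorphism. The main obstacle I anticipate is establishing the linearity $d_X(o_X, \xi_X(s)) = a s$; this is the step that isolates the \emph{angle} of $\xi$ between the two factors and relies essentially on the product structure of the bicombing together with consistency.
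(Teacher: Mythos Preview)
Your argument is correct and follows the same strategy as the paper: define a map from $\partial_{\sigma^X} X \times \partial_{\sigma^Y} Y$ times the arc $\{(a,b):\norm{p}{(a,b)} = 1,\ a,b \geq 0\}$ into the product boundary, check it factors through the join quotient, and show the factored map is a continuous bijection between compact Hausdorff spaces. Your invocation of Proposition~\ref{p:ellexp} for continuity is a slight shortcut over the paper's explicit distance estimates, and your surjectivity argument (extracting the constant speed $a$ from the relation $\xi_X(s) = \sigma^X_{o_X,\xi_X(s_0)}(s/s_0)$) matches the paper's.

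One case is missing, however: when one of $X, Y$ is compact, say $Y$, the boundary $\partial_{\sigma^Y} Y$ is empty, so your domain $\partial_{\sigma^X} X \times \partial_{\sigma^Y} Y \times [0,1]$ is empty and $\Phi$ is vacuous, yet the join (under the convention $A * \emptyset = A$) is $\partial_{\sigma^X} X$ and the claim becomes $\partial_{\prbi{\sigma^X}{\sigma^Y}}(X \times_p Y)\cong\partial_{\sigma^X} X$. The paper disposes of this degenerate case first, showing via Proposition~\ref{f:neweq52} that every $(\prbi{\sigma^X}{\sigma^Y})$-ray based at $o$ has image in $X \times \{o_Y\}$. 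A smaller stylistic point: for continuity it is cleaner to verify it on the product (which your Proposition~\ref{p:ellexp} argument actually does) and then pass to the join by the universal property of the quotient map, rather than speaking of convergence ``in the join'' with all three coordinates written out --- at $t\in\{0,1\}$ one of those coordinates is not well-defined in the join.
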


\begin{proof}
First, assume that the space $Y$ is compact; then $\partial_{\sigma^Y} Y=\emptyset$, and we need to show that the boundary $\partial_{\prbi{\sigma^X\!}{\sigma^Y}}X\times_p Y$ is homeomorphic to $\partial_{\sigma^X}X$.
Pick a basepoint $o=(o_X,o_Y)\in X\times Y$.
By the definition of the product bicombing, the set $X\times\{o_Y\}$ is $(\prbi{\sigma^X\!}{\sigma^Y})$-convex in $X\times Y$,
hence the map
sending a $\sigma^X$-ray $\xi^X$ based in $o_X$
to the $(\prbi{\sigma^X\!}{\sigma^Y})$-ray\; $t\mapsto(\xi^X(t),o_Y)$
induces a homeomorphic embedding
of
$\partial_{\sigma^X}X$
into 
$\partial_{\prbi{\sigma^X\!}{\sigma^Y}}X\times_p Y$.
We show that it is onto by showing that 
every
$(\prbi{\sigma^X\!}{\sigma^Y})$-ray $\gamma$ 
based in 
$o$
has its image contained in $X\times\{o_Y\}$.
Since $Y$ is compact,
there exists a constant 
$D>0$ 
such that $Y\subseteq B_Y(o_Y,D)$.
Then, for each $n\in\N$ there exist $a_n\in X\times \{o_Y\}$ such that $d_{X\times_pY}(a_n,\gamma(n))<D$.
Proposition \ref{f:neweq52} then implies that
for all $n\geq r>0$
we have that
\begin{equation*}
d_{X\times_pY}(X\times\{o_Y\},\gamma(r))\leq d_{X\times_pY}(\geod{o}{a_n}(r),\gamma(r))\leq 2D\cdot r/n,
\end{equation*}
which tends to $0$ as $n\to\infty$; 
therefore $\gamma(r)\in X\times\{o_Y\}$, and $\im\,\gamma\subseteq X\times\{o_Y\}$. 
Likewise follows the case when $X$ is compact.	
	
\medskip	
Now we 
proceed to the main case.
Assume that $X$ are $Y$ are non-compact. Then, as the spaces $X$ and $Y$ are proper, the compactifications $\bdry{X}{\sigma^X}$ and $\overline{Y}_{\!\!\sigma^Y}$ are compact, and the boundaries $\partial_{\sigma^X}X$ and $\partial_{\sigma^Y}Y$ are non-empty.
Let $\rGamma:=\{(a,b)\in\R^2:a,b\geq0,\norm{p}{(a,b)}=1\}$. 
Consider the map $q$ given by 
\begin{equation*}
	\partial_{\sigma^X}X\times\partial_{\sigma^Y}Y\times\rGamma\ni\bpl[\xi_X],[\xi_Y],(a,b)\bpr\mapsto{\big[}t\mapsto\bpl\xi_X(at),\xi_Y(bt)\bpr{\big]}\in\partial_{\prbi{\sigma^X\!}{\sigma^Y}}X\times_pY. 
\end{equation*}
One may check that $q$ is well-defined,
i.e.~the map $t\mapsto(\xi_X(at),\xi_Y(bt))$ is a
$(\prbi{\sigma^X\!}{\sigma^Y})$-ray
for all choices of a $\sigma^X$-ray $\xi^X$, a $\sigma^Y$-ray $\xi^Y$ and $(a,b)\in\rGamma$,
whose asymptotic class does 
not 
change when $\xi^X$ or $\xi^Y$ is replaced with 
any other
element of the asymptotic class of $\xi^X$ or $\xi^Y$, respectively.

Let 
$\pi$
be the 
quotient map 
of the equivalence relation $\sim_*$
on the set $\partial_{\sigma^X}X\times\partial_{\sigma^Y}Y\times\rGamma$, where $\sim_*$ 
is 
--- upon an identification of $\rGamma$ with $[0,1]$ ---
the 
equivalence relation
standardly used to define the join of the spaces $\partial_{\sigma^X}X$ and $\partial_{\sigma^Y}Y$, i.e.~$\sim_*$ is the smallest equivalence relation such that $(x,y,(1,0))\!\sim_*\!(x,y',(1,0))$ and $(x,y,(0,1))\!\sim_*\!(x',y,(0,1))$ for all $x,x'\in\partial_{\sigma^X}X$ and $y,y'\in\partial_{\sigma^Y}Y$.
Observe 
that for any pair $\xi,\zeta$ of $\sigma^X$-rays or of $\sigma^Y$-rays originating from 
a common 
basepoint, and numbers $a,a'>0$, the function $[0,\infty)\ni t\mapsto d(\xi(at),\zeta(a't))$ is zero iff $a=a'$ and $\xi=\zeta$, or $a=a'=0$.
This observation implies that
the map $q$ (setwise) factors
(in a unique way)
as $q=q_\pi\pi$,
with
the map 
$q_\pi\colon\partial_{\sigma^X}X*\partial_{\sigma^Y}Y\to\partial_{\prbi{\sigma^X\!}{\sigma^Y}}X\times_p Y$ 
being
one-to-one.
We shall show that $q_\pi$ is a homeomorphism:
since the set
$\partial_{\sigma^X}X\times\partial_{\sigma^Y}Y\times\rGamma$
is compact, and we have already shown that $q_\pi$ is one-to-one, 
it is sufficient to show that $q_\pi$ is a
continuous
surjection.

\smallskip
The
surjectivity of $q_\pi$ is equivalent to the surjectivity of $q$.
We check the latter below.
Take a 
$(\prbi{\sigma^X\!}{\sigma^Y})$-ray
$(\gamma^X,\gamma^Y)$ in $X\times_p Y$. 
By the definition of the product bicombing, we have that
$\gamma^X(\alpha t)=\gamma^X|_{[0,t]}(\alpha t)=\sigma_{\gamma^X(0)\gamma^X(t)}(\alpha)$ 
for all $t\geq0$ and $0\leq \alpha\leq1$;
therefore
for all 
$t>0$ and 
$0\leq s\leq s'\leq t$
we have that
$\im\,\gamma^X|_{[s,s']}=\im\,\sigma_{\gamma^X(s),\gamma^X(s')}$
and $d_X(\gamma^X(s),\gamma^X(s'))=(s'-s)a_t$
(where $a_t=d_X(\gamma^X(0),\gamma^X(t))/t$).
Dividing the second of these equalities by $(s'-s)$, we see
that $a_t$ is 
a constant 
independent of $t$
---
denote it by $a$;
\linebreak[1]
therefore $\gamma^X$ satisfies $\im\,\gamma^X|_{[s,s']}=\im\,\sigma_{\gamma^X(s),\gamma^X(s')}$
and $d(\gamma^X(s),\gamma^X(s'))=(s'-s)a$ for all $s'\geq s\geq 0$.
Hence:
if $a\neq0$, then $\xi^X\colon[0,\infty)\to X$ given by $\xi^X(t)=\gamma^X(a^{-1}t)$ is a $\sigma^X$-ray;
and if $a=0$,
then
$\gamma^X$ is constant, so
any $\sigma^X$-ray $\xi^X$ originating in $\gamma^X(0)$ 
satisfies
$\gamma^X(t)(=\gamma^X(0))=\xi^X(at)$
for all $t\geq0$.
Similarly, one obtains that there exists a $\sigma^Y$-ray $\xi^Y$ such that $\gamma^Y(t)=\xi^Y(bt)$ for all $t\geq0$, where $b=d_Y(\gamma^Y(0),\gamma^Y(1))$.  
The speed parameters
$a$, $b$
satisfy
\begin{multline*}
	1=d_{X\times_pY}{\big(}(\gamma^X(0),\gamma^Y(0)),(\gamma^X(1),\gamma^Y(1)){\big)}\\
	=\bnorm{p}{{\big(}d_X(\gamma^X(0),\gamma^X(1)),d_Y(\gamma^Y(0),\gamma^Y(1)){\big)}}
	=\norm{p}{(a,b)}.
\end{multline*}
The 
surjectivity of $q$
follows.

The
continuity of $q_\pi$ 
is equivalent to the continuity of $q$, which we check below at each point $(\bar{x},\bar{y},(a,b))\in\partial_{\sigma^X}X\times\partial_{\sigma^Y}Y\times\rGamma$.
Fix a basepoint $o=(o_X,o_Y)\in X\times Y$.
Consider any $R,\epsilon>0$. 
Then for any $\delta>0$, 
$\bar{x}'\in U_{o_X}(\bar{x},R,\delta)\cap\partial_{\sigma^X}X$, $\bar{y}'\in U_{o_Y}(\bar{y},R,\delta)\cap\partial_{\sigma^Y}Y$, and $(a',b')\in\rGamma$ with $|a'-a|,|b'-b|<\delta$, 
we have
\begin{multline*} 
	d_X(\geod{o_X}{\bar{x}}^X(aR),\geod{o_X}{\bar{x}'}^X(a'R))\leq d_X(\geod{o_X}{\bar{x}}^X(aR),\geod{o_X}{\bar{x}}^X(a'R))+d_X(\geod{o_X}{\bar{x}}^X(a'R),\geod{o_X}{\bar{x}'}^X(a'R))\\
	\leq\delta R+a'R\delta\leq \delta R+\delta R=2\delta R.
\end{multline*} 
Since a similar reasoning applies in $Y$, we jointly have 
\begin{equation*}
	d_{X\times_p Y}\bpl\geod{o}{q(\bar{x},\bar{y},(a,b))}^{X\times Y}(R),\geod{o}{q(\bar{x}',\bar{y}',(a',b'))}^{X\times Y}(R)\bpr\leq2^{1/p}\cdot2\delta R,
\end{equation*}
which is smaller than $\epsilon$ for 
sufficiently small $\delta$.
\end{proof}

\section{Quasisymmetric structure on boundary}\label{s:quasisym}
Let $(X,d)$ be a complete metric space that admits a 
\ccc{}
bicombing $\sigma$.
In this section we define a metric $d_{o,C}$ on the boundary  $\partial_{\sigma}X$ such that 
if a group $G$ acts on $X$ via isometries and $\sigma$ is $G$-equivariant, then
the induced action on $(\partial_\sigma X,d_{o,C})$ 
is via quasisymmetries
(see definitions below). Similar properties of a similar metric have been studied in the case of CAT(0) spaces in \cite[Section 3.1]{Moran16} and \cite[Proposition 9.6(1)]{OsSw15}, where, basically, only the 
\ccc-ness
of the CAT(0)-bicombing is used, and we reformulate and adapt the proofs to our setting.

For 
metric spaces $(X_1,d_1),(X_2,d_2)$,
a map $f\colon (X_1,d_1)\to(X_2,d_2)$ is a \emph{quasisymmetry} if $f$ is not constant and there exists a homeomorphism $\eta\colon [0,\infty)\to[0,\infty)$ such that for all $x,y,z\in X_1$ 
and
$t\geq0$, if $d_1(x,y)\leq td_1(x,z)$, 
then $d_2(f(x),f(y))\leq\eta(t)d_2(f(x),f(z))$. 
By \cite[Proposition 10.6]{Heinonen01}, 
quasisymmetries are closed under composition, 
each quasisymmetry is one-to-one, 
and the inverse (defined on its image) 
of a quasisymmetry
is also a quasisymmetry.

Let $C>0$ and $o\in X$. For $\bar{x}_1,\bar{x}_2\in\partial_\sigma X$ such that $\bar{x}_1\neq\bar{x}_2$, define 
$d_{o,C}(\bar{x}_1,\bar{x}_2)=t^{-1}$, where $t$ is the unique number such that $d(\geod{o}{\bar{x}_1}(t),\geod{o}{\bar{x}_2}(t))=C$ (recall 
Proposition 
\ref{f:asymptotic}\ref{f:asymptotic1}), and let $d_{o,C}(\bar{x},\bar{x})=0$ for all $\bar{x}\in\partial_\sigma X$. 

\begin{fact}\label{p:quasisym}
Let $(X,d)$ be a complete metric space that admits a 
\ccc{}
bicombing $\sigma$. Then the following hold for every $C,C'>0$ and $o,o'\in X$. 
\begin{enumerate}[(i)]
\item 
\label{p:quasisym1}
The map $d_{o,C}$ is a metric.

\item 
\label{p:quasisym2}
The topology induced by $d_{o,C}$ coincides with the topology on $\partial_\sigma X$ defined in Section \ref{s:deslan}.

\item 
\label{p:quasisym3}
The identity map $\mathrm{id}\colon (\partial_\sigma X,d_{o,C})\to(\partial_\sigma X,d_{o,C'})$ is a quasisymmetry.

\item 
\label{p:quasisym4}
The identity map $\mathrm{id}\colon (\partial_\sigma X,d_{o,C})\to(\partial_\sigma X,d_{o',C})$ is a quasisymmetry.

\item 
\label{p:quasisym5}
Let $G$ act on $X$ via isometries in such a way that $\sigma$ is $G$-equivariant. Then 
the extension of 
the
action of each element of $G$ to $\bdry{X}{\sigma}$ 
restricts to 
a quasisymmetry of $(\partial_\sigma X,d_{o,C})$. 
\end{enumerate}	

\end{fact}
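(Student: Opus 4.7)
The plan is to base the whole proof on a single geometric observation: for $\sigma$-rays $\xi,\eta$ emanating from a common point $o$, the function $D(t):=d(\xi(t),\eta(t))$ is convex with $D(0)=0$ (by Proposition~\ref{f:asymptotic}\ref{f:asymptotic1}), so $D(s)\leq(s/t)D(t)$ for $0\leq s\leq t$, and in particular $t\mapsto D(t)/t$ is non-decreasing. Applied with $\xi=\geod{o}{\bar{x}_1}$, $\eta=\geod{o}{\bar{x}_2}$ and non-asymptotic $\bar{x}_1\neq\bar{x}_2$ (so that $D$ is unbounded and the value $C$ is attained at a unique time), this gives well-definedness of $s_{o,C}(\bar{x}_1,\bar{x}_2):=1/d_{o,C}(\bar{x}_1,\bar{x}_2)$. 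For part~\ref{p:quasisym1}, only the triangle inequality requires work: setting $t^*:=1/(d_{o,C}(\bar{x}_1,\bar{x}_3)+d_{o,C}(\bar{x}_3,\bar{x}_2))$, one has $t^*\leq s_{o,C}(\bar{x}_j,\bar{x}_k)$ for $\{j,k\}=\{1,3\},\{3,2\}$, so the bound $d(\geod{o}{\bar{x}_j}(t^*),\geod{o}{\bar{x}_k}(t^*))\leq t^*C\cdot d_{o,C}(\bar{x}_j,\bar{x}_k)$ combined with the triangle inequality in $X$ gives $d(\geod{o}{\bar{x}_1}(t^*),\geod{o}{\bar{x}_2}(t^*))\leq C$, hence $s_{o,C}(\bar{x}_1,\bar{x}_2)\geq t^*$. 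For part~\ref{p:quasisym2}, I would verify directly that, for any $\bar{x}$, $t>0$ and $0<\epsilon<C$, the basic open set $U_o(\bar{x},t,\epsilon)\cap\partial_\sigma X$ contains the $d_{o,C}$-ball around $\bar{x}$ of radius $\epsilon/(tC)$ (via the same monotonicity bound $D(t)\leq tC\cdot d_{o,C}$) and is contained in the $d_{o,C}$-ball of radius $1/t$, which is enough to match the two topologies.

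For part~\ref{p:quasisym3}, I will show the stronger claim that $d_{o,C}$ and $d_{o,C'}$ are bilipschitz equivalent. Assuming $C<C'$, monotonicity of $D$ gives $s_{o,C}\leq s_{o,C'}$, while monotonicity of $D(t)/t$ at $t=s_{o,C'}$ gives $s_{o,C'}/s_{o,C}\leq C'/C$; combining, $1\leq d_{o,C}/d_{o,C'}\leq C'/C$.

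Part~\ref{p:quasisym4} is the main technical step. The key geometric input is that the asymptotic $\sigma$-rays $\geod{o}{\bar{x}}$ and $\geod{o'}{\bar{x}}$ (both representing $\bar{x}\in\partial_\sigma X$) make $t\mapsto d(\geod{o}{\bar{x}}(t),\geod{o'}{\bar{x}}(t))$ convex and bounded by Proposition~\ref{f:asymptotic}\ref{f:asymptotic1}, hence non-increasing, hence bounded by its value at $0$, namely $R:=d(o,o')$. Consequently $|D_o(t)-D_{o'}(t)|\leq 2R$ for the corresponding distance functions $D_o,D_{o'}$ measured along rays from $o$ and $o'$ respectively. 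When $C>2R$, at $s:=s_{o,C}(\bar{x}_1,\bar{x}_2)$ we have $D_{o'}(s)\in[C-2R,C+2R]$, and monotonicity of $D_{o'}(t)/t$ then yields $sC/(C+2R)\leq s_{o',C}(\bar{x}_1,\bar{x}_2)\leq sC/(C-2R)$; so $d_{o,C}$ and $d_{o',C}$ are bilipschitz whenever $C>2d(o,o')$. For arbitrary $C$, bilipschitz-ness follows from the chain $d_{o,C}\sim d_{o,C''}\sim d_{o',C''}\sim d_{o',C}$ with any fixed $C''>2d(o,o')$, the outer two equivalences coming from part~\ref{p:quasisym3}. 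This is more than enough for quasisymmetry.

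Finally, part~\ref{p:quasisym5} will fall out as a corollary: $G$-equivariance of $\sigma$ gives $\geod{o}{g\bar{x}}(t)=g\:\!\geod{g^{-1}o}{\bar{x}}(t)$, and since $g$ acts on $X$ by isometries, $d_{o,C}(g\bar{x}_1,g\bar{x}_2)=d_{g^{-1}o,C}(\bar{x}_1,\bar{x}_2)$. Thus $g\colon(\partial_\sigma X,d_{g^{-1}o,C})\to(\partial_\sigma X,d_{o,C})$ is an isometry, and pre-composing with the quasisymmetry $\mathrm{id}\colon(\partial_\sigma X,d_{o,C})\to(\partial_\sigma X,d_{g^{-1}o,C})$ from part~\ref{p:quasisym4} yields the desired quasisymmetry. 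I expect the main obstacle to be part~\ref{p:quasisym4}, specifically the basepoint-change estimate $|D_o-D_{o'}|\leq 2R$; everything else is bookkeeping on the monotonicity of $D(t)/t$ and does not require any ingredient beyond Proposition~\ref{f:asymptotic}\ref{f:asymptotic1}.
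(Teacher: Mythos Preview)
Your proposal is correct and follows essentially the same route as the paper. Both arguments rest on the same geometric input (convexity of $t\mapsto d(\geod{o}{\bar{x}_1}(t),\geod{o}{\bar{x}_2}(t))$ with value $0$ at $t=0$); you package it as ``$D(t)/t$ is non-decreasing'', while the paper invokes conicality directly at each step, but these are the same inequality. The only cosmetic differences are: in \ref{p:quasisym1} you work at the time $t^*=1/(d_{o,C}(\bar{x}_1,\bar{x}_3)+d_{o,C}(\bar{x}_3,\bar{x}_2))$ and avoid a WLOG, whereas the paper assumes $d_{o,C}(\bar{x}_1,\bar{x}_3)$ is maximal and works at $t_{13}$; and in \ref{p:quasisym4} you obtain the two-sided bilipschitz bound in one stroke, whereas the paper derives one side and then swaps $o\leftrightarrow o'$. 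Parts \ref{p:quasisym2}, \ref{p:quasisym3}, \ref{p:quasisym5} are handled identically.
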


\begin{proof}
\ref{p:quasisym1} Let $\bar{x}_1,\bar{x}_2,\bar{x}_3\in\partial X$. 
The function $d_{o,C}$ clearly satisfies $d_{o,C}(\bar{x}_1,\bar{x}_2)=0$ iff $\bar{x}_1=\bar{x}_2$, and is symmetric, so it remains to show that $d_{o,C}$ satisfies the triangle inequality.
Suppose
without loss of generality
that $d_{o,C}(\bar{x}_1,\bar{x}_3)\geq d_{o,C}(\bar{x}_1,\bar{x}_2),d_{o,C}(\bar{x}_2,\bar{x}_3)$, and let $t_{ij}:=d_{o,C}(\bar{x}_i,\bar{x}_j)^{-1}$.
We have that 
\begin{multline*}
C=d(\geod{o}{\bar{x}_1}(t_{13}),\geod{o}{\bar{x}_3}(t_{13}))
\leq d(\geod{o}{\bar{x}_1}(t_{13}),\geod{o}{\bar{x}_2}(t_{13}))
+d(\geod{o}{\bar{x}_2}(t_{13}),\geod{o}{\bar{x}_3}(t_{13}))\\
\leq (t_{13}/t_{12})d(\geod{o}{\bar{x}_1}(t_{12}),\geod{o}{\bar{x}_2}(t_{12}))
+(t_{13}/t_{23})d(\geod{o}{\bar{x}_2}(t_{23}),\geod{o}{\bar{x}_3}(t_{23}))
=Ct_{13}(t_{12}^{-1}+t_{23}^{-1}), 
\end{multline*}
where the last inequality follows from conicality of $\sigma$. Therefore $t_{12}^{-1}\leq t_{13}^{-1}+t_{23}^{-1}$, 
and the claim follows.

\medskip
\ref{p:quasisym2} Observe that by the definition, for any $\epsilon,r>0$ 
and $\bar{x}_1\in\partial X$ 
we have
that 
$B_{d_{o,C}}(\bar{x}_1,\epsilon)=U_o(\bar{x}_1,1/\epsilon,C)\cap\partial_\sigma X$. 
To finish the proof of the claim, 
consider 
$\bar{x}_1,\bar{x}_2\in\partial_\sigma X$ and $R,\epsilon>0$
such that
$\bar{x}_2\in U_o(\bar{x}_1,R,\epsilon)\cap\partial_\sigma X$. 
There exists $\epsilon'$ such that $C>\epsilon'>0$ and $U_o(\bar{x}_2,R,\epsilon')\subseteq U_o(\bar{x}_1,R,\epsilon)$. Then, by conicality, $U_o(\bar{x}_2,R,\epsilon')\cap\partial_\sigma X\supseteq U_o(\bar{x}_2,RC/\epsilon',C)\cap\partial_\sigma X=B_{d_{o,C}}(\bar{x}_2,\epsilon'/RC)$.

\medskip
\ref{p:quasisym3} Let $\mu=\min(C,C')$ and $M=\max(C,C')$.  
By 
conicality of $\sigma$, for any $\bar{x}_1,\bar{x}_2\in\partial X$ we have $d_{o,M}(\bar{x}_1,\bar{x}_2)\leq d_{o,\mu}(\bar{x}_1,\bar{x}_2)\leq (M/\mu)d_{o,M}(\bar{x}_1,\bar{x}_2)$.
The claim follows.    	
	
\medskip
\ref{p:quasisym4} Since quasisymmetries are closed under composition, in view of 
\ref{p:quasisym3},
we can assume that $C>2d(o,o')$. Let $\bar{x}_1,\bar{x}_2\in\partial X$ be different and $t:=d_{o,C}(\bar{x}_1,\bar{x}_2)^{-1}$.  
If $d(\geod{o'}{\bar{x}_1}(t),\geod{o'}{\bar{x}_2}(t))\geq C$, 
then $d_{o',C}(\bar{x}_1,\bar{x}_2)\geq t^{-1}=d_{o,C}(\bar{x}_1,\bar{x}_2)$. 
Otherwise, observe that by the triangle inequality and 
Proposition \ref{f:asymptotic}\ref{f:asymptotic1},
\begin{multline*}
d(\geod{o'}{\bar{x}_1}(t),\geod{o'}{\bar{x}_2}(t))
\geq d(\geod{o}{\bar{x}_1}(t),\geod{o}{\bar{x}_2}(t))\\
-d(\geod{o}{\bar{x}_1}(t),\geod{o'}{\bar{x}_1}(t))-d(\geod{o}{\bar{x}_2}(t),\geod{o'}{\bar{x}_2}(t))
\geq C-2d(o,o').
\end{multline*}
By conicality of $\sigma$, 
\vspace{0.3em}
\begin{equation*}
d\!\!\:\left(\!\geod{o'}{\bar{x}_1}\!\left(\frac{tC}{C-2d(o,o')}\right)\!,\geod{o'}{\bar{x}_2}\!\left(\frac{tC}{C-2d(o,o')}\right)\!\right)
\geq \frac{C}{C-2d(o,o')}d(\geod{o'}{\bar{x}_1}(t),\geod{o'}{\bar{x}_2}(t))\geq C,
\end{equation*} 

\smallskip\noindent
therefore $d_{o',C}(\bar{x}_1,\bar{x}_2)^{-1}\leq tC/(C-2d(o,o'))$.
 
Summarising
both cases, $d_{o,C}(\bar{x}_1,\bar{x}_2)\leq (C/(C-2d(o,o')))d_{o',C}(\bar{x}_1,\bar{x}_2)$. Therefore, as we can 
swap
$o$ with $o'$ in the above reasoning, the claim follows.

\medskip
\ref{p:quasisym5} Observe that 
the action of each element $g\in G$ on $X$
induces an
an isometry between $(\partial_\sigma X,d_{o,C})$ and $(\partial_\sigma X,d_{go,C})$, therefore the claim follows by 
\ref{p:quasisym4}
and the fact that quasisymmetries are closed under composition.
\end{proof}

\newcommand{\emp}[1]{\textcolor{magenta}{#1}}
\newcommand{\cola}[1]{\textcolor{red}{#1}}
\newcommand{\colb}[1]{\textcolor{orange}{#1}}
\newcommand{\colc}[1]{\textcolor{blue}{#1}}
\newcommand{\cold}[1]{\textcolor{violet}{#1}}

\section{Axes, flats, and the topology of boundary}\label{s:flats}

We begin this section 
with a brief recap of
some standard terminology.
For an isometry $\varphi$ of a metric space $(X,d)$, we define $|\varphi|:=\inf_{x\in X}d(x,\varphi(x))$, $\Min(\varphi):=\{x\in X:d(x,\varphi(x))=|\varphi|\}$, and we call an isometric embedding $\gamma\colon \R\to X$ an \emph{axis} of $\varphi$ if there exists a number $T>0$ such that $\varphi(\gamma(t))=\gamma(t+T)$ for all $t\in\R$. 
By the triangle inequality, for any $x\in X$ and $n\in\N$ 
we have that \begin{multline*}
nT=d(\gamma(0),\varphi^n(\gamma(0)))\leq d(\gamma(0),x)+d(x,\varphi^n(x))+d(\varphi^n(x),\varphi^n(\gamma(0)))\\
\leq2d(\gamma(0),x)+nd(x,\varphi(x)), 
\end{multline*} 
therefore $T=|\varphi|$, 
and $\im\,\gamma\subseteq\Min(\varphi)$.
For a bicombing $\sigma$ on $X$, we
call an axis $\gamma$ of $\varphi$ a 
\emph{$\sigma$-axis} 
if $\im\,\gamma|_{[s,t]}=\im\,\sigma_{\gamma(s),\gamma(t)}$ for 
all
real numbers $s<t$.

\begin{fact}[Proposition \ref{p:amalgamatedintro}]\label{p:amalgamated}
Let $G=G_1*_ZG_2$ (with $G_1\neq Z\neq G_2$), 
where $Z$ is virtually $\Z$, 
act geometrically on a proper metric space $(X,d_X)$ that admits a 
\ccc,
reversible, $G$-equivariant bicombing $\sigma$. Then there exists a separating pair of points in the boundary $\partial_\sigma X$.
\end{fact}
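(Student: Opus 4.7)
The plan is to realise the separating pair as the two endpoints at infinity of a $\sigma$-axis for a generator of a finite-index cyclic subgroup of $Z$, and then prove separation via a Bass--Serre coarse wall argument.

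First, since $Z$ is virtually $\Z$, fix $z \in Z$ generating an infinite cyclic subgroup of finite index. Properness of the action gives $|z| > 0$, and cocompactness together with $G$-equivariance of $\sigma$ yields $\Min(z) \neq \emptyset$ (translate any minimising sequence of $x \mapsto d(x, zx)$ into a compact fundamental set). Using $G$-equivariance, consistency, convexity and reversibility of $\sigma$, the axis-existence framework building on \cite{DeLa16,Basso18} produces a $\sigma$-axis $\gamma \colon \R \to X$ for $z$, satisfying $\gamma(t + |z|) = z\gamma(t)$ (concretely, the $\sigma$-geodesic from $x \in \Min(z)$ to $zx$ concatenates consistently with its $z$-translates because, by convexity and minimality, each $z^n x$ realises the infimum displacement). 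Put $o := \gamma(0)$, and let $\bar p_\pm \in \partial_\sigma X$ be the classes of the $\sigma$-rays $t \mapsto \gamma(\pm t)$; these are distinct since $\gamma$ is an isometric embedding of $\R$.

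Next I would set up the Bass--Serre tree $T$ of $G = G_1 *_Z G_2$: an edge $e_0$ of $T$ is stabilised by $Z$, with endpoints $v_1, v_2$ stabilised by $G_1, G_2$, and removing the interior of $e_0$ disconnects $T$ into subtrees $T_1 \ni v_1$ and $T_2 \ni v_2$. By the \v{S}varc--Milnor lemma, $\alpha \colon G \to X$, $g \mapsto go$, is a $G$-equivariant quasi-isometry; composing a quasi-inverse $\beta \colon X \to G$ with the orbit map $G \to T$, $g \mapsto gv_1$, gives a $G$-coarsely-equivariant coarse map $\pi \colon X \to T$. Since $\langle z \rangle \subseteq \Stab_G(e_0) = Z$ has finite index, $\pi(\gamma)$ lies in a bounded neighbourhood of $e_0$, and more generally $G$-translates of $\gamma$ coarsely correspond to edges of $T$. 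The core geometric claim I need is: for sufficiently large $R > 0$, the tubular neighbourhood $N_R := \{x \in X : d(x, \gamma) \leq R\}$ coarsely separates $X$, so that the two ``deep'' components $C_R^1, C_R^2$ of $X \setminus N_R$ satisfy $\pi(C_R^i) \subset T_i$ up to bounded error, and any point of $X$ far from $\gamma$ lies deep in exactly one of $T_1, T_2$ under $\pi$.

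With this in hand, set $U_i := C_R^i \cup \{\bar x \in \partial_\sigma X : \geod{o}{\bar x}([T_0, \infty)) \subseteq C_R^i \text{ for some } T_0 \geq 0\}$ for $i \in \{1,2\}$. Convexity of $\sigma$ and the coarse wall force any $\sigma$-ray $\xi$ from $o$ whose class is not $\bar p_\pm$ to eventually commit to a single $C_R^i$: otherwise $\xi$ would stay inside $N_{R'}$ for arbitrarily large $R'$, and applying Proposition~\ref{f:asymptotic}\ref{f:asymptotic2} to the compact family of $\sigma$-rays in $N_{R'}$ would force $\xi$ to be asymptotic to $\gamma$, i.e.\ converge to $\bar p_\pm$. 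Using Propositions~\ref{f:neweq52} and~\ref{p:ellexp}, one verifies that $U_1, U_2$ are disjoint open subsets of $\bdry{X}{\sigma}$ covering $\partial_\sigma X \setminus \{\bar p_\pm\}$. Each $U_i$ contains boundary points: pick any $h \in G_i \setminus Z$ (possible by hypothesis); Bass--Serre normal form places $\pi(h\gamma)$ strictly on the $T_i$-side of $e_0$, so the axis endpoint $h\bar p_+ \neq \bar p_\pm$ belongs to $U_i$. Hence $\partial_\sigma X \setminus \{\bar p_+, \bar p_-\}$ is disconnected.

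The hard part will be the coarse wall statement: transferring the combinatorial separation $T \setminus \mathrm{int}(e_0) = T_1 \sqcup T_2$ to a genuine topological separation of $\partial_\sigma X$ via $N_R$, with $R$ large enough to absorb both the \v{S}varc--Milnor constants and the finite-index deficit $[Z : \langle z \rangle]$. The delicate point is ruling out long excursions --- a $\sigma$-ray could run nearly parallel to a translate $g\gamma$ for $g \notin Z$ over a long interval before pulling away --- so one needs a uniform quantitative bound (coming from cocompactness and convexity of $\sigma$) ensuring that once a ray leaves $N_R$ deeply enough, it cannot return. Reversibility of $\sigma$ will be essential both for the axis construction and for the symmetric control of $\xi$ in Proposition~\ref{f:asymptotic}.
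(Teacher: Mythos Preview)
Your overall strategy coincides with the paper's: take the two ends of a $\sigma$-axis for a finite-index cyclic generator of $Z$, and transfer the combinatorial separation coming from the amalgam (you phrase it via the Bass--Serre tree, the paper via normal forms and the partition $G = A_1 \sqcup Z \sqcup A_2$) to a coarse separation of $X$ by a tube around that axis. The ``hard part'' you anticipate is in fact not delicate: since every ray $\zeta$ under consideration is based at $o = \gamma(0)$, convexity of $\sigma$ makes $t \mapsto d(\zeta(t), \mathrm{im}\,\gamma)$ monotone non-decreasing (if $d(\zeta(t),\gamma(s)) < c$ for some $t>r$, compare $\sigma_{o,\zeta(t)}$ with $\sigma_{o,\gamma(s)}$ to get $d(\zeta(r),\gamma(sr/t)) < c$). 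So once a ray leaves the tube it never returns, and Proposition~\ref{f:asymptotic}\ref{f:asymptotic3} directly gives that any ray not asymptotic to $\gamma|_{[0,\infty)}$ or $\gamma|_{(-\infty,0]}$ eventually stays outside the tube, hence in a single $X_i$. No uniform quantitative excursion bound is needed.

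There is, however, a genuine gap in your non-emptiness step. You claim that for $h \in G_i \setminus Z$ the translated endpoint $h\bar p_+$ is distinct from $\bar p_\pm$, but this fails whenever $h$ centralises $z$: take $G_1 = \Z^2$ with $Z = \Z \times \{0\}$ and $h = (0,1)$; then $h\gamma$ is another $\sigma$-axis for $z$, hence at bounded distance from $\gamma$, and $h\bar p_\pm = \bar p_\pm$. The paper's remedy is to pick $g = g_1 g_2$ with $g_j \in G_j \setminus Z$; this element acts hyperbolically on the Bass--Serre tree, so its powers $g^n$ have normal-form length $2n$ and satisfy $d_\Gamma(g^n, Z) \geq 2n \to \infty$. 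The $\sigma$-axis of $g$ therefore genuinely escapes the tube around $\gamma$, and its forward endpoint lies in $E_1$ (and the backward one in $E_2$, by the symmetric argument).
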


\begin{proof}
We briefly recall the normal form for the amalgamated product as in e.g.~\cite[Theorem 1 in I.1.2]{SerreBook80}, which we use in 
this
proof. 
For $i=1,2$,
choose a set $R_i$ of 
representatives of non-trivial right cosets of $Z$ in $G_i$.
Then each element $g\in G$ may be represented in a unique way as $g=z\cdot r_1\cdot r_2\cdot \ldots\cdot r_k$, where $z\in Z$, $r_j\in R_1\cup R_2$ for $1\leq j\leq k$, and $r_j\in R_1$ iff $r_{j+1}\in R_2$ for $1\leq j\leq k-1$.
We refer to $k$ as the length of the normal form representation of $g$, and for $1\leq j\leq k$ we refer to $r_j$ as the $j$-th term of this representation. 

Since $G$ acts geometrically on a proper metric space, it is finitely generated.
Let $\Gamma$ be the Cayley graph for $G$ over a finite set of generators $S$ contained in $G_1\cup G_2$.
Let 
\begin{equation*}
A_i:=\{g\in G:\text{the first term in the normal form representation for $g$ belongs to $G_i$}\}
\end{equation*}
for $i=1,2$.
Then $G$ is the disjoint union of $A_1$, $Z$ and $A_2$.
Since the generating set $S$ is a subset of $G_1\cup G_2$, the lengths of the normal form representations for any two group elements connected by an edge in the graph $\Gamma$
differ by at most $1$, 
and,
for $i=1,2$, 
the neighbours in the graph $\Gamma$ of the elements of $A_i$ are contained in the set $A_i\cup Z$, which implies that any path in $\Gamma$ from an element of $A_1$ to an element of $A_2$ must pass through 
the set $Z$.

Let $M$ be a cyclic subgroup of finite index of $Z$, 
and let 
$m$ be a generator of $M$.
By \cite[Proposition 5.5]{DeLa16}, 
$m$ has a $\sigma$-axis $\mu$.
Let $\alpha$ be the quasi-isometry given by $G\ni g\mapsto g\mu(0)\in X$.
Let $C>0$ be such that $M$ is $C$-dense in $Z$ (with respect to the metric $d_\Gamma$ from the graph $\Gamma$), the image $\alpha(G)$ is $C$-dense in $X$, and for all $g,g'\in G$ the inequality $C^{-1}d_\Gamma(g,g')-C\leq d_X(\alpha(g),\alpha(g'))\leq Cd_\Gamma(g,g')+C$ holds.
Put $X_i:=B(\alpha(A_i\cup Z),C+1)$ for $i=1,2$.
Clearly, $X_1$ and $X_2$ are open subsets of $X$ 
such that $X_1\cup X_2=X$.
Consider any element $x\in X_1\cap X_2$.
Then there exist $a_i\in A_i\cup Z$, where $i=1,2$,
such that $d_X(\alpha(a_i),x)<C+1$. Therefore 
$d_X(\alpha(a_1),\alpha(a_2))<2C+2$, so $d_\Gamma(a_1,a_2)<3C^2+2C$.
Since, as discussed above, the path in $\Gamma$ from $a_1$ to $a_2$ necessarily passes through $Z$, we have that $d_\Gamma(a_i,Z)<3C^2+2C$ for $i=1,2$, which, as $\alpha(M)\subseteq\im\,\mu$, gives that
\begin{multline*}
d_X(x,\im\,\mu)\leq d_X(x,\alpha(M))\leq d_X(x,\alpha(a_1))+d_X(\alpha(a_1),\alpha(M))\\
<C+1
+Cd_\Gamma(a_1,M)+C
\leq 2C+1+C(d_\Gamma(a_1,Z)+C)\\
<2C+1+C(3C^2+2C+C)=:\mathbf{C}.
\end{multline*}
Therefore $X_1\cap X_2\subseteq B(\im\,\mu,\mathbf{C})$.

Let $\xi_+,\xi_-\colon [0,\infty)\to X$ be the $\sigma$-rays defined by 
$\xi_+(t):=\mu(t)$ 
and 
$\xi_-(t):=\mu(-t)$. 
We 
show that the pair of points $[\xi_+],[\xi_-]$ disconnects the boundary $\partial_\sigma X$.
Consider any 
\linebreak[2]
$\sigma$-ray $\zeta\not\in\{\xi_+,\xi_-\}$ based in $\mu(0)$. By Proposition \ref{f:asymptotic}\ref{f:asymptotic3}, 
there exists $r>0$ such that $d(\zeta(t),\im\,\mu)\geq \mathbf{C}+2$ for any $t\geq r$, in particular there exists unique $i\in\{1,2\}$ such that $\zeta(t)\in X_i$ for 
all
$t\geq r$.
Denote 
\begin{equation*}
E_i:=\{[\zeta]:\zeta\text{ is a }\sigma\text{-ray},\zeta(0)=\mu(0),(\exists r>0)(\forall t\geq r) (\zeta(t)\in X_i)\};
\end{equation*}
we have that $\partial_\sigma X$ is the disjoint union of $E_1$, $E_2$ and $\{[\xi_-],[\xi_+]\}$. 
It remains to show that both $E_1$ and $E_2$ are non-empty open subsets of the boundary $\partial_\sigma X$. 
First, we prove the openness. Let $\zeta$, $r$ and $i$ be as above. Consider any $\sigma$-ray $\eta$ such that $d(\eta(r),\zeta(r))<1$. 
By conicality and the triangle inequality, $d(\eta(t),\im\,\mu)\geq d(\eta(r),\im\,\mu)\geq\mathbf{C}+1$ for 
all
$t\geq r$, therefore $\eta(t)\in X_i$, and $[\eta]\in E_i$. 
Now we prove that both sets $E_i$ are non-empty.
Below we prove that $E_1\neq\emptyset$, the case of $E_2$ is symmetric.
Let $g_1\in R_1$ and $g_2\in R_2$, and let $g:=g_1g_2$.
Then for all $n\in\N\setminus\{0\}$, since the normal form for $g^n$ is the $n$-fold concatenation of the normal form for $g$, we have that $g^n\in A_1$, and, 
by the discussion in the second paragraph of this proof, 
$d_\Gamma(g^n,Z)\geq 2n$.
Let $\gamma$ be a $\sigma$-axis for $g$ in $X$,
see \cite[Proposition 5.5]{DeLa15},
and define the $\sigma$-ray $\zeta$ to be the $\sigma$-ray originating in $\mu(0)$ asymptotic to $\gamma|_{[0,\infty)}$.
Proposition \ref{f:asymptotic}\ref{f:asymptotic1} 
and  
the $g$-invariance of the metric $d_X$
give that
\begin{multline*}
d_X(\zeta(|g|n),\alpha(g^n))\leq d_X(\zeta(|g|n),\gamma(|g|n))+d_X(\gamma(|g|n),\alpha(g^n))\\
\leq d_X(\zeta(0),\gamma(0))+d_X(g^n\gamma(0),g^n\mu(0))=2d_X(\gamma(0),\mu(0))=:D.	
\end{multline*}
We also have the following chain of inequalities
\begin{multline*}
	d_X(\alpha(g^n),\im\,\mu)\geq d_X(\alpha(g^n),\alpha(M))-|m|\\
	\geq d_X(\alpha(g^n),\alpha(Z))-|m|
	\geq C^{-1}d_\Gamma(g^n,Z)-C-|m|\geq C^{-1}\cdot 2n-C-|m|.
\end{multline*}
Jointly,
the last two chains of inequalities have
the following two consequences, 
which 
together
give that $[\zeta]$ belongs
to
$E_1$.
First, 
for any $n\in\N$,
if $\zeta(|g|n)$ belongs to $X_2$,
then 
openness of $X_1$ and $X_2$ 
gives
that
the $\sigma$-geodesic
from 
the point
$\zeta(|g|n)$
to 
the point
$\alpha(g^n)$, belonging to $X_1$,
passes through $X_1\cap X_2$;
therefore
\begin{equation*}
D\geq d(\alpha(g^n),\zeta(|g|n))\geq
d_X(\alpha(g^n),X_1\cap X_2)
\geq d_X(\alpha(g^n),\im\,\mu)-\mathbf{C},
\end{equation*} 
which tends to $\infty$ when $n\to\infty$; 
therefore 
$\zeta(|g|n)$ belongs to $X_2$ only for finitely many $n\in\N$,
so
$[\zeta]$ does not belong to $E_2$.
Second,
the inequality 
$d_X(\zeta(|g|n),\im\,\mu)\geq1$ 
holds for sufficiently large $n$, therefore
$\zeta$ is not asymptotic to any of the $\sigma$-rays $\xi_-$, $\xi_+$. 
\end{proof}

\begin{fact}[Proposition \ref{p:sphereinbdryintro}]\label{p:sphereinbdry}
Let $G$ be group that contains a free abelian subgroup $\Z^n\cong A<G$ and acts geometrically on a proper metric space $X$ that admits a
\ccc,
reversible, $G$-equivariant bicombing $\sigma^X$. Then $\partial_{\sigma^X} X$ contains a homeomorphic copy of $S^{n-1}$. 
Moreover, 
if $A$ is of finite index in $G$, then $\partial_{\sigma^X} X\cong S^{n-1}$.
\end{fact}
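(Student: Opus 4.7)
The plan is to construct a continuous injection $\iota\colon S^{n-1}\hookrightarrow\partial_{\sigma^X}X$ using axes of elements of $A$, and, when $[G\!:\!A]<\infty$, upgrade it to a homeomorphism. Fix a basepoint $o\in X$; by the \v{S}varc--Milnor lemma applied to the proper action of $A$, the orbit map $\alpha\colon A\to X$, $\alpha(a):=a\cdot o$, is a quasi-isometric embedding. Choosing a basis of $A$ I identify $A\cong\Z^n\subset\R^n$, and write $a_w$ for the element of $A$ corresponding to $w\in\Z^n$. By \cite[Proposition 5.5]{DeLa16}, each $a_w$ with $w\neq 0$ has a $\sigma^X$-axis $\gamma_w$, which yields two boundary points $\bar{x}_w^\pm\in\partial_{\sigma^X}X$. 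Using Proposition \ref{f:neweq52} together with the fact that the orbit points $\alpha(a_w^k)$ are at uniformly bounded distance from the points $\gamma_w(k|a_w|)$, the sequence $\alpha(a_w^k)$ converges in $\bdry{X}{\sigma^X}$ to $\bar{x}_w^+$ as $k\to+\infty$. Since $a_{kw}=a_w^k$, the subsequence $\alpha(a_{kw}^m)$ has the same limit, so $\bar{x}_{kw}^+=\bar{x}_w^+$ for every positive integer $k$, and I may define $\iota(v):=\bar{x}_w^+$ for $v=w/\|w\|_2\in\Q^n\cap S^{n-1}$.

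Next I would extend $\iota$ continuously to all of $S^{n-1}$. The crucial estimate is obtained by applying Proposition \ref{f:neweq52} to pairs of orbit points $\alpha(w_1),\alpha(w_2)$: using the upper QI bound $d(\alpha(w_1),\alpha(w_2))\leq C\|w_1-w_2\|_2+C$ together with the lower bound $d(o,\alpha(w_i))\geq C^{-1}\|w_i\|_2-C$, one controls $d(\geod{o}{\alpha(w_1)}(r),\geod{o}{\alpha(w_2)}(r))$ by a quantity of order $r\cdot\|v_1-v_2\|_2$ when $w_i=m_iu_i$ with large $m_i$ and $v_i=u_i/\|u_i\|_2\in\Q^n\cap S^{n-1}$. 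Passing to the boundary and combining with the explicit metric $d_o$ on $\bdry{X}{\sigma^X}$ from \eqref{eq:do}, this shows that $\iota$ is uniformly continuous on $\Q^n\cap S^{n-1}$, hence extends to a continuous map $\iota\colon S^{n-1}\to\partial_{\sigma^X}X$. Injectivity follows from the parallel lower estimate: for $v\neq v'$ the approximating orbit points have $d$-separation growing linearly in $r$, so the two limit rays diverge linearly, and Proposition \ref{f:asymptotic}\ref{f:asymptotic1} forbids them from being asymptotic.

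For the `moreover' part, when $A$ has finite index in $G$ the action of $A$ alone is already geometric, hence $\alpha(A)$ is cobounded in $X$. Given any $\sigma^X$-ray $\xi$ from $o$, I pick for each $t\geq 0$ an element $w_t\in\Z^n$ with $d(\xi(t),\alpha(w_t))$ uniformly bounded; properness forces $\|w_t\|_2\to\infty$, and by compactness of $S^{n-1}$ some subsequence of $w_t/\|w_t\|_2$ converges to a $v\in S^{n-1}$. Applying Proposition \ref{f:asymptotic}\ref{f:asymptotic2} to the compact family $\Xi:=\{\geod{o}{\iota(u)}:u\in S^{n-1}\}$ (compact by continuity of $\iota$) then shows $\xi$ is asymptotic to $\iota(v)$, so $\iota$ is surjective. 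Since $S^{n-1}$ is compact and $\partial_{\sigma^X}X$ is Hausdorff, $\iota$ is a homeomorphism.

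The main obstacle is precisely the point flagged in the statement: without a convex isometric $\R^n$-flat furnished by a Flat Torus--type theorem, the angular structure of $A\cong\Z^n$ cannot be read off intrinsically inside $X$. All angular information must instead be transported through the coarse quasi-isometric embedding $\alpha$ and converted into boundary data using only Proposition \ref{f:neweq52} and the axes of individual elements. The technical crux is showing that the rays $\geod{o}{\iota(v)}$ vary tamely enough in $v\in\Q^n\cap S^{n-1}$ for limits along rational approximating sequences to exist in $\partial_{\sigma^X}X$; this is where the specific linear-in-$r/d$ form of Proposition \ref{f:neweq52} plays an essential role, compensating for the absence of a product splitting of the minset.
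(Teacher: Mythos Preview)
Your overall strategy --- define $\iota$ on rational directions via axes, extend by a Lipschitz estimate, then prove injectivity and surjectivity --- is close to what the paper does. But there is a genuine gap at the very first step.

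You claim the orbit map $\alpha\colon A\to X$ is a quasi-isometric embedding ``by the \v{S}varc--Milnor lemma applied to the proper action of $A$''. This is incorrect: \v{S}varc--Milnor requires cocompactness, and the $A$-action on $X$ is in general not cocompact. What you actually need is that $A\cong\Z^n$ is undistorted in $G$, equivalently that the translation lengths $|a_w|$ are uniformly bi-Lipschitz to $\|w\|_2$. This is a nontrivial fact; it is precisely what the flat torus theorem gives in the CAT(0) world, and in the present setting it is supplied by \cite[Theorem~1.2]{DeLa16}, which produces an isometrically embedded $n$-dimensional normed space $F\subseteq X$ on which $A$ acts geometrically by translations. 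The paper invokes this flat at the outset and then uses $F$ as a sharp comparison model: Claim~(B$\sharp$) gives a clean $1$-Lipschitz bound in the boundary metric $d_{o,1}$, and Claim~(B) gives the two-sided estimate needed for injectivity. Your QI-based estimates, once the embedding is granted, can be pushed through, but they are coarser and require more bookkeeping than the paper's use of scaling in $F$.

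There is a second, smaller gap in your ``moreover'' part. You want to apply Proposition~\ref{f:asymptotic}\ref{f:asymptotic2} with $\Xi=\{\geod{o}{\iota(u)}:u\in S^{n-1}\}$, but for this you need a \emph{uniform} $D$ with $d(\zeta(t),\im\,\xi)\le D$ for some $\xi\in\Xi$. The orbit point $\alpha(w_t)$ is close to the axis $\gamma_{w_t'}$ only up to $d(o,\gamma_{w_t'}(0))$, which depends on the primitive direction $w_t'$ and is not a priori bounded. The paper handles this by using the cocompactness of $A$ (finite index) to translate each axis by an element of $A$ so that it starts within a fixed distance $C$ of $o$; see the introduction of $\theta_a^X$ in the proof. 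You need to insert this translation step for your argument to close.
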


\begin{proof}
By \cite[Theorem 1.2]{DeLa16} $X$ contains an isometric copy $F$ of an $n$-dimensional normed space on which $A$ acts geometrically by translations. 
Observe that 
$F$ admits a (unique) 
\ccc,
$A$-equivariant bicombing $\sigma^F$,
 which consists of linear segments 
(recall \cite[Theorem 3.3]{DeLa15}).
Note that in general $F$ is not $\sigma^X$-convex in $X$ (see \cite[Example 6.3]{DeLa16}) --- if it was so, then the assertion would easily follow, as we would have $\sigma^F=\sigma^X|_{F\times F\times [0,1]}$, which would allow us to view $S^{n-1}\cong\partial_{\sigma^F}F$ as a subset of $\partial_{\sigma^X}X$. 
We 
shall
define a homeomorphic embedding $\Phi\colon \partial_{\sigma^F}F\to \partial_{\sigma^X}X$. If, additionally, $A$ is of finite index in $G$, then $\Phi$ 
turns
out to be a surjection. 

\smallskip
Fix a basepoint $o\in F$.
For
each
$a\in A$,
denote by
$\xi_a^F$ 
the $\sigma^F$-ray that originates in $o$ and contains $ao$,
pick a $\sigma^X$-axis $\gamma_a^X$ in $X$ 
(see \cite[Proposition 5.5]{DeLa16}),
and
put $\xi_a^X:=\gamma_a^X|_{[0,\infty)}$.

\newcommand{\cla}{(A)}
\newcommand{\clb}{(B)}
\newcommand{\clc}{(B$\sharp$)}
\newcommand{\hcla}{\myhypertarget{cl:cla}}
\newcommand{\hclb}{\myhypertarget{cl:clb}}
\newcommand{\hclc}{\myhypertarget{cl:clc}}
\newcommand{\rcla}{\hyperlink{cl:cla}{\cla}}
\newcommand{\rclb}{\hyperlink{cl:clb}{\clb}}
\newcommand{\rclc}{\hyperlink{cl:clc}{\clc}}

The construction of the map $\Phi$, which consists in continuously extending the map induced by sending $\xi_a^F$ to $\xi_a^X$ for each $a\in A$, is presented in detail below the following claim,
which is used to justify the correctness of the construction and various properties of $\Phi$.
Picking one $\sigma^X$-axis $\gamma_a^X$ for each $a\in A$, rather than considering the set of all such $\sigma^X$-axes, is more of an editorial choice; in particular,
the defined map $\Phi$ does not depend on this choice, as may be seen 
using
part \rcla{} 
of the claim 
below.
The colours are used in the further text to highlight the key places of the formulas 
and 
to aid
the presentation
of the
flow 
of 
the
argument.

\bigskip
\begin{samepage}
\noindent
\hcla
\textbf{Claim.} 
{\it Let $a\in A$ and $\gamma_1,\gamma_2$ be axes (which are not necessarily $\sigma^X$-axes) of $a$. 
Then} 
\begin{flalign*}
	&\text{\cla}&&
	d(\gamma_1(t),\gamma_2(t))\leq 2|a|+d(\gamma_1(0),\gamma_2(0))=:
	C(|a|,\gamma_1(0),\gamma_2(0))\textit{ for any }t\in\R.
	&&
\end{flalign*}
\end{samepage}

\hclb\hclc
{\it Let $a_1,a_2\in A$ and $r>0$. 
Then}
\newcommand{\ccol}[1]{\omit\hfill $#1$\hfill}
\begin{flalign*}
	&\text{\clb}&&\ccol{{\big|}d(\geod{o}{[\xi^X_{\cola{a_1}}]}^X(r),\geod{o}{[\xi^X_{\colc{a_2}}]}^X(r))-d(\xi^F_{\cola{a_1}}(r),\xi^F_{\colc{a_2}}(r)){\big|}\leq C'(\cola{a_1})+C'(\colc{a_2}),}&&\\[-0.2em]
	&&&\ccol{\textit{where }C'(a)=C(|a|,\xi_{a}^X(0),\xi_{a}^F(0))+d(\xi_{a}^X(0),o);}&&
\end{flalign*}
\vspace{-1.8em}
\begin{flalign*}
	&\text{\clc}&&d(\geod{o}{[\xi^X_{\cola{a_1}}]}^X(r),\geod{o}{[\xi^X_{\colc{a_2}}]}^X(r))\leq d(\xi^F_{\cola{a_1}}(r),\xi^F_{\colc{a_2}}(r)).
	&&
\end{flalign*}

\medskip\noindent
\textbf{Proof.} \rcla{} 
For $0\leq t\leq|a|$ the claim follows by the triangle inequality: 
\begin{multline*}
d(\gamma_1(t),\gamma_2(t))\leq d(\gamma_1(t),\gamma_1(0))+d(\gamma_1(0),\gamma_2(0))+d(\gamma_2(0),\gamma_2(t))\\
\leq |a|+d(\gamma_1(0),\gamma_2(0))+|a|.
\end{multline*} 
The 
case 
of arbitrary $t\in\R$ 
follows from $a$-invariance of the metric $d$ and shifting along axes.

\begin{figure}[h]
\includegraphics[width=\textwidth]{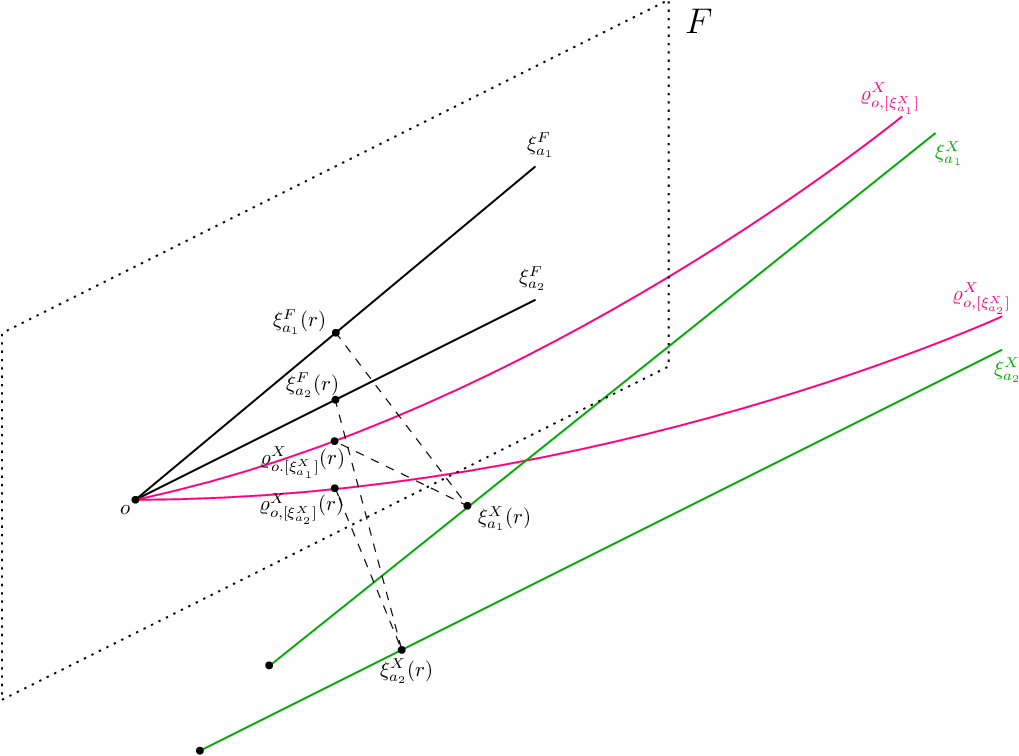}
\caption{Claim \protect\rclb.}	
\label{fig:claimb}
\end{figure}

\smallskip
\rclb{} 
By the triangle inequality, 
we obtain that 
\begin{align*}
&{\big|}d(\geod{o}{[\xi^X_{\cola{a_1}}]}^X(r),\geod{o}{[\xi^X_{\colc{a_2}}]}^X(r))-d(\xi^F_{\cola{a_1}}(r),\xi^F_{\colc{a_2}}(r)){\big|}\\
&\qquad\leq d(\geod{o}{[\xi^X_{\cola{a_1}}]}^X(r),\xi^F_{\cola{a_1}}(r))+d(\geod{o}{[\xi^X_{\colc{a_2}}]}^X(r),\xi^F_{\colc{a_2}}(r))\\
&\qquad\leq d(\geod{o}{[\xi^X_{\cola{a_1}}]}^X(r),\xi^X_{\cola{a_1}}(r))+d(\xi^X_{\cola{a_1}}(r),\xi^F_{\cola{a_1}}(r)) 
+d(\geod{o}{[\xi^X_{\colc{a_2}}]}^X(r),\xi^X_{\colc{a_2}}(r))+d(\xi^X_{\colc{a_2}}(r),\xi^F_{\colc{a_2}}(r)).
\end{align*}
The claim follows, 
as for any $i\in\{1,2\}$ the following two inequalities hold:  
by Proposition~\ref{f:asymptotic}\ref{f:asymptotic1},
we have that $d(\geod{o}{[\xi_{a_i}^X]}^X(r),\xi_{a_i}^X(r))\leq d(\geod{o}{[\xi_{a_i}^X]}^X(0),\xi_{a_i}^X(0))$;
and, 
by Claim \rcla{} applied to axes containing $\xi^X_{a_i}$ and $\xi^F_{a_i}$, 
we have that $d(\xi^X_{a_i}(r),\xi^F_{a_i}(r))\leq C(|a_i|,\xi^X_{a_i}(0),\xi^F_{a_i}(0))$.

\smallskip
\rclc{} Fix $R>r$. 
By scaling in $F$, $d(\xi^F_{a_1}(R),\xi^F_{a_2}(R))=d(\xi^F_{a_1}(r),\xi^F_{a_2}(r))\cdot R/r$. 
Therefore, 
by conicality of $\sigma^X$ 
and \rclb{} (applied for $R$ and $a_1,a_2$), 
\begin{multline*}
d(\geod{o}{[\xi^X_{\cola{a_1}}]}^X(r),\geod{o}{[\xi^X_{\colc{a_2}}]}^X(r))
\leq d(\geod{o}{[\xi^X_{\cola{a_1}}]}^X(R),\geod{o}{[\xi^X_{\colc{a_2}}]}^X(R))\cdot r/R\\
\leq(d(\xi^F_{\cola{a_1}}(R),\xi^F_{\colc{a_2}}(R))+C'(\cola{a_1})+C'(\colc{a_2}))\cdot r/R\\
= d(\xi^F_{\cola{a_1}}(r),\xi^F_{\colc{a_2}}(r))+(C'(\cola{a_1})+C'(\colc{a_2}))\cdot r/R.
\end{multline*}
The claim follows, 
as $R$ may be chosen arbitrarily large.

\bigskip
Let $F_A\subseteq\partial_{\sigma^F}F
$ be the asymptotic classes of $\sigma^F$-rays 
belonging the set 
$\{\xi^F_a:a\in A\}$. Define $\Phi_A\colon F_A\to\partial_{\sigma^X}X$ 
to be
the map induced by the map $\xi_a^F\mapsto \xi_a^X$.
Claim \rclc{}
implies that the map $\Phi_A$ is well-defined 
(i.e.~if two elements $a,b\in A$ are such that $\xi_a^F=\xi_b^F$, then $\xi_a^X$ and $\xi_b^X$ are asymptotic) 
and 1-Lipschitz from $(F_A,d_{o,1}^F)$ to $(\partial_{\sigma^X}X,d_{o,1}^X)$, where $d_{o,1}^F$ and $d_{o,1}^X$ are the metrics discussed in Section \ref{s:quasisym}. Therefore, since $F_A$ is dense in $\partial_{\sigma^F}F$, the map $\Phi_A$ can be extended continuously to a map $\Phi\colon \partial_{\sigma^F}F\to\partial_{\sigma^X}X$. 

\smallskip
We 
show that $\Phi$ is one-to-one. Since $\partial_{\sigma^F}F\cong S^{n-1}$ is compact, this will imply that $\Phi$ is a homeomorphic embedding onto its image. Let $\xi_1^F\neq\xi_2^F$ be $\sigma^F$-rays based in $o$, 
and let $a^i_n\in A$ for $i=1,2$ and $n\in\N$ 
be such that $[\xi_{a^i_n}^F]$ converges to $[\xi_i^F]$ in $\partial_{\sigma^F}F$ for $i=1,2$. We may choose $r>0$ such that $d(\xi_1^F(r),\xi_2^F(r))\geq8$, and $N\in\N$ such that for any $n\geq N$ and $i=1,2$ we have $d(\xi_{a^i_n}^F(r),\xi_i^F(r))\leq1$.
In particular, 
$d(\xi_{a^1_N}^F(r),\xi_{a^2_N}^F(r))\geq6$.
By Claim~\rclb{} and scaling in $F$, for any $R>r$,
\begin{multline*}
d(\geod{o}{[\xi_{\cola{a^1_N}}^X]}^X(R),\geod{o}{[\xi_{\colc{a^2_N}}^X]}^X(R))\geq d(\xi_{\cola{a^1_N}}^F(R),\xi_{\colc{a^2_N}}^F(R))-C'(\cola{a^1_N})-C'(\colc{a^2_N})\\
=d(\xi_{\cola{a^1_N}}^F(r),\xi_{\colc{a^2_N}}^F(r))\cdot R/r-C'(\cola{a^1_N})-C'(\colc{a^2_N})\geq 6R/r-C'(\cola{a^1_N})-C'(\colc{a^2_N}),
\end{multline*} 
which is greater than $5R/r$ for sufficiently large $R$.
Using Claim \rclc{} and scaling in $F$, 
we obtain that
for any $n\geq N$
\begin{multline*}
d(\geod{o}{[\xi_{\colb{a^i_n}}^X]}^X(R),\geod{o}{[\xi_{\cola{a^i_N}}^X]}^X(R))
\leq d(\xi_{\colb{a^i_n}}^F(R),\xi_{\cola{a^i_N}}^F(R))\\[-0.3em]
\leq d(\xi_{\colb{a^i_n}}^F(R),\xi_i^F(R))+d(\xi_i^F(R),\xi_{\cola{a^i_N}}^F(R))
\leq R/r+R/r=2R/r.
\end{multline*}
Therefore
\begin{multline*}
d(\geod{o}{[\xi_{\colb{a^1_n}}^X]}^X(R),\geod{o}{[\xi_{\cold{a^2_m}}^X]}^X(R))\\
\geq d(\geod{o}{[\xi_{\cola{a^1_N}}^X]}^X(R),\geod{o}{[\xi_{\colc{a^2_N}}^X]}^X(R))-d(\geod{o}{[\xi_{\colb{a^1_n}}^X]}^X(R),\geod{o}{[\xi_{\cola{a^1_N}}^X]}^X(R))-d(\geod{o}{[\xi_{\cold{a^2_m}}^X]}^X(R),\geod{o}{[\xi_{\colc{a^2_N}}^X]}^X(R))\\
\geq 5R/r-2R/r-2R/r
\geq R/r
\end{multline*} 
for any $n,m\geq N$ and sufficiently large $R$.
Therefore, passing to the limit with $n$ and $m$, one obtains that $d(\geod{o}{\Phi([\xi_1^F])}^X(R),\geod{o}{\Phi([\xi_2^F])}^X(R))
\geq R/r>0$, which implies that $\Phi([\xi_1^F])\neq\Phi([\xi_2^F])$.

\medskip
Now we prove the last part of the statement of this proposition:
assume 
additionally
that $A$ is of finite index in $G$;
we show that then $\Phi$ is onto.
Let $C>0$ be such that the set $Ao$ of $A$-translates of $o$ 
satisfies
$B(Ao,C)=X$.
Let $\xi^X$ be a $\sigma^X$-ray based in $o$. For every $n\in\N$ there exists $a_n\in A$ such that $d(a_no,\xi^X(n))\leq C$. 
It suffices to show that $\geod{o}{[\xi_{a_n}^X]}^X(r)$ converges to $\xi^X(r)$ when $n\to\infty$ for any $r>0$. 
First, observe that $||a_n|-n|=|d(a_no,o)-d(o,\xi^X(n))|\leq d(a_no,\xi^X(n))\leq C$, therefore, in particular, $|a_n|\to\infty$.
Second, since translating a $\sigma^X$-axis of $a\in A$ 
by
an element of the centraliser $C(a)\supseteq A$ produces a $\sigma^X$-axis of $a$, and $B(Ao,C)=X$, there exists a $\sigma^X$-axis $\theta_a^X$ of $a$ such that $d(\theta_a^X(0),o)\leq C$; put $\zeta_a^X:=\theta_a^X|_{[0,\infty)}$;
by Claim \rcla,
the 
$\sigma^X$-rays
$\zeta_a^X$ and $\xi_a^X$ are asymptotic.
By  
the triangle inequality
and 
Proposition \ref{f:asymptotic}\ref{f:asymptotic1},
we have that
\begin{multline*}
	d(\geod{o}{[\xi_{a_n}^X]}^X(|a_n|),\xi^X(|a_n|))=d(\geod{o}{[\emp{\zeta_{a_n}^X}]}^X(|a_n|),\xi^X(|a_n|))\\
	\leq d(\geod{o}{[\zeta_{a_n}^X]}^X(|a_n|),\zeta_{a_n}^X(|a_n|))+d(\zeta_{a_n}^X(|a_n|),a_no)+d(a_no,\xi^X(n))+d(\xi^X(n),\xi^X(|a_n|))\\
	\leq d(\geod{o}{[\zeta_{a_n}^X]}^X(0),\zeta_{a_n}^X(0))+d(a_n\zeta_{a_n}^X(0),a_no)+C+||a_n|-n|
	\leq C+C+C+C=4C;
\end{multline*} 
then the
conicality of $\sigma^X$ gives that $d(\geod{o}{[\xi_{a_n}^X]}^X(r),\xi^X(r))\leq 4Cr/|a_n|$ whenever $r\leq|a_n|$. Therefore, since $|a_n|\to\infty$, 
we have that
$d(\geod{o}{[\xi_{a_n}^X]}^X(r),\xi^X(r))\to0$
for all $r\geq0$.
\end{proof}

\newcommand{\nbhd}{neighbourhood}
\newcommand{\has}{\bar{H}}
\newcommand{\hasred}{\tilde{\bar{H}}}
\newcommand{\hc}{\check{H}}
\newcommand{\htpy}{\mathfrak{h}}

\section{Almost geodesic completeness}\label{s:agc}
A space $X$ that admits a bicombing $\sigma$ is 
\emph{almost $\sigma$-geodesically complete} 
if
for some (equivalently, for all --- see Proposition \ref{f:asymptotic}\ref{f:asymptotic1})
basepoint $o\in X$
there exists a universal constant $C>0$ such that for each point $x\in X$ there is a $\sigma$-ray $\xi$ such that $\xi(0)=o$ and $\im\,\xi\cap\overline{B}(x,C)\neq\emptyset$.
In this section we prove the following theorem. 

\begin{theorem}[Theorem \ref{t:agcc3intro}; cf.~{\cite[Corollary 3]{GeOn07}}]\label{t:agcc3}
Assume that $X$ is a proper 
finite-dimensional 
geodesic metric space that admits a 
\ccc{}
geodesic bicombing 
$\sigma$
and a cocompact group action via isometries,
such that $|\partial_\sigma X|\geq 2$. 
Then $X$ is 
almost 
$\sigma$-geodesically 
complete.	
\end{theorem}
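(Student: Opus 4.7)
The plan is to deduce Theorem \ref{t:agcc3} from Theorem \ref{t:agcmt} by contraposition, following the strategy of \cite{GeOn07, Ontaneda05} in the CAT(0) setting. Suppose that $X$ is not almost $\sigma$-geodesically complete; the goal is to derive $\hasred^{\dim \partial_\sigma X}(\partial_\sigma X) = 0$, contradicting Theorem~\ref{t:agcmt}. From the failure of almost $\sigma$-geodesic completeness one obtains, for each $n \in \N$, a point $x_n \in X$ such that $B(x_n, n)$ contains no point of any $\sigma$-ray from $o$; in particular $R_n := d(o, x_n) \geq n$, and by convexity of the distance function along $\sigma$-geodesics (Proposition~\ref{f:asymptotic}\ref{f:asymptotic1}) the compact set $\pi_{R_n}(\partial_\sigma X) \subseteq X_{R_n}$ is disjoint from $B(x_n, n)$, hence contained in the punctured compactum $K_n := X_{R_n} \setminus B(x_n, n)$.

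Next, using Proposition~\ref{p:ellexp}, I would define a continuous homotopy $H \colon \partial_\sigma X \times [0, 1] \to \bdry{X}{\sigma}$ by $H(y,t) = \exp_o(y, R_n/(1-t))$ for $t < 1$ and $H(y, 1) = y$; this deforms, inside $\bdry{X}{\sigma}$, the inclusion $\partial_\sigma X \hookrightarrow \bdry{X}{\sigma}$ onto the projection $\pi_{R_n}|_{\partial_\sigma X}$ with image contained in $K_n \subseteq X$. Since $\bdry{X}{\sigma}$ is contractible (it is an ER, by the proof of Theorem~\ref{t:main}), the inclusion is null-homotopic, but using the homotopy $H$ and the factorisation through $K_n$ one can compare the induced maps on reduced Alexander--Spanier cohomology. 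In particular, the long exact sequence of the pair $(\bdry{X}{\sigma}, \partial_\sigma X)$, combined with contractibility of $\bdry{X}{\sigma}$, identifies $\hasred^{\dim \partial_\sigma X}(\partial_\sigma X)$ with the relative cohomology $\has^{\dim \partial_\sigma X + 1}(\bdry{X}{\sigma}, \partial_\sigma X)$, which in turn is controlled by the cohomology of the $K_n$'s as $n \to \infty$ via the continuity properties of Alexander--Spanier cohomology on nested compacta.

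The final step is to show that $\hasred^{\dim \partial_\sigma X}(K_n) = 0$ for all sufficiently large $n$. Since $X$ is finite-dimensional and the inclusion $B(x_n, n) \hookrightarrow X_{R_n}$ is an open ball in a Euclidean retract (cf.~Proposition~\ref{p:charerar}), removing this ball kills top-dimensional reduced cohomology in a definite dimensional range; one first establishes that $\dim \partial_\sigma X \leq \dim X - 1$ under the present hypotheses (using cocompactness together with $|\partial_\sigma X| \geq 2$), so that the puncture is indeed in a dimension that kills $\hasred^{\dim \partial_\sigma X}(K_n)$. Combining this vanishing with the homotopy argument yields the desired contradiction.

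The main obstacle will be the last dimensional comparison: ensuring that $\dim \partial_\sigma X$ is strictly less than $\dim X$ and that the punctured ball $K_n$ genuinely has trivial reduced Alexander--Spanier cohomology in degree $\dim \partial_\sigma X$. This requires care because finite-dimensionality alone does not suffice — one must exploit that $X$ is a compact-by-cocompactness Euclidean retract (so local cohomological behaviour of $X_{R_n}$ resembles that of a Euclidean ball), and leverage the inverse-limit presentation of $\bdry{X}{\sigma}$ to pass cohomological information from the $K_n$ to the boundary.
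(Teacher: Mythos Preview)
Your overall strategy --- contrapose, assume $X$ is not almost $\sigma$-geodesically complete, and derive a contradiction with Theorem~\ref{t:agcmt} --- is exactly the paper's. The paper also passes through the isomorphism $\hasred^{*}(\partial_\sigma X)\cong\has^{*+1}_c(X)$ of Remark~\ref{r:cohoeq}, which is your long exact sequence observation. Where your argument breaks down is the ``final step''. You never justify why $\pi_{R_n}^*\colon\hasred^{\dim\partial_\sigma X}(K_n)\to\hasred^{\dim\partial_\sigma X}(\partial_\sigma X)$ should be surjective (the homotopy $H$ you wrote lives in $\bdry{X}{\sigma}$, not in $K_n$, so it does not by itself give a factorisation of the identity on $\hasred^*(\partial_\sigma X)$ through $\hasred^*(K_n)$); the sets $K_n$ are not nested, so ``continuity on nested compacta'' does not apply as stated; the claim $\dim\partial_\sigma X\leq\dim X-1$ is neither proved in the paper nor needed; and the assertion that removing an open ball from a Euclidean retract ``kills top-dimensional reduced cohomology'' is not a theorem --- in the model case $X=\R^d$ the punctured ball $K_n$ has $\hasred^{d-1}(K_n)\neq 0$, which is precisely the degree $\dim\partial_\sigma X$ you want to vanish.

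The paper's route (Lemma~\ref{l:agcta}) is both simpler and avoids all of this. One shows directly that $\has^i_c(X)=0$ for every $i$: given any $\varphi\in\csch{\has}{i}{X}{\overline{B}(x,R)}$, use the cocompact group action to translate $\overline{B}(x,R)$ so that it misses every $\sigma$-ray from $o$; then properness gives a finite bound $C$ on the length of any $\sigma$-geodesic from $o$ passing through the translated ball; finally the ``pull-back'' map $f^{o,C}(x)=\exp_o(x,\max(d(o,x)-C,0))$ is boundedly homotopic to $\id_X$ and has image disjoint from the ball, so $(f^{o,C})^*\varphi=0$ in $\has^i_c(X)$. No dimension comparison, no cohomology of punctured spheres --- the group action and the bicombing do all the work.
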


\begin{remark}\phantomsection\label{u:oneptbdry1}
	\begin{enumerate}[(i)]
	\item The assumption that the boundary has at least two points can be relaxed to the condition that $\partial_\sigma X\neq\emptyset$ (i.e., equivalently, that $X$ is non-compact) if the group action in the statement of Theorem \ref{t:agcc3} is such that the bicombing $\sigma$ is additionally $G$-equivariant: a standard argument, which works in the CAT(0) realm, allows one to find 
	for a given $\sigma$-ray $\xi$ 
	a $\sigma$-ray pointing in `the opposite direction' compared 
	to $\xi$ by transporting (approximations of) small subsegments of $\xi$.
	
	\item We do not know whether the assumption that $|\partial_\sigma X|\geq2$ may be relaxed to the assumption that $\partial_\sigma X\neq\emptyset$ in general. 
	We note here that a space $X$ with $|\partial_\sigma X|=1$ 
	necessarily
	must not 
	be almost $\sigma$-geodesically complete, as otherwise it would be quasi-isometric to the real half-line, hence cannot be acted upon cocompactly by a group.
	\end{enumerate}	

\end{remark}

The proof from \cite{Ontaneda05,GeOn07}, where the space $X$ is assumed to be CAT(0),
can be 
translated to the context of
spaces 
admitting a 
\ccc{}
geodesic bicombing. 
We expand on it below.

\subsection{Preparatory lemmas}\label{sbs:agcprep}
 
Recall Definition \ref{d:ellexp} and Proposition \ref{p:ellexp}. 
Let $\mathrm{Cone}_o(A):=\exp_o(A\times[0,\infty])$ 
for any set $A\subseteq\partial_\sigma X$ and basepoint $o\in X$.

\begin{lemma}[cf.~proof of~{\cite[Main Theorem]{GeOn07}}]\label{l:agcD}
Let $X$ be a proper metric space that admits a 
\ccc{}
geodesic bicombing.
Then 
for every
non-empty
closed set $A\subseteq\partial_\sigma X$ there exists 
a closed set $D\subseteq\bdry{X}{\sigma}$ such that (i)~$D\cap\partial_\sigma X=A$, (ii)~$\mathrm{Cone}_o(A)\subseteq D$, 
(iii)~$D\cap X\subseteq \overline{B}_X(\mathrm{Cone}_o(A)\cap X,1)$, 
and
(iv)~$D$ is a strong deformation retract of $\bdry{X}{\sigma}$.

\end{lemma}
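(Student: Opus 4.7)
The plan is to take $D := \mathrm{Cone}_o(A) = \exp_o(A \times [0, \infty])$. Since $X$ is proper, $\bdry{X}{\sigma}$ is compact (see Section~\ref{s:deslan}), and hence $A$, being closed in $\partial_\sigma X$, is compact; so $A \times [0, \infty]$ is compact and by continuity of $\exp_o$ (Proposition~\ref{p:ellexp}), the set $D$ is compact, and thus closed. Property~(i) holds because $\exp_o(\bar{x}, t) \in \partial_\sigma X$ forces $t = \infty$, and then $\exp_o(\bar{x}, \infty) = \bar{x}$, so $D \cap \partial_\sigma X = A$. Properties~(ii) and~(iii) are immediate.

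The substantial task is (iv). The plan is to construct a strong deformation retract $H \colon \bdry{X}{\sigma} \times [0, 1] \to \bdry{X}{\sigma}$ onto $D$ by adapting the Geoghegan--Ontaneda argument \cite{GeOn07} from the CAT(0) setting to the \ccc{} setting, using $\exp_o$ as a ``radial coordinate.'' I would fix a compatible metric on $\bdry{X}{\sigma}$ (such as the one from (\ref{eq:do})) and via a partition-of-unity argument on $\partial_\sigma X$ subordinate to a finite open cover adapted to $A$, construct two continuous assignments defined on all of $\bdry{X}{\sigma}$: a ``direction'' $\bar{a}(\bar{x}) \in A$ with $\bar{a}|_A = \mathrm{id}_A$ (choosing a single representative direction from each element of the cover), and a ``radius'' $s(\bar{x}) \in [0, \infty]$ equal to $\ell_o(\bar{x})$ on $D$, finite on $\partial_\sigma X \setminus A$, and satisfying $s(\bar{x}) \to \infty$ as $\bar{x} \to A$. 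The retraction would then be $r(\bar{x}) := \exp_o(\bar{a}(\bar{x}), s(\bar{x}))$, and the homotopy $H$ would interpolate between $\bar{x}$ at $t = 0$ and $r(\bar{x})$ at $t = 1$ by a $\sigma$-geodesic in $X$ for interior points, and for $\bar{x} \in \partial_\sigma X \setminus A$ by first sliding along $\exp_o(\bar{x}, \cdot)$ from infinity back to a finite radius and then along a $\sigma$-geodesic to $r(\bar{x})$.

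The key obstacle I expect is the joint continuity of $r$ and $H$ at points of $A$: as $\bar{x} \in \partial_\sigma X \setminus A$ approaches $\bar{a} \in A$, the target $\exp_o(\bar{a}(\bar{x}), s(\bar{x})) \in X$ must converge to $\bar{a} = \exp_o(\bar{a}, \infty) \in \partial_\sigma X$, which forces $s(\bar{x}) \to \infty$ at an appropriate rate relative to the convergence of $\bar{a}(\bar{x})$ to $\bar{a}$. Balancing these rates --- and checking that the piecewise-defined $H$ remains continuous across the interface between $X$ and $\partial_\sigma X$ --- is the core difficulty and mirrors the corresponding CAT(0) construction. Should the direct construction prove too delicate, a back-up is to use the $1$-slack in condition~(iii) to enlarge $D$ to a closed tubular neighbourhood of $\mathrm{Cone}_o(A)$ contained in $\overline{B}_X(\mathrm{Cone}_o(A) \cap X, 1) \cup A$, chosen so that $(\bdry{X}{\sigma}, D)$ has the homotopy extension property, and then conclude from the facts that $\bdry{X}{\sigma}$ is contractible by \cite[Theorem~1.4]{DeLa15} and that such an enlarged $D$ is also contractible (via radial shrinking along $\exp_o$ toward $o$), so that the inclusion $D \hookrightarrow \bdry{X}{\sigma}$ is a homotopy equivalence and hence (by the HEP) a strong deformation retract.
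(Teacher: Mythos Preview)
Your primary construction has a genuine gap. You propose to build a continuous ``direction'' map $\bar a\colon\bdry{X}{\sigma}\to A$ with $\bar a|_A=\mathrm{id}_A$ via a partition of unity subordinate to a finite cover, choosing one representative of $A$ per cover element. But a partition of unity produces convex combinations of the chosen representatives, not elements of $A$; and a piecewise-constant selection is not continuous. More fundamentally, a continuous $\bar a$ with $\bar a|_A=\mathrm{id}_A$ is a retraction onto $A$, and an arbitrary closed $A\subseteq\partial_\sigma X$ is not a retract (take $\partial_\sigma X=S^1$ and $A$ two points). Without such a map, the formula $r(\bar x)=\exp_o(\bar a(\bar x),s(\bar x))$ does not yield a continuous retraction onto $\mathrm{Cone}_o(A)$, and your homotopy $H$ inherits the same defect. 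Your backup via the homotopy extension property is in the right spirit but is only a hope: you do not say how to choose the enlarged $D$ so that $(\bdry X{\sigma},D)$ has the HEP, nor why that $D$ is an ANR or otherwise admits the required structure.

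The paper avoids the selection problem entirely by never moving a point to a different ray. It defines $\omega(\bar x)$ as the largest parameter $s\le\ell_o(\bar x)$ for which $\min_{\bar a\in A}d(\geod{o}{\bar x}(s),\geod{o}{\bar a}(s))\le 1$, sets $D=\{\bar x:\ell_o(\bar x)=\omega(\bar x)\}$, and retracts by $(\bar x,s)\mapsto\exp_o\!\big(\bar x,\max(s,\omega(\bar x))\big)$, i.e.\ each point slides \emph{radially} along its own $\sigma$-ray toward $o$ until it first lies within sphere-wise distance $1$ of $\mathrm{Cone}_o(A)$. Properties (i)--(iii) are then immediate, and all the work goes into proving that $\omega$ is continuous (using convexity of the bicombing and compactness of $A$), which simultaneously gives closedness of $D$ and continuity of the strong deformation retraction. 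This is precisely an explicit realisation of your ``enlarge $D$ using the $1$-slack'' idea, with the enlargement chosen so that a purely radial retraction suffices and no direction map into $A$ is needed.
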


\begin{proof}
For any $\bar{a}\in A$ 
and 
$\bar{x}\in\bdry{X}{\sigma}$, 
we have by convexity of $\sigma$ that the function $\delta_{\bar{x}}^{\bar{a}}\colon[0,\infty)\cap[0,\ell_o(\bar{x})]\to\R$ given by $\delta_{\bar{x}}^{\bar{a}}(s)=d(\exp_o(\bar{a},s),\exp_o(\bar{x},s))$ is non-decreasing, strictly increasing on $(\delta_{\bar{x}}^{\bar{a}})^{-1}((0,\infty))$ and continuous.
Since $A$ is compact, the function $\mu_{\bar{x}}^A\colon[0,\infty)\cap[0,\ell_o(\bar{x})]\to\R$ given by $\mu_{\bar{x}}^A(s)=\min_{\bar{a}\in A}\delta_{\bar{x}}^{\bar{a}}(s)$ is well-defined, and is non-decreasing and strictly increasing on $(\mu_{\bar{x}}^A)^{-1}((0,\infty))$, as the functions $\delta_{\bar{x}}^{\bar{a}}$ are;
it is 
continuous 
by the following argument.
Let $s\in [0,\infty)\cap[0,\ell_o(\bar{x})]$.
Consider any sequence $(s_n)\subseteq[0,\infty)\cap[0,\ell_o(\bar{x})]$ converging to $s$ from above. Let $\bar{a}\in A$ be such that $\mu_{\bar{x}}^A(s)=\delta_{\bar{x}}^{\bar{a}}(s)$.
Then we have $\delta_{\bar{x}}^{\bar{a}}(s_n)\geq\mu_{\bar{x}}^A(s_n)\geq\mu_{\bar{x}}^A(s)=\delta_{\bar{x}}^{\bar{a}}(s)$.
As the left hand side converges to the right hand side as $n\to\infty$, we have that $\mu_{\bar{x}}^A(s_n)\to\mu_{\bar{x}}^A(s)$.
Now, assume that we have a sequence $(s_n)\subseteq[0,\infty)\cap[0,\ell_o(\bar{x})]$ approaching $s$ from below.
Let $\bar{a}_n$ be such that $\mu_{\bar{x}}^A(s_n)=\delta_{\bar{x}}^{\bar{a}_n}(s_n)$.
By compactness of $A$, each subsequence of $(n)_{n\in\N}$ admits a subsequence $(n_k)_{k\in\N}$ such that $\bar{a}_{n_k}$ is convergent to some $\bar{a}\in A$.
We have the following inequalities: $\mu_{\bar{x}}^A(s)\geq\mu_{\bar{x}}^A(s_{n_k})=d(\exp_o(\bar{x},s_{n_k}),\exp_o(\bar{a}_{n_k},s_{n_k}))$. By continuity of $\exp_o$, passing to the limit with $k$ 
we obtain that $\mu_{\bar{x}}^A(s)\geq\mu_{\bar{x}}^A(s_{n_k})\to\delta_{\bar{x}}^{\bar{a}}(s)\geq\mu_{\bar{x}}^A(s)$. 
Therefore $\mu_{\bar{x}}^A(s_n)\to\mu_{\bar{x}}^A(s)$.

Define $\omega\colon\bdry{X}{\sigma}\to[0,\infty]$ by
$\omega(\bar{x})=\sup\{s\in[0,\infty)\cap[0,\ell_o(\bar{x})]:\mu_{\bar{x}}^A(s)\leq 1\}$ 
(in slightly informal terms: `walk  
from $o$ along the $\sigma$-geodesic/ray to $\bar{x}$ 
until 
reaching $\bar{x}$ or
diverging 
to a ``sphere-wise'' distance at least 1
from all 
of the
$\sigma$-rays 
that begin in
$o$ 
and end in $A$;
the distance covered is $\omega(\bar{x})$')  
and let $D:=\{\bar{x}\in\bdry{X}{\sigma}:\ell_o(\bar{x})=\omega(\bar{x})\}$.
It easily follows that $D$ 
satisfies (ii) and (iii), and that $A\subseteq D\cap\partial_\sigma X$. 
The other inclusion 
required by
property (i) is satisfied, 
as
\begin{multline*}
D\cap\partial_\sigma X=\{\bar{x}\in\partial_\sigma X:(\forall s\geq0)(\exists \bar{a}\in A)(d(\geod{o}{\bar{x}}(s),\geod{o}{\bar{a}}(s))\leq1)\}\\
\subseteq \{\bar{x}\in\partial_\sigma X:(\forall s\geq0)(\exists \bar{a}\in A)(d_{o,2}(\bar{x},\bar{a})\leq s^{-1})\}
\subseteq\overline{A}
=A
\end{multline*}
(recall Proposition \ref{p:quasisym}\ref{p:quasisym1}).
It is sufficient to prove that $\omega$ is continuous, as then
it 
immediately follows that $D$ is closed, 
and 
that
property (iv) is satisfied,
as then we 
have the following retraction: $\bdry{X}{\sigma}\times[0,\infty]\ni(\bar{x},s)\mapsto\exp_o(\bar{x},\max(s,\omega(\bar{x})))\in\bdry{X}{\sigma}$.

\newcommand{\cli}{(A)}
\newcommand{\clii}{(B)}
\newcommand{\cliii}{(C)}
\newcommand{\hcli}{\myhypertarget{cl:8i}}
\newcommand{\hclii}{\myhypertarget{cl:8ii}}
\newcommand{\hcliii}{\myhypertarget{cl:8iii}}
\newcommand{\rcli}{\hyperlink{cl:8i}{\cli}}
\newcommand{\rclii}{\hyperlink{cl:8ii}{\clii}}
\newcommand{\rcliii}{\hyperlink{cl:8iii}{\cliii}}

\bigskip\noindent
\hcli\hclii\hcliii
\textbf{Claim.} {\it Assume that $(\bar{x}_n)\subseteq\bdry{X}{\sigma}$ converges to $\bar{x}\in\bdry{X}{\sigma}$ and $\omega(\bar{x}_n)$ converges to some $t\in[0,\infty]$. Then
\emph{\cli{}} if $\ell_o(\bar{x})>s>t$, then $\mu_{\bar{x}}^A(s)\geq 1$;
\emph{\clii{}} if $t>s\geq 0$ then $\mu_{\bar{x}}^A(s)\leq 1$;
\linebreak[1]
\emph{\cliii{}} $\omega(\bar{x})=t$.}

\smallskip\noindent
\textbf{Proof.} \rcli{} If not, then there exists $\bar{a}\in A$ such that $d(\exp_o(\bar{x},s),\exp_o(\bar{a},s))=\delta_{\bar{x}}^{\bar{a}}(s)<1$. Since $\bar{x}_n\to\bar{x}$, for large enough $n$ we have that $\delta_{\bar{x}_n}^{\bar{a}}(s)=d(\exp_o(\bar{x}_n,s),\exp_o(\bar{a},s))<1$, and, as $\ell_o$ is continuous, $s<\ell_o(\bar{x}_n)$. This implies that $\omega(\bar{x}_n)>s$ for sufficiently large $n$, thus $t\geq s$. Contradiction.

\smallskip
\rclii{} 
Let $\bar{a}_n\in A$ be such that $\delta_{\bar{x}_n}^{\bar{a}_n}(\omega(\bar{x}_n))=\mu_{\bar{x}_n}^A(\omega(\bar{x}_n))$. 
Then we have that $1\geq \delta_{\bar{x}_n}^{\bar{a}_n}(\omega(\bar{x}_n))=d(\exp_o(\bar{x}_n,\omega(\bar{x}_n)),\exp_o(\bar{a}_n,\omega(\bar{x}_n)))$. 
Since $\omega(\bar{x}_n)\to t$, for large enough $n$ we have that $\omega(\bar{x}_n)\geq s$, therefore $1\geq d(\exp_o(\bar{x}_n,s),\exp_o(\bar{a}_n,s))$. 
Therefore, by compactness of $A$, there exists $\bar{a}\in A$ such that $1\geq d(\exp_o(\bar{x},s),\exp_o(\bar{a},s))=\delta_{\bar{x}}^{\bar{a}}(s)
\geq\mu_{\bar{x}}^A(s)$.

\smallskip
\rcliii{} 
First, 
note that passing to the limit with $\omega(\bar{x}_n)\leq\ell_o(\bar{x}_n)$ gives that $t\leq\ell_o(\bar{x})$.
If 
$t=\ell_o(\bar{x})>0$, 
then 
by 
\rclii{}
we have that $\omega(\bar{x})\geq s$ for any $s<t$,
therefore $t=\ell_o(\bar{x})\geq\omega(\bar{x})\geq t$,
so $\omega(\bar{x})=t$.
If $t=\ell_o(\bar{x})=0$, then $\omega(\bar{x})=0=t$, 
as $0\leq\omega(\bar{x})\leq\ell_o(\bar{x})=0$. 
Otherwise, 
we have that
$t<\ell_o(\bar{x})$.
Claim 
\rcli{}
implies that 
$\mu_{\bar{x}}^A(t)\geq 1$.
Since $\ell_o(\bar{x}_n)\to\ell_o(\bar{x})$, we have for sufficiently large $n$ that $\omega(\bar{x}_n)<\ell_o(\bar{x}_n)$; 
also note that $\omega(\bar{x}_n)\geq 1/2$, which in the limit implies that $t\geq 1/2>0$, since $\mu_{\bar{x}_n}^A(\omega(\bar{x}_n))=1$ and the diameter of $B(o,1/2)$ is not greater than 1; 
therefore 
Claim 
\rclii{}
implies that $\mu_{\bar{x}}^A(t)\leq 1$. 
Since $\mu_{\bar{x}}^A$ is increasing on $(\mu_{\bar{x}}^A)^{-1}((0,\infty))$, $t$ is the unique number such that $\mu_{\bar{x}}^A(t)=1$, and $\omega(\bar{x})=t$.
\bigskip

Continuity of $\omega$ now follows, as
each subsequence of a convergent sequence 
in 
$\bdry{X}{\sigma}$ admits a subsequence that satisfies the assumptions of 
Claim 
\rcliii{}
above.
\end{proof}

For a simplicial complex $K$, below we consider it with 
the piecewise--unit-$\ell^\infty$ metric.
That is, we endow it with the gluing metric arising from the identification of each $k$-simplex $[v_0,\ldots,v_k]$ of $K$ with the subspace $\{(\lambda_0,\ldots,\lambda_k):\lambda_0,\ldots,\lambda_k\geq 0\text{ and }\lambda_0+\ldots+\lambda_k=1\}$ of $\R^{k+1}$ with the supremum metric.

\smallskip
Two spaces $X,Y$ 
have the same \emph{bounded homotopy type} 
if there exist (continuous) maps $f\colon X\to Y$ and $g\colon Y\to X$, and \emph{bounded homotopies} $\htpy^X\colon X\times[0,1]\to X$ between $g\circ f$ and $\mathrm{id}_X$, and $\htpy^Y\colon Y\times[0,1]\to Y$ between $f\circ g$ and $\mathrm{id}_Y$, i.e.~homotopies such that the diameters of the trajectories $\htpy^X(x,\cdot\,)$ for $x\in X$ and $\htpy^Y(y,\cdot\,)$ for $y\in Y$ are bounded by a constant independent of the choice of $x$ and $y$. The maps $f$ 
and 
$g$ above are called \emph{bounded homotopy equivalences}.

\begin{lemma}[cf.~{\cite[Lemma I.7A.15]{BrHae99}}]\label{l:agcI7A15}
	Let $X$ be a metric space 
	that admits 
	a locally finite open cover $\mathcal{U}=\{B(x_i,\epsilon):i\in I\}$ for some $\epsilon>0$.
	Assume that for $k\in\{1,3\}$ each ball $B(x_i,k\epsilon)$ admits a 
	continuous function
	$\sigma^{i,k\epsilon}\colon B(x_i,k\epsilon)\times B(x_i,k\epsilon)\times[0,1]\to B(x_i,k\epsilon)$ satisfying $\sigma^{i,k\epsilon}(x,x',0)=x$, $\sigma^{i,k\epsilon}(x,x',1)=x'$ for all $x,x'\in B(x_i,k\epsilon)$, 
	and that 
	these functions are
	such 
	that $\sigma^{i,k\epsilon}|_{(B(x_i,k\epsilon)\cap B(x_j,k\epsilon))^2\times[0,1]}=\sigma^{j,k\epsilon}|_{(B(x_i,k\epsilon)\cap B(x_j,k\epsilon))^2\times[0,1]}$ for 
	all
	${i,j\in I}$.
	Then the nerve $K$ of the 
	cover 
	$\mathcal{U}$ is a locally finite simplicial complex of the same bounded homotopy type as $X$.
	
	Moreover, if the space $X$ is proper, geodesic and admits a cocompact 
	group
	action via isometries, and $\dim K<\infty$, then the constructed bounded homotopy equivalences
	$f\colon X\to K$ and $g\colon K\to X$ 
	are quasi-isometries. 
\end{lemma}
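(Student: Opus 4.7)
The plan is to construct $f\colon X\to K$ and $g\colon K\to X$ as bounded homotopy equivalences along the lines of the nerve-theoretic argument of \cite[Lemma I.7A.15]{BrHae99}, replacing CAT(0)-geodesics by the local combings $\sigma^{i,k\epsilon}$. Using local finiteness of $\mathcal{U}$, I would first fix a continuous partition of unity $\{\varphi_i\}_{i\in I}$ subordinate to $\mathcal{U}$ (e.g.~by normalising the functions $\max(0,\epsilon-d(x,x_i))$), and define $f(x):=\sum_i\varphi_i(x)v_i\in K$, where $v_i$ is the vertex of $K$ corresponding to $B(x_i,\epsilon)$. This is well-defined: whenever $\varphi_{i_0}(x),\ldots,\varphi_{i_n}(x)>0$ the point $x$ lies in every $B(x_{i_j},\epsilon)$, so $\{v_{i_j}\}$ spans a simplex of $K$. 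Continuity is standard, and local finiteness of $K$ follows from that of $\mathcal{U}$.

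Next, I would define $g\colon K\to X$ by induction on the skeleta of $K$, setting $g(v_i):=x_i$ on vertices and extending over each simplex $\Delta=[v_{i_0},\ldots,v_{i_n}]$ by iterated coning using $\sigma^{i_0,3\epsilon}$. The key geometric input is that, since the balls $B(x_{i_j},\epsilon)$ share a common point, all vertices $x_{i_j}$ lie in $B(x_{i_0},2\epsilon)\subseteq B(x_{i_0},3\epsilon)$; the hypothesis that $\sigma^{i,3\epsilon}$ and $\sigma^{j,3\epsilon}$ agree on the intersection of their domains ensures both independence from the chosen vertex ordering and agreement on common faces of adjacent simplices. Continuity of $g$ follows from continuity of each $\sigma^{i,3\epsilon}$.

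For the bounded homotopies: to show $g\circ f\simeq \mathrm{id}_X$, observe that for each $x\in X$ with $\varphi_{i_0}(x)>0$, both $x$ and $g(f(x))$ lie in $B(x_{i_0},3\epsilon)$, so $\sigma^{i_0,3\epsilon}(x,g(f(x)),\cdot\,)$ provides a local contraction whose tracks have diameter at most $6\epsilon$; global continuity again follows from the compatibility of the $\sigma^{i,3\epsilon}$ on overlaps. For $f\circ g\simeq \mathrm{id}_K$, on each simplex of $K$ the map $f\circ g$ takes values in the closed star of the simplex, and one defines the homotopy by linear contraction inside each star, giving tracks of $\ell^\infty$-diameter at most $1$. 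For the \enquote{moreover} assertion, cocompactness together with $\dim K<\infty$ and local finiteness controls the Lipschitz constants of $f$ and $g$ (the map $g$ moves adjacent vertices of $K$ to points at distance less than $2\epsilon$), so both maps are coarsely Lipschitz; combined with the bounded homotopies this makes $f$ a quasi-isometric embedding, and the bounded homotopy $f\circ g\simeq \mathrm{id}_K$ guarantees that $f(X)$ is uniformly dense in $K$.

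The main obstacle will be carrying through the inductive construction of $g$ in a way that remains consistent across different vertex orderings and that keeps the iterated image inside $B(x_{i_0},3\epsilon)$; this is the point at which both the compatibility condition on overlaps and the specific radius $k=3$ are used essentially. In \cite[Lemma I.7A.15]{BrHae99} the analogous consistency is automatic from CAT(0) convexity, and the present lemma abstracts out exactly the combinatorial ingredient needed in its absence.
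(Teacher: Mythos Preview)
Your overall architecture---partition of unity for $f$, skeleton-by-skeleton construction of $g$, bounded homotopies via the local combings, and coarse Lipschitz plus bounded homotopy for the ``moreover''---matches the paper's. The difference lies in how $g$ is built, and here your proposal has a genuine gap that you yourself flag but do not resolve.

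Your iterated-coning scheme using $\sigma^{i_0,3\epsilon}$ does not obviously stay inside a single $3\epsilon$-ball as the skeletal dimension grows: the face $[v_{i_1},\ldots,v_{i_n}]$ of $\Delta$ is mapped, by induction, into $B(x_{i_1},3\epsilon)$, and there is no reason this is contained in $B(x_{i_0},3\epsilon)$ (you only know $d(x_{i_0},x_{i_1})<2\epsilon$, so a priori only $B(x_{i_0},5\epsilon)$). Nor does the compatibility hypothesis give independence from vertex ordering: it says the various $\sigma^{i,3\epsilon}$ agree on overlaps of their domains, not that iterated coning in different orders yields the same map. Also note that your construction never uses the $k=1$ maps $\sigma^{i,\epsilon}$, even though the statement provides them.

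The paper avoids all of this with a different mechanism. It does \emph{not} iterate: within each simplex $\Delta$ it picks a point $c$ in the non-empty intersection $\bigcap_{v_j\in\Delta^{(0)}}B(x_j,\epsilon)$, subdivides $\Delta$ into the regions $\St_{\max}(v_j)$ (points whose $v_j$-barycentric coordinate is maximal), and cones $g|_{\partial\Delta}$ radially to $c$ using $\sigma^{j,\epsilon}$ on the $j$-th region. The inductive invariant is $g(\St_{\max}(v_j))\subseteq B(x_j,\epsilon)$, which is preserved because $\sigma^{j,\epsilon}$ takes values in $B(x_j,\epsilon)$; compatibility of the $\sigma^{j,\epsilon}$ on overlaps handles the boundaries between the $\St_{\max}$ regions. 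The $3\epsilon$-maps are saved for the homotopy $g\circ f\simeq\mathrm{id}_X$, exactly as you describe. The ``moreover'' part in the paper is more explicit than your sketch: cocompactness is used to bound uniformly the number of cover elements meeting a unit ball (hence $f$ is coarsely Lipschitz), and $\dim K<\infty$ is used to show each point of $K$ lies in an open star of controlled radius (hence $g$ is coarsely Lipschitz).
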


\begin{remark}\label{u:agcI7A15}
If the space $X$ admits a conical bicombing $\sigma$, then
one may 
construct the
families $\{\sigma^{i,\epsilon}:i\in I\}$ and $\{\sigma^{i,3\epsilon}:i\in I\}$ 
satisfying the properties required for them in the statement above
by restricting $\sigma$ to appropriate balls.
\end{remark}

\begin{proof}
	The proof that $X$ and the nerve of $\mathcal{U}$ 
	are of the same bounded homotopy type
	can be done using the construction from the proof of \cite[Lemma I.7A.15]{BrHae99}, 
	with the change that instead of using the unique geodesic between a pair of points $x,x'\in B(x_i,k\epsilon)$, where $i\in I$ and $k\in\{1,3\}$, one may use the segment $t\mapsto\sigma^{i,k\epsilon}(x,x',t)$. We present its outline below.
	
	\smallskip
	Denote by $v_i$ the vertex in $K$ corresponding to the ball $B(x_i,\epsilon)\in\mathcal{U}$.
	
	The map $f\colon X\to K$ is constructed via a 
	partition of unity,
	which is 
	almost 
	subordinate 
	---
	it is 
	subordinate upon ignoring the (topological) boundaries of the supports
	---
	to 
	the
	(locally finite) open cover $\mathcal{U}$:
	given $x\in X$ and $i\in I$, define $\varphi_i(x):=\max(0,\epsilon-d(x_i,x))$, and 
	define $f(x)$ 
	to have the 
	$v_i$-coordinate 
	equal to
	$\varphi_i(x){\big/}\sum_{j\in I}\varphi_j(x)$.
	Observe that for any $i\in I$ we have 
	the inclusion
	$f(B(x_i,\epsilon))\subseteq\st(v_i)$, where  $\st(v)=\bigcup\{\intr\,\Delta:v\in\Delta,\,\Delta\text{ is a simplex of }K\}$ is the open star of the vertex $v$ in $K$. In particular, $f(B(x_i,\epsilon))\subseteq B(f(x_i),2)$.
	
	The map $g\colon K\to X$ is constructed inductively over the skeleta of $K$, maintaining the property that for each $v_i$ we have that $g(K^{(d)}\cap\St_{\max}(v_i))\subseteq B(x_i,\epsilon)$, where $K^{(d)}$ is the $d$-skeleton of $K$ and $\St_{\max}(v_i)$ consists of $y\in K$ whose 
	$v_i$-coordinate 
	is not smaller than 
	any
	other 
	of
	its
	$v_j$-coordinates (where $j$ ranges over $I$).
	Put $g(v_i):=x_i$.
	Assume that we have defined $g$ on $K^{(d)}$. We shall extend it to $K^{(d+1)}$ for each $(d+1)$-simplex $\Delta$ in $K$ separately.
	Let $y_c$ be the central point of $\Delta$ and pick any point $c\in\bigcap\{B(x_j,\epsilon):v_j\in\Delta^{(0)}\}$.
	Given a point $y\in\Delta^{(d)}\cap\St_{\max}(v_j)$, where $v_j\in\Delta^{(0)}$,  
	one may define $g$ to map the segment $[y,y_c]$ via the map $ty+(1-t)y_c\mapsto\sigma^{j,\epsilon}(g(y),c,t)\in B(x_j,\epsilon)$, where $t\in[0,1]$. This gives a well-defined continuous extension of $g$ to the whole $\Delta$, as the functions $\sigma^{j,\epsilon}$ are assumed to agree with each other on intersections of their domains, are continuous and their domains are open in $X\times X\times [0,1]$, and the considered segments $[y,y_c]$ cover the whole $\Delta$.
	Observe that for any $i\in I$ we have that
	\begin{multline*}
		g(\st(v_i))\subseteq g\bpl\!\!\:\bigcup\{\St_{\max}(v_j):v_j=v_i\text{ or }\{v_i,v_j\}\in K^{(1)}\}\bpr\\
		\subseteq g\bpl\!\!\:\bigcup\{\St_{\max}(v_j):d(x_i,x_j)<2\epsilon\}\bpr\subseteq \bigcup\{B(x_j,\epsilon):d(x_i,x_j)<2\epsilon\}\subseteq B(x_i,3\epsilon). 	
	\end{multline*}
	
	\smallskip
	Regarding the bounded homotopy between $g\circ f$ and the identity of $X$, observe that for any $i\in I$ we have that $g(f(B(x_i,\epsilon)))\subseteq g(\st(v_i))\subseteq B(x_i,3\epsilon)$.
	Therefore, for a point $x\in B(x_i,\epsilon)$, we may define the desired bounded homotopy to contain the map $(x,t)\mapsto\sigma^{i,3\epsilon}(g(f(x)),x,t)$. Similarly as above, it is a well-defined bounded homotopy due to the assumptions on the functions $\sigma^{i,3\epsilon}$.
	
	Regarding the bounded homotopy between $f\circ g$ and the identity of $K$, observe that for any simplex $\Delta$ of $K$ we have that
	\begin{multline*}
	f(g(\Delta))\subseteq f\bpl g\bpl\!\!\:\bigcup\{\St_{\max}(v_j):v_j\in\Delta^{(0)}\}\bpr\!\!\:\bpr\subseteq f\bpl\!\!\:\bigcup\{B(x_j,\epsilon):v_j\in\Delta^{(0)}\}\bpr\\
	\subseteq\bigcup\{\st(v_j):v_j\in\Delta^{(0)}\}.	
	\end{multline*}  
	It is now a standard fact that
	one may construct a homotopy $\htpy^K$ between $f\circ g$ and $\id_K$
	such that for each
	simplex $\Delta$ of $K$ 
	the homotopy 
	$\htpy^K$
	moves
	points of $\Delta$
	along piecewise-linear 
	segments contained in the union of open stars of vertices of $\Delta$;
	in particular, 
	$\htpy^K$
	is a bounded homotopy.
		
	\medskip
	Regarding the proof of the `moreover' part, note that we have proved above that the compositions $g\circ f$ and $f\circ g$ are at finite distance from the identity maps on $X$ and $K$, respectively, therefore it is sufficient to prove that $f$ and $g$ are coarsely Lipschitz. 
	
	\smallskip
	Regarding the proof 
	for $f$, first note that we have 
	that for any compact $A\subseteq X$
	there exists a constant $C_A$ such that
	the cardinality of any $\epsilon$-net in $A$ is bounded by $C_A$.
	Indeed, for any $(\epsilon/2)$-net $N$ and $\epsilon$-net $M$ in $A$ 
	we have that each $\epsilon$-ball centred in an element of $M$ contains an element of $N$, and each element of $N$ is contained in at most 
	one
	$\epsilon$-ball centred in an element of $M$; 
	therefore $|N|\geq|M|$, so it is sufficient to take $C_A$ equal to the cardinality of any $(\epsilon/2)$-net in $A$.  

\newcommand{\net}{\odot}
	
	Our first goal is to show that $\sup\{d(f(x),f(x')):x,x'\in X,d(x,x')\leq 1\}<\infty$.
	Take any $o\in X$. 
	Since $X$ admits a cocompact group action, there exists $R>0$ such that the translates of $B(o,R)$ cover $X$. Let $C:=C_{\overline{B}(o,R+1+\epsilon)}$ be as in the 
	paragraph
	above.
	Let $x,x'\in X$
	be
	such that $d(x,x')\leq 1$. 
	By picking a maximal subset of points of pairwise distances not smaller than $\epsilon$ from the set $\{x_i:i\in I,B(x_i,\epsilon)\cap \overline{B}(x,1)\neq\emptyset\}$,
	one obtains a set $I_\net\subseteq I$ such that 
	$|I_\net|\leq C$, 
	as any group element that translates $x$ into $B(o,R)$ translates the set $\{x_i:i\in I,B(x_i,\epsilon)\cap \overline{B}(x,1)\neq\emptyset\}$ into $B(o,R+1+\epsilon)$. 
	Since one may 
	connect
	$x$ 
	and
	$x'$ 
	with
	a geodesic contained in $\overline{B}(x,1)$,
	there exists a chain of points $x_{i_1},\ldots,x_{i_k}\in B(x,1+\epsilon)$ such that 
	$i_j\in I$
	for $1\leq j\leq k$, 
	$B(x_{i_j},\epsilon)\cap B(x_{i_{j+1}},\epsilon)\neq\emptyset$ 
	for $1\leq j<k$, $x\in B(x_{i_1},\epsilon)$ 
	and $x'\in B(x_{i_k},\epsilon)$.
	Observe that 
	for each $i_j$, where $1\leq j\leq k$, there exists $i_j^\net\in I_\net$ such that $d(x_{i_j},x_{i_j^\net})<\epsilon$; 
	in particular, $f(x_{i_j^\net})$ belongs to $B(f(x_{i_j}),2)$, 
	thus $f(B(x_{i_j},\epsilon))\subseteq B(f(x_{i_j}),2)\subseteq B(f(x_{i_j^\net}),4)$.
	Then 
	the chain of points $x_{i_1^\net},\ldots,x_{i_k^\net}$ is such that $i_j^\net\in I_\net$ for $1\leq j\leq k$, $B(f(x_{i_j^\net}),4)\cap B(f(x_{i_{j+1}^\net}),4)\neq\emptyset$ for $1\leq j<k$, $f(x)\in B(f(x_{i_1^\net}),4)$ 
	and $f(x')\in B(f(x_{i_k^\net}),4)$.
	By taking the shortest among such chains, one may assume that $k\leq |I_\net|\leq C$;
	then we have that $d(f(x),f(x'))\leq 4k\leq 4C$.
	Finally, for arbitrary $x,x'\in X$, by considering 
	a
	geodesic between $x$ and $x'$, and the triangle inequality, one may obtain that $d(f(x),f(x'))\leq 4C\lceil d(x,x')\rceil\leq 4Cd(x,x')+4C$, where $\lceil\cdot\rceil$ is the ceiling function.
	
	\smallskip
	Regarding the proof for $g$, 
	recall 
	the identification of $d$-simplices with appropriate subset of $\R^{d+1}$ with the supremum metric,
	and 
	observe that for any $i\in I$, 
	assigning to an element of the 
	open
	star $\st(v_i)$
	the 
	value of 
	its
	$v_i$-coordinate
	is
	a 1-Lipschitz function.  	
	Therefore, for any $y\in K$
	and $i\in I$ such that $y\in\st(v_i)$ we have that $B(y,\lambda_i)\subseteq\st(v_i)$, where $\lambda_i$ is 
	the 
	value of the
	$v_i$-coordinate 
	of $y$.
	Therefore, as $\dim K<\infty$, for each $y\in K$ there exists a vertex $v_{i_y}$ such that $B(y,1/\dim(K))\subseteq\st(v_{i_y})$. Consider $y,y'\in K$. If $d(y,y')\leq1/(2\dim(K))$, then $d(f(y),f(y'))\leq 6\epsilon$ (as $f(\st(v_{i_y}))\subseteq B(x_{i_y},3\epsilon)$). Therefore for arbitrary $y,y'\in K$ we have that 
	\begin{equation*}
	d(f(y),f(y'))\leq\left\lceil\frac{d(y,y')}{(2\dim(K))^{-1}}\right\rceil\cdot6\epsilon
	\leq 12\dim(K)\epsilon d(y,y')+6\epsilon.\qedhere
	\end{equation*}
\end{proof}

\medskip
The default 
cohomology theory
in this 
section
is
the Alexander--Spanier 
cohomology
$\has^*$.
We note that all of the reasonings in the remaining part of this subsection 
also
work with the simplicial 
cohomology in the place of the Alexander--Spanier 
cohomology;
the extra properties of the latter cohomology theory, mainly the consequences of admitting more so-called taut pairs, see \cite[above 6.1.7, and Section 6.6]{SpanierBook66}, will be used mainly in Remark \ref{r:cohoeq} and the proof of 
Theorem
\ref{t:agcmt}.

For a topological space $X$, the 
\emph{(Alexander--Spanier) cohomology with compact support $\has_c^*(X)$}
is
defined as the direct limit of the system 
$\{\csch{\has}{*}{X}{K}:K\subseteq X\text{ compact}\}$ (with the 
homomorphism 
in this system
being the maps induced by inclusions), see \cite[Theorem 6.6.15]{SpanierBook66}.
We note here that 
excision, \cite[Theorem 6.4.4]{SpanierBook66}, allows us to view the groups $\has_c^*(V)$, where $V\subseteq X$ is open, 
also
as 
the 
following
direct limit: 
\begin{equation}\label{eq:cschopen}
	\has_c^*(V)\cong
	\lim_{\longrightarrow}\{\has^*(X,X\setminus K):K\subseteq V\text{ compact}\}.
\end{equation} 

\smallskip
Let $X$
be
a proper metric space, $i\in\N$ and $T$ be a function $[0,+\infty)\to[0,+\infty)$. Then we say that the group $\has^i_c(X)$ of 
cohomology
with compact support
is:
\begin{enumerate}[$\bullet$]
\item 
\emph{$T$-uniformly trivial}, 
if
the
map 
$H^i_c(B(x,r))\to H^i_c(B(x,r+T(r)))$ 
induced by 
(the system of) inclusions,
recall \eqref{eq:cschopen}, 
is trivial;

\item 
\emph{uniformly trivial}, if it is $T$-uniformly trivial for some function $T$;

\item 
\emph{$T$-\nbhd-uniformly trivial}, 
if
for each $r>0$ 
and
compact set $A$ 
contained in a 
closed 
ball of radius $r$
the 
map
$\csch{\has}{i}{X}{A}\to\csch{\has}{i}{X}{\overline{B}(A,T(r))}$ 
induced by inclusion 
is trivial.
\end{enumerate}

\smallskip
Let $K$ be a locally finite simplicial complex.
By the 
\emph{simplicial cohomology with compact support}
$H_{\spl,c}^*(K)$ we mean the 
one
resulting from 
the (co)chain complex 
\begin{equation*}
C_{\spl,c}^*(K)=\{\varphi\in C_\spl^*(K):\varphi\text{ is supported in a finite subcomplex of $K$}\}.
\end{equation*}
For $i\in\N$ and a function $T\colon[0,\infty)\to[0,\infty)$, 
we say that 
the group $H_{\spl,c}^i(K)$ is 
\linebreak[4]
\emph{$T$-\nbhd-uniformly trivial} if 
for each $r>0$ 
and
$i$-cocycle $\varphi$ 
supported in 
a finite subcomplex $L$
contained in a 
closed
ball of 
of 
radius
$r$ 
there exists an $(i-1)$-cochain $\psi$ 
supported in 
(a, necessarily finite, subcomplex of $K$ contained in)
the compact set $\overline{B}(L,T(r))$.
(Recall that we equip simplicial complexes with a piecewise--unit-$\ell^\infty$ metric; however, in the remainder of this section, we only really use the fact that the metric on a simplicial complex 
is such that each simplex is of diameter at most $1$, 
and 
that 
Lemma~\ref{l:agcI7A15} holds.)

\begin{lemma}\label{l:cohouninbhd}
	\begin{enumerate}[(i)]
		\item\label{l:cohouninbhd0} 
		Let $X$ be a proper metric space.
		If $\has_c^i(X)$ is $T$-uniformly trivial, then 
		it
		is $(r\mapsto 2T(r+1)+2r+2)$-\nbhd-uniformly trivial.
		
		\item\label{l:cohouninbhd1} 
		Let $(X,d_X)$ and $(Y,d_Y)$ be proper metric spaces. Let $f\colon X\to Y$ and $g\colon Y\to X$ be bounded homotopy equivalences.
		Assume that $f$ and $g$ are quasi-isometries, and that $\has^i_c(Y)$ is $T$-\nbhd-uniformly trivial.
		Then $\has^i_c(X)$ is $(r\mapsto CT(Cr+2C^2)+2C)$-\nbhd-uniformly trivial for some $C>0$.

		\item\label{l:cohouninbhd2} 
		Let $K$ be a 
		locally finite 
		simplicial complex and $T\colon [0,+\infty)\to[0,+\infty)$. 
		(a) If 
		$\has_c^i(X)$ is $T$-\nbhd-uniformly trivial, then $H_{\spl,c}^i(X)$ is $(r\mapsto
T(r+1)+3)$-\nbhd-uniformly trivial.		
		(b) If 
		$H_{\spl,c}^i(X)$ is $T$-\nbhd-uniformly trivial, then $\has_c^i(X)$ is $(r\mapsto T(r+1)+4)$-\nbhd-uniformly trivial.
	\end{enumerate}
\end{lemma}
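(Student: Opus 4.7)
All three parts share a template: given a relative class $\alpha$ whose support is controlled by $r$, lift it to a compactly-supported cohomology class on an open set, apply the hypothesis to kill it, then translate the resulting triviality back into triviality of a map of relative cohomology groups, tracking carefully how neighbourhoods of the original support enlarge along the way. The main work in each case is the bookkeeping.

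For (i): given $\alpha\in\csch{\has}{i}{X}{A}$ with $A\subseteq\overline B(x_0,r)$, use the description (\ref{eq:cschopen}) to view $\alpha$ as a class of $\has^i_c(B(x_0,r+1))$. The $T$-uniform triviality hypothesis kills this class in $\has^i_c(B(x_0,r+1+T(r+1)))$, and (\ref{eq:cschopen}) then yields a compact $K$ with $A\subseteq K\subseteq B(x_0,r+1+T(r+1))$ such that $\alpha$ restricts to zero in $\csch{\has}{i}{X}{K}$. Since $A\subseteq\overline B(x_0,r)$, we have $K\subseteq\overline B(A,\,2r+1+T(r+1))$; by functoriality $\alpha$ then restricts to zero in $\csch{\has}{i}{X}{K'}$ for any compact $K'\supseteq K$, so in particular for $K':=\overline B(A,\,2T(r+1)+2r+2)$.

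For (ii): pick a single constant $C$ simultaneously bounding the quasi-isometry data of $f$ and $g$ and the trajectory-diameters of the bounded homotopies $gf\simeq\id_X$ and $fg\simeq\id_Y$. Both $f$ and $g$ are continuous quasi-isometric embeddings between proper metric spaces, hence proper, and so induce contravariant maps on the relative groups $\csch{\has}{i}{\cdot}{\cdot}$. Apply $g^*$ to $\alpha\in\csch{\has}{i}{X}{A}$ to land in $\csch{\has}{i}{Y}{g^{-1}(A)}$, where the quasi-isometry inequality forces $g^{-1}(A)\subseteq\overline B_Y(y_0,\,Cr+C^2)$ for some $y_0$. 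The hypothesis on $Y$ kills $g^*\alpha$ after enlarging to $B_Y:=\overline B_Y(g^{-1}(A),\,T(Cr+C^2))$; applying $f^*$ yields $f^*g^*\alpha=0$ in $\csch{\has}{i}{X}{f^{-1}(B_Y)}$. The bounded homotopy $H\colon X\times[0,1]\to X$ from $gf$ to $\id_X$ satisfies $H^{-1}(A)\subseteq\overline B(A,C)\times[0,1]$, hence defines a homotopy of pairs $(X\times[0,1],(X\setminus\overline B(A,C))\times[0,1])\to(X,X\setminus A)$; homotopy invariance then identifies $(gf)^*\alpha$ with the image of $\alpha$ inside $\csch{\has}{i}{X}{\overline B(A,C)}$. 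Enlarging further to any compact $K\supseteq\overline B(A,C)\cup f^{-1}(B_Y)$, $\alpha$ restricts to zero over $K$. A short estimate using the quasi-isometry inequality plus the homotopy bound shows $f^{-1}(B_Y)\subseteq\overline B_X(A,\,C+CT(Cr+C^2)+C)$, and absorbing constants into $C$ gives the claimed bound.

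The main obstacle is (iii), since it requires a quantitative form of the isomorphism between simplicial and Alexander--Spanier cohomology on a simplicial complex. I would use the standard comparison map $C^*_{\spl}(K)\to C^*_{AS}(K)$ sending a simplicial cochain $\varphi$ to the AS cochain whose value on a tuple of points lying in a common simplex $\sigma$ is $\varphi(\sigma)$; this is a quasi-isomorphism on any simplicial complex. The essential geometric input is that in the piecewise-unit-$\ell^\infty$ metric every simplex has diameter at most $1$, so the star of any subcomplex $L$ lies in $\overline B(|L|,1)$ and any simplex meeting $\overline B(|L|,t)$ lies in $\overline B(|L|,t+1)$. For (a): a simplicial cocycle $\varphi$ supported in $L\subseteq\overline B(x,r)$ yields an AS cocycle representing a class in $\csch{\has}{i}{K}{|L|}$; applying the $T$-nbhd-uniform AS triviality (with a unit of slack absorbed into $r+1$) produces an AS cochain $\tilde\psi$ with $\delta\tilde\psi=\varphi_{AS}$ vanishing on tuples outside $\overline B(|L|,T(r+1)+1)$; restricting $\tilde\psi$ to vertex-tuples and absorbing an acyclic correction arising from the chain homotopy of the comparison map yields a simplicial cochain $\psi$ with $\delta\psi=\varphi$ and simplicial support contained in $\overline B(L,T(r+1)+3)$, the extra two units accounting for the simplex-diameter estimate and the correction. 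Part (b) is the symmetric reverse passage, converting a simplicial cobounding cochain into an AS one via the same map.
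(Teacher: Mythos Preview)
Your arguments for (i) and (ii) are correct and follow the same route as the paper: factor through the compactly supported cohomology of an open ball in (i), and chase the diagram $g^*\to(\text{kill in }Y)\to f^*\to(\text{homotopy})$ in (ii). The bookkeeping matches up to harmless constants.

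The problem is (iii). You propose to work at the cochain level with a comparison map $C^*_{\spl}(K)\to C^*_{AS}(K)$ and then, for (a), ``restrict $\tilde\psi$ to vertex-tuples'' to recover a simplicial cochain. But an Alexander--Spanier $n$-cochain is an \emph{equivalence class} of functions $K^{n+1}\to\Z$ modulo locally-zero functions; restriction to vertex-tuples is not well-defined on such classes, and different representatives will yield different simplicial cochains. Your ``acyclic correction arising from the chain homotopy of the comparison map'' is exactly the place where this has to be repaired, and you have not said what that chain homotopy is or why its support is controlled. Part (b) is then dismissed as ``the symmetric reverse passage'', which is not a proof.

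The paper avoids cochain-level manipulations entirely. For a finite subcomplex $L\subseteq K$ it introduces the complementary subcomplex $L_{\simcpl}$ (all simplices contained in $K\setminus L$), so that a compactly supported simplicial cocycle with support in $L$ represents a class in $H^i_{\spl}(K,L_{\simcpl})$. One then uses only the \emph{natural isomorphism} $H^i_{\spl}(K,M)\cong\has^i(K,M)$ for subcomplexes $M$, together with the inclusions $K\setminus\overline B(L,1)\subseteq L_{\simcpl}\subseteq K\setminus L$ coming from the diameter-$\leq 1$ bound on simplices. This sandwiches the simplicial relative groups between Alexander--Spanier groups relative to metric neighbourhoods, and the hypothesis kills the relevant map in a commutative square. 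The same device with an enlarged subcomplex $L^{\circ}$ handles (b). No cochain-level comparison is needed; everything happens in cohomology, where the naturality of the isomorphism is all you use.
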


\begin{proof}
\ref{l:cohouninbhd0} 
Consider $\emptyset\neq A\subseteq\overline{B}(x,r)$.
Then the map
\begin{equation*} \csch{\has}{i}{X}{A}\to\csch{\has}{i}{X}{\overline{B}(x,T(r+1)+r+1)},
\end{equation*}
in view of \eqref{eq:cschopen},
factors as 
\begin{multline*}
\csch{\has}{i}{X}{A}
\to \has_c(B(x,r+1))
\to \has_c(B(x,T(r+1)+r+1))\\
\to\csch{\has}{i}{X}{\overline{B}(x,T(r+1)+r+1)},
\end{multline*}
therefore is trivial, as the middle arrow is trivial.
The claim follows, as $\overline{B}(x,T(r+1)+r+1)$ 
is of diameter at most $2(T(r+1)+r+1)$,
hence 
\begin{equation*}
A\subseteq \overline{B}(x,T(r+1)+r+1)\subseteq\linebreak[1]\overline{B}(A,2(T(r+1)+r+1)).
\end{equation*}

\medskip	
\ref{l:cohouninbhd1} 
Denote by 
$\htpy^X\colon X\times[0,1]\to X$ the bounded 
homotopy 
between $g\circ f$ and $\mathrm{id}_X$. 
Combining various assumptions of this lemma, 
we obtain that there exists a constant $C>0$ 
such that
the diameter of the set $\htpy^X(\{x\}\times[0,1])$ is smaller than $C$
for all $x\in X$ (in particular, the maps $g\circ f$ and $\id_X$ are $C$-close),
the image of $g$ is $C$-dense in $Y$,
and
for all $y,y'\in Y$ the inequality 
$C^{-1}d_Y(y,y')-C\leq d_X(g(y),g(y'))\leq Cd_Y(y,y')+C$ 
holds.

\smallskip	
For any compact set $A\subseteq X$ 
contained in a ball $B(x_A,r)$
we have the following diagram.
\begin{equation*}
\begin{tikzcd}[row sep=1.25em,column sep=0.5em,cramped,font=\small,baseline=1em]
\csch{\has}{i}{X}{A}\arrow[r,"g^*"]\arrow[drr,"(gf)^*"',end anchor={[xshift=-5.8ex,yshift=-1.8ex]}]\arrow[dd,"\mathrm{id}^*=(gf)^*" description] & \csch{\has}{i}{Y}{g^{-1}(A)}\arrow[r] & \csch{\has}{i}{Y}{\overline{B}(g^{-1}(A),T(Cr+2C^2))}\arrow[d,"f^*"]\\  
& 
& \csch{\has}{i}{X}{f^{-1}(\overline{B}(g^{-1}(A),T(Cr+2C^2)))}\arrow[d]\\
\csch{\has}{i}{X}{\overline{B}(A,C)}\arrow[rr] & & 
\csch{\has}{i}{X}{\overline{B}(A,CT(Cr+2C^2)+2C)}
\end{tikzcd}
\end{equation*}

\medskip
\noindent
All unlabelled arrows are induced by inclusions. 
The 
maps 
$(gf)^*,\mathrm{id}^*\colon{\csch{\has}{i}{X}{A}}\to {\csch{\has}{i}{X}{\overline{B}(A,C)}}$ 
are equal
by
the fact that the bounded homotopy
$\htpy^X$
between $g\circ f$ and $\mathrm{id}_X$ induces a homotopy between the maps 
${(X,X\setminus\overline{B}(A,C))}\to{(X,X\setminus A)}$ 
induced by $g\circ f$ and $\mathrm{id}_X$ (see \cite[Theorem 6.5.6]{SpanierBook66}).
The vertical map 
induced by 
inclusion in the second column is well-defined by the following argument.
Denote 
by
$A'$
the set
$f^{-1}(\overline{B}(g^{-1}(A),T(Cr+2C^2)))$,
and consider
$x'\in A'$. 
Then there exists $y\in Y$ 
such that $d_Y(y,f(x'))\leq T(Cr+2C^2)$ 
and $g(y)\in A$. 
Then
we have that
\begin{multline*}
	d_X(x',A)\leq d_X(x',g(y))\leq d_X(x',g(f(x')))+d_X(g(f(x')),g(y))\\
	\leq C+Cd_Y(f(x'),y)+C\leq CT(Cr+2C^2)+2C,
\end{multline*}
so the discussed map in the diagram is indeed well-defined.

The 
map 
in the first row
that is induced by inclusion is zero,
as the
set $g^{-1}(A)$ is contained in a ball of radius 
$Cr+2C^2$:
let $y_A\in Y$ be such that $d_X(g(y_A),x_A)\leq C$;
then
\vspace{-0.2em} 
\begin{equation*}
g^{-1}(A)\subseteq g^{-1}(B(x_A,r))\subseteq g^{-1}(B(g(y_A),r+C))\subseteq B(y_A,C(r+C)+C^2).
\end{equation*}

Finally, joining various pieces together, we obtain that
the map
\vspace{-0.2em} 
\begin{equation*}
\csch{\has}{i}{X}{A}\to\csch{\has}{i}{X}{\overline{B}(A,CT(Cr+2C^2)+2C)}
\end{equation*}

\vspace{-0.2em}
\noindent
induced by inclusion
(the result of 
going 
the down-right route in the diagram)
is 
trivial 
(as
the result of 
going 
the right-right-down-down route in the diagram).

\newcommand{\pomp}{\!\scaleobj{1.5}{\circ}}
	
	\medskip	
	\ref{l:cohouninbhd2} 
	There is a canonical isomorphism between the simplicial 
	cohomology
	and the Alexander--Spanier 
	cohomology
	of a simplicial complex relative to its subcomplex, see e.g.~\cite[Section 6.5 and Theorem 4.6.8]{SpanierBook66}.
	For a subcomplex $L$ of $K$ we 
	denote by $L_{\simcpl}$ the subcomplex of $K$ consisting of simplices contained in $K\setminus L$;
	observe
	that 
	we have the following inclusions: $L_\simcpl\subseteq K\setminus L$ and $K\setminus\overline{B}(L,1)\subseteq L_\simcpl$.
	
	\smallskip
	(a)
	Take $\varphi\in Z_{\spl,c}^i(K)$ supported in
	a finite subcomplex 
	$L$
	of $K$ that is
	contained in a 
	closed
	ball of radius $r$.
	Let $L^{\pomp}$ be the finite subcomplex of $K$ consisting of the simplices intersecting 
	the set $\overline{B}(L,T(r+1)+1))$
	(in particular, $L^{\pomp}\subseteq\overline{B}(L,T(r+1)+2)$). 
	We have the following diagram.  
\smallskip
\begin{equation*}
\begin{tikzcd}[row sep=1.25em,column sep=1.1em,cramped,font=\small,baseline=1em]
	H^i_{\spl}(K,L_{\simcpl}) \arrow[rrr]\arrow[d,"\cong"] & & &	H^i_{\spl}(K,L_{\simcpl}^{\pomp})\arrow[d,"\cong"]\\
	\has^i(K,L_{\simcpl})\arrow[r] & \csch{\has}{i}{K}{\overline{B}(L,1)}\arrow[r] & \csch{\has}{i}{K}{\overline{B}(L,T(r+1)+1)}\arrow[r] & \has^i(K,L_{\simcpl}^{\pomp})
\end{tikzcd}
\end{equation*}

\medskip
\noindent
	The horizontal arrows are 
	induced by
	inclusions.
	The vertical arrows are canonical isomorphisms between cohomology theories. 
	By the assumption, the middle arrow in the second row is zero, therefore the arrow in the first row is zero.
		One has that
		$K\setminus L_{\simcpl}^{\pomp}\subseteq\overline{B}(L^{\pomp},1)\subseteq\overline{B}(L,T(r+1)+3)$,
	which finishes the proof.

	\smallskip
	(b)
	Let $A\subseteq K$ be compact
	and contained in a 
	closed
	ball of radius $r$.
	Let $L$ be the smallest subcomplex of $K$ that contains 
	$A$.
	Observe that $L\subseteq\overline{B}(A,1)$, therefore is contained in a ball of radius 
	$r+1$.
	Let $L^{\pomp}$ be the smallest subcomplex of $K$ containing $\overline{B}(L,T(r+1)+1)$ (so, in particular, $L^{\pomp}\subseteq\overline{B}(L,T(r+1)+2)\subseteq\overline{B}(A,T(r+1)+3)$).
	We have the following diagram.
	\smallskip
\begin{equation*}
\begin{tikzcd}[row sep=1.25em,column sep=1.1em,cramped,font=\small,baseline=1em]
	\csch{\has}{i}{K}{A}\arrow[r] & \has^i(K,L_{\simcpl})\arrow[r]\arrow[d,"\cong"] & \has^i(K,L_{\simcpl}^{\pomp})\arrow[r]\arrow[d,"\cong"] & \csch{\has}{i}{K}{\overline{B}(A,T(r+1)+4)}\\
	& H^i_{\spl}(K,L_{\simcpl})\arrow[r] & H^i_{\spl}(K,L_{\simcpl}^{\pomp}) &
\end{tikzcd}
\end{equation*}

\medskip
\noindent
	The horizontal arrows are 
	induced by
	inclusions. The vertical arrows are canonical isomorphisms between cohomology theories. 
	By the assumption, 
	the arrow in the second row is zero, therefore composition of the arrows in the top row is also zero.	
\end{proof}

\begin{lemma}[cf.~{\cite[Theorem 2, Proposition 1 and its Corollary]{GeOn07}}]\label{l:agct2p1c}
	Let $X$ be a finite-dimensional proper metric space that admits a 
	\ccc{}
	geodesic bicombing $\sigma$ and a cocompact group action via isometries.
	
	\begin{enumerate}[(i)]
		\item\label{l:agct2p1c1}
		There is a finite-dimensional, countable, locally finite simplicial complex $K$ such that $X$ and $K$ are of the same bounded homotopy type.
		Moreover, 
		the 
		bounded homotopy equivalences
		justifying this fact 
		may be chosen so that they are quasi-isometries. 
		
		\item\label{l:agct2p1c2} If $\has^i_c(X)$ is trivial, then it is uniformly trivial.
		
		\item\label{l:agct2p1c3} Assume that $\has^i_c(X)$ is trivial for 
		all
		$i>k$. Then there exists a number $\bar{t}$ such that 
		the cohomology groups
		$\has^i_c(X)$
		for $i>k$
		are 
		$(r\mapsto \bar{t}\,)$-\nbhd-uniformly trivial,
		i.e.~for all compact $A\subseteq X$ and $i>k$
		the map $\csch{\has}{i}{X}{A}\to\csch{\has}{i}{X}{\overline{B}(A,\bar{t}\,)}$
		induced by inclusion
		is trivial.
		
		\item\label{l:agct2p1c3cor}
		Under the assumptions of \ref{l:agct2p1c3},
		there 
		exists a number $t$ such that the maps $\has_c^i(U)\to\has_c^i(B(U,t))$ (induced by 
		the
		system of 
		inclusions) 
		are trivial for all open subsets $U\subseteq X$ and $i>k$.
	\end{enumerate}
\end{lemma}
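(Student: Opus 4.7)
For part \ref{l:agct2p1c1}, I would apply Lemma \ref{l:agcI7A15} with the $G$-equivariant open cover $\mathcal{U} := \{B(go, \epsilon) : g \in G\}$ for any basepoint $o \in X$ and any $\epsilon > 0$. Properness combined with cocompactness bounds the multiplicity of $\mathcal{U}$ uniformly by some $N$ (the number of $G$-translates of $o$ meeting a fixed ball is finite by properness; the function $x \mapsto |\{g : go \in B(x,\epsilon)\}|$ is $G$-invariant and bounded on a compact fundamental domain, hence globally bounded), so the nerve $K$ is locally finite with $\dim K \leq N - 1$. Conicality of $\sigma$ makes every ball $B(go, k\epsilon)$ $\sigma$-convex, so restrictions of $\sigma$ supply the local families required by Lemma \ref{l:agcI7A15} (cf.\ Remark \ref{u:agcI7A15}). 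The ``moreover'' clause then applies, as $X$ is proper, geodesic, admits a cocompact isometric action, and $\dim K < \infty$, giving quasi-isometric bounded homotopy equivalences.

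For part \ref{l:agct2p1c2}, I would transfer the problem to the nerve $K$ produced by \ref{l:agct2p1c1}, where the compactly supported Alexander--Spanier and simplicial cohomologies are related by Lemma \ref{l:cohouninbhd}\ref{l:cohouninbhd2}. For any finite subcomplex $L \subseteq K$, the group $H^i_{\spl}(K, K \setminus L)$ is finitely generated, so triviality of the colimit $H^i_{\spl,c}(K)$ implies that a single finite subcomplex kills all of $H^i_{\spl}(K, K \setminus L)$. Cocompactness of the $G$-action reduces the analysis to finitely many $G$-orbits of $L$ at each radius $r$, producing a killing function $T(r)$ uniform in position; transferring back to $X$ via Lemma \ref{l:cohouninbhd}\ref{l:cohouninbhd1} and \ref{l:cohouninbhd}\ref{l:cohouninbhd2} (together with the easy passage between uniform and neighbourhood-uniform triviality using Lemma \ref{l:cohouninbhd}\ref{l:cohouninbhd0}) completes the argument.

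Part \ref{l:agct2p1c3} is the \textbf{main obstacle}, since it requires upgrading the function $T$ of \ref{l:agct2p1c2} to a single constant $\bar{t}$ valid for all compact $A$ and all $i > k$ simultaneously. My plan is to first apply Lemma \ref{l:cohouninbhd}\ref{l:cohouninbhd0} to \ref{l:agct2p1c2} to obtain neighbourhood-uniform triviality of $\has^i_c(X)$ with some function $T'_i$, then transfer to simplicial cohomology on $K$ via Lemma \ref{l:cohouninbhd}\ref{l:cohouninbhd1} and \ref{l:cohouninbhd}\ref{l:cohouninbhd2}. The delicate step is then to upgrade $T'_i$ to a constant in $K$, using that $K$ is finite-dimensional (so only the degrees $k < i \leq \dim K$ matter) and that the cochain complex $C^*_c(K)$ has finite-type structure as a $\mathbb{Z}G$-module (finitely many $G$-orbits of simplices per degree): killing cochains for a finite collection of orbit representatives of cocycles, together with their $G$-translates, should assemble into uniformly bounded-support cochains killing arbitrary cocycles. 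Pulling back via Lemma \ref{l:cohouninbhd} then transports the constant $\bar{t}_K$ obtained in $K$ to a constant $\bar{t}$ in $X$. The subtle point in the assembly is producing a bounded-support section of the coboundary map that is compatible with the $G$-action, for which I would adapt the approach of \cite[Proposition 1 and its Corollary]{GeOn07} to the present non-CAT(0) setting.

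Part \ref{l:agct2p1c3cor} follows directly from \ref{l:agct2p1c3} via \eqref{eq:cschopen}: any element of $\has^i_c(U)$ for open $U \subseteq X$ comes from some $\has^i(X, X \setminus A)$ with compact $A \subseteq U$, and by \ref{l:agct2p1c3} its image in $\has^i(X, X \setminus \overline{B}(A, \bar{t}))$ vanishes; since $\overline{B}(A, \bar{t})$ is a compact subset of $B(U, \bar{t} + 1)$, the element dies in $\has^i_c(B(U, \bar{t} + 1))$, so $t := \bar{t} + 1$ suffices.
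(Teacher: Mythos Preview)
Your proposal has two genuine gaps, one in \ref{l:agct2p1c1} and one in \ref{l:agct2p1c3}.

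\textbf{Part \ref{l:agct2p1c1}.} The cover $\{B(go,\epsilon):g\in G\}$ need not work: the hypothesis is only that the action is \emph{cocompact}, not proper, so the orbit $Go$ can accumulate and the cover can fail to be locally finite (and for small $\epsilon$ it need not cover $X$ at all). Your multiplicity argument invokes ``properness'' of the action, which is not assumed. The paper instead takes a maximal $1$-separated set $E\subseteq X$ and covers by $\{B(x,1):x\in E\}$; local finiteness, countability and finite-dimensionality of the nerve then follow from properness of the \emph{metric space} alone (finitely many $1$-separated points in any ball), and Lemma~\ref{l:agcI7A15} with Remark~\ref{u:agcI7A15} supplies the rest. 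Note that this nerve $K$ carries no $G$-action in general, so any downstream argument that relies on a simplicial $G$-action on $K$ is unavailable.

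\textbf{Part \ref{l:agct2p1c3}.} Your plan---use the $\Z G$-module structure of $C^*_{\spl,c}(K)$ and assemble $G$-translates of killing cochains for orbit representatives---is both unavailable (no $G$-action on $K$, as above) and unnecessary. The paper's argument is purely local and does not use the group at all: after transferring to $K$ via Lemma~\ref{l:cohouninbhd}, one builds a chain homotopy $D^i\colon C^i_{\spl,c}(K)\to C^{i-1}_{\spl,c}(K)$ by downward induction from $i>n=\dim K$ (where $D^i=0$), defining $D^i$ first on cochains supported in a \emph{single simplex} (such a cochain has support of diameter $\leq 1$, so $c-D^{i+1}\delta c$ is a cocycle supported in a ball of radius $\tau(i+1)+2$, and the transferred $T$-neighbourhood-uniform triviality supplies a cobounding $D^ic$ supported in the $T(\tau(i+1)+2)$-neighbourhood), then extending linearly. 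The recursion $\tau(i)=T(\tau(i+1)+2)$ terminates after $n-k$ steps, giving $\bar t=\max_i\tau(i)$. The point you missed is that working simplex-by-simplex replaces any need for equivariance: linearity propagates the support bound to arbitrary cochains automatically.

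Your parts \ref{l:agct2p1c2} and \ref{l:agct2p1c3cor} are fine in outline (the latter matches the paper exactly with $t=\bar t+1$), though your sketch of \ref{l:agct2p1c2} again leans on a $G$-action on $K$; the paper simply invokes \cite[Proposition~1]{GeOn07}, whose argument uses cocompactness on $X$ directly to get uniformity in the centre of the ball.
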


\begin{proof}
	\ref{l:agct2p1c1}
	Consider a maximal set $E\subseteq X$ such that for each pair of different points $x,x'\in E$ we have $d(x,x')\geq1$. Then the family $\{B(x,1):x\in E\}$ is an open cover of $X$.
	Denote by $K$ its nerve.
	Then, by the first two paragraphs of the proof of \cite[Theorem 2]{GeOn07}, we have that $K$ is countable, locally finite and finite-dimensional.
	The remaining properties of the complex $K$ follow by Lemma \ref{l:agcI7A15} (see Remark \ref{u:agcI7A15}).
	
	\medskip
	\ref{l:agct2p1c2}
	The proof of \cite[Proposition 1]{GeOn07} may be applied directly, since the
	CAT(0)-assumption 
	is used only to deduce the conclusion of \ref{l:agct2p1c1}.
	
	\medskip
	\ref{l:agct2p1c3}
	We are working with the following picture: $\has^*_c(X)\cong \has^*_c(K)\cong H^*_{\spl,c}(K)$, where $K$ is the simplicial complex from \ref{l:agct2p1c1}. 
	Put
	$n:=\dim K$.
	By \ref{l:agct2p1c2}, the bounded cohomology groups $\has^i_c(X)$ for $i>k$ are uniformly trivial, and by a subsequent application of Lemma \ref{l:cohouninbhd}\ref{l:cohouninbhd0}, \ref{l:cohouninbhd1} and \ref{l:cohouninbhd2}(a), the bounded cohomology groups $H^i_{\spl,c}(K)$ for $i>k$ are 
	$T_i$-\nbhd-uniformly trivial 
	for some functions $T_i$, so we may pick a function $T$ such that each 
	simplicial
	$i$-cocycle for $k<i\leq n$ with support contained in a 
	closed
	ball of radius $r$ cobounds in the 
	closed
	$T(r)$-neighbourhood of its support.
	Now it suffices to show that there exists a number $\bar{t}$ such that for all $i>k$ each simplicial $i$-cocycle in $K$ of bounded support cobounds in the 
	closed
	$\bar{t}$-neighbourhood of its support, as then the claim follows from subsequent application of Lemma \ref{l:cohouninbhd}\ref{l:cohouninbhd2}(b) and \ref{l:cohouninbhd1}.
	
	To this end, 
	we construct a chain homotopy $D=D^i\colon C^i_{\spl,c}(K)\to C^{i-1}_{\spl,c}(K)$ between the identity map $\mathrm{id}$ and the zero map for $i>k$ that satisfies an additional property: for each $i$ there exists a number $\tau(i)$ such that for each $c\in C^i_{\spl,c}(K)$ we have that $D^ic$ is supported in the 
	closed
	$\tau(i)$-neighbourhood of $c$.
	First, define $D^i=0$ and take $\tau(i)=0$ for $i>n$. This clearly satisfies the required properties.
	Next, take $i$ such that $k<i\leq n$, 
	assume that we 
	have
	defined $D^{i+1}$ and $\tau(i+1)$, 
	and let $c$ be an $i$-cochain that is 
	supported in a simplex.
	Then $D^i$ must satisfy $c=\delta D^{i+1}c+D^i\delta c$, therefore $\delta D^ic=c- D^{i+1}\delta c$. Observe that the right-hand side is a cocycle: 
	\begin{equation*}
	\delta(c- D^{i+1}\delta c)=\delta c-(\delta D^{i+1})(\delta c)=\delta c-\delta c+D^{i+2}(\delta\delta c)=0+D^{i+2}(0)=0, 
	\end{equation*}
	therefore it cobounds in the
	closed 
	$T(\tau(i+1)+2)$-neighbourhood
	of its support, since, by the assumption, it is supported in the
	closed
	$(\tau(i+1)+1)$-neighbourhood
	of $c$. Define $D^ic$ to be any $(i-1)$-cochain that is supported 
	in
	the closed
	$T(\tau(i+1)+2)$-neighbourhood
	of the support of $c$ and satisfies the equation $\delta D^ic=c- D^{i+1}\delta c$. Next, extend $D^i$ linearly to $C_{\spl,c}^i(K)$. One may easily verify that it suffices to take 
	$\tau(i):=T(\tau(i+1)+2)$.
	
	Finally, let $\bar{t}:=\max_{i>k}\tau(i)$.
	Given a simplicial $i$-cocycle $c$, 
	we have that $c=\delta Dc+D\delta c=\delta Dc$, and, by the construction, $Dc$ is supported in the 
	closed
	$\bar{t}$-neighbourhood of $c$, as required.

\medskip	
\ref{l:agct2p1c3cor}
Statement
\ref{l:agct2p1c3} gives
triviality of the maps $\csch{\has}{i}{X}{A}\to\csch{\has}{i}{X}{\overline{B}(A,\bar{t}\,)}$ for $A\subseteq U$ compact, which, in view of \eqref{eq:cschopen}, implies 
the
triviality of the limit map $\has_c^i(U)\to\has_c^i(B(U,\bar{t}+1))$. 
Therefore it is sufficient to take $t:=\bar{t}+1$.
\end{proof}

\subsection{Main lemmas and proof of Theorem \ref{t:agcc3} (Thm.~\ref{t:agcc3intro})}\label{sbs:agcmain}
\label{subs:mainlems}

\begin{remark}[cf.~{\cite[Remark 2]{GeOn07}}]\label{r:cohoeq}
Let $X$ be 
a 
proper metric space that admits a 
\ccc{}
geodesic bicombing $\sigma$.
Then $\has^{*+1}_c(X)$ and 
the reduced Alexander--Spanier cohomology
group
$\hasred^{*}(\partial_\sigma X)$ 
are isomorphic. Indeed,
fix $o\in X$;
then for any $R>0$ and $i\in\N$ we have the following fragment of the 
exact sequence of the pair $(\bdry{X}{\sigma},\bdry{X}{\sigma}\setminus\overline{B}_X(o,R))$: 
\begin{equation*}
\has^{i}(\bdry{X}{\sigma})\to 
\has^{i}(\bdry{X}{\sigma}\setminus\overline{B}_X(o,R))\to 
\has^{i+1}(\bdry{X}{\sigma},\bdry{X}{\sigma}\setminus\overline{B}_X(o,R))\to 
\has^{i+1}(\bdry{X}{\sigma}). 
\end{equation*}
For $i\geq 1$
the first and the last of these groups are trivial,
as $\bdry{X}{\sigma}$ is contractible; therefore the middle arrow is an isomorphism, which together with excision gives that 
\begin{equation*}
\has^{i}(\bdry{X}{\sigma}\setminus\overline{B}_X(o,R))\cong 
\has^{i+1}(\bdry{X}{\sigma},\bdry{X}{\sigma}\setminus\overline{B}_X(o,R))\cong 
\has^{i+1}(X,X\setminus\overline{B}_X(o,R));
\end{equation*}
passing to the limit, we obtain the desired isomorphism (see \cite[Theorem 6.6.2]{SpanierBook66}).
Similarly follows the 
case of
$i=0$, 
where
we have 
$\Z$ 
as the left term of the exact sequence above.

In fact, the above argument 
works in a more general setting.
Consider a 
compact subset $Z$ 
of 
a 
compact 
space $\mathfrak{X}$, such
that $Z$ is a Z-set in $\mathfrak{X}$. 
Let $\{\htpy_t\colon\mathfrak{X}\to\mathfrak{X}:t\in[0,1]\}$ be the homotopy from the definition of 
Z-set. 
Assume that 
$\mathfrak{X}$ is contractible and
$\bigcup_{t>0}\htpy_t(\mathfrak{X})=\mathfrak{X}\setminus Z$.
Then
$\has^{*+1}_c(\mathfrak{X}\setminus Z)\cong\hasred^{*}(Z)$ 
--- 
it suffices to consider 
the sets
$(\htpy_t(\mathfrak{X}))_{t>0}$ in the place of balls $(\overline{B}(o,R))_{R>0}$ 
in the argument above,
as
the family $\{\htpy_t(\mathfrak{X}):t>0\}$ is cofinal in the family of all compact subsets of $\mathfrak{X}\setminus Z$.
	
\end{remark}

\begin{theorem}[Theorem \ref{t:agcmtintro}; cf.~{\cite[Main Theorem]{GeOn07}}]\label{t:agcmt}
	Let $X$ be a 
	finite-dimensional proper metric space 
	that admits a
	\ccc{}
	geodesic bicombing $\sigma$ and a cocompact group action via isometries,
	such that $|\partial_\sigma X|\geq 2$. 
	Then the
	reduced Alexander--Spanier cohomology 
	group $\hasred^{\dim \partial_\sigma X}(\partial_\sigma X)$ is non-zero.
\end{theorem}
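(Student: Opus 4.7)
The plan is to argue by contradiction, adapting the Geoghegan--Ontaneda proof of the CAT(0) analogue \cite[Main Theorem]{GeOn07} to the present setting, with the CAT(0) inputs replaced by the \ccc{}-counterparts established in Subsection~\ref{sbs:agcprep}. Set $n := \dim \partial_\sigma X$. By Remark~\ref{r:cohoeq} we have $\hasred^n(\partial_\sigma X) \cong \has^{n+1}_c(X)$, so it suffices to show that $\has^{n+1}_c(X) \neq 0$. Suppose towards a contradiction that this group vanishes. Since $\dim \partial_\sigma X = n$ forces $\has^i(\partial_\sigma X) = 0$ for every $i > n$, the isomorphism in Remark~\ref{r:cohoeq} gives $\has^i_c(X) = 0$ for every $i > n$. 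Hence, by Lemma~\ref{l:agct2p1c}\ref{l:agct2p1c3cor}, there exists a constant $t > 0$ such that the inclusion-induced map $\has^i_c(U) \to \has^i_c(B(U,t))$ vanishes for every open $U \subseteq X$ and every $i > n$. Note also that the case $n=0$ is immediate: then $\partial_\sigma X$ is compact, totally disconnected and has at least two points, hence has at least two connected components and $\hasred^0(\partial_\sigma X) \neq 0$.

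For $n \geq 1$, to produce the contradiction I would proceed as follows. Pick distinct points $\bar{x}_1, \bar{x}_2 \in \partial_\sigma X$ and disjoint closed neighbourhoods $A_1, A_2 \subseteq \partial_\sigma X$ of $\bar{x}_1, \bar{x}_2$ respectively. Applying Lemma~\ref{l:agcD} to each $A_i$ produces a closed set $D_i \subseteq \bdry{X}{\sigma}$ that is a strong deformation retract of the contractible space $\bdry{X}{\sigma}$, satisfies $D_i \cap \partial_\sigma X = A_i$, and has $D_i \cap X$ contained in the $1$-neighbourhood of $\mathrm{Cone}_o(A_i) \cap X$. By Proposition~\ref{f:asymptotic}\ref{f:asymptotic2} the cones $\mathrm{Cone}_o(A_1)$ and $\mathrm{Cone}_o(A_2)$ diverge, so $D_1 \cap D_2$ is bounded, and their union together with an appropriate compact set covers $\bdry{X}{\sigma}$. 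Combining the long exact sequences of the pairs $(\bdry{X}{\sigma}, \bdry{X}{\sigma} \setminus A_i)$ with excision and with the isomorphism from Remark~\ref{r:cohoeq} lets one transfer $\hasred^*(\partial_\sigma X)$ back and forth with the compactly-supported cohomology of open subsets of $X$ thickened out of $D_i \cap X$. A Mayer--Vietoris argument applied to such a thickening of $D_1 \cup D_2$, combined with the uniform triviality bound $t$, should then force a non-trivial class in $\has^n(\partial_\sigma X)$ supported near $\{\bar{x}_1, \bar{x}_2\}$, contradicting the assumed vanishing of $\hasred^n(\partial_\sigma X)$.

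The main obstacle is the last step: the Mayer--Vietoris/long-exact-sequence book-keeping that turns the uniform triviality of $\has^i_c$ for $i > n$ into the desired non-trivial class. In the CAT(0) case this is carried out in the final part of \cite[Main Theorem]{GeOn07} using convexity of closed balls and geodesic projections. In our setting, the only genuine substitutions required are the bicombing in place of CAT(0) geodesics, the $\sigma$-convexity of balls (a consequence of conicality of $\sigma$), and, most importantly, Lemma~\ref{l:agcD}, which furnishes the strong deformation retracts $D_A$ even when the cones $\mathrm{Cone}_o(A)$ themselves fail to be $\sigma$-convex. With these substitutions, the argument of \cite{GeOn07} is expected to transfer to the present \ccc{}-bicombable setting without essential change.
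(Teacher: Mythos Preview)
Your opening setup is correct and matches the paper: the reduction to $\has^{n+1}_c(X)$ via Remark~\ref{r:cohoeq}, the observation that $\has^i_c(X)=0$ for all $i>n$ under the contradiction hypothesis, the extraction of the constant $t$ from Lemma~\ref{l:agct2p1c}\ref{l:agct2p1c3cor}, and the direct treatment of $n=0$ are all fine.

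The gap is in your ``main obstacle'' paragraph, and it is a real one. The two-points/two-cones/Mayer--Vietoris scheme you sketch is not the mechanism in \cite{GeOn07}, and your description does not explain how a vanishing of $\has^{n+1}_c$ on \emph{every} open set under a fixed thickening would be violated by anything built out of $D_1,D_2$; you never produce an open $U$ and a class in $\has^{n+1}_c(U)$ surviving into $\has^{n+1}_c(B(U,t))$. The paper (following \cite{GeOn07}) proceeds quite differently. One first invokes the equality $\dim_{\Z}\partial_\sigma X=\dim\partial_\sigma X=n$ to obtain a closed $A\subseteq\partial_\sigma X$ with $\has^n(\partial_\sigma X,A)\neq 0$, and then \cite[Lemma~1]{GeOn07} to find small balls $B_k\subseteq\partial_\sigma X\setminus A$ converging to some $\bar\gamma\notin A$ so that each inclusion-induced map $\has^n(\partial_\sigma X,\partial_\sigma X\setminus B_k)\to\has^n(\partial_\sigma X,A)$ is nonzero. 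Lemma~\ref{l:agcD} is applied \emph{once}, to $A$, yielding the deformation retract $D$; a second closed set $E\subseteq\bdry{X}{\sigma}$ with $E\cap\partial_\sigma X=\partial_\sigma X\setminus B_k$ is then built by hand from the $\sigma$-ray to $\bar\gamma$, arranged so that $d(D\cap X,\,X\setminus E)\ge t+1$. A commutative diagram comparing the exact sequences of the triples $(\bdry{X}{\sigma},\partial_\sigma X\cup E,E)$ and $(\bdry{X}{\sigma},\partial_\sigma X\cup D,D)$, together with strong excision, converts the nonzero map on the boundary into a nonzero map $\has^{n+1}_c(X\setminus E)\to\has^{n+1}_c(B(X\setminus E,t+1))$, contradicting the choice of $t$. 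None of this is a Mayer--Vietoris argument, and the essential input you are missing is the cohomological-dimension step producing $A$ with $\has^n(\partial_\sigma X,A)\neq 0$ and its localisation via \cite[Lemma~1]{GeOn07}.
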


\begin{remark}\label{u:oneptbdry2}
	For more discussion on the assumption that $|\partial_\sigma X|\geq 2$,
	see Remark \ref{u:oneptbdry1}.
	(And clearly, a space with $|\partial_\sigma X|=1$ does not satisfy the conclusion of this theorem.)
\end{remark}

\begin{remark}\label{u:findimbdry}
	Recall
	that in the proof of Theorem \ref{t:main} in Section \ref{s:deslan} 
	we proved that for a finite-dimensional proper metric space that admits a 
	\ccc{}
	bicombing 
	we have that $\dim\bdry{X}{\sigma}\leq\dim X$.
	In particular, $\dim\partial_\sigma X<\infty$, so the cohomology group $\hasred^{\dim \partial_\sigma X}(\partial_\sigma X)$ in the statement above is well-defined.
\end{remark}

\begin{remark}[cf.~{\cite[below Remark 3]{GeOn07}}]\label{u:groupch}
If the action of the group, $G$, in the statement above is geometric (i.e.~it is additionally proper), then we have the isomorphism $\has^*_c(X)\cong H^*(G,\Z G)$, 
which 
by 
Remark \ref{r:cohoeq} 
gives 
the isomorphism $\hasred^*(\partial_\sigma X)\cong H^{*+1}(G,\Z G)$.
Indeed, by taking the nerve
of a locally finite open cover 
$\mathcal{U}$
of $X$ by balls (of the same radius), 
such that
$\mathcal{U}$
is closed under the action of $G$,
one obtains a simplicial complex $K$ with a geometric (and simplicial) action of~$G$,
boundedly homotopic to the space $X$ (see Lemma~\ref{l:agcI7A15}). Then 
by \cite[Exercise VIII.7.4]{BrownBook82}
it follows that $H^*(G,\Z G)\cong\has^*_c(K)\cong\has^*_c(X)$. 	
\end{remark}

\begin{proof}
	There is the following cohomological definition of dimension of a topological space $Y$:
	\vspace{-0.3em} 
	\begin{equation*}
		\dim_\Z Y=\sup\{i\in\N: (\exists A\subseteq Y)(A\text{ is closed}, \has^i(Y,A)\neq 0)\}.
	\end{equation*}
	(Usually one uses the \v{C}ech 
	cohomology 
	in the above definition, but these coincide with the Alexander--Spanier 
	cohomology
	---
	see \cite[Corollary 6.8.8 and Exercise 6.D]{SpanierBook66} or \cite[Theorem 2]{Dowker52}.)
	The studies on (co)homological notions of dimension date back to Alexandrov \cite{Alexandroff32}.
	The above definition of dimension coincides with the covering dimension for separable metric spaces of finite covering dimension, see e.g.~\cite[below Corollary 1.9.9]{EngelkingBook78}.
	
	\smallskip
	Since the space $\bdry{X}{\sigma}$ is of finite dimension
	(see Remark \ref{u:findimbdry}),
	the topological dimension $n$ of $\partial_\sigma X$ is also finite, 
	therefore its cohomological dimension $\dim_\Z\partial_\sigma X$ is also $n$.
	Let $d$ be the largest number such that $\has_c^{d+1}(X)\cong\hasred^d(\partial_\sigma X)\neq 0$ (recall Remark \ref{r:cohoeq} for the isomorphism).
	By considering 
	the set 
	$A$ 
	consisting of a single point (in the definition of the cohomological dimension above),
	we have $d\leq n$. 
	Assume a contrario that $d<n$.
	Take 
	$\emptyset\neq A\subseteq\partial_\sigma X$ 
	closed such that $\has^n(\partial_\sigma X,A)\neq 0$ 
	(for the non-emptiness of $A$, one may use the exact sequence of the pair $(X,A)$ for $n\geq 1$, or \cite[Theorem 6.4.5]{SpanierBook66} for $n=0$).	
	Fix a basepoint $o\in X$.
	By \cite[Lemma 1]{GeOn07},
	there exist open balls $B_k\subseteq\partial_\sigma X\setminus A$ of radius $1/k$ (with respect to some metric on the boundary $\partial_\sigma X$, it does not matter which one)
	for 
	sufficiently large 
	$k\in\N$, and 
	$\bar{\gamma}\in\partial_\sigma X\setminus A$, such that the balls $B_k$ 
	converge to 
	$\bar{\gamma}$,
	and the maps $\has^n(\partial_\sigma X,\partial_\sigma X\setminus B_k)\to\has^n(\partial_\sigma X,A)$ induced by inclusion are all non-zero. 
	
	The 
	outline of the 
	remaining part of the
	proof
	of
	the current 
	theorem
	is as follows.
	Using 
	the $\mathrm{Cone}_o$ operation, 
	one may deduce from
	the 
	non-zeroness of the map ${\has^n(\partial_\sigma X,\partial_\sigma X\setminus B_k)}\to{\has^n(\partial_\sigma X,A)}$ for a sufficiently large $k$
	---
	where we have the set $A$ contained in a `very large' set $\partial_\sigma X\setminus B_k$
	---
	that we have a similar situation in $X$, namely that
	the map $\has^{n+1}_c(X\setminus E)\to\has^{n+1}_c(X\setminus D)$ 
	induced by inclusion 
	is non-zero
	for some $E,D\subseteq X$ 
	such that a ball of large diameter around $X\setminus E$ is contained in $X\setminus D$. 
	This,
	however,
	contradicts Lemma \ref{l:agct2p1c}\ref{l:agct2p1c3cor}.
	
	\smallskip
	Now we proceed to the details.
	Let 
	the set 
	$D$ be 
	related to the set $A$
	as in the statement of
	Lemma \ref{l:agcD}.	
	Pick a constant $t$ as guaranteed by Lemma \ref{l:agct2p1c}\ref{l:agct2p1c3cor}.
	Since $\bar{\gamma}\not\in A$, by 
	compactness of $A$ and 
	Proposition \ref{f:asymptotic}\ref{f:asymptotic2}, 
	one may choose $s_{\bar{\gamma}}$ such that $d(\geod{o}{\bar{\gamma}}(s_{\bar{\gamma}}),\mathrm{Cone}_o(A))\geq t+3$.
	Since $B_k$ converges to $\bar{\gamma}$, one may choose $k$ such that $\exp_o(B_k\times\{s_{\bar{\gamma}}\})\subseteq B(\geod{o}{\bar{\gamma}}(s_{\bar{\gamma}}),1)$.
	Let $B:=B_k$.
	Define 
	\begin{equation*}
	E:=\overline{B}(o,s_{\bar{\gamma}})\cup\{\bar{x}\in\bdry{X}{\sigma}:d(\geod{o}{\bar{x}}(s_{\bar{\gamma}}),\geod{o}{\bar{\gamma}}(s_{\bar{\gamma}}))\geq 1\}\cup\mathrm{Cone}_o(\partial_\sigma X\setminus B).
	\end{equation*}
	Observe that it is a closed set such that $E\cap\partial_\sigma X=\partial_\sigma X\setminus B$.
	Furthermore, $d(D\cap X,{X\setminus E})\geq t+1$: take $a\in\mathrm{Cone}_o(A)\cap X$ and $x\in X\setminus E$; 
	then $d(x,o)>s_{\bar{\gamma}}$, so by conicality of $\sigma$ we have that 
	\begin{multline*}
		d(a,x)\geq d(\sigma_{o,a}(s_{\bar{\gamma}}/d(o,x)),\sigma_{o,x}(s_{\bar{\gamma}}/d(o,x)))=d(\sigma_{o,a}(s_{\bar{\gamma}}/d(o,x)),\geod{o}{x}(s_{\bar{\gamma}}))\\
		\geq d(\sigma_{o,a}(s_{\bar{\gamma}}/d(o,x)),\geod{o}{\bar{\gamma}}(s_{\bar{\gamma}}))-d(\geod{o}{\bar{\gamma}}(s_{\bar{\gamma}}),\geod{o}{x}(s_{\bar{\gamma}}))\geq t+3-1=t+2; 
	\end{multline*}
	the claim follows as $D\cap X\subseteq\overline{B}(\mathrm{Cone}_o(A)\cap X,1)$.
	
	We have the following diagram.
	\smallskip	
	\begin{equation*}
	\begin{tikzcd}[row sep=0.85em,column sep=1.1em,cramped,font=\small,baseline=1em]
		& |[orange]| \has^n(\partial_\sigma X\cup E,E) \arrow[r, orange]\arrow[dddd,blue] & |[orange]| \has^{n+1}(\bdry{X}{\sigma},\partial_\sigma X\cup E)\arrow[dddd,blue]\\ 
		\has^n(\partial_\sigma X,\partial_\sigma X\setminus B) \arrow[ur,"\cong"]\arrow[dd] & & & |[darkgreen]| \has^{n+1}_c(X\setminus E)\arrow[ul,darkgreen,"\cong"']\arrow[d,darkgreen]\\
		& & & |[darkgreen]| \has_c^{n+1}(B(X\setminus E,t+1))\arrow[d,darkgreen]\\
		\has^n(\partial_\sigma X,A)\arrow[dr,"\cong"] & & & |[darkgreen]| \has_c^{n+1}(X\setminus D)\arrow[dl,darkgreen,"\cong"']\\
		|[magenta]| 0=\has^n(\bdry{X}{\sigma},D)\arrow[r,magenta] & |[magenta]| \has^n(\partial_\sigma X\cup D,D)\arrow[r,magenta,"\cong"] & |[magenta]|  \has^{n+1}(\bdry{X}{\sigma},\partial_\sigma X\cup D)\arrow[r,magenta] & |[magenta]|  \has^{n+1}(\bdry{X}{\sigma},D)=0
	\end{tikzcd}
	\end{equation*}
	
	\medskip
	\noindent
	The upper and the lower rows are fragments of the exact 
	sequences 
	of the triples $(\bdry{X}{\sigma},E\cup\partial_\sigma X,E)$ and $(\bdry{X}{\sigma},D\cup\partial_\sigma X,D)$, respectively. 
	In the lower row, 
	the middle arrow is an isomorphism, 
	since $D$ is a deformation retract of $\bdry{X}{\sigma}$.
	The two long vertical arrows in the middle are induced by the inclusion of triples $(\bdry{X}{\sigma},D\cup\partial_\sigma X,D)\hookrightarrow (\bdry{X}{\sigma},E\cup\partial_\sigma X,E)$. 
	The square on the left is the 
	excision of $X$ 
	(the so-called strong excision property, 
	see \cite[Theorem 6.6.5]{SpanierBook66}).
	The isomorphisms on the right follow by the fact that, in compact spaces, the 
	cohomology
	relative to a closed set may be identified with the compactly supported 
	cohomology 
	of its complement, see \cite[Lemma 6.6.11]{SpanierBook66}.
	The vertical arrows on the right follow from the inclusions $X\setminus E\subseteq B(X\setminus E,t+1)\subseteq X\setminus D$.
	
	We obtain a contradiction in the following way. The vertical arrow on the left is non-zero by the assumption on the sets
	$B_k$ for $k\in\N$,
	therefore, 
	moving 
	step-by-step 
	from the left to the right, one 
	shows that each vertical arrow in the diagram is
	also non-zero. 
	In particular,
	the map $\has_c^{n+1}(X\setminus E)\to \has_c^{n+1}(B(X\setminus E,t+1))$ is non-zero, which gives a contradiction with 
	the definition of the number $t$. 
\end{proof}

We note that the argument in the proof of 
the
lemma below works also for the simplicial 
cohomology.

\begin{lemma}[cf.~{\cite[Theorem A]{Ontaneda05}}]\label{l:agcta}
	Let $X$ be a 
	proper non-compact metric space that admits a 
	\ccc{}
	geodesic bicombing $\sigma$ and a cocompact group action via isometries.
	If any of the groups 
	$\has_c^i(X)$ is non-zero, then $X$ is almost 
	$\sigma$-geodesically 
	complete.	
\end{lemma}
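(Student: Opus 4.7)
I will prove the contrapositive: assuming $X$ is not almost $\sigma$-geodesically complete, every compactly supported Alexander--Spanier cohomology group $\bar{H}_c^i(X)$ vanishes. The approach is an adaptation of Ontaneda's argument \cite{Ontaneda05} from the CAT(0) setting to the setting of spaces admitting a \ccc{} bicombing.

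First I would use Lemma \ref{l:agct2p1c}\ref{l:agct2p1c1} to replace $X$ with a quasi-isometric and boundedly homotopy equivalent locally finite, finite-dimensional simplicial complex $K$. Via Lemma \ref{l:cohouninbhd}\ref{l:cohouninbhd1} and the canonical identification of Alexander--Spanier and simplicial cohomology for simplicial pairs, it then suffices to show that for every compact $A \subseteq X$ there exists a compact $A' \supseteq A$ such that the inclusion-induced map $\bar{H}^i(X, X \setminus A) \to \bar{H}^i(X, X \setminus A')$ is zero for all $i$. To achieve this vanishing I plan to construct, for each compact $A$, a compact $A' \supseteq A$ together with a continuous map $f\colon X \to X$ and a homotopy $H\colon X \times [0,1] \to X$ from $\mathrm{id}_X$ to $f$ satisfying $H|_{(X \setminus A') \times [0,1]} = \mathrm{id}$ and $H(A' \times [0,1]) \subseteq X \setminus A$. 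Standard homotopy invariance of cohomology applied to the homotopy of pairs $(X, X \setminus A') \to (X, X \setminus A)$ obtained from $H$ then forces the relevant inclusion-induced map to be zero.

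The geometric heart of the construction is to push points using the bicombing $\sigma$ together with the failure of almost completeness. Fix a basepoint $o$. For a point $x$ in a bounded region, the $\sigma$-geodesic $\sigma_{o,x}$ can often be extended to a longer $\sigma$-geodesic from $o$ towards a suitable far-away target; the failure of almost $\sigma$-geodesic completeness, together with cocompactness, should supply in a uniform way such targets lying in the ``gaps'' between $\sigma$-rays from $o$, and guarantee that the corresponding extension trajectories can be made to avoid any prescribed compact $A$. Using consistency and conicality of $\sigma$, together with the Lipschitz estimate of Proposition \ref{f:neweq52}, these extensions should then be assembled continuously into the required $f$ and $H$, with $H$ realised concretely by linearly varying the extension parameter (and thereby the target along the relevant $\sigma$-geodesic).

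The main obstacle is this last step: constructing $f$ and $H$ continuously and ensuring they satisfy the precise conditions above. In the CAT(0) case the uniqueness of geodesics makes such a construction natural; here continuity of $\sigma$ plays the corresponding role, but one must deal with transition regions where extension targets depend only coarsely on $x$, and guarantee that the homotopy remains outside $A$ uniformly over $A'$ --- all via careful scale choices dictated by the failure of almost $\sigma$-geodesic completeness and controlled by Proposition \ref{f:neweq52} applied at those scales.
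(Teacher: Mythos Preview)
Your approach has a real gap. First, the conditions you impose on $H$ are inconsistent: since $H_0 = \mathrm{id}_X$ and $A \subseteq A'$, you get $H(a,0) = a \in A$ for $a \in A$, contradicting $H(A' \times [0,1]) \subseteq X \setminus A$. More importantly, the geometric mechanism you propose---pushing points \emph{outward} along extended $\sigma$-geodesics toward targets in the gaps---runs straight into the continuity obstacle you yourself identify, and there is no evident way around it: a continuous choice of extension target for every $x$ is exactly the kind of selection that the bicombing does not hand you.

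The paper goes in the opposite direction and the continuity issue disappears. After using cocompactness to translate the ball $\overline{B}(x,R)$ so that it meets no $\sigma$-ray from a fixed basepoint $o$, properness gives that $C := \sup\{L_{\varrho_{o,x'}} : x' \in \overline{B}(x,R)\}$ is finite, where $L_\alpha$ denotes the supremal length of any $\sigma$-geodesic extending $\alpha$. One then \emph{pulls back toward} $o$: the single explicit map $f^{o,C}(y) := \exp_o\bigl(y, \max(d(o,y)-C,0)\bigr)$ is continuous by Proposition~\ref{p:ellexp}, boundedly homotopic to $\mathrm{id}_X$, and its image misses $\overline{B}(x,R)$ by the definition of $C$ (a point $z$ in the image lying in that ball would witness a $\sigma$-geodesic from $o$ through $z$ of length exceeding $C$). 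Hence $(f^{o,C})^*\varphi$ lives in $\bar{H}^i(X,X)=0$, and the bounded homotopy makes $\varphi$ and $(f^{o,C})^*\varphi$ equal in the direct limit $\bar{H}_c^i(X)$. No passage to a simplicial complex, no homotopy supported on a compact set, and no continuous selection of extension targets are needed.
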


\begin{proof}
	The proof \cite[Theorem A]{Ontaneda05} can be adapted by changing each occurrence of `(the unique) CAT(0)-geodesic (ray)' to `(the) 
	$\sigma$-geodesic(/ray)'.
	In particular, the objects: $\ell_\alpha$ (which later on we call $L_\alpha$, to avoid confusing it with the function $\ell_o$ 
	from Definition \ref{d:ellexp}), 
	$f^{p,s}\colon X\to X$, 
	and $[p,x]$, $[p,x_0)$, where $\alpha$ is a $\sigma$-geodesic, $p,x\in X$, $s>0$ and $x_0\in\partial_\sigma X$, become 
	\begin{equation*}
	(L_\alpha=)\ell_\alpha=\sup\{t\in\R:\text{there exists a }\sigma\text{-geodesic of length }t\text{ that extends } \alpha\},
	\end{equation*}
	$f^{p,s}(x)=\exp_p(x,\max(d(p,x)-s,0))$, 
	$[p,x]=\geod{p}{x}$ 
	and $[p,x_0)=\geod{p}{x_0}$.
	The last two notations will not be used further in this proof.
	The function $f^{p,s}$ is the key object in this proof, and 
	the value $f^{p,s}(x)$
	can be 
	described
	as the point reached 
	in the following walk: `starting in $x$,
	go 
	backwards
	along the $\sigma$-geodesic
	that begins in
	$p$
	and ends in
	$x$,
	towards 
	$p$,
	with unit speed for time $s$, unless you 
	reach $p$ earlier --- then stop'.
	
	\smallskip
	The outline is as follows.
	Assume
	that
	the space $X$ is not almost 
	$\sigma$-geodesically 
	complete;
	we shall show that
	then 
	every
	element
	$\varphi\in\csch{H}{i}{X}{\overline{B}(x,R)}$,
	where $x\in X$ and $R>0$,
	is zero in $\has^*_c(X)$
	(i.e.~the image of $\varphi$ under the canonical map is zero). 
	Fix a basepoint $o\in X$.
	The 
	action 
	of $G$ on $X$
	via isometries 
	induces an action of $G$ on $\has^*_c(X)$ via isomorphisms.
	Therefore, since the action of $G$ on $X$ is cocompact,
	one may assume
	without loss of generality 
	that 
	the ball $\overline{B}(x,R)$ 
	intersects no 
	$\sigma$-ray
	originating in $o$:
	the $G$-orbit of $x$ is $D$-dense in $X$ for some $D>0$,
	and, as $X$ is not almost 
	$\sigma$-geodesically 
	complete, there exists $x_\odot\in X$ such that $\overline{B}(x_\odot,D+R)$ intersects no 
	\linebreak[2]
	$\sigma$-ray; let $g$ be such that $d(gx,x_\odot)\leq D$; then $g^*\varphi\in\csch{\has}{i}{X}{\overline{B}(gx,R)}$
	is non-zero in $\has^*_c(X)$ iff $\varphi$ is,
	and $\overline{B}(gx,R)$
	intersects no $\sigma$-ray.
	Using properness of $X$, one may obtain that $C:=\sup\{L_{\geod{o}{x'}}:x'\in\overline{B}(x,R)\}$
	is finite:
	if there existed $x_n\in X$ such that $d(o,x_n)\to\infty$ and $t_n\leq d(o,x_n)$ such that $\geod{o}{x_n}(t_n)\in\overline{B}(x,R)$, then by compactness of $\overline{B}(x,R)$ and $\bdry{X}{\sigma}$, one could choose a subsequence $x_{n_k}$ convergent to some $\bar{x}\in\partial_\sigma X$ such that $\geod{o}{x_{n_k}}(t_{n_k})\to a$ for some $a\in\overline{B}(x,R)$; then $a=\lim_k\geod{o}{x_{n_k}}(t_{n_k})=\geod{o}{\bar{x}}(d(o,a))$ (recall Proposition \ref{p:ellexp}), which is a contradiction.
	Since the identity $\id_X$ and the function $f^{o,C}$ are boundedly homotopic, the (images under the canonical maps of the) elements $\varphi=(\id_X)^*\varphi\in\csch{\has}{i}{X}{\overline{B}(x,R)}$ and $(f^{o,C})^*\varphi\in\csch{\has}{i}{X}{(f^{o,C})^{-1}(\overline{B}(x,R))}$ are equal 
	in $\has_c^i(X)$, 
	see \cite[Theorem 6.5.6]{SpanierBook66}.
	Since the image of $f^{o,C}$ omits $\overline{B}(x,R)$, the 
	latter
	cohomology group is trivial.
	Therefore $\varphi$ is trivial in $\has_c^i(X)$.
\end{proof}

\begin{proof}{\sc (of Theorem \ref{t:agcc3} (Thm.~\ref{t:agcc3intro}))}
Follows from 
Theorem \ref{t:agcmt} 
and 
Lemma 
\ref{l:agcta}, and an application of the isomorphism considered in Remark \ref{r:cohoeq}.
\end{proof}

\section{Problems and open questions}\label{s:open}

Below we collect and present 
some
problems and open questions arising from this article, 
which 
constitute a natural continuation of the 
topics of
research
discussed 
in 
this paper.

\begin{enumerate}[ref=\thesection.Q\arabic*, label=Q\arabic*.,listparindent=0em,parsep=0.4em,itemsep=0.2em]
	\item\label{q:Helly}
	Does a counterpart of Theorem \ref{t:nonunique} (Thm.~\ref{t:nonuniqueintro}) hold for Helly groups?\ ---
	that is: does there exist a group acting an two Helly graphs 
	such that the boundaries of their injective hulls (recall the proof of Corollary \ref{c:injective}\ref{c:injective2}) are non-homeomorphic?
	
	A discussion on adapting the example by Croke and Kleiner \cite{CrKl00}, used in the proof of Theorem~\ref{t:nonunique}, to the Helly case
	is made in Remark \ref{r:nonunique}\ref{r:nonunique2}, where it is pointed out that a problem lies in local non-Hellyness around diagonal gluing-lines --- does a local Hellyfication around such lines solve the problem?
	
	\item\label{q:unctbly} Does there exist a group acting geometrically on more than two (especially, uncountably many) injective metric spaces 
	with pairwise non-homeomorphic boundaries?
	
	A discussion on using the Wilson's approach \cite{Wilson05} to answer this question positively is made in Remark \ref{r:nonunique}\ref{r:nonunique1}.
	
	\item\label{q:ccc} %
	May the second conclusion of Corollary \ref{c:ccc} (Thm.~\ref{t:cccintro}) not hold for a CAT(0) cube complex of arbitrary dimension?
	---
	that is: does there exist a 
	CAT(0) cube complex $X$ such that 
	for some $p,r\in[1,\infty]$
	the boundaries of $X$ 
	endowed 
	with the 
	piecewise-$\ell^p$ metric and the 
	piecewise-$\ell^r$ metric
	are non-homeomorphic?
	
	May the first conclusion of Corollary \ref{c:ccc} not hold for a 
	CAT(0) cube complex (of arbitrary dimension) that admits a cocompact group action?\ %
	---
	that is:  does there exist a 
	CAT(0) cube complex $X$ admitting a cocompact group action such that 
	for some $p,r\in[1,\infty]$
	the 
	identity of $X$
	does not extend to a homeomorphism
	between the 
	boundary-compactification of $X$
	endowed 
	with the 
	piecewise-$\ell^p$ metric and 
	the 
	boundary-compactification of $X$ endowed with the
	piecewise-$\ell^r$ metric?
	
	A related discussion is made in Remark \ref{r:cccdisc}.
	
	\item\label{q:pasw} Do the results relating topological properties of boundaries of CAT(0) spaces and algebraic properties of groups acting upon them, e.g.~in the spirit of Swenson \cite{Swenson99} or Papasoglu--Swenson \cite{PaSw09}, extend to the realm of spaces admitting \ccc{} geodesic bicombings?\ %
	---
	for example: does the boundary of a \ccc-bicombable space acted upon geometrically by a 1-ended group have no cut-points?
	does the converse of Proposition~\ref{p:amalgamated} (Prop.~\ref{p:amalgamatedintro}) hold, i.e.~% 
	does a group acting geometrically on a \ccc-bicombable space $X$ split as an amalgamated product over 
	a
	2-ended subgroup when the boundary of $X$ has a local cut-point?
	
	\item \label{q:examples}
	Describe the topology of boundaries of 
	some
	examples of
	\ccc-bicombable spaces acted upon in a controlled way by a group. 
	For 
	example, 
	the 
	C(4)--T(4) small cancellation groups and the FC-type Artin 
	groups
	are Helly groups \cite{CCGHO25,HuOs21}, 
	hence they act geometrically on \ccc-bicombable spaces 
	(see Corollary \ref{c:injective}\ref{c:injective2}), 
	which
	provides these classes of groups with the first 
	notion of 
	boundary
	(of an associated space)
	known 
	for them.

	\item \label{q:oneptbdry}
	Does there exist a proper \ccc-bicombable space that admits a cocompact group action, 
	such that 
	its 
	boundary consists of exactly $1$ point? 
	
	See Remarks \ref{u:oneptbdry1} and \ref{u:oneptbdry2} for more context and detail.
	
\end{enumerate}

\printbibliography

\end{document}